\crefname{hypothesis}{Hypothesis}{Hypotheses}
\DeclareMathOperator{\diag}{diag}
\begin{document}

\newcommand\relatedversion{}
\renewcommand\relatedversion{\thanks{The full version of the paper can be accessed at \protect\url{https://arxiv.org/abs/0000.00000}}}

\newcommand{\bi}{\begin{itemize}}
\newcommand{\ei}{\end{itemize}}
\newcommand{\op}{\operatorname}
\newcommand{\Out}{\op{Out}}
\newcommand{\Aut}{\op{Aut}}
\newcommand{\Inn}{\op{Inn}}
\newcommand{\MCG}{\op{MCG}}
\newcommand{\Om}{\Omega}
\newcommand{\PS}{\op{PS}}
\newcommand{\bb}{\mathbb}
\newcommand{\tsubset}{<}
\newcommand{\tsubseteq}{<}
\newcommand{\be}{\begin{equation}}
\newcommand{\ee}{\end{equation}}
\newcommand{\Ga}{\Gamma}
\newcommand{\bc}{\mathbb C}
\newcommand{\R}{\mathbb R}
\renewcommand{\H}{\mathbb H}
\newcommand{\Z}{\mathbb Z}
\newcommand{\N}{\mathbb N}
\newcommand{\ga}{\gamma}
\newcommand{\ka}{\kappa}
\newcommand{\la}{\lambda}
\newcommand{\La}{\Lambda}
\newcommand{\inte}{\op{int}}
\newcommand{\ba}{\backslash}
\newcommand{\ov}{\overline}
\newcommand{\cl}{\overline}
\newcommand{\til}{\tilde}
\renewcommand{\cal}{\mathcal}
\newcommand{\br}{\mathbb R}
\newcommand{\SO}{\op{SO}}
\newcommand{\Isom}{\op{Isom}}
\newcommand{\PSL}{\op{PSL}}
\newcommand{\F}{\cal F}
\newcommand{\hu}{\op{hull}}
\newcommand{\core}{\op{core}}
\newcommand{\coree}{\op{core}_\e(M)}
\newcommand{\bH}{\mathbb H}
\newcommand{\mT}{\mathsf T}
\newcommand{\BFM}{\op{BF} M}

\newcommand{\RF}{\op{RF}}
\newcommand{\FM}{\op{F}M}
\newcommand{\hull}{\op{hull}}
\newcommand{\stab}{\op{Stab}}
\newcommand{\Stab}{\op{Stab}}
\newcommand{\diam}{\op{diam}}
\newcommand{\vol}{\op{vol}}
\newcommand{\q}{\mathbb Q}
\newcommand{\G}{\Gamma}
\newcommand{\m}{\mathsf{m}^{\BMS}}
\renewcommand{\O}{\mathcal O}
\newcommand{\mS}{\mathscr{S}}
\newcommand{\mG}{\mathscr{G}}\newcommand{\mH}{\mathscr{H}}
\newcommand{\T}{\op{T}}
\renewcommand{\frak}{\mathfrak}
\renewcommand{\v}{\mathsf v}
\newcommand{\e}{\varepsilon}
\newcommand{\BR}{\op{BR}}
\newcommand{\tdg}{\partial_\infty^{(2)}\Ga}
\newcommand{\BMS}{\op{BMS}}
\renewcommand{\L}{\mathcal L}
\newcommand{\fa}{\mathfrak a}
\newcommand{\pc}{P^{\circ}}
\renewcommand{\i}{\op{i}}
\newcommand{\M}{\mathcal{M}}
\renewcommand{\S}{\mathbb S}
\newcommand{\hd}{\op{dim}{}\Lambda}
\newcommand{\so}{\SO^\circ}
\newcommand{\ns}{\not\sim}
\newcommand{\s}{\sigma}
\newcommand{\cT}{\cal T_{\La_\rho}}
\newcommand{\Gr}{\Gamma_\rho}
\newcommand{\C}{\cal C}
\newcommand{\Tei}{\op{Teich}}
\newcommand{\id}{\op{id}}
\newcommand{\dGa}{\mathfrak R_{\op{disc}}(\Ga)}
\newcommand{\Mob}{\text{M\"ob}}

\newcommand{\PGL}{\op{PGL}}
\newcommand{\GL}{\op{GL}}
\newcommand{\E}{\cal E}
\newcommand{\Fr}{\op{F}}
\renewcommand{\P}{\mathbb{P}}
\newcommand{\ess}{\mathsf{E}}
\newcommand{\B}{\mathcal{B}}
\newcommand{\fg}{\frak g}

\newcommand{\lox}{\op{lox}}
\newcommand{\p}{\mathsf{p}}

\newcommand{\Fund}{\op{Fund}}
\newcommand{\fk}{\mathfrak{k}}
\newcommand{\fp}{\mathfrak{p}}
\newcommand{\rank}{\op{rank}}
\newcommand{\Lie}{\op{Lie}}
\newcommand{\U}{\mathcal{U}}
\newcommand{\Leb}{\op{Leb}}
\newcommand{\supp}{\op{supp}}
\newcommand{\sa}{\mathsf A}
\renewcommand{\epsilon}{\e}
\renewcommand{\t}{\theta}
\renewcommand{\d}{\mathsf{d}}
\newcommand{\sq}{\mathsf{Q}}
\newcommand{\SL}{\op{SL}}
\newcommand{\codim}{\op{codim}}
\def\g{\gamma}
\newcommand{\TG}{\mathscr T_\Ga}
\renewcommand{\c}{\mathbb C}

\newcommand{\vv}{\mathtt{v}}
\newcommand{\cM}{\mathcal M}
\renewcommand{\F}{\op{F}}
\renewcommand{\diag}{\op{diag}}
\newcommand{\bS}{\mathbb S}
\renewcommand{\P}{\cal P}
\newcommand{\hc}{\widehat{\mathbb C}}
\newcommand{\Vol}{\vol}
\newcommand{\da}{\delta_{\mathsf A}}
\newcommand{\Inv}{\op{Inv}} 
\newcommand{\RFM}{\op{RF}\cM}
\newcommand{\rdi}{\mathfrak R_{\op{disc}} (\Gamma)}
\renewcommand{\u}{\mathsf u}
\newcommand{\w}{\mathsf w}
\newcommand{\Haar}{\op{Haar}}
\newcommand{\z}{\mathbb Z}
\title{Dynamics and Rigidity through the Lens of Circles}

\author{Hee Oh\thanks{Department of Mathematics, Yale University, New Haven, CT 06511 (\email{hee.oh@yale.edu})}}

\maketitle

\pagenumbering{arabic}
\setcounter{page}{1}

\begin{abstract} We report on recent developments in the dynamics and rigidity of infinite-volume homogeneous spaces through the lens of circles. By addressing four natural questions about circle packings, we highlight the interplay between dynamics, geometry, and rigidity that defines the emerging frontier of homogeneous dynamics.\end{abstract}

\section{Introduction}
This article explores the interplay between dynamics, geometry, and rigidity in infinite-volume homogeneous spaces, viewed through the lens of circles. Motivated by four natural questions on circle counting, orbit closures, rigidity, and torus counting, we present results that highlight two complementary aspects of homogeneous dynamics. Orbit-closure classification and rigidity phenomena reflect the underlying geometric and algebraic structure, while counting and equidistribution are governed by mixing in infinite volume. Together, these circle problems show how dynamics and rigidity in infinite volume arise naturally and illuminate the broader frontier of homogeneous dynamics beyond the finite-volume setting.

\subsection{Circle packings} 
Circles have fascinated mathematicians since the time of the ancient Greeks.
By a circle packing $\cal P$, we mean a configuration of countably many circles in the complex plane $\mathbb C$. 
How can such configurations be constructed? The earliest systematic example is due to Apollonius of Perga (262-190 BC), who proved that  three mutually tangent circles in the plane admit precisely two circles tangent to all three. Starting with four mutually tangent circles and repeatedly inserting new circles tangent to three of the existing ones, as prescribed by Apollonius's theorem, one obtains the celebrated Apollonian circle packing.  Figure \ref{ap} illustrates the first few generations, with circles labeled by the reciprocals of their radii.
\begin{figure}[htbp]\centering
  \includegraphics [height=2.7cm]{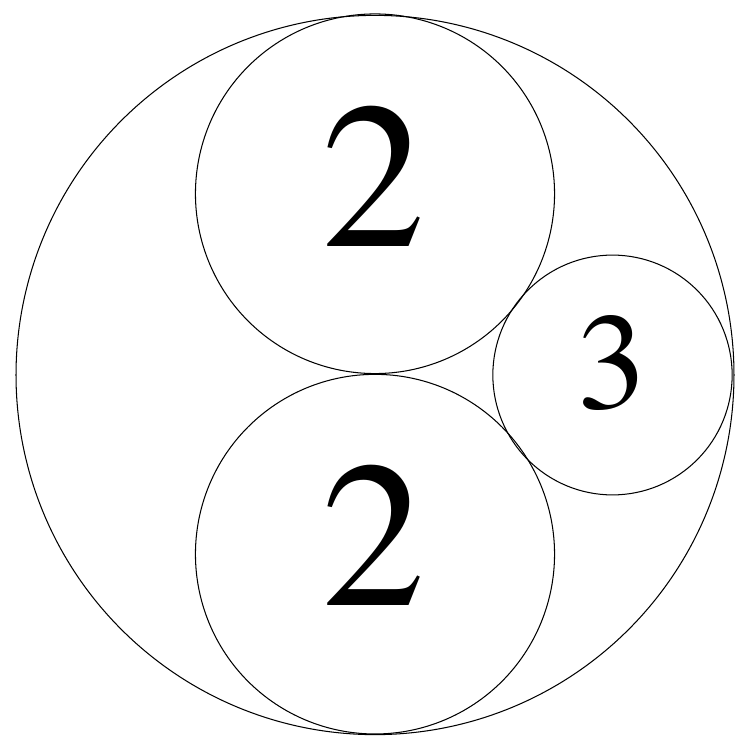}
  \includegraphics [height=2.7cm]{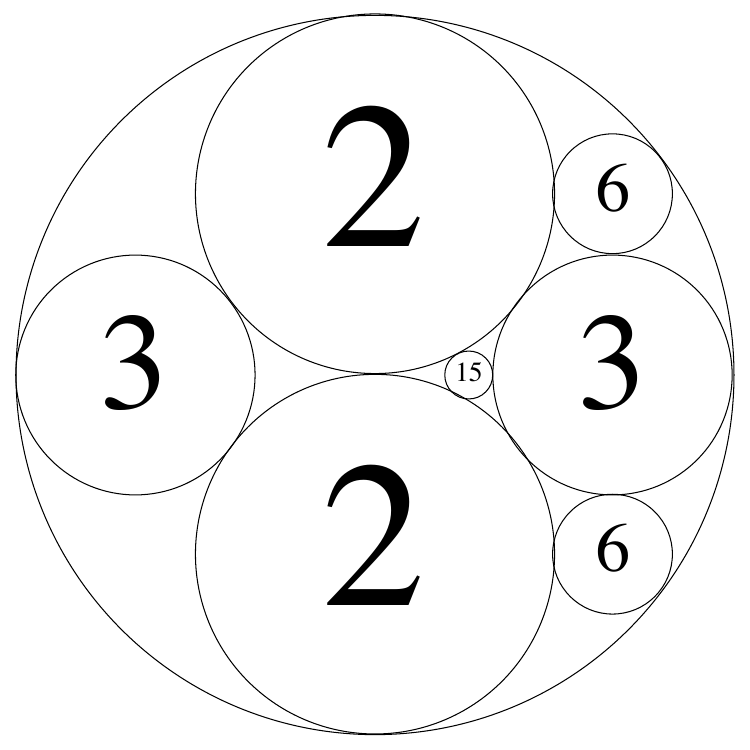}
  \includegraphics [height=2.7cm]{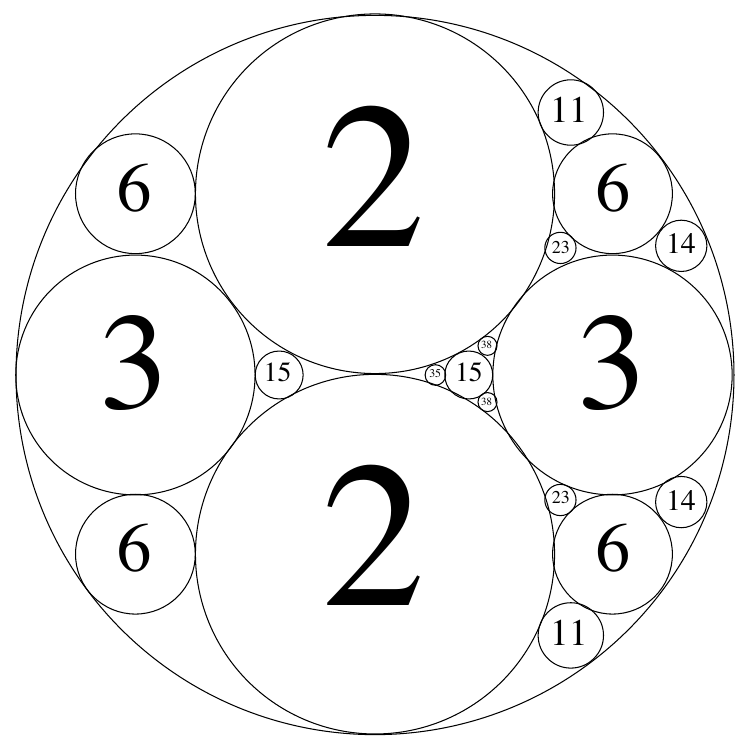}
   \includegraphics [height=2.7cm]{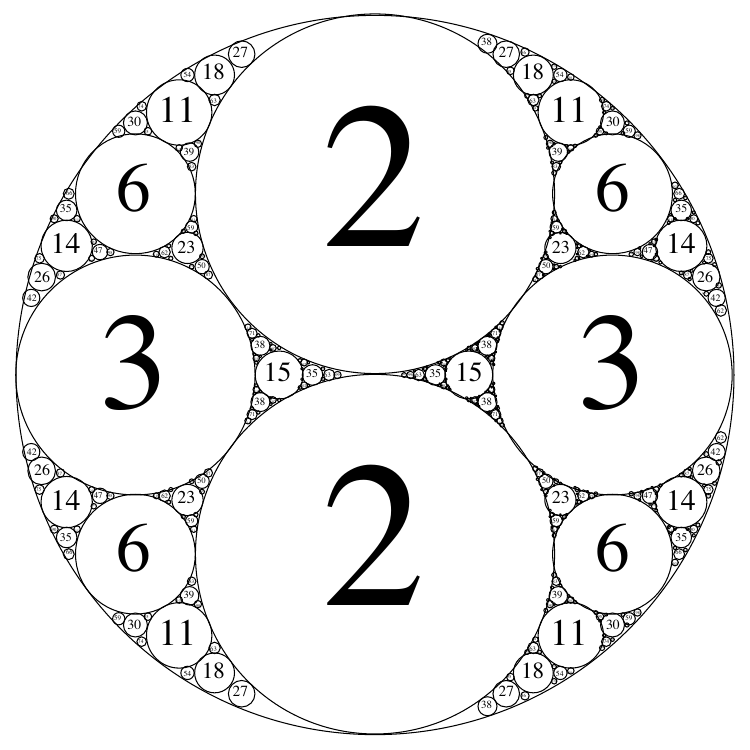} 
   \caption{First few generations of Apollonian circle packings} \label{ap} 
\end{figure}

Another striking example
is a Sierpi\'nski-type circle packing, shown in Figure \ref{S1}. 
\begin{figure}[htbp] \begin{center}
  \includegraphics [height=3cm]{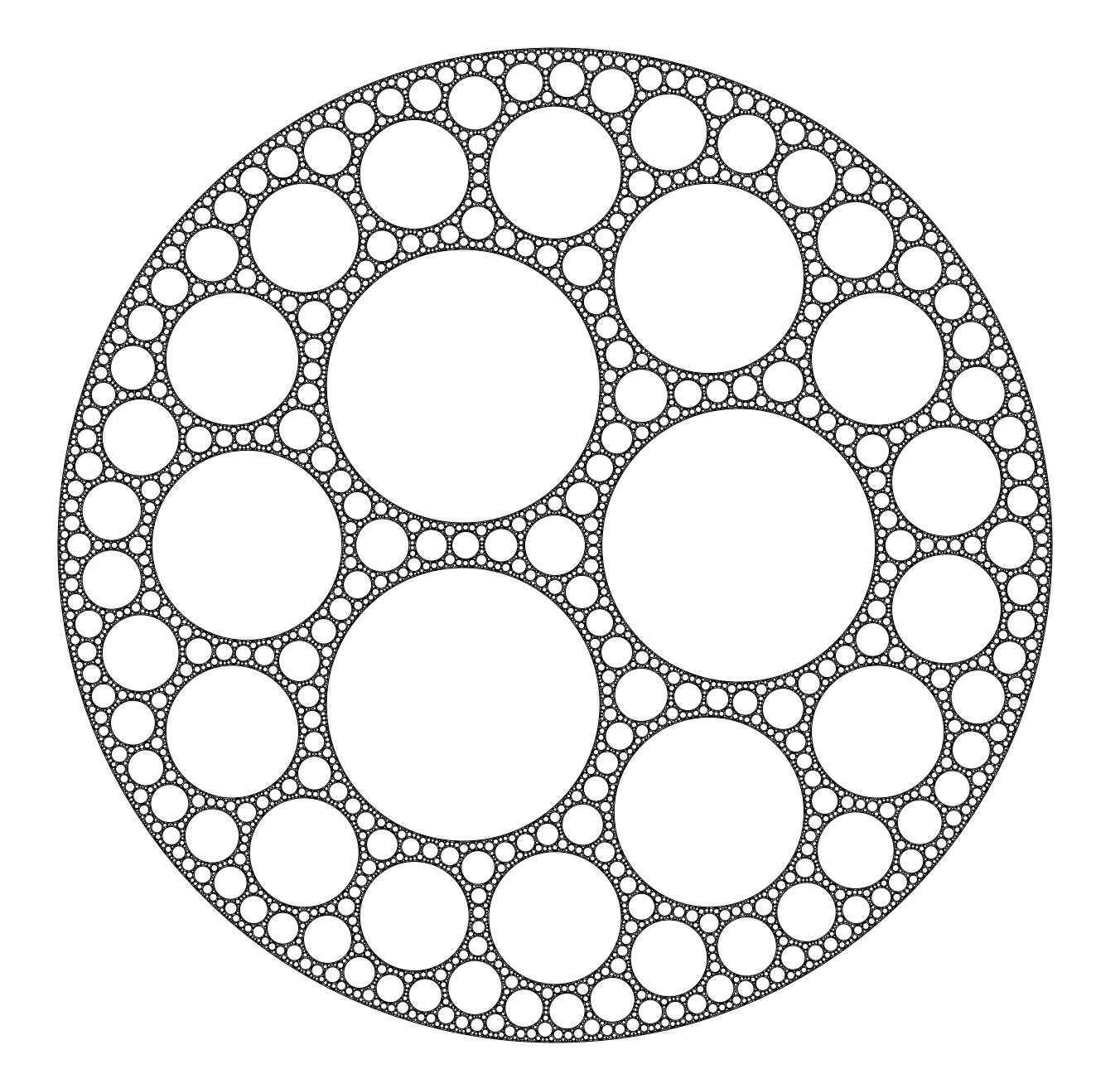} 
\end{center}
\caption{A Sierpi\'nski-type circle packing}\label{S1}
\end{figure}

Where do such intricate circle packings arise? A unifying perspective is that
both Apollonian and Sierpi\'nski-type  packings emerge as $$\text{limit sets of Kleinian groups. }$$

 A Kleinian group is
a discrete subgroup of $\PSL_2(\c)$. Such groups act on the Riemann sphere  $\hc=\c\cup\{\infty\}$ by M\"obius transformations. Explicitly,  for any $a,b,c,d\in \c$ with $ad-bc=1$ and $z\in \hc$,
    \be\label{mo} \begin{pmatrix} a& b \\ c& d\end{pmatrix} \cdot z=\frac{az+b}{cz+d}. \ee 
The limit set $\La\subset \hc $ of a Kleinian group $\Ga$ is the collection of accumulation points of an orbit $\Ga (z)$ for any $z\in \hc$.

\subsection{Four motivating questions} We regard a circle packing $\cal P$ both as the set of circles and as their union in $\c$; thus writing
$C\in \cal P$ refers to one of the circles, while $\overline {\cal P}$ denotes its closure in $\hc$.
Given a circle packing $\cal P$ with symmetry group $\Ga<\PSL_2(\c)$, meaning
 $\Ga \cal P=\cal P$ and $\overline \P=\La$, the following motivating questions are natural:
 \begin{enumerate}
  \item {\it Circle counting:} How many circles in $\mathcal P$ have radii at least $t$ as $t \to 0$?  

  \item {\it Orbit closures:} For a given circle $C \subset \widehat{\mathbb C}$, not necessarily in $\mathcal P$, what is the closure of $\Gamma C$ in the space of circles?  

  \item {\it Rigidity:} How can we decide whether a $\Gamma$-equivariant embedding 
  $f:\overline{\mathcal P} \to \hc$ must be a M\"obius transformation?  

  \item {\it Torus counting:} For $f$ as in question 3, how many tori $(C, f(C)) \in \mathcal P \times f(\mathcal P)$ have volume at least $t$ as $t \to 0$ (see Figure~\ref{torus} for an illustration)?
\end{enumerate}

\begin{figure}[htbp] \label{torus} \begin{center}
  \includegraphics [height=3.5cm]  {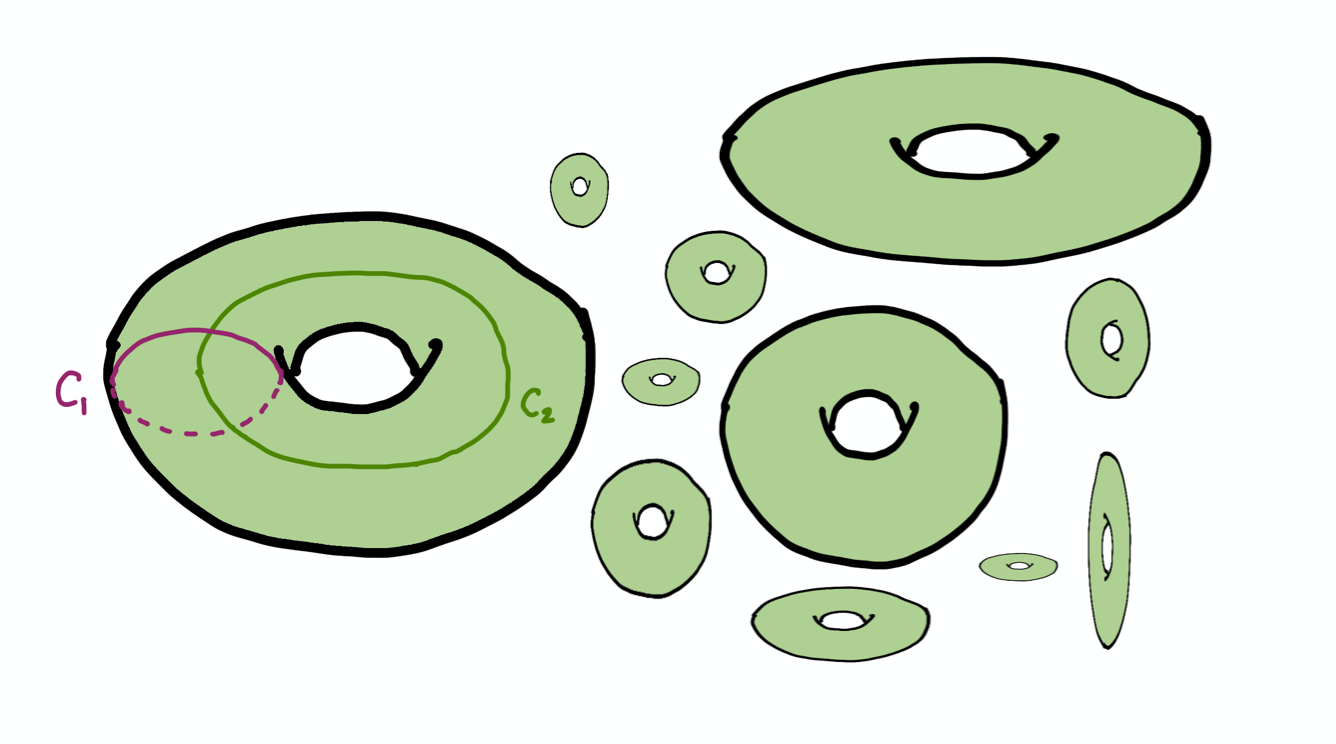} 
\end{center}\caption{Torus packing}
\end{figure}

While simple to state, these problems lead directly into subtle questions about dynamics on infinite-volume homogeneous spaces, ranging from rank one to higher rank. Infinite-volume marks precisely where classical finite-volume methods cease to apply and new dynamical phenomena emerge. 

\begin{itemize}
  \item  Questions 1 and 2 concern the distribution of geodesic planes 
in hyperbolic $3$-manifolds and involve, respectively, the dynamics of frame flows and unipotent flows on infinite-volume hyperbolic $3$-manifolds. 
 In sharp contrast to the finite-volume case, the ambient geometry and topology exert decisive influence on the answers. 
\item Question 3 addresses representation rigidity of  Kleinian groups. Our approach highlights new perspectives on rigidity phenomena of Kleinian groups by interpreting them through
dynamics on higher rank homogeneous spaces.
\item Question 4 is a higher-rank analogue of question 1, relying on the dynamics of diagonal flows in higher-rank spaces of infinite volume. 
\end{itemize}
Together, these problems illustrate the intricate interplay between geometry, dynamics, and rigidity that underpins much of this article. While the finite-volume case is by now well understood, their resolution in the infinite-volume setting reveals
 genuinely new and subtler dynamical phenomena. In this perspective, they may be viewed as infinite-volume analogues of several landmark results:  
 Duke, Rudnick, and Sarnak \cite{DRS} and Eskin and McMullen \cite{EM} on counting and equidistribution (questions 1 and 4),
 Ratner and Shah (\cite{ratner_top}, \cite{Shah_1991}) on orbit-closure classification (question 2), and
 Mostow, Prasad, and Sullivan rigidity (\cite{Mostowbook}, \cite{Prasad1973strong}, \cite{Sullivan1981ergodic}) (question (3)).

\subsection{Organization} The paper is organized in two parts. The first part presents results addressing the four motivating questions about circles: circle-counting, orbit closures, representation rigidity, and torus-counting. The second part focuses on mixing phenomena in infinite volume: first in rank one, where we discuss local, exponential, and uniform exponential mixing with applications to the affine sieve, and then in higher rank, where we develop analogues for diagonal flows on self-joining quotients.
These mixing results form the main analytic tools underlying the counting and equidistribution problems.

Throughout, I have aimed to maintain an informal and somewhat conversational style.
Some statements are presented without full references or detailed historical context; these 
can be found in the cited papers.

\subsection*{Acknowledgement} I am deeply grateful to all my collaborators whose joint works are presented in this article, including
several of my former students. I owe special thanks to my Ph.D. advisor, Gregory Margulis, who introduced me to homogeneous dynamics; to Peter Sarnak, who opened my eyes to discrete subgroups of infinite covolume through Apollonian circle packings; and to Curtis McMullen, who led me  to Sierpi\'nski-type circle packings and the broader world of circle packings.  I also thank Dennis Sullivan for foundational ideas that have inspired my research over the past decade. 

 I am grateful to C. McMullen and Yongquan Zhang for permission to use Figures \eqref{S1}, \eqref{limitex}, \eqref{twol}, and \eqref{f:def}, and to Joy Kim for her help with the most of the other figures.

\section{Circle counting and Kleinian groups}\label{s:cc}
\subsection{Counting Apollonian circles} We begin by recalling the following circle-counting theorem proved in joint work with Kontorovich. We write $\op{rad} (C) $ for the Euclidean radius of a circle $C$.

\begin{theorem} [\cite{KO}]\label{ko}For any bounded Apollonian packing $\cal P$, there exists a constant $c_{\cal P}>0$ such that
$$\#\left\{C\in \cal P: \op{rad}( C) \ge \tfrac{1}t\right\}\sim c_{\cal P}\, t^{\da}\quad\text{as $t\to \infty$}$$
where $\delta_{\mathsf A}$ is the Hausdorff dimension of the closure $\overline{\cal P}$. 
\end{theorem}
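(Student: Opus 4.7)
The plan is to reinterpret the circle count as a lattice-point count for a Kleinian group acting on a homogeneous space, and then extract the asymptotic from equidistribution of expanding translates driven by mixing of the Bowen--Margulis--Sullivan measure. First I would set up the Kleinian group: let $\Ga<\PSL_2(\c)$ denote the symmetry group of $\cal P$ (an index-two extension of the classical Apollonian group). Then $\Ga$ is geometrically finite with limit set $\La(\Ga)=\overline{\cal P}$, so by Sullivan's theorem its critical exponent $\delta_\Ga$ equals $\da=\dim_H\overline{\cal P}$. Since $\cal P$ decomposes into finitely many $\Ga$-orbits of circles $C_1,\dots,C_k$, it suffices to prove the asymptotic separately for
\[
N_i(t)\;:=\;\#\{C\in\Ga C_i:\op{rad}(C)\ge 1/t\}
\]
and sum.

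Next I would pass to a homogeneous reformulation. The space of circles in $\hc$ identifies with $G/H$, where $G=\PSL_2(\c)$ and $H=\Stab_G(C_i)\cong\PGL_2(\br)$, so $N_i(t)=\#\{\ga \Ga_{C_i}\in \Ga/\Ga_{C_i}:\op{rad}(\ga C_i)\ge 1/t\}$. A direct computation in the upper half-space model shows that $\op{rad}(gC_i)\asymp e^{-\beta(g)}$ for a Busemann-type height function $\beta:G\to\br$ (essentially the signed distance from the geodesic plane bounded by $C_i$ to a reference horosphere based at $\infty$). Thus, up to boundary effects,
\[
N_i(t)\;=\;\#(\Ga\cap B_t H)
\]
for an expanding bi-$H$-invariant family $B_t\subset G$ of ``size'' $\asymp t$.

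For the main analytic step, I would follow the Eskin--McMullen strategy, adapted to infinite volume via Roblin's equidistribution theorem: thicken $B_t$ transversally in $G$ and apply mixing of the frame flow on $\Ga\ba G$ with respect to the BMS measure $\m$. For $\Ga$ geometrically finite this mixing is due to Babillot and Roblin, and it yields
\[
N_i(t)\;\sim\;c_i\, t^{\da}\qquad\text{as }t\to\infty,
\]
with $c_i$ expressed as a product of a Patterson--Sullivan integral over $\La$ and a skinning measure concentrated on the geodesic plane bounded by $C_i$. Setting $c_{\cal P}:=\sum_i c_i$ then gives the theorem.

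The hard part will be the infinite-volume equidistribution itself: the classical Eskin--McMullen argument relies on the finite Haar volume of the ambient homogeneous space, which fails here, forcing one to replace $\PSL_2$-ergodicity by mixing of $\m$ and to work throughout with Patterson--Sullivan theory in place of Lebesgue measure on the boundary. A second delicate point is the presence of cusps in $\Ga$: the Apollonian group contains parabolic elements, and one must show that the contribution from circles whose approximating group elements linger near bounded parabolic fixed points is of strictly lower order. This rests on the precise decay of the Patterson--Sullivan density at bounded parabolic points, a feature guaranteed by geometric finiteness but requiring care to exploit in the thickening estimates that convert BMS mixing into the lattice-point count.
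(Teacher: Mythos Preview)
Your outline is correct and aligns closely with the general framework the paper develops around Theorem~\ref{os2} (geodesic planes, skinning measures, local mixing of the BMS measure), under which Theorem~\ref{ko} is presented as a special case. The paper, however, explicitly notes in a footnote that the \emph{original} proof in \cite{KO} takes a route specific to the Apollonian setting: the Descartes circle theorem identifies the curvatures of a mutually tangent quadruple with the coordinates of a vector on the zero cone of the Descartes quadratic form, so the circle count becomes a lattice-point count in the space of \emph{horospheres}, not in the space of geodesic planes (your $H\cong\PGL_2(\br)$). Both arguments ultimately rest on infinite-volume equidistribution driven by BMS mixing, but the horosphere route exploits the arithmetic of the packing and feeds directly into the integral-curvature and affine-sieve applications, whereas your geodesic-plane route is the robust one that extends to arbitrary geometrically finite packings. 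One small slip: the family $B_t$ you describe is only right-$H$-invariant, not bi-$H$-invariant, since the radius is not a class function on $H\ba G/H$; this does not affect the strategy but means the counting step needs the wavefront/well-roundedness machinery rather than a pure double-coset volume computation.
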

The dimension $\delta_{\mathsf A}$ is independent of the choice of $\cal P$ (so $\mathsf A$ stands for "Apollonian") and has been estimated as $\da\approx 1.3057$. Circles in $\cal P$ are moreover equidistributed with respect to the $\delta_{\mathsf A}$-dimensional Hausdorff measure $\mathcal H^{\da}$ on $\overline{\cal P}$, defined using the Euclidean metric on $\mathbb C$, as established in joint work with Shah.
\begin{theorem}[\cite{OS_inv}]\label{os}There exists a constant $\mathsf{c_A}>0$ such that for any Apollonian packing $\cal P$, we have the following: for any region $R \subset \c$ bounded by a piecewise $C^1$-curve,
$$\#\left\{C\in \cal P: \op{rad} (C) \ge \tfrac{1}t,\; C\cap R\ne \emptyset  \right\} \sim {\mathsf c_{\mathsf A}}\, \mathcal H^{\da}(R\cap \overline{\cal P}) \, t^{\da}\quad\text{as $t\to \infty$.}$$ 
\end{theorem}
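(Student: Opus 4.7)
The plan is to convert the circle count into an orbital count on the homogeneous space $G/H$, where $G=\PSL_2(\c)$ and $H=\stab_G(C_0)$ is the stabilizer of a reference circle $C_0\in \cal P$, and then to evaluate it via equidistribution of translates of the closed $H$-orbit $\Gamma_0\ba H$ inside $\Gamma\ba G$, with $\Gamma_0=\Gamma\cap H$. Since $\cal P$ consists of finitely many $\Gamma$-orbits of circles, it suffices to handle one orbit $\Gamma C_0$ and write
\be
N_t(R)=\#\bigl\{\gamma\Gamma_0\in \Gamma/\Gamma_0 : \op{rad}(\gamma C_0)\ge \tfrac1t,\; \gamma C_0\cap R\ne\emptyset\bigr\}.
\ee
Choosing a one-parameter subgroup $A=\{a_s\}\subset G$ whose action on $C_0$ scales the radius by $e^s$, the admissible region in $G/H$ pulls back to a thickening of the form $\bigcup_{0\le s\le\log t} V\cdot a_{-s}$, with $V$ a bounded transversal in $G$ recording the Euclidean location of the circle in $R$. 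A standard unfolding then recasts $N_t(R)$ as $\int_0^{\log t}\bigl(\int_{\Gamma_0\ba H}\psi_R(h a_s)\,dh\bigr)\,ds$ for an appropriate test function $\psi_R$ on $\Gamma\ba G$.

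The analytic engine is the infinite-volume equidistribution theorem
\be
e^{-\delta_{\mathsf A} s}\int_{\Gamma_0\ba H}\psi(h a_s)\,dh \;\longrightarrow\; c_H\cdot \mu^{\BR}(\psi)\qquad (s\to\infty),
\ee
for $\psi\in C_c(\Gamma\ba G)$, where $\mu^{\BR}$ is the Burger--Roblin measure and $c_H$ is the total mass of the skinning measure on $\Gamma_0\ba H$. This is proved by writing the integral as a matrix coefficient, decomposing Haar measure along the $H\cdot A\cdot \bar N$ product structure into Patterson--Sullivan and Lebesgue factors, and invoking mixing of the frame flow $\{a_s\}$ on $\Gamma\ba G$ with respect to the Bowen--Margulis--Sullivan measure. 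Finiteness of the BMS measure and mixing both hold because the Apollonian group $\Gamma$ is geometrically finite with critical exponent $\delta_{\mathsf A}$ and limit set $\ov{\cal P}$.

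Integrating in $s\in[0,\log t]$ produces the $t^{\delta_{\mathsf A}}$ asymptotic with a leading constant equal to the product of the skinning mass and $\mu^{\BR}$ evaluated against the lift of $R$. To match the stated form, I would then invoke the feature, special to bounded Apollonian packings, that the $\Gamma$-Patterson--Sullivan density coincides with $\cal H^{\delta_{\mathsf A}}|_{\ov{\cal P}}$ up to a universal scalar; this converts the $\mu^{\BR}$ factor into $\cal H^{\delta_{\mathsf A}}(R\cap\ov{\cal P})$, and the universality of the residual constant $\mathsf c_{\mathsf A}$ follows because all bounded Apollonian packings are $\PSL_2(\c)$-conjugate. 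The hardest part will be twofold. First, one must upgrade the equidistribution from continuous $\psi$ to the indicator of a region bounded by a piecewise $C^1$-curve, which requires a uniform decay bound on the $\mu^{\BR}$-mass of tubular neighborhoods of $\partial R$; this is feasible because $\partial R$ has Hausdorff dimension $1<\delta_{\mathsf A}$, but a quantitative rate must be extracted. Second, because the Apollonian group has a parabolic subgroup at every tangency point of $\cal P$, a cusp-excursion estimate of Margulis-function type is needed to guarantee that orbit contributions near the cusps do not distort either the main term or the boundary bound.
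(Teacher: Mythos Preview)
Your proposal is correct and follows essentially the same route the paper describes for Theorem~\ref{os2} (of which Theorem~\ref{os} is a special case): equidistribution of orthogonal translates of the properly immersed geodesic plane $\Gamma_0\backslash C_0^\dagger$ via local mixing of the BMS measure, with the limiting constant given by the skinning mass and the limiting distribution by the Burger--Roblin/Patterson--Sullivan measure, which for Apollonian packings coincides with $\mathcal H^{\delta_{\mathsf A}}$. The only minor caution is that your product $H\cdot A\cdot \bar N$ is not a global decomposition of $G$ (it misses a null set since $H$ does not act transitively on $G/\bar B$), but this is harmless for the integration and the rest of your outline, including the two ``hard parts'' you flag, matches the actual argument.
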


 The symmetry group $\G_{\cal P}$ of $\cal P$ is generated by the inversions with respect to the four dual circles (orthogonal to three of mutually tangent circles) corresponding to a quadruple of tangent circles in $\cal P$ (see the four red circles in Figure \ref{dual}). 
\begin{figure}[htbp]
 \begin{center}
    \includegraphics[height=3cm]{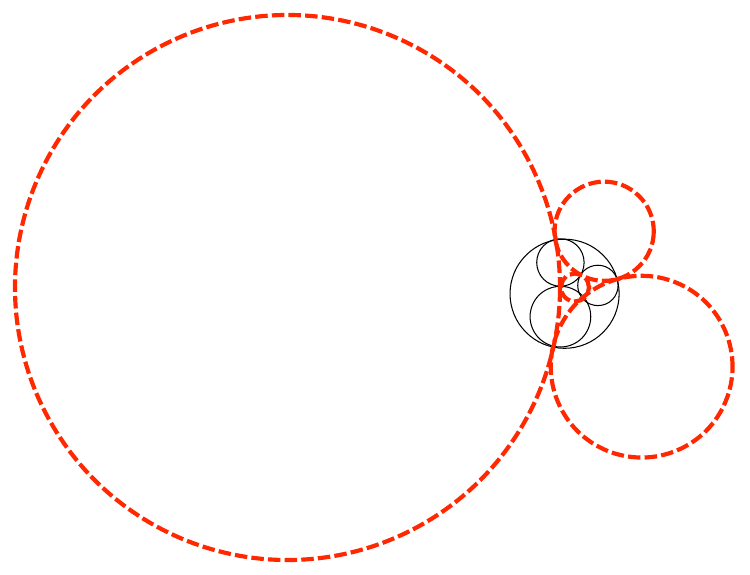}
 \end{center}\caption{Dual circles}\label{dual}
\end{figure} 

It is crucial in the above theorems that  $\G_{\cal P}$ is a {\it geometrically finite} Kleinian group whose limit set is equal to the closure $\overline{\cal P}$.

\subsection{Kleinian groups} Let us recall some background on Kleinian groups.  Such groups arise naturally as holonomy representations of fundamental groups of complete hyperbolic $3$-manifolds.
The hyperbolic $3$-space $\bH^3$ is the unique simply connected complete Riemannian manifold of dimension $3$ with constant sectional curvature $-1$. We use the upper half-space model:
$$\bH^3=\{(x_1, x_2, y): y>0\}\quad \text{with } 
ds=\frac{\sqrt{dx_1^2+dx_2^2+dy^2}}{y}.$$ 
The geometric boundary of $\bH^3$ is the Riemann sphere $\hc$, with 
the plane $(x_1, x_2, 0)$ identified with the complex plane $\c$. 
Geodesics (resp., geodesic planes) in $\bH^3$ are either vertical lines (resp.,  vertical planes) or  semicircles (resp., hemispheres) perpendicular to the plane $\c$.
\vspace{-0.5em}
\begin{figure}[htbp] \begin{center}    \includegraphics[height=4cm]{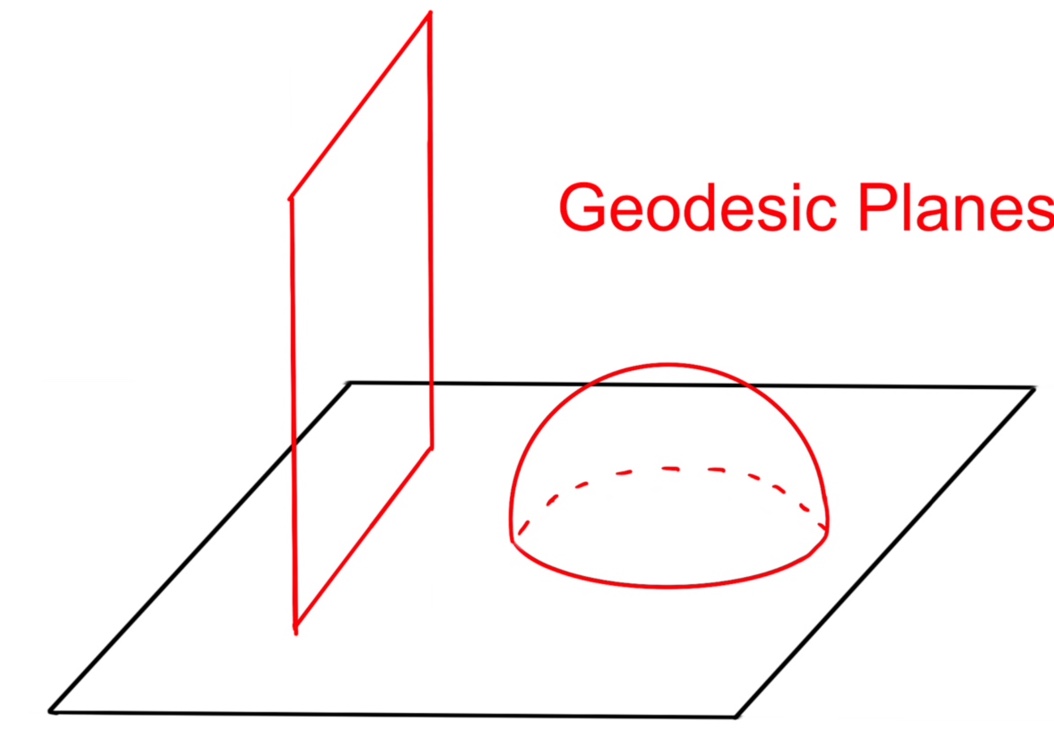} \end{center}\caption{Geodesic planes}\label{geop} \end{figure} 
The group $\Mob(\hc)$ of M\"obius transformations is generated by inversions with respect to circles in $\hc$, where an inversion in a circle sends each point to its reflection through the circle with respect to the Euclidean metric.
The group $\op{PSL_2}(\mathbb C)$ acts on $\hc$ by orientation preserving M\"obius transformations as in \eqref{mo}.
This action extends to an isometric action on $\bH^3$.
The Poincar\'e extension theorem identifies:
 $$\op{PSL}_2(\mathbb C)=\op{Isom}^+(\bH^3)\quad \text{and}\quad \Mob (\hc)=\op{Isom}(\bH^3).$$ 
 Let $\cal C$ denote the space of circles in $\hc$ equipped with its natural topology in which
 $C_i\to C$ if the Hausdorff distance between $C_i$ and $C$ tends to zero.
A classical characterization states that a homeomorphism $f$ of $\hc$ is a M\"obius transformation if and only if it preserves circles: $f(\cal C)=\cal C$.

\begin{definition} A discrete subgroup $\Gamma$ of $G=\PSL_2(\c)$ is called a Kleinian group.
\end{definition}
For general background on Kleinian groups, see \cite{Marden2016hyperbolic} and \cite{Matsuzaki1998hyperbolic}.
In this article, we assume  all Kleinian groups are torsion-free and non-elementary, that is, they do not have an abelian subgroup of finite index.
Each element of a Kleinian group is therefore  either loxodromic (conjugate to a diagonal element whose diagonal entries
have modulus not equal to $1$) or parabolic (conjugate to a strictly upper triangular matrix).

  \begin{figure}[htbp] \begin{center}
  \includegraphics [height=4cm]{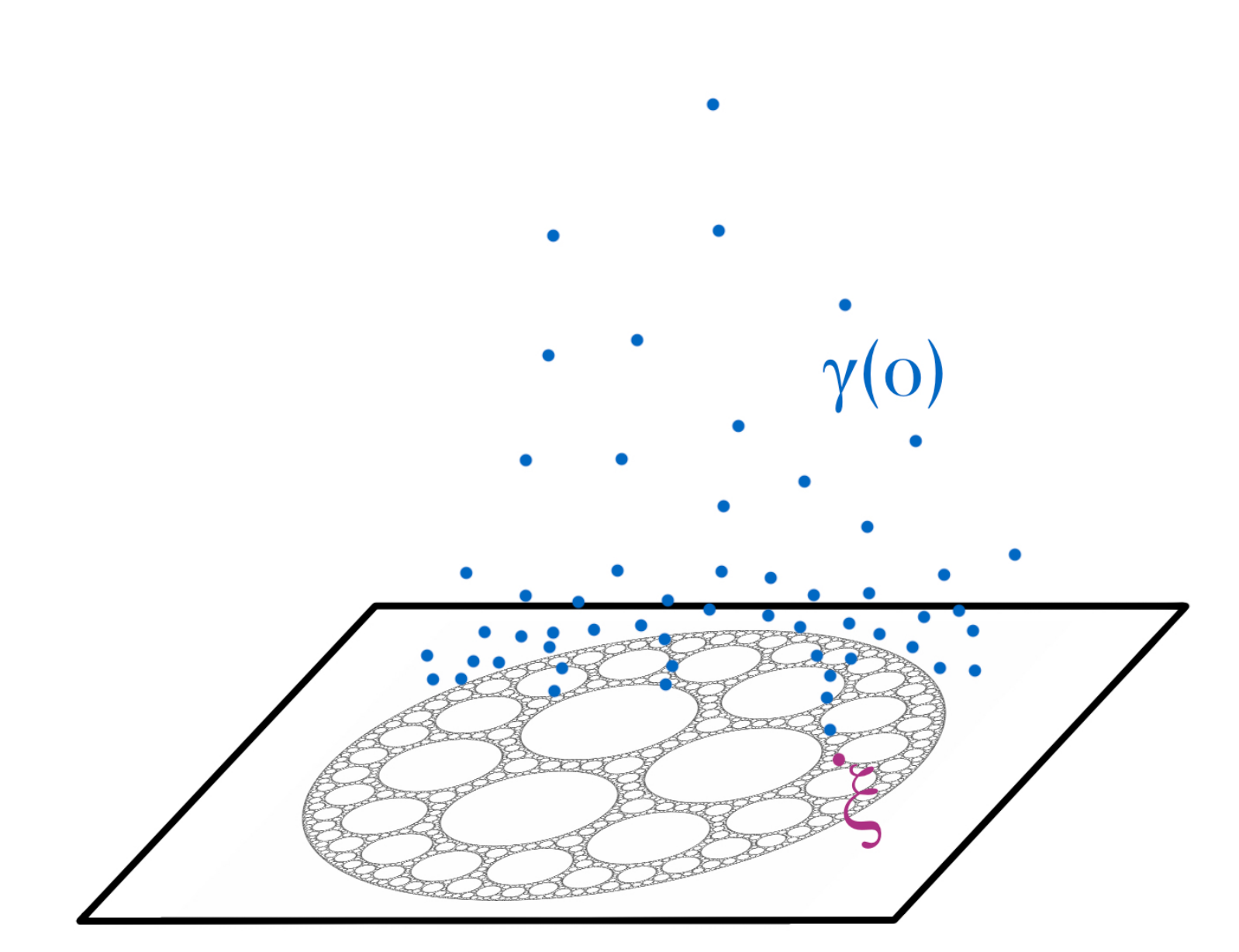}  
\end{center} \caption{Snow falling and limit sets}\label{snow}
\end{figure}    

For a  Kleinian group $\Gamma$, the quotient $\cM=\Gamma\ba \bH^3$ is a complete hyperbolic  manifold.
Conversely, any complete hyperbolic $3$-manifold $\cM$ arises as such a quotient for some Kleinian group $\Gamma$. Thus the study of hyperbolic $3$-manifolds is inseparable from the study of Kleinian groups. See Figure~\ref{snow} for a visual metaphor illustrating limit sets and domains of discontinuity.
The notions of limit set and convex core play a central role. 

\begin{definition}  The limit set $\Lambda_\Ga\subset \hc$  is  the set of all accumulation points of $\Gamma(o)$ for $o\in \bH^3\cup \hc$. Its complement $\Omega_\Ga=\hc-\La_\Ga$ is called the domain of discontinuity.
\end{definition}
We often omit the subscript $\Ga$ from $\La_\Ga$ and $\Omega_\Ga$ when 
the group under consideration is clear from context.
 A useful picture is to imagine snow falling: the limit set is precisely where the snow accumulates on the ground (see Figure \ref{snow}). When $\Ga< G$ is a lattice, that is, when $\Ga\ba \bH^3$ has finite volume, the snow covers the entire ground, i.e., $\La=\hc$. For non-lattice Kleinian groups, one often obtains breathtaking fractal patterns. See Figure \ref{limitex}.
The first  image is homeomorphic to a circle, the next two to the Sierpi\'nski carpet (the planar fractal obtained by repeatedly removing central squares) and the last to a Cantor set.

\begin{figure}[htbp] \begin{center}
  \includegraphics [height=4cm]{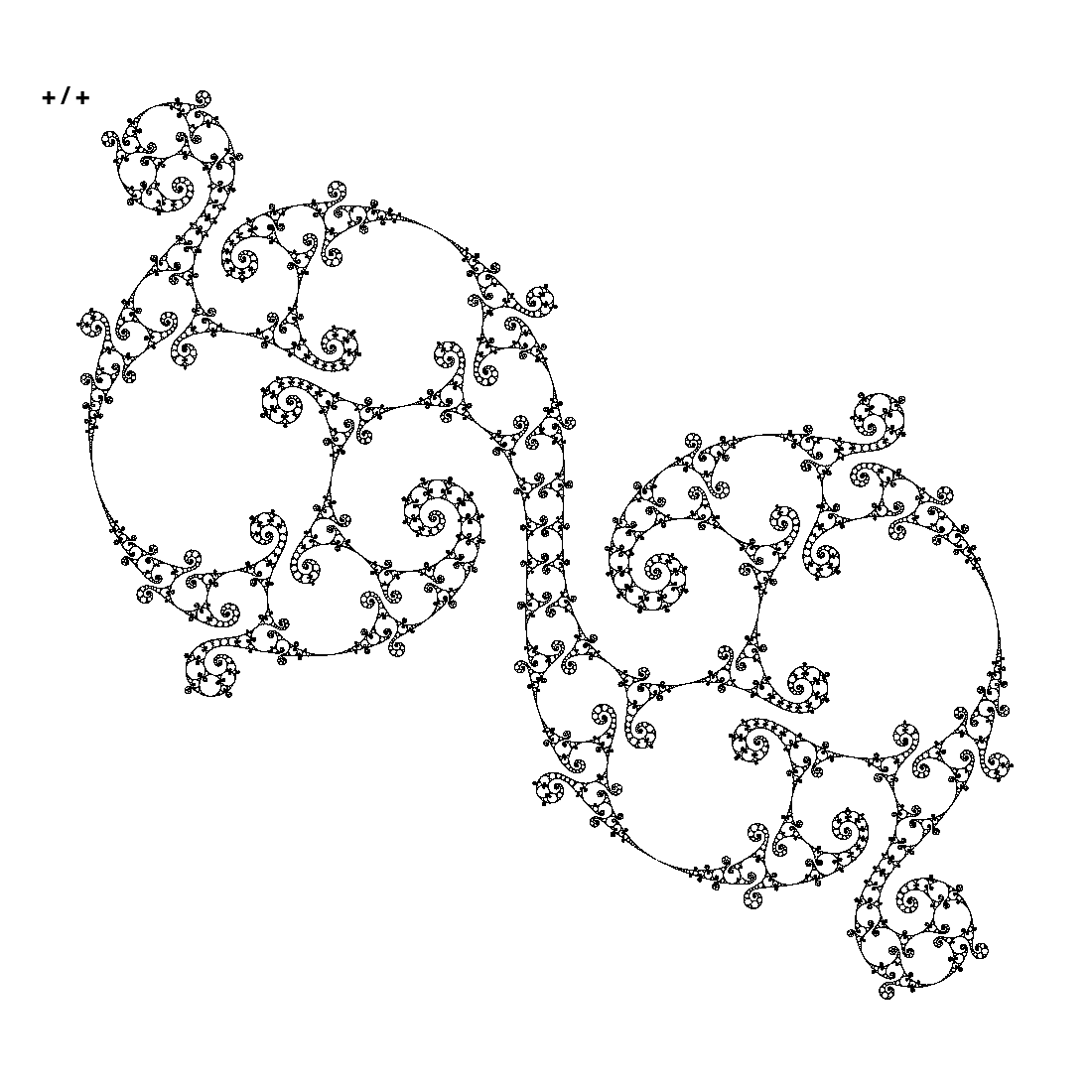}\quad
 \includegraphics [height=4cm]{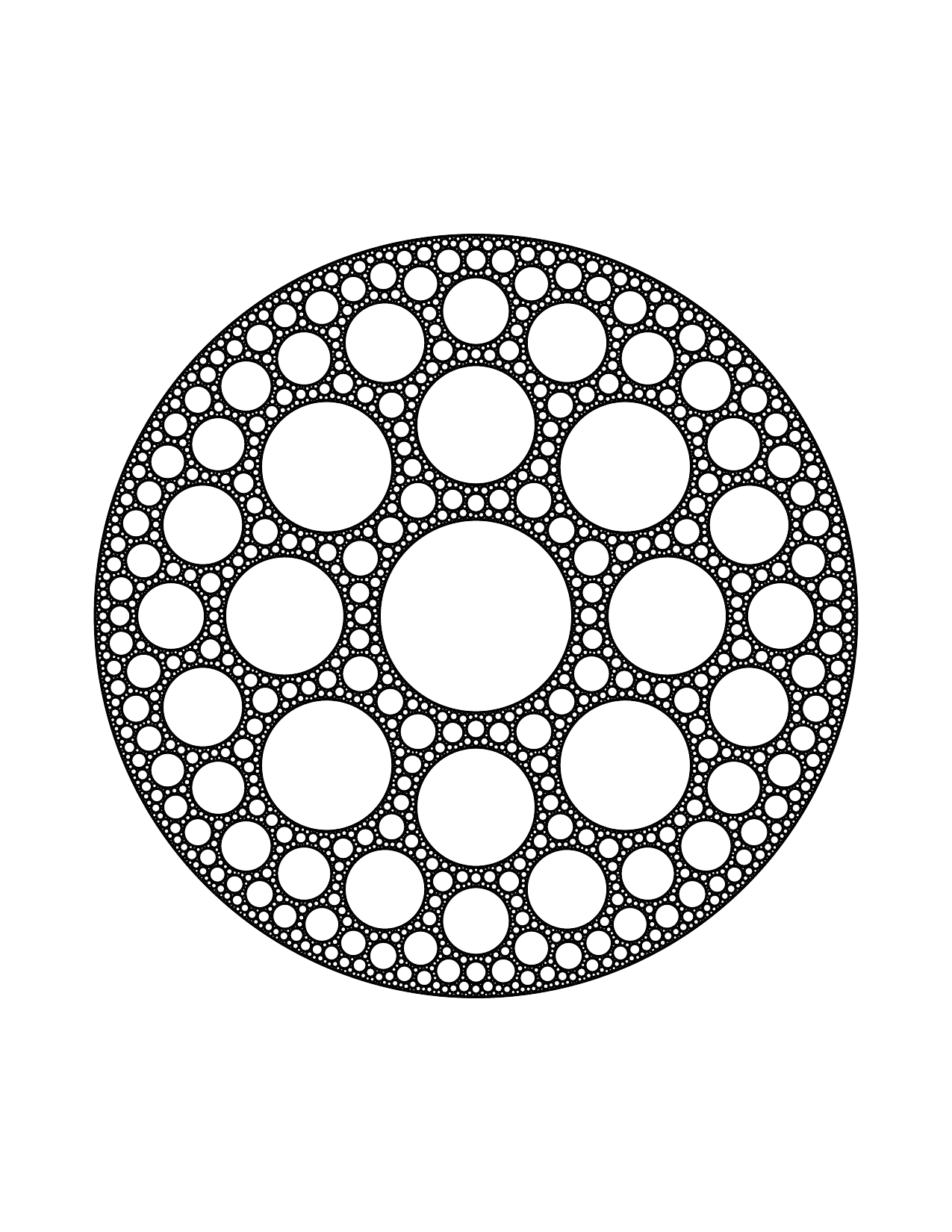}\quad
 \includegraphics [height=4cm]{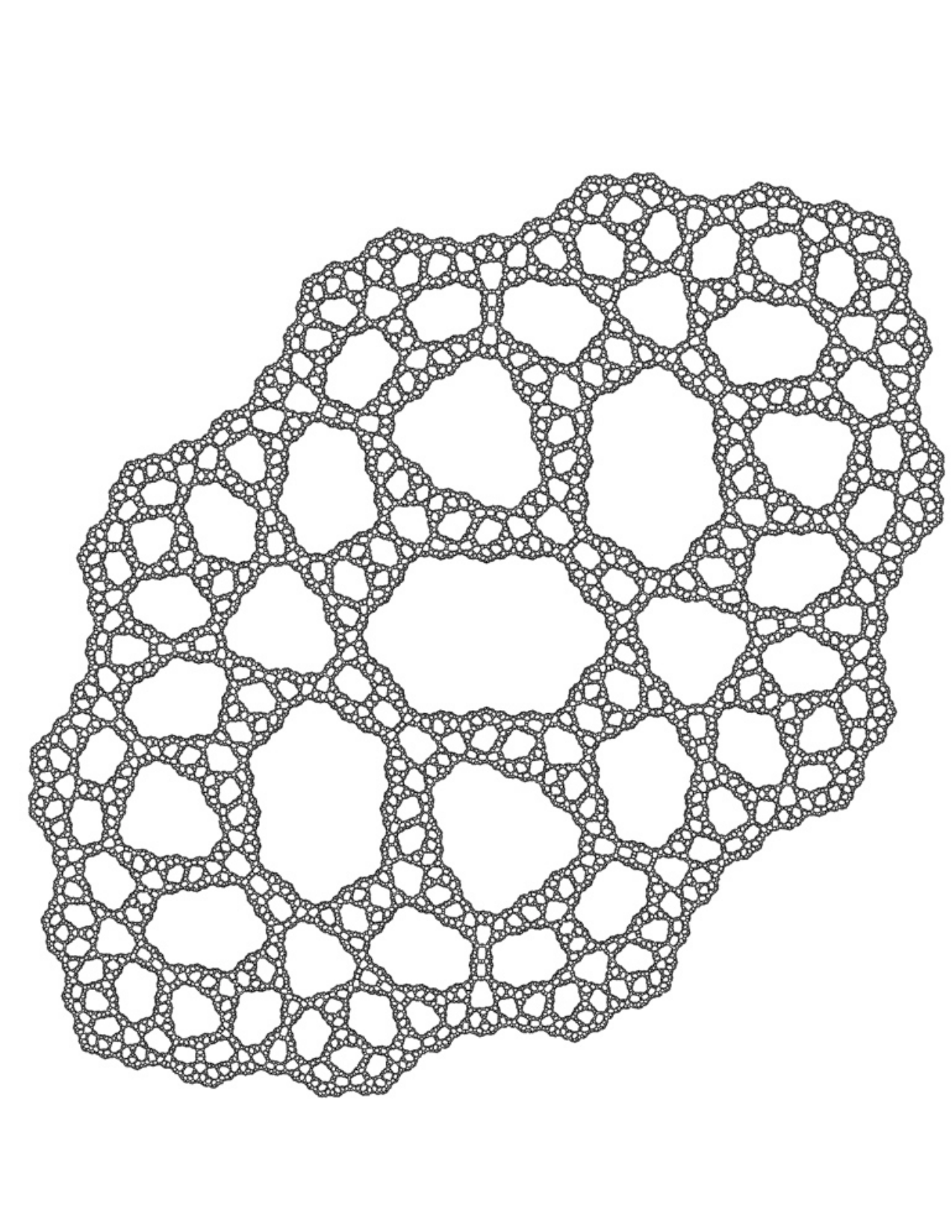}\quad 
 \includegraphics [height=4cm]{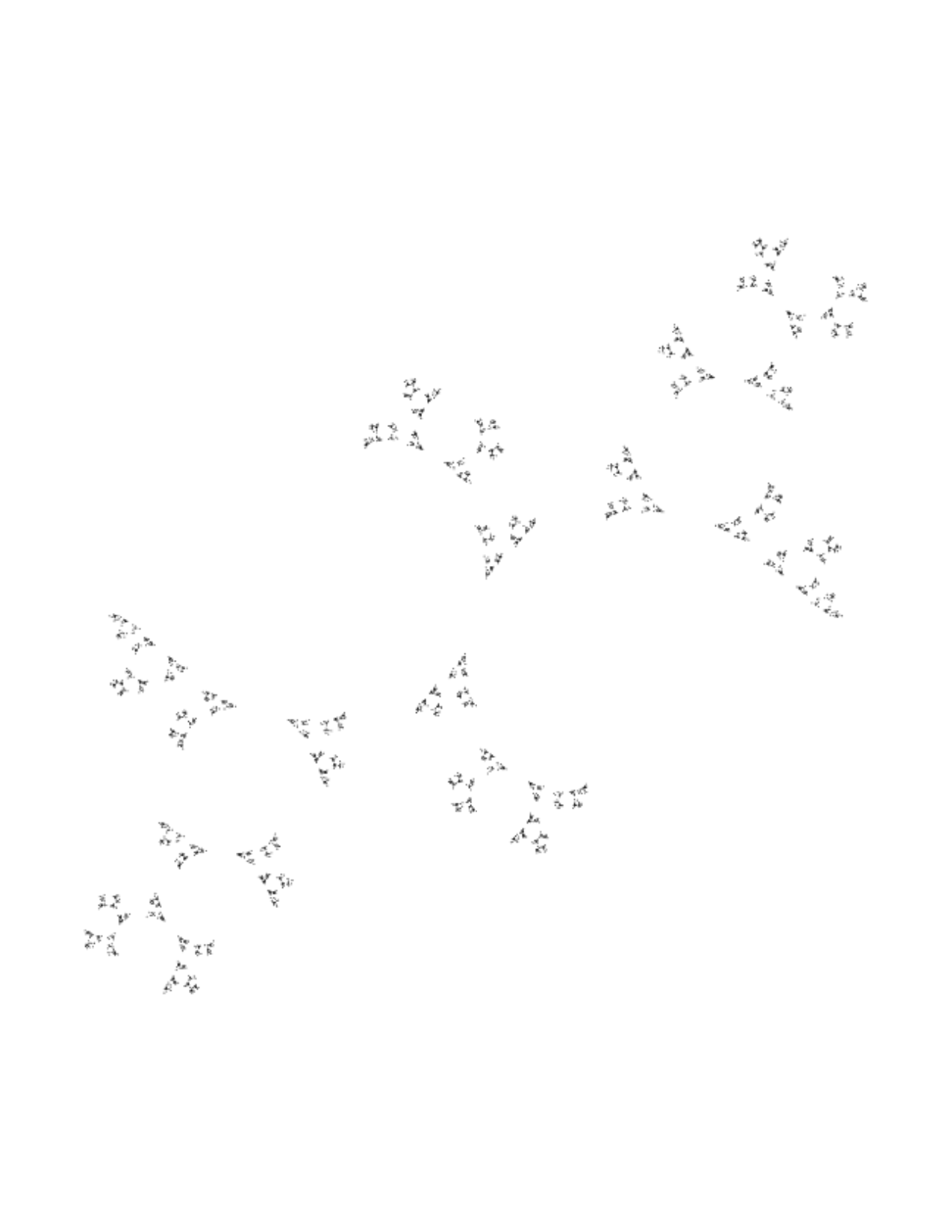}
 \end{center}\caption{Fractal limit sets of non-lattice Kleinian groups}\label{limitex}
\end{figure} 
The convex core of $\cal M=\Ga\ba \bH^3$ is 
 $$\op{core} \cM :=  \Gamma\ba \op{hull} \Lambda \; \subset  \; \cM $$
 where $\op{hull} \Lambda \subset \bH^3$ denotes  the convex hull of $\Lambda$. See Figure~\ref{convexcore} for an illustration of the convex core.

\begin{figure}[htbp] \label{convexcore}\begin{center}
    \includegraphics[height=4cm]{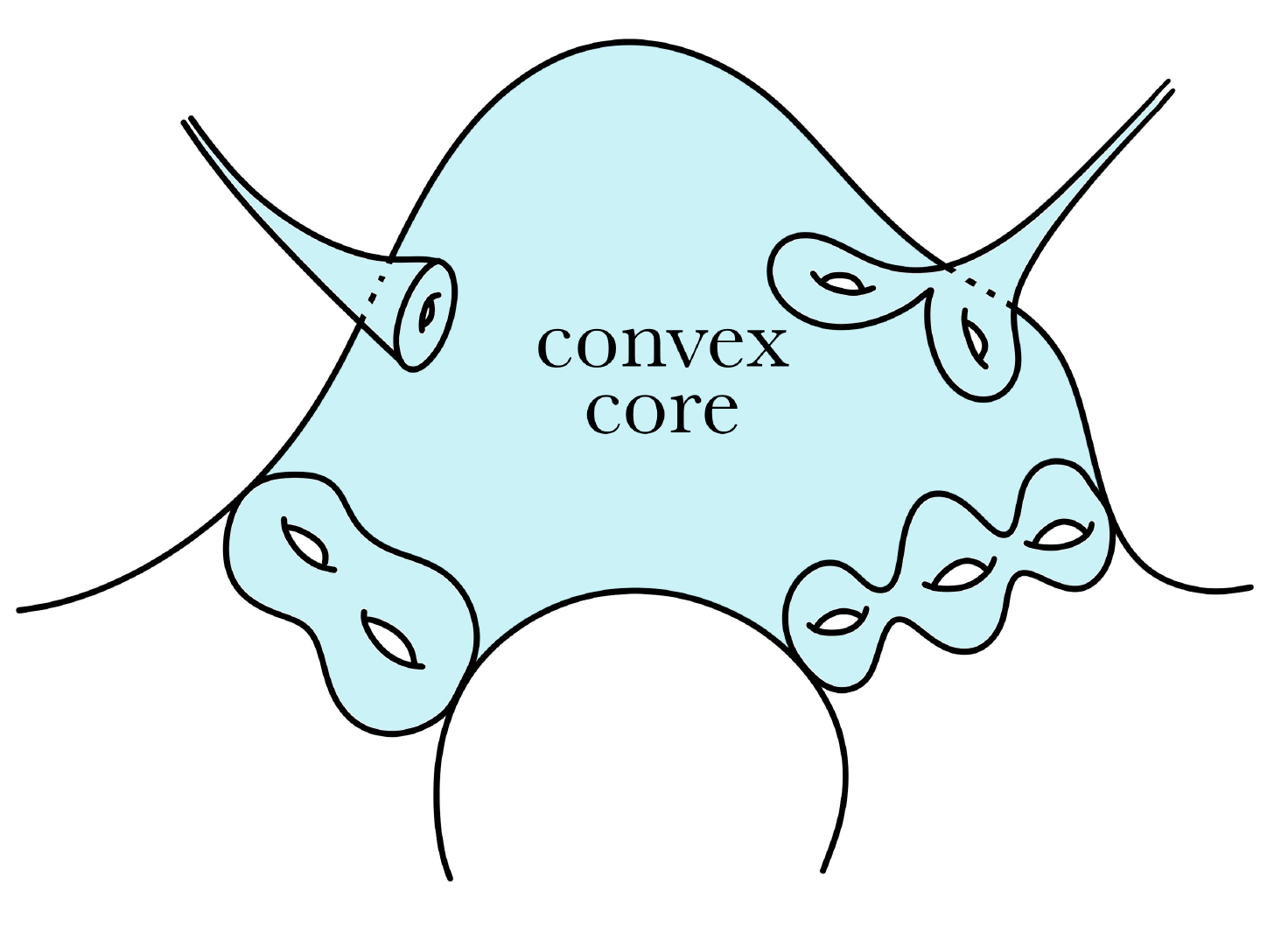}
\end{center} \caption{Convex core of a geometrically finite hyperbolic $3$-manifold}
\end{figure}    

\begin{definition}\label{def}
A Kleinian group $\Gamma$ is {\it geometrically finite} if the unit neighborhood of $\core{\cM}$ has finite volume. If $\core{\cM}$ is compact, we call $\Ga$ {\it convex cocompact}.
 \end{definition}
Geometrically finite (respectively, convex cocompact) groups are natural generalizations of  lattices 
(respectively, cocompact lattices). Among geometrically finite groups, lattices are precisely those whose limit sets are the whole $\hc$, whereas the limit sets of all others have Hausdorff dimension strictly smaller than $2$. 

While there are only countably many lattices in $G$, up to conjugation, by the local rigidity theorem of Selberg and Weil,  the Bers-Sullivan-Thurston density conjecture (now a theorem due to Namazi-Souto and Ohshika building on the work of many others (see \cite[§5.9]{Marden2016hyperbolic})) asserts that geometrically finite groups form an open and dense subset of the space of all finitely generated Kleinian groups. Therefore, results proved for geometrically finite groups apply to a very broad class of Kleinian groups.

\subsection{Circle-counting theorem for geometrically finite Kleinian groups} 
A fundamental observation of Sullivan \cite{Sullivan1979density} inspired by Patterson \cite{Patterson1976limit} links the action of $\Ga$ on $\bH^3$ with its action on the boundary $\hc$. In particular,  the growth rate of a $\Ga$-orbit in $\bH^3$ is determined by the Hausdorff dimension $\op{dim} \La$. 

\begin{theorem} [\cite{Sullivan1979density}, \cite{Sullivan1984entropy}] For any geometrically finite group
$\Ga<\PSL_2(\c)$,
$$\delta =\op{dim}(\La) $$
where $\delta=\delta(\Ga)$
denotes the critical exponent
$$\delta:= \limsup_{T\to \infty}\frac{1}{T} {\log \#\{x\in \Ga (o): d(x, o)\le T\}}, \quad o\in \bH^3. $$
\end{theorem}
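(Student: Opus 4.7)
\medskip

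\noindent\textbf{Proof plan.} The plan is to exploit the Patterson--Sullivan construction to build a bridge between the orbit counting function (which records $\delta$) and the geometry of $\Lambda$ (which records $\dim \Lambda$). First I would construct a $\Gamma$-invariant conformal density $\{\mu_x : x \in \bH^3\}$ of dimension $\delta$ supported on $\La$: one defines, for $s > \delta$, the normalized probability measures
$$
\mu_x^s := \frac{1}{\sum_{\g \in \Ga} e^{-s\, d(o,\g o)}} \sum_{\g \in \Ga} e^{-s\, d(x,\g o)}\, \mathsf{Dir}_{\g o}
$$
on $\bH^3 \cup \hc$, adjusted by a Patterson-type weight if the Poincar\'e series converges at $s=\delta$, and takes a weak-$*$ limit as $s \searrow \delta$. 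The limit $\mu_x$ is supported on $\La$ and satisfies $\g_* \mu_x = \mu_{\g x}$ together with the conformality relation $\frac{d\mu_x}{d\mu_y}(\xi) = e^{-\delta \beta_\xi(x,y)}$ in terms of the Busemann function.

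The next step is Sullivan's shadow lemma, the central geometric input. Writing $\mathsf{sh}_o(B(x,R)) \subset \hc$ for the shadow (radial projection from $o$) of the ball $B(x,R) \subset \bH^3$, the lemma states that for any fixed $R$ large enough, any $\xi$ in the \emph{conical} (radial) limit set, and any geodesic ray from $o$ toward $\xi$ passing within distance $R$ of $\g o$ at height $T = d(o, \g o)$,
$$
\mu_o\bigl(\mathsf{sh}_o(B(\g o, R))\bigr) \asymp e^{-\delta T}.
$$
The shadow has Euclidean diameter comparable to $e^{-T}$. Geometric finiteness enters in a decisive way here: it guarantees that $\La$ decomposes as the conical limit set together with a countable set of bounded-parabolic fixed points, and the lemma extends to the cusps via Sullivan's \emph{global measure formula}, which introduces an extra factor depending on the rank of the cusp but preserves the scaling $\asymp r^{\delta}$ up to controlled corrections.

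With these tools the two inequalities become parallel covering/mass-distribution arguments. For the upper bound $\dim \La \le \delta$, I would cover $\La$ by the shadows of all orbit points $\g o$ with $d(o,\g o) \le T$. Orbit counting (or just the definition of $\delta$) gives at most $\asymp e^{(\delta + \e)T}$ such shadows, each of Euclidean diameter $\asymp e^{-T}$, producing an efficient cover of $\La$ at scale $e^{-T}$ and hence $\dim_H \La \le \delta + \e$. For the lower bound $\dim \La \ge \delta$, the shadow lemma says $\mu_o(B(\xi, r)) \lesssim r^{\delta - \e'}$ (after absorbing the cusp corrections into $\e'$ and removing small neighborhoods of parabolic fixed points); the mass distribution principle then forces $\dim_H \La \ge \delta$.

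The main obstacle, and the place where geometric finiteness is genuinely used rather than merely assumed, is the treatment of the parabolic fixed points. Near a rank-$k$ cusp, shadows of orbit points cluster and the naive shadow estimate breaks down, so one must quantify the contribution of each parabolic subgroup via horoball coordinates and establish the global measure formula. Once that local-to-global matching at cusps is in place, the conical part dominates Hausdorff-dimension-wise, and both inequalities close to give $\delta = \dim \La$.
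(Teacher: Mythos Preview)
The paper does not prove this theorem: it is stated with citations to Sullivan's original papers \cite{Sullivan1979density}, \cite{Sullivan1984entropy} and used as background, so there is no proof in the paper to compare against.

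That said, your outline is a faithful reconstruction of Sullivan's own argument. The Patterson--Sullivan density, the shadow lemma giving $\mu_o(\mathsf{sh}_o(B(\gamma o,R)))\asymp e^{-\delta\,d(o,\gamma o)}$, the global measure formula to handle bounded parabolic points, and the covering/mass-distribution pair for the two inequalities are exactly the ingredients. One small sharpening: for the upper bound you want to cover $\Lambda$ by shadows of orbit points in a \emph{shell} $T\le d(o,\gamma o)\le T+R$ (or, equivalently, pick for each $\xi$ the deepest orbit point within distance $R$ of the ray), not all orbit points with $d(o,\gamma o)\le T$; otherwise you are over-counting by a factor that does not affect the dimension estimate but obscures why the shadows actually cover $\Lambda$ at scale $e^{-T}$. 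You correctly identify the cusps as the only genuine difficulty, and Sullivan's global measure formula is precisely what closes that gap.
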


We call a circle packing\footnote{Usually, a circle packing refers to a collection of circles with disjoint interiors that cannot be enlarged without intersecting the interiors of others. In this article, however, a circle packing simply means a countable union of circles.} $\cal P$ locally finite if, for any bounded region $B\subset \c$ and any $\e>0$, there are only finitely many circles in $\cal P$ with radii greater than $\e$ that intersect $B$.
This condition is necessary in order to pose a circle counting problem. Theorems \ref{ko} and \ref{os} are special cases of the following joint work with Shah:

\begin{theorem} [\cite{OS_inv}]\label{os2}Let $\P$ be a locally finite circle packing invariant under a geometrically finite Kleinian group $\Ga$ and with finitely many $\Ga$-orbits.
  If $\delta \le 1$, we further assume that $\cal P$ does not contain an infinite bouquet of tangent circles (see Figure \ref{bou}). Then there exist a constant $0<c_{\cal P}<\infty $ and a locally finite measure $\omega_\Ga$ on $\La\cap \c$ such that
  for any region $R\subset \c$ bounded by a piecewise algebraic curve\footnote{The piecewise algebraic boundary condition can be replaced by the weaker assumption $\omega_\Ga(\partial R)=0$.},
  $$\#\left\{C\in \cal P: \op{rad} (C) \ge \tfrac{1}t,\; C\cap R\ne \emptyset  \right\} \sim c_{\cal P} \cdot \omega_\Ga (R) \cdot  t^{\delta}\quad\text{as $t\to \infty$.}$$ 
\end{theorem}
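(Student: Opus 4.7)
The plan is to recast the circle-counting problem as an orbit-counting problem on the homogeneous space $\Ga\ba G$, with $G=\PSL_2(\c)$, and to resolve it via equidistribution of translated orbits. Since $\cal P$ decomposes into only finitely many $\Ga$-orbits, it suffices to treat a single base circle $C_0\in \cal P$. Writing $H<G$ for the stabilizer of $C_0$, a copy of $\PGL_2(\br)$ coming from the identification of circles in $\hc$ with geodesic planes in $\bH^3$, the orbit $\Ga C_0$ is identified with $\Ga\ba \Ga H$ inside $\Ga\ba G/H$. After conjugating $C_0$ into a standard position, the constraints $\op{rad}(\g C_0)\ge 1/t$ and $\g C_0\cap R\ne\emptyset$ define an expanding region in $G/H$ of the form $B_R\cdot a_{\log t}\cdot H$, where $a_s=\diag(e^{s/2},e^{-s/2})$ is the geodesic flow subgroup and $B_R\subset G$ is a bounded region depending on $R$.

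The main analytic input is the equidistribution of the translated $H$-orbit $\Ga H\cdot a_s$ inside $\Ga\ba G$. Sullivan's mixing of the frame flow $a_s$ against the Bowen-Margulis-Sullivan measure, combined with a wavefront-type thickening argument, produces a locally finite skinning measure $\mu^{\PS}_{H}$ on $(\Ga\cap H)\ba H$ and the Burger-Roblin measure $\mathsf{m}^{\BR}$ on $\Ga\ba G$ such that, after the appropriate exponential renormalization,
$$
e^{-\delta s}\int_{(\Ga\cap H)\ba H} \psi(\Ga h\, a_s)\, dh \;\longrightarrow\; |\mu^{\PS}_H|\cdot \mathsf{m}^{\BR}(\psi) \quad \text{as }s\to\infty ,
$$
for every $\psi\in C_c(\Ga\ba G)$. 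Unfolding this equidistribution against a test function encoding the counting conditions on $\g C_0$, and applying the change of variable $t=e^{s}$ between the geodesic parameter and the Euclidean radius, one extracts an asymptotic of the form $c_{C_0}\cdot \omega_\Ga(R)\cdot t^\delta$. The proportionality factor $c_{C_0}$ absorbs $|\mu^{\PS}_H|$ together with Jacobian factors, while the limit measure $\omega_\Ga$ on $\La\cap\c$ emerges as the $H$-transverse projection of $\mathsf{m}^{\BR}$ to the boundary, which is a concrete conformal density closely related to the $\delta$-dimensional Patterson-Sullivan density on $\La$. Summing over the finitely many $\Ga$-orbits in $\cal P$ produces the stated asymptotic with constants $c_{\cal P}$ and $\omega_\Ga$.

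The key obstacle is that in infinite volume none of $\mathsf{m}^{\BR}$, $\m$, or $\mu^{\PS}_H$ is a priori finite, and the equidistribution above is sensitive to non-compactness of $\Ga\ba G$ near cusps. Two issues must therefore be addressed: first, that the skinning measure $|\mu^{\PS}_H|$ is finite; second, that the equidistribution can be applied to the test function extracted from the counting problem, which is only piecewise smooth and has noncompact support in cuspidal directions. When $\delta\le 1$, an infinite bouquet of tangent circles would correspond to an infinite cuspidal accumulation of the orbit $\Ga C_0$ precisely in the regime where the local Patterson-Sullivan measure fails to compensate for horoball volume, producing $|\mu^{\PS}_H|=\infty$; the bouquet hypothesis is exactly what rules this out. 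Under this hypothesis, the Stratmann-Velani global measure formula for the Patterson-Sullivan density near cusps permits truncation of test functions in cusp neighborhoods with negligible error, and the mixing-to-counting passage goes through.
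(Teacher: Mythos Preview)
Your approach is essentially the paper's: reduce to a single $\Ga$-orbit, pass to the geodesic plane $C_0^\dagger$ with stabilizer $H$, and derive the asymptotic from equidistribution of the orthogonal translates $Ha_s$---driven by local mixing of the frame flow (Theorem~\ref{win}) together with a thickening argument---with the skinning measure and Burger--Roblin measure appearing exactly as you describe. The paper records the explicit identifications $c_{\cal P}=\mathsf{sk}_\Ga(\cal P)/|m^{\BMS}|$ and $d\omega_\Ga(z)=(|z|^2+1)^\delta\,d\nu_o(z)$, and confirms that the no-bouquet hypothesis is precisely what ensures $\mathsf{sk}_\Ga(\cal P)<\infty$ when $\delta\le 1$.
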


\begin{figure}[htbp] \begin{center}
    \includegraphics[height=2cm]{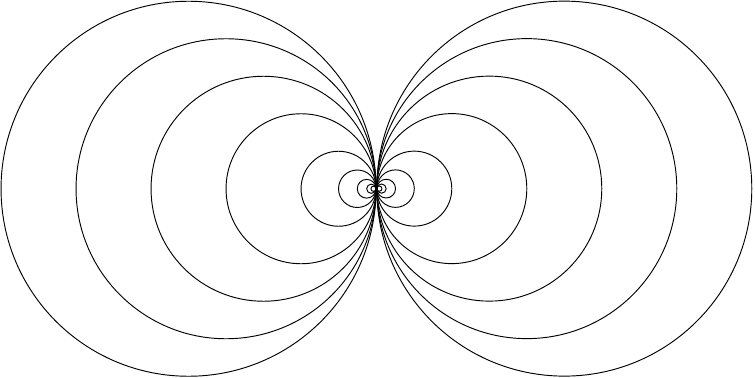}
\end{center} \caption{Infinite bouquet of tangent circles}\label{bou}
\end{figure}    

\subsection{Decoding the constant and the measure} In results of this type, it is often the constant $c_{\cal P}$ and the measure $\omega_\Ga$ that encode the complexity of the problem and reflect the methods involved in the proof. 
$$\text{Where do the constant $c_{\cal P}$ and the measure $\omega_\Ga$ come from? }$$ Their existence rests on fundamental results in the dynamics of hyperbolic manifolds $\cM=\Ga\ba \bH^3$, beginning with the construction of a geometric measure on the limit set.

To motivate the definition of a “geometric” measure, consider how the spherical measure on $\hc$ transforms under the action of $G$. Fix $o=(0,0,1)\in \mathbb H^3$ and let $K=\Stab_G(o)$. Let $m_o$ denote the $K$-invariant probability measure on $\hc$. For $g\in G$, the pushforward\footnote{$g_*m_o(E)=m_o(g^{-1} (E))$ for all Borel subsets $E\subset \hc$.}  $g_*m_o$ is absolutely continuous with respect to $m_o$, with Radon–Nikodym derivative
\be\label{bus} \frac{dg_*m_o}{dm_o} (\xi)=e^{ 2\beta_\xi (o,go)} \quad\text{ for all $g\in G$},\ee 
where $\beta$ denotes the Busemann function. Recall that for $\xi\in \hc$ and $x,y\in \bH^3$,
$\beta_\xi(x,y)=\lim_{t\to +\infty} d(\xi_t, x)- d(\xi_t, y)$ for a geodesic ray $\xi_t$ toward $\xi$, where $d$ denotes the hyperbolic distance in $\bH^3$. The Busemann function encapsulates
the conformal transformation law for the action of $G$ on $\hc$.

The spherical measure $m_o$ has full support on $\hc$.
To study a Kleinian group $\Gamma$, however, one needs measures supported on its limit set $\Lambda$. Patterson and Sullivan introduced such measures, which capture the asymptotic distribution of $\Gamma$-orbits. For geometrically finite groups, their construction yields a unique conformal measure of the correct dimension. A measure $\nu$ on $\hc$ is called a $\Ga$-conformal measure of dimension $s$ if it satisfies
\be \label{conformal} \frac{d\ga_*\nu}{d\nu} (\xi) =e^{s \beta_{\xi}(o,\ga o)} \quad\text{ for all $\ga\in \Ga$.}\ee

\begin{theorem}[\cite{Patterson1976limit}, \cite{Sullivan1979density}]\label{psm}For any geometrically finite group $\Ga<\PSL_2(\c)$,
there exists a unique $\Ga$-conformal probability measure $\nu_o$ on $\hc$ of dimension $\delta$.
Moreover, $\nu_o$ is supported on $\La$. \end{theorem}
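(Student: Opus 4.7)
The plan is to follow the classical Patterson--Sullivan construction for existence, deduce the conformal relation by reindexing the Poincar\'e series, and reduce uniqueness to ergodicity of the associated Bowen--Margulis--Sullivan measure on the unit tangent bundle of $\Ga\ba \bH^3$.

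For existence, I consider the Poincar\'e series $P_s(o)=\sum_{\ga\in \Ga} e^{-s\, d(o,\ga o)}$, which converges for $s>\delta$ and diverges for $s<\delta$ by the definition of $\delta$. For $s>\delta$, I form the probability measure
\be
\mu_s=\frac{1}{P_s(o)}\sum_{\ga\in \Ga} e^{-s\, d(o,\ga o)}\, D_{\ga o}
\ee
on the compactification $\bH^3\cup\hc$, where $D_{\ga o}$ denotes the unit Dirac mass at $\ga o$, and take $\nu_o$ to be any weak-$*$ subsequential limit of $\mu_s$ as $s\to \delta^+$. If $P_\delta(o)=\infty$ this works directly; otherwise I first insert Patterson's slowly varying factor $h(d(o,\ga o))$ into each summand to force divergence at $\delta$ without disturbing the conformal properties of the limit. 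Since $P_s(o)\to\infty$ while each compact subset of $\bH^3$ meets only finitely many $\ga o$, the mass of $\mu_s$ escapes every compact set of $\bH^3$, so $\supp(\nu_o)\subseteq \hc$; because $\Ga o$ accumulates exactly on $\La$, in fact $\supp(\nu_o)\subseteq \La$.

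For the conformal law I reindex: using $d(o,\ga^{-1}\ga' o)=d(\ga o,\ga' o)$, one obtains $\ga_*\mu_s=P_s(o)^{-1}\sum_{\ga'} e^{-s\, d(\ga o,\ga' o)}\, D_{\ga' o}$, so the Radon--Nikodym density of $\ga_*\mu_s$ relative to $\mu_s$ at $\ga' o$ equals $e^{s[d(o,\ga' o)-d(\ga o,\ga' o)]}$. By the defining limit of the Busemann function this converges to $e^{s\, \beta_\xi(o,\ga o)}$ as $\ga' o\to \xi\in \La$, and passing to the limit $s\to \delta^+$ gives $d\ga_*\nu_o/d\nu_o=e^{\delta\, \beta_\xi(o,\ga o)}$, as required by \eqref{conformal}. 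Combined with minimality of the $\Ga$-action on $\La$, this quasi-invariance upgrades the support statement to $\supp(\nu_o)=\La$.

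The hard part is uniqueness, especially delicate when $\Ga$ has cusps. My plan is to leverage ergodic theory of the Bowen--Margulis--Sullivan measure: for any $\Ga$-conformal probability measure $\nu$ of dimension $\delta$, define
\be
d\m(v)=e^{\delta\,\beta_{v^+}(o,\pi v)}\, e^{\delta\,\beta_{v^-}(o,\pi v)}\, d\nu(v^+)\, d\nu(v^-)\, dt
\ee
on $\op{T}^1\bH^3$ via the Hopf parametrization $v\leftrightarrow(v^+,v^-,t)$ with footpoint projection $\pi$; the conformal relation of $\nu$ makes $\m$ simultaneously $\Ga$-invariant and invariant under the geodesic flow. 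Sullivan's theorem, which uses geometric finiteness decisively through his shadow lemma and a careful analysis of horoballs around cusps, asserts that $\m$ descends to a finite and geodesic-flow-ergodic measure on $\Ga\ba \op{T}^1\bH^3$. Given two $\delta$-conformal probability measures $\nu_1,\nu_2$ on $\La$, apply the construction to $\nu=(\nu_1+\nu_2)/2$: the densities $d\nu_i/d\nu$ pull back to geodesic-flow-invariant $L^\infty$ functions on $\Ga\ba \op{T}^1\bH^3$, which ergodicity forces to be constant; the probability normalization then yields $\nu_1=\nu_2$.
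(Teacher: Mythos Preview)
The paper does not supply its own proof of this theorem; it is stated as a classical result of Patterson and Sullivan and then used as input for the subsequent discussion of the BMS measure and the skinning measure. Your proposal is a faithful outline of the original argument from the cited references: Patterson's Poincar\'e-series construction (with the slowly varying auxiliary function in the convergence-type case) for existence, the reindexing computation for the conformal law, minimality of the $\Ga$-action on $\La$ for full support, and Sullivan's finiteness and ergodicity of the associated BMS measure for uniqueness.

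One point worth making explicit in your uniqueness step: you invoke ergodicity of the BMS measure built from the convex combination $\nu=(\nu_1+\nu_2)/2$ \emph{before} uniqueness is established, so you must be sure that Sullivan's finiteness theorem (and hence ergodicity via the Hopf argument) applies to the BMS measure attached to an \emph{arbitrary} $\delta$-conformal probability measure, not just to the particular one produced by Patterson's construction. This is indeed the case, since the shadow lemma and the cusp estimates hold for any $\delta$-conformal measure, but the logical order deserves a sentence. With that caveat your argument is sound.
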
 
This measure is called the Patterson–Sullivan measure, or the geometric measure associated to $\Gamma$. The measure $\omega_\Ga$ in Theorem \ref{os2} is
given explicitly as $$d\omega_\Ga(z) =(|z|^2+1)^{\delta} d\nu_o(z), \quad z\in \La\cap \c.$$ When $\Ga$ is convex cocompact or the symmetry group of an Apollonian circle packing, the measures $\nu_o$ and $\omega_\Ga$ coincide (up to scaling) with the $\delta$-dimensional Hausdorff measures with respect to the spherical metric and the Euclidean metric on $\c$, respectively.

 Explaining the constant $c_{\cal P}$ requires two key ingredients:
the Bowen-Margulis-Sullivan (BMS) measure on the unit tangent bundle $\T^1(\cM)$, and the skinning measure on a properly immersed geodesic plane in $\cM$.

\subsubsection{Bowen-Margulis-Sullivan measure} A tangent vector $v\in \T^1(\bH^3)$ is determined by the forward and backward endpoints $v^+, v^-\in \hc$ of the associated geodesic, together with the Busemann value $\beta_{v^+}(o, v)$. This gives the Hopf-parametrization
\be\label{hopf} \op{T}^1(\bH^3) =\left( \hc \times \hc -\text{diag}\right) \times \br .\ee 
In these coordinates, the Bowen-Margulis-Sullivan measure $m^{\BMS}$ on $\T^1(\cM)$ is induced from the $\Ga$-invariant measure $\tilde m^{\BMS}$ on $\T^1(\bH^3)$, defined by incorporating the conformal factors from \eqref{conformal} into
the product of Patterson-Sullivan measures and the Lebesgue measure on $\br$:
\begin{align*}
d \tilde m^{\BMS}(v)& =
e^{\delta \beta_{v^+}(o, v)}\; e^{\delta \beta_{v^-}(o,v) }\;d\nu_o (v^+) d\nu_o(v^-) dt.
\end{align*}

 \begin{figure} \label{support} \begin{center}
  \includegraphics [height=4.5cm]{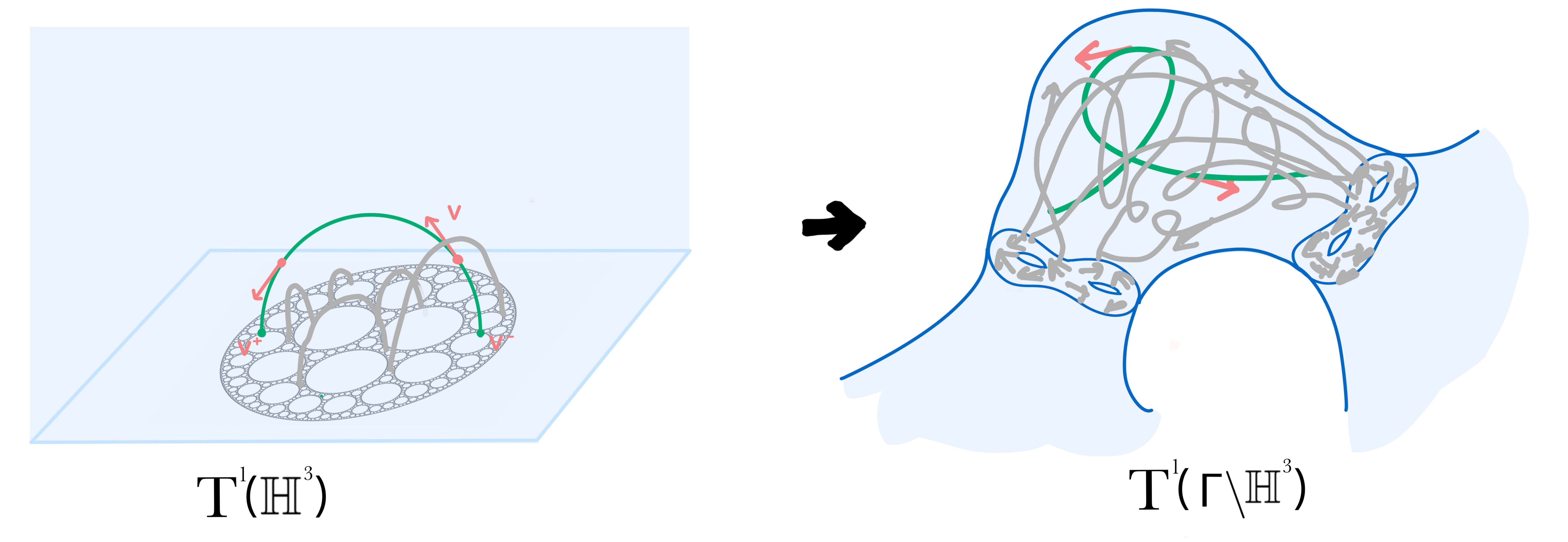} \end{center}
  \caption{Support of BMS measure}
\end{figure}

Sullivan proved that if $\Ga$ is geometrically finite, then $m^{\BMS}$ is a {\em finite} measure \cite{Sullivan1984entropy}:
$$|m^{\BMS}|<\infty .$$
The support of the BMS measure consists of all tangent vectors with endpoints in the limit set, and hence
projects into the convex core under the basepoint projection $\T^1(\cM)\to \cM$ (cf. Figure \ref{support}). For convex cocompact groups, this makes finiteness immediate. In the presence of cusps, that is, for geometrically finite but non-convex-cocompact groups, finiteness becomes a non-trivial result, reflecting the specific geometry of the cusps.

This finiteness is crucial: although $\cM$ has infinite Riemannian volume,
the essential dynamics of flows is captured by this finite BMS measure. This explains why many central theorems
in dynamics for hyperbolic manifolds are formulated for geometrically finite quotients.

\subsubsection{Skinning measure of a properly immersed geodesic plane} 
 A circle $C\in \cal C$ corresponds uniquely to a geodesic plane $C^\dagger\subset \bH^3$ (the  hemisphere above $C$) and vice versa (Figure \ref{geop}).
The orbit $\Ga C$ is closed in $\cal C$ if and only if the inclusion map $\op{Stab}_\Ga (C^\dagger)\ba  C^\dagger\to \cM$ is proper  \cite{OS_inv}.
In joint work with Shah \cite{OS_Jams},
we introduced the skinning measure for such $C^\dagger$. It is defined by assigning Patterson-Sullivan weights on the forward and backward endpoints of geodesics orthogonal to $C^\dagger$. Concretely,
$$d\mu^{\mathsf{sk}}_{C^\dagger}(v)= e^{\delta \beta_{v^+}(o, v)} d\nu_o(v^+)
+ e^{\delta \beta_{v^-}(o, v)} d\nu_o(v^-) $$
where $v^+$ and $v^-$ are the visual images of $v$, with $v$ taken as outward normal to $C^\dagger$. The normalization ensures that this measure is invariant under $\op{Stab}_\Ga (C^\dagger)$, and hence descends to a locally finite measure on the properly
immersed geodesic plane $\op{Stab}_\Ga (C^\dagger)\ba  C^\dagger\subset \cM$.

\begin{figure}[htbp] \begin{center}
  \includegraphics [height=5cm]{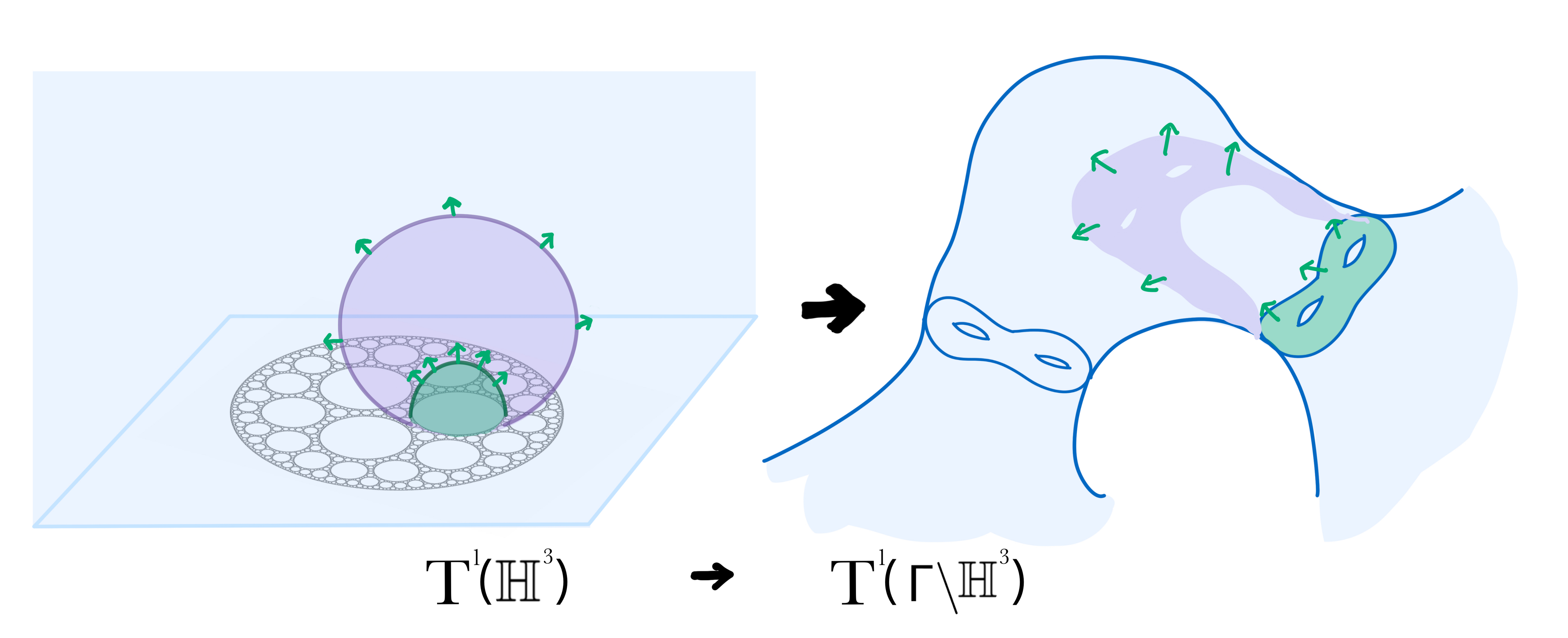}\end{center}
  \caption{Orthogonal translates of geodesic planes}\label{ot}
\end{figure}    

Intuitively the skinning measure records how the geodesic plane $C^\dagger$ intersects the limit set through its normal vectors. The skinning constant of $C$ is the total mass of this measure:
$$0<\mathsf{sk}(C):=  \mu^{\mathsf{sk}}_{C^\dagger}( \op{Stab}_\Ga (C^\dagger)\ba  C^\dagger)\le \infty  .$$

 For a locally finite circle packing $\cal P=\Ga C_1\cup \cdots \cup \Ga C_\ell$, we set
$ \mathsf{sk}_\Ga(\cal P)= \mathsf{sk}(C_1)+\cdots +  \mathsf{sk}(C_\ell )  .$
This constant is finite
whenever $\delta>1$ or $\cal P$ has no infinite bouquet of tangent circles. Finally
the constant $c_{\cal P}$ in Theorem \ref{os2} is given
by $$0<c_{\cal P}= \frac{\mathsf{sk}_\Ga (\cal P)}{|m^{\BMS}|}<\infty.$$

With these finiteness results in place, the key technical ingredient in the proof of Theorem \ref{os2} is the description of the asymptotic distribution of orthogonal translates of properly immersed geodesic planes under the geodesic flow (Figure \ref{ot}), using the local mixing of the geodesic flow\footnote{The original proof of Theorem \ref{ko} used the Descartes circle theorem: for Apollonian circle packings, it allows one to interpret the circle-counting problem as a point-counting problem in the space of horospheres. This approach is specific to the Apollonian case.}(Theorem \ref{win}).
This distribution is governed by the skinning measure, which serves as the bridge between the geometry of circle packings and the dynamics of flows in infinite volume.

\section{Orbit closures of circles and  rigidity of geodesic planes}\label{s:mmo}
\subsection{Orbit closures in round Sierpi\'nski carpets}
In the previous section, the circle counting problem  concerned 
 closed $\Ga$-orbits of circles. In this section, we study all possible orbit closures of arbitrary 
circles under $\Gamma$ in $\cal C$. When $\Ga C$ is not closed, what can its closure in $\cal C$ look like? For example, 
what is the closure of the orbit of the red circle in Figure \ref{red} under the symmetry group $\Ga$?

\begin{figure}[htbp] \begin{center}
    \includegraphics [height=3.5cm]{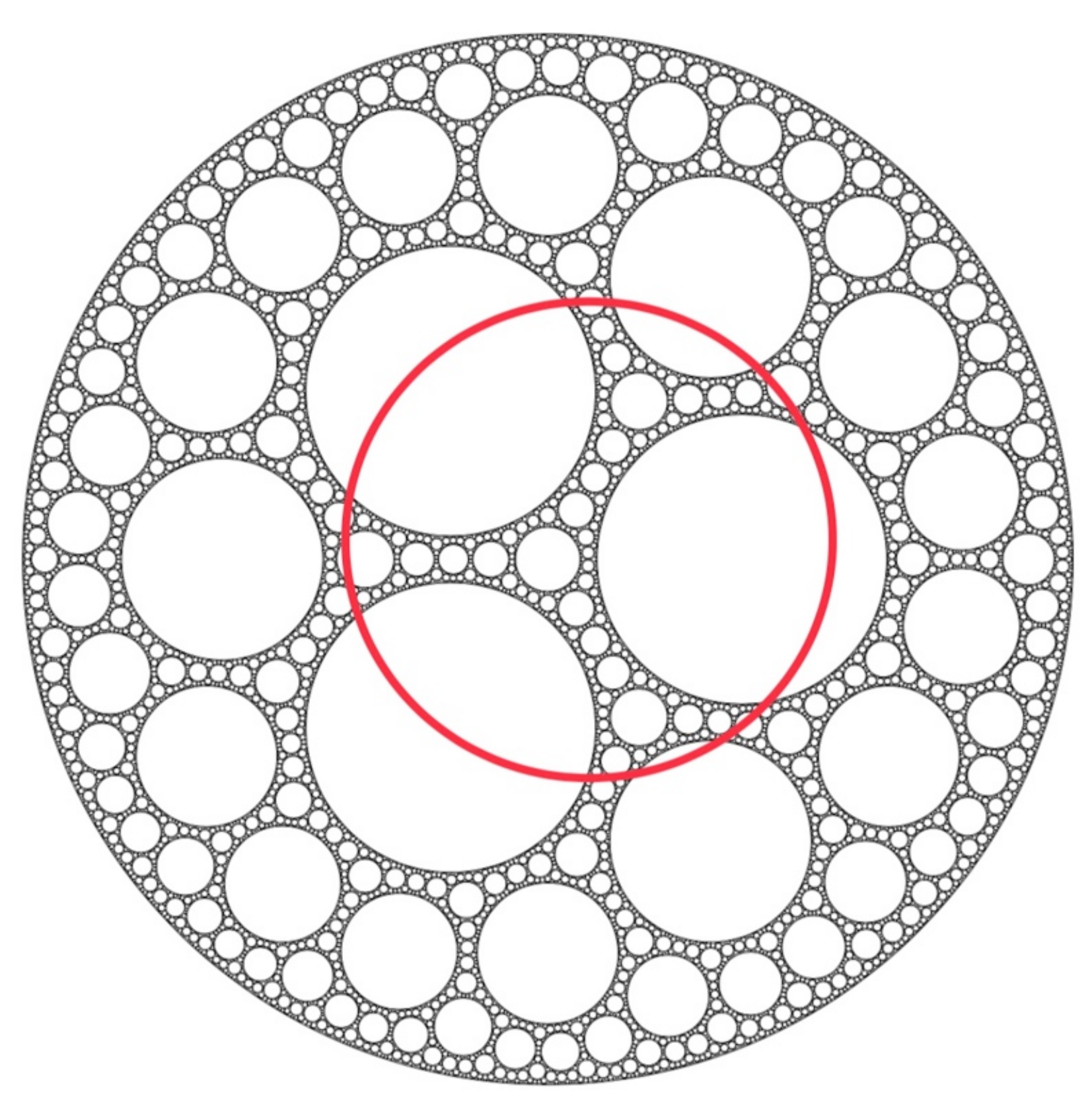}  \end{center}
 \caption{Round Sierpi\'nski carpet}\label{red}
\end{figure}    

If $C$ does not intersect the limit set $\La$, then the orbit $\Ga C$ is closed in a trivial way
and is uninteresting for our purposes.
Hence our ambient space will be the space of circles intersecting $\La$, which we denote by
$$\cal C_\La\subset \cal C.$$ Clearly $\Ga (\cal C_\La)=\cal C_{\La}$.
We further define $$\cal C_\La^*\subset \cal C_\La$$ to be the family of circles separating $\La$; that is,
$C\in \cal C_\La^*$ if both the interior and exterior of $C$ intersect $\La$. Whether $C$ lies in $\cal C_\La^*$ or not influences the behavior of its orbit $\Ga C$.

An important class of Kleinian groups in this discussion consists of those whose limit sets $\Lambda$ satisfy:
$$\Omega=\hc-\La=\bigcup B_i$$
where $B_i$ are infinitely many round disks with mutually disjoint closures.
Such a limit set is called a {\it round Sierpi\'nski carpet}. We give a
complete description of orbit closures
 for geometrically finite groups whose limit sets are round Sierpi\'nski carpets. This classification was first established in joint work with McMullen and Mohammadi \cite{MMO_inventiones} for convex cocompact groups, and was later extended to all geometrically finite groups in joint work with Benoist \cite{BO_etds}.

\begin{theorem}[\cite{MMO_inventiones}, \cite{BO_etds}]\label{mmo1}Let $\Ga<\PSL_2(\c)$ be a geometrically finite group whose limit set $\La$ is a round Sierpi\'nski carpet. Let $C\in \cal C_\La$.
\begin{itemize}
    \item If $C\in \cal C_\La^*$, then $\Ga C$ is either closed or dense in $\cal C_\La$.
\item If $C \notin \cal C_\La^*$, then $\overline{\Ga C}=\{D\in \cal C_\La: D\subset \Ga \overline{B}\}$ where $B$ is the component of $\Omega$ whose closure contains $C$.
\end{itemize}
\end{theorem}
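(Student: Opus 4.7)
The plan is to reinterpret the classification as an orbit closure problem in homogeneous dynamics on $\Ga\backslash G$. The space $\cal C$ of circles is $G$-equivariantly identified with $G/H$ where $G=\PSL_2(\c)$ and $H\cong\PSL_2(\br)$ is the stabilizer of a fixed base circle. Under this identification, $\Ga$-orbit closures in $\cal C$ correspond to closures of $H$-orbits in $\Ga\backslash G$, which is the frame bundle of $\cM=\Ga\backslash\bH^3$. The hypothesis $C\in\cal C_\La$ corresponds to the $H$-orbit meeting the renormalized frame bundle $\RF\cM$ (the support of $m^{\BMS}$), and the dichotomy $C\in\cal C_\La^*$ vs.\ $C\notin\cal C_\La^*$ records whether the $H$-orbit crosses $\RF\cM$ transversally or only tangentially.

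For the first case, the goal is a Ratner-type dichotomy for $H$-orbit closures in $\Ga\backslash G$: every $H$-orbit meeting $\RF\cM$ transversally is either closed or dense in $\RF\cM\cdot H$. I would argue by contradiction. If $xH$ is not closed, one produces a sequence $xh_n\to y\in xH$ with $xh_n\ne y$ and, via a closing-lemma manipulation, extracts transverse elements $g_n\to e$ in $G\setminus H$ with $g_n y\in\overline{xH}$. Using recurrence of horocyclic subflows of $H$ to $\RF\cM$ --- a consequence of $|m^{\BMS}|<\infty$ and local mixing of the geodesic flow in infinite volume --- one conjugates these transverse translates under the geodesic $A$-action and passes to a limit to produce a genuine one-parameter subgroup of $G$ transverse to $H$ under which $\overline{xH}$ is invariant. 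Any such extra invariance would normally force $\overline{xH}$ to contain an open subset of $\Ga\backslash G$; the crucial new input from the round Sierpi\'nski carpet hypothesis is the topological rigidity of $\La$, which precludes any invariant closed subset of $\RF\cM\cdot H$ strictly between a closed orbit and the full set. Translating back via $G/H$, saturation of $\RF\cM\cdot H$ is exactly density of $\Ga C$ in $\cal C_\La$.

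For the second case, $C\subset\overline B$ for a round disk component $B$ of $\Om$ and is tangent to $\partial B$ at some $p\in\partial B\cap\La$. After a M\"obius transformation sending $B$ to a half-plane, $C$ becomes a horocycle in $\bH^2\cong B$ based at $p$, and $\Ga_B:=\Stab_\Ga(\overline B)$ acts on $B$ as a Fuchsian group. The round Sierpi\'nski carpet assumption forces the limit set of $\Ga_B$ on $\partial B$ to coincide with the whole $\partial B$, making $\Ga_B$ a lattice in $\PSL_2(\br)$. I would then apply the Hedlund--Dani classification of horocycle orbits on such a lattice quotient to handle the generic case, and use a separate analysis of limits of $\Ga$-translates from other disk components to recover density at horocycles based at parabolic fixed points of $\Ga_B$. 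Combining this with the $\Ga$-action on the family $\{\gamma\overline B\}$ identifies $\overline{\Ga C}$ with $\{D\in\cal C_\La: D\subset\Ga\overline B\}$. The reverse inclusion is easier: since the $\gamma\overline B$ form a disjoint family of disks whose Euclidean diameters can only accumulate at points of $\La$, any Hausdorff limit of circles $D_n\subset\gamma_n\overline B$ of positive radius itself lies in some $\gamma\overline B$.

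The main obstacle is Case~1: proving the orbit closure dichotomy for $H$-actions in infinite volume, where Ratner's measure- and orbit-closure theorems are unavailable. This requires a quantitative deployment of unipotent dynamics tailored to the topology of a round Sierpi\'nski carpet limit set, and the extension from convex cocompact to general geometrically finite groups in \cite{BO_etds} demands additional care to control $H$-orbits entering cusp neighborhoods of $\Ga\backslash G$, where recurrence to $\RF\cM$ is substantially more delicate.
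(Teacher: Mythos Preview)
Your overall framework---passing to $H=\PSL_2(\br)$-orbits on $\Ga\backslash G$ and running a Margulis-style argument (extract transverse elements, shear under $A$, upgrade to extra invariance)---matches the paper's. But in Case~1 your proposed recurrence mechanism is the wrong one, and you misidentify where the round Sierpi\'nski carpet hypothesis enters.

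You derive recurrence of horocyclic subflows to $\RFM$ from $|m^{\BMS}|<\infty$ together with local mixing of the geodesic flow. These properties hold for \emph{every} Zariski dense geometrically finite Kleinian group, yet the closed-or-dense dichotomy fails for many of them: the paper itself exhibits quasi-Fuchsian examples (\S\ref{s:chaos}) with chaotic geodesic planes. So BMS-finiteness and local mixing cannot by themselves force the dichotomy, and the role you assign to the carpet hypothesis---a vague ``topological rigidity of $\La$'' that rules out intermediate invariant sets---is not how it is actually used.

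The paper's mechanism is that the carpet hypothesis yields not recurrence but \emph{thick recurrence}: one builds a compact set $\cal R\subset\Ga\backslash G$ and a uniform $\kappa>1$ such that for every $x\in\cal R$ and every $s>0$ there is $t$ with $s<|t|<\kappa s$ and $xu_t\in\cal R$. It is this uniform gap-free return pattern, not generic recurrence, that makes the shearing/limiting step go through in infinite volume. The construction of $\cal R$ rests on the positive modulus property $\inf_{i\ne j}\,\text{mod}\bigl(\hc-(\overline B_i\cup\overline B_j)\bigr)>0$ of the round carpet, which guarantees that every separating circle $C$ has a \emph{thick} slice $C\cap\La$, namely one containing a uniformly perfect Cantor subset. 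Since the visual images of horocycles are circles, thickness of circular slices translates directly into thick recurrence of unipotent orbits. This is the missing idea in your outline; without it the argument does not distinguish acylindrical groups from the quasi-Fuchsian counterexamples.

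Your Case~2 sketch (reducing to Hedlund--Dani horocycle classification for the lattice $\Stab_\Ga(\overline B)$ acting on $B$) is along the right lines.
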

\begin{figure}[htbp] \begin{center}
 \includegraphics [height=4cm]{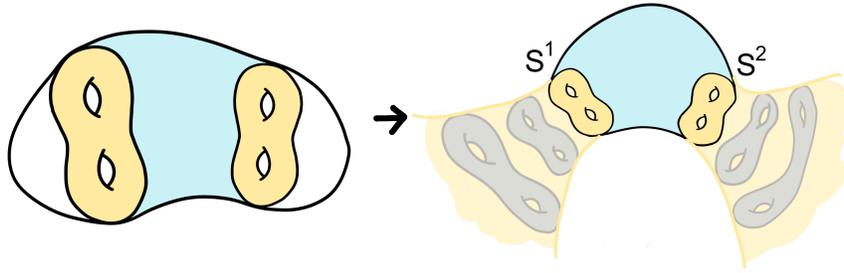}
\end{center}\caption{A manifold with round Sierpiński carpet limit set: the convex core has totally geodesic boundary}\label{cut}
  \end{figure}

Geometrically finite hyperbolic $3$-manifolds $\cM$ with round Sierpiński carpet limit sets are precisely those whose convex core has compact totally geodesic boundary. Each $\Ga$-orbit of a component $B$ of $\Omega$ corresponds to a boundary component of the convex core, and $B$ being a round disk means that the associated boundary surface is totally geodesic.
In fact, $\core \cM$ can be obtained by removing finitely many disjoint compact embedded totally geodesic surfaces from 
a complete finite-volume hyperbolic $3$-manifold, as illustrated in Figure~\ref{cut}, where two disjoint compact geodesic surfaces $S_1$ and $S_2$ are removed from a closed hyperbolic $3$-manifold and become the convex core boundary of the resulting $\cM$.
It follows that, up to isometry, there are only countably many such manifolds, since there are only countably many finite-volume hyperbolic $3$-manifolds and only countably many ways to remove finitely many compact geodesic surfaces.

\subsection{Geometric structure and limit sets of acylindrical hyperbolic $3$-manifolds} We established a similar orbit-closure dichotomy for a much broader class of geometrically finite groups, called {\it acylindrical} groups. 
A compact core of $\cM$ is a compact connected submanifold $N$ with boundary whose inclusion  induces an isomorphism $\pi_1(N)\simeq  \Ga$, unique up to homeomorphism. 
A geometrically finite manifold $\cM =\Ga\ba \bH^3$ is called {\it acylindrical} if its compact core $N$ satisfies: 
\begin{itemize}
    \item  $N$ has incompressible boundary;
    \item  every essential cylinder in $N$ is boundary parallel. 
    \end{itemize}
    If moreover the boundary of $\core \cM$ is totally geodesic, we call $\cM$ {\em rigid acylindrical}. Note that acylindricity is a purely topological condition: any geometrically finite hyperbolic $3$-manifold homeomorphic to an acylindrical one is itself acylindrical.
    It turns out that any geometrically finite acylindrical hyperbolic $3$-manifold arises as a quasiconformal deformation of a rigid acylindrical one.
To state this precisely,
recall that an orientation-preserving homeomorphism $f:\hc\to \hc$ is called $\kappa$-quasiconformal for some $\kappa\ge 1$ if it maps infinitesimal circles to ellipses of eccentricity at most $\kappa$; that is,
  \be\label{qc} \limsup_{r\to 0}\frac{\sup \{|f(x)-f(z)|: |z-x|=r\}}{\inf \{|f(x)-f(z)|: |z-x|=r\}} \le \kappa. \ee 
  The $1$-quasiconformal maps are conformal and hence they are elements of $\PSL_2(\c)$.
Two Kleinian groups $\Ga_1$ and $\Ga_2$ are said to be quasiconformally conjugate if they are conjugated by such a map, i.e., there exists an isomorphism $\rho:\Ga_1\to \Ga_2$ and a quasiconformal homeomorphism $f$ of
$\hc$ such that for all $ \gamma\in \Ga_1$,
$\rho(\ga)= f\circ \ga \circ f^{-1}$ on $\hc$. Such a representation is called a quasiconformal deformation.

By the Ahlfors-Bers theorem, quasiconformal deformations of a geometrically finite acylindrical group, considered up to conjugation, are parametrized by the product
$$\prod_i\mathrm{Teich} (S_i)$$
where $S_i$ are the components of the boundary of the convex core of $\Ga\ba \bH^3$ and $\mathrm{Teich}(S_i)$ denotes the Teichm\"uller space of $S_i$,  which parametrizes its quasiconformal deformations. The following result of Thurston and McMullen provides a distinguished representative in each such quasiconformal class:
\begin{theorem}[\cite{Thurston1986hyperbolic}, \cite{Mc_iter}] Every geometrically finite acylindrical Kleinian group is quasiconformally conjugate to a geometrically finite  rigid acylindrical group, unique up to conjugation.
\end{theorem}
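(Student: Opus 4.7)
The plan is to realize the rigid representative as the unique fixed point of Thurston's skinning map inside the Ahlfors--Bers deformation space of $\Ga$, so that both existence and uniqueness follow from a Banach-type contraction principle. Let $\cM=\Ga\ba\bH^3$ with compact core $N$, and let $S_1,\dots,S_k$ be the components of $\partial N$ away from the parabolic locus. By Ahlfors--Bers, the quasiconformal deformation space of $\Ga$ (modulo conjugation) is parametrized by $\prod_i\Tei(S_i)$, and I want to isolate a unique point in this space whose associated Kleinian group $\Ga_0$ has totally geodesic convex core boundary.

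First I would set up Thurston's skinning map. Given $X=(X_i)\in\prod_i\Tei(S_i)$, Ahlfors--Bers supplies a unique geometrically finite hyperbolic structure $\cM_X$ on $\inte(N)$ with conformal boundary $X$. Passing to the cover of $\cM_X$ corresponding to $\pi_1(S_i)$ yields a geometrically finite 3-manifold with two geometrically finite ends, and the conformal structure on the opposite end defines $\sigma(X)_i\in\Tei(\ov{S_i})$. Composing with the orientation-reversing involution swapping the two sides produces the skinning map $\bar\sigma:\prod_i\Tei(S_i)\to\prod_i\Tei(S_i)$. A fixed point $X_0$ of $\bar\sigma$ is exactly a conformal boundary datum whose two-sided information matches, which forces the boundary of the convex core of $\cM_{X_0}$ to be totally geodesic; the corresponding Kleinian group $\Ga_0$ is then geometrically finite rigid acylindrical and, being a point in the Ahlfors--Bers deformation space of $\Ga$, is quasiconformally conjugate to $\Ga$.

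Next I would invoke the contraction property. Thurston's central insight, made quantitative in McMullen's iterative approach \cite{Mc_iter}, is that acylindricity of $N$ forces $\bar\sigma$ to strictly contract the Teichm\"uller metric, with contraction constant $c<1$ depending only on $N$. Granting this, the Banach fixed point theorem on the complete metric space $\prod_i\Tei(S_i)$ yields the unique fixed point $X_0$, giving both existence of $\Ga_0$ and uniqueness up to conjugation (two rigid acylindrical conjugates of $\Ga$ would yield two fixed points of $\bar\sigma$). The main obstacle is precisely this contraction estimate: this is where acylindricity is indispensable, since an essential non-boundary-parallel cylinder in $N$ would produce a cover on which the skinning map behaves isometrically, ruining contraction. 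The proof combines the geometry of pleated surfaces with Royden/Kobayashi-type estimates on Teichm\"uller space and Thurston's ``efficiency of pleated surfaces'' bounds, with McMullen supplying the sharper quantitative control needed for iteration to converge.
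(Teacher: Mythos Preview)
The paper does not prove this theorem at all; it is quoted as a background result from the cited references \cite{Thurston1986hyperbolic} and \cite{Mc_iter}, so there is no ``paper's own proof'' to compare against. Your sketch is a faithful outline of the argument in those sources: the skinning map $\sigma$ on the Ahlfors--Bers deformation space, the identification of its fixed points with structures having totally geodesic convex core boundary, and the contraction estimate for acylindrical $N$ supplying both existence and uniqueness via the Banach fixed point theorem on the complete Teichm\"uller metric. One small historical wrinkle: Thurston's original argument in \cite{Thurston1986hyperbolic} went through the bounded image theorem rather than strict contraction; the uniform contraction constant $c<1$ depending only on the topological type of $N$ is McMullen's contribution in \cite{Mc_iter}, so your phrase ``Thurston's central insight, made quantitative in McMullen's iterative approach'' slightly misattributes the contraction idea. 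Mathematically the plan is sound.
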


 \begin{figure}[htbp] \begin{center}
  \includegraphics [height=4.5cm]{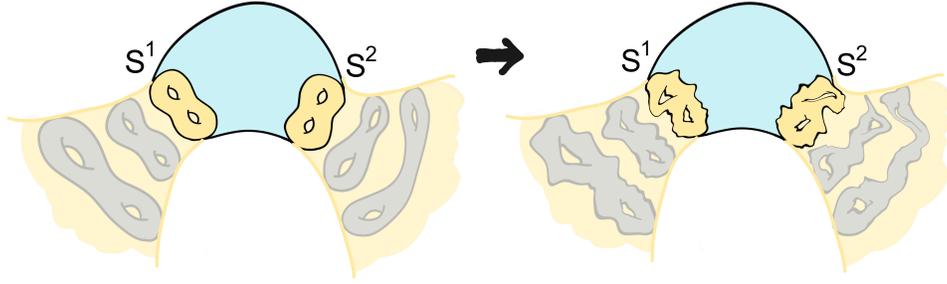}
   \end{center}
     \caption{A quasiconformal deformation of a convex cocompact hyperbolic $3$-manifold with two ends}\label{two}
  \end{figure} 
In Figure \ref{two}, we illustrate this theorem for a convex cocompact hyperbolic $3$-manifold $\cM$  whose convex core has two boundary components $S_1$ and $S_2$. For any choice of $\rho_i\in \op{Teich}(S_i)$, $i=1,2$, there exists a quasiconformal deformation of the ambient $3$-manifold $\cM$
whose convex-core boundary has been deformed according to these parameters.

For convex cocompact groups,  acylindricity is equivalent to the condition that the limit set  $\La$ is a Sierpi\'nski carpet: 
$$\hc-\La=\bigcup B_i$$
where $B_i$ are infinitely many {\emph{Jordan}} disks with mutually disjoint closures. When the $B_i$ are round disks, $\La$ is a round Sierpi\'nski carpet, corresponding precisely to the rigid acylindrical case. For a  general geometrically finite acylindrical group, the closures of $B_i$ may intersect. We regroup the components of $\Omega$ into maximal families of Jordan disks forming trees: declare $B_i\sim B_j$ if their closures intersect, which spans an equivalence relation on the collection $\{B_i\}$. Let $T_\ell$ be the union of all disks in the same equivalence class. Then the closure of $T_\ell$ forms a "tree of disks", in the sense that the dual graph whose vertices corresponds to the disks and whose edges connect pairs of tangent disks is a tree.
Thus the limit set has the structure
$$\hc-\La=\bigcup T_\ell $$ where $T_\ell$ are  maximal trees of components with mutually disjoint closures (see the right image in Figure \ref{twol}).

 \begin{figure}[htbp] \begin{center}
   \includegraphics [height=4.5cm] {qfs}  
   \qquad\qquad 
    \includegraphics [height=4.5cm] {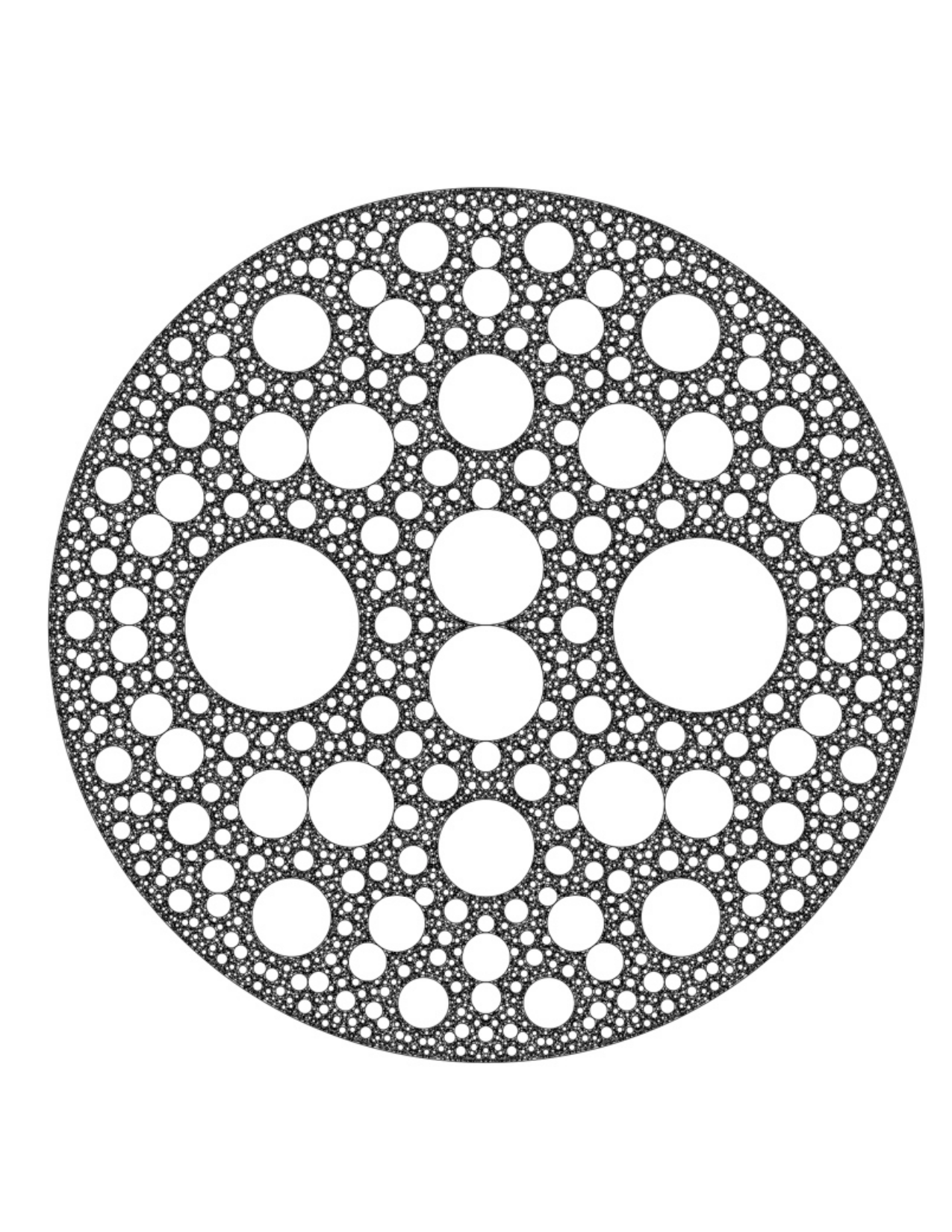}  
   \end{center}
     \caption{Limit sets of geometrically finite acylindrical groups: \cite{Zhang2022construction}}\label{twol}
  \end{figure}

\subsection{Orbit closures for geometrically finite acylindrical hyperbolic $3$-manifolds} With this picture in mind, we have the following orbit-closure classification,  proved in joint work with McMullen and Mohammadi \cite{MMO_duke} for convex cocompact groups, and later extended with Benoist \cite{BO_etds}.
 
\begin{theorem} [\cite{MMO_duke}, \cite{BO_etds}]\label{c2}
    Let $\Ga<\PSL_2(\c)$ be any geometrically finite acylindrical group.
     Then: \begin{enumerate}
 \item  Any $\Gamma$-invariant subset of $\mathcal C_\La^*$ is either a finite
union of closed
$\Gamma$-orbits or dense in $\mathcal C_\La^*$.

\item  There are at most countably many closed $\Gamma$-orbits in $ {\cal C}_\La^*$.
\item  If $\Ga$ is rigid, then any $\Gamma$-invariant subset of $\mathcal C_\La^*$ is either a finite union of closed
$\Gamma$-orbits or dense in $\mathcal C_\Lambda$.
\end{enumerate}
\end{theorem}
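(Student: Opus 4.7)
The plan is to translate the question into homogeneous dynamics on the frame bundle $\Ga\ba G/M$, where $G=\PSL_2(\c)$, $H\cong \PSL_2(\br)$ is the stabilizer of a fixed geodesic plane, and $M$ rotates frames about the normal direction. Under the identification $C\leftrightarrow C^\dagger$ of circles with the geodesic planes they bound, $\Ga$-orbits on $\cal C$ correspond to $H$-orbits on $\Ga\ba G/M$; circles in $\cal C_\La^*$ correspond to $H$-orbits whose plane meets $\hull \La$ and whose boundary circle separates $\La$ into two arcs each meeting $\La$. In this language, the problem is a classification of $H$-orbit closures in the appropriate portion of $\RFM$.

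Part (3) will fall out of Theorem \ref{mmo1} almost directly. Given a $\Ga$-invariant $E\subset \cal C_\La^*$ with $\Ga$ rigid acylindrical, each orbit $\Ga C\subset E$ is either closed or dense in $\cal C_\La$; if any one of them is dense, then $\overline{E}\supset \cal C_\La$, yielding the second alternative. Otherwise $E$ consists entirely of closed orbits, and I would upgrade countability to finiteness by a non-accumulation argument: a convergent sequence of distinct closed orbits $\Ga C_n\to C$ would, by the dichotomy applied to $\Ga C$, either produce a dense orbit inside $\overline{E}$ (giving the second alternative again) or a closed limit $\Ga C$; in the latter case the local isolation of closed $H$-orbits in $\Ga\ba G$, coming from the finiteness of the BMS measure and the positivity of the skinning measure $\mathsf{sk}(C)$, forces $\Ga C_n=\Ga C$ for $n$ large.

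For parts (1) and (2) in the general acylindrical case, I would \emph{not} try to transport the rigid classification through the Thurston--McMullen quasiconformal conjugacy $\Ga=f\Ga_0 f^{-1}$, since $f$ does not send circles to circles. Instead I would carry out a direct analysis of a one-parameter unipotent subgroup $U<H$ acting on the separating locus in $\RFM$: first classify $U$-minimal closed subsets, then upgrade to $H$-orbit closures via translation along the centralizer direction in $H$. Countability in (2) is independent of this dynamical analysis: each closed $\Ga$-orbit in $\cal C_\La^*$ determines a $\Ga$-conjugacy class of finitely generated Fuchsian subgroups $\Stab_\Ga(C^\dagger)<\Ga$, and since $\Ga$ is finitely generated it has at most countably many finitely generated subgroups up to conjugation.

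The main obstacle will be the unipotent topological rigidity on the separating locus for the non-rigid acylindrical setting. In the rigid case \cite{MMO_inventiones}, the complementary components of $\La$ are round disks, which provide clean geometric barriers that any $U$-minimal closed set must either avoid or cross, forcing the dichotomy. In the general acylindrical case, the components form Jordan-disk trees $T_\ell$ whose boundaries can be fractal, and one must rule out intermediate $U$-invariant closed sets that insinuate themselves along these tree boundaries. Overcoming this will require refined estimates on the behavior of the Patterson--Sullivan measure near the trees $T_\ell$, the uniqueness of the conformal density of dimension $\delta$ from Theorem \ref{psm}, and a version of Ratner--Shah unipotent rigidity adapted to the infinite-volume, acylindrical geometry.
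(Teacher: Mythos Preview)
Your translation of the problem into $\PSL_2(\br)$-orbits on $\Ga\ba G$ and the intent to argue via a one-parameter unipotent $U<\PSL_2(\br)$ is correct and matches the paper's framework. However, the actual mechanism you propose for the non-rigid acylindrical case is not the one that works, and your finiteness argument contains an error.

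\textbf{The missing mechanism.} For parts (1)--(2) you plan to ``rule out intermediate $U$-invariant closed sets'' using ``refined estimates on the Patterson--Sullivan measure near the trees $T_\ell$'' and the uniqueness of the conformal density. This is not how the argument goes, and I do not see how PS-measure estimates alone would suffice. The key input, as the paper explains in \S3.6, is the \emph{positive modulus property} of the acylindrical limit set: $\inf_{\ell\ne k}\operatorname{mod}\bigl(\hc-(\overline T_\ell\cup\overline T_k)\bigr)>0$. This purely quasiconformal fact implies that every separating circle $C$ meets $\Lambda$ in a set containing a \emph{uniformly perfect} Cantor subset, with uniformity constants independent of $C$. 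From this one builds a fixed compact set $\mathcal R\subset\Ga\ba G$ to which every relevant $U$-orbit has \emph{thick recurrence}: for each $x\in\mathcal R$ there is $\kappa>1$ such that for all $s>0$ some $t$ with $s<|t|<\kappa s$ satisfies $xu_t\in\mathcal R$. Thick recurrence is the replacement for the finite-measure recurrence that drives Ratner's argument, and it is what allows the Margulis--Dani--Shah drift machinery to run. Without identifying this mechanism, your proposal for the general acylindrical case has no engine.

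\textbf{An error in the finiteness step.} Your claim that closed $H$-orbits are ``locally isolated'' because of ``finiteness of the BMS measure and positivity of the skinning measure $\mathsf{sk}(C)$'' is false. Closed $H$-orbits are \emph{not} isolated; indeed, part (1) of the theorem says exactly the opposite---any infinite collection of them is dense in $\cal C_\La^*$. The skinning measure is irrelevant here. The correct argument that an accumulation of distinct closed orbits forces density is again a unipotent one: if $C_n\to C$ lie on distinct closed $H$-orbits, then nearby points on different orbits produce, via the $U$-shearing mechanism and thick recurrence, additional invariance in the closure $\overline{\bigcup_n \Ga C_n}$ beyond $H$, forcing it to be all of $\cal C_\La^*$.

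\textbf{A gap in the countability argument.} Your argument for (2) needs that distinct closed orbits $\Ga C$ in $\cal C_\La^*$ yield distinct conjugacy classes of stabilizers. This fails if $\Stab_\Ga(C^\dagger)$ is elementary (many circles share the trivial stabilizer). You must first show that for any closed $\Ga C$ with $C\in\cal C_\La^*$, the stabilizer is a non-elementary Fuchsian group---equivalently, that its limit set equals $C\cap\Lambda$---which is itself a consequence of the thick-recurrence analysis above.
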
Note that (1) not only includes the closed-or-dense dichotomy for individual orbits
$\Ga C\subset \cal C_\La^*$, but also implies that any infinite collection of closed $\Ga$-orbits is automatically dense in $\cal C_\La^*$.
  
\begin{remark} When $\Ga$ is not rigid, the closedness of $\Ga C$ in $\cal C_\La^*$ does not necessarily imply that it is closed in $\cal C_\La$. Indeed,
Zhang \cite{Yong_exotic} constructed a convex cocompact acylindrical group $\Ga$ and a circle $C\in \cal C_\La^*$ such that $\Ga C$ is closed in $\cal C_\La^*$ but accumulates in $\cal C_\La$.
This reflects the phenomenon that while the interior of $\core \cM$ behaves more like a homogeneous space, the interior together with its boundary is not. 
\end{remark}

 We also remark that possibly except for finitely many $\G$-orbits, all circles in $\cal C_\La$ meet $\Omega$:
\begin{theorem} [\cite{MMO_inventiones}, \cite{BO_etds}]\label{finite}If $\Ga$ is a geometrically finite non-lattice Kleinian group, 
then the set $\{C\in \cal C: C\subset \La\}$ consists of finitely many closed $\Ga$-orbits.
\end{theorem}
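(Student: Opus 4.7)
The statement splits into two claims: \textbf{(I)} every $\Ga$-orbit $\Ga C$ with $C\subset \La$ is closed in $\cal C$, and \textbf{(II)} there are only finitely many such orbits. My plan for (I) proceeds via the equivalence recalled in Section~\ref{s:cc}: $\Ga C$ is closed in $\cal C$ if and only if the induced immersion $H\ba P\to \cM$ is proper, where $P:=C^\dagger$ and $H:=\Stab_\Ga(P)$. Since $\partial_\infty P=C\subset\La$, we have $P\subset \hull(\La)$, so the image of $P$ in $\cM$ lies in $\core\cM$. The key lemma I would establish is that $H$ is a lattice in $\Stab_G(P)\simeq \PSL_2(\br)$. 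Assuming for contradiction that $H$ is not a lattice, $\La_H\subsetneq C$, and we pick $\xi\in C\setminus \La_H$. Since $\xi\in\La_\Ga$, the geodesic ray in $P\simeq \bH^2$ toward $\xi$ is recurrent to a compact set of $\cM$ (immediate if $\xi$ is radial; otherwise reduce to this case by excising the standard cusp neighborhood around $\xi$, using geometric finiteness). Meanwhile, this same ray exits into a funnel of $H\ba P$, since $\xi\notin \La_H$. Reconciling these two facts yields a sequence $\ga_n\in \Ga\setminus H$ with $\ga_n P$ converging to $P$ in $\cal C$, contradicting the discreteness of $\Ga$ in $G$. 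Hence $H$ is a lattice, $H\ba P$ has finite area, and its cusps are carried by cusps of $\cM$; properness of $H\ba P\to \cM$, and therefore closedness of $\Ga C$, follow.

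For (II), each closed orbit $\Ga C$ with $C\subset\La$ corresponds to a properly immersed finite-area totally geodesic surface $S_C\to \cM$ contained in $\core\cM$. Either $S_C$ is a component of $\partial\core\cM$ (finitely many, by geometric finiteness) or $S_C$ lies in $\inte(\core\cM)$. In the latter case, $S_C$ is $\pi_1$-injective and corresponds to an incompressible, $\partial$-incompressible surface in the compact core $N$ of $\cM$ after truncating cusps. Kneser--Haken finiteness bounds the number of isotopy classes of such surfaces in $N$. Within each isotopy class, the $\Ga$-orbits of totally geodesic representatives are finite in number (a geodesic plane is determined by its circle at infinity, and for $\delta<2$ only finitely many Fuchsian conjugacy classes of a fixed topological type can be realized geodesically). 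Summing over the finitely many classes gives finiteness.

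The hardest step is the funnel-versus-recurrence dichotomy in (I): turning the qualitative tension between simultaneous recurrence in $\cM$ and escape to a funnel in $H\ba P$ into a quantitative sequence $\ga_n\in\Ga\setminus H$ with $\ga_n P\to P$. This requires careful use of the thick-thin decomposition of $\cM$, conicality of most points of $\La$, and the inclusion $P\subset \hull(\La)$, which together ensure that the returns of the ray into a fixed compact subset of $\cM$ correspond to $\Ga$-translates $\ga_n^{-1}(P)$ whose intersection with $P$ occurs at unbounded distance in $H\ba P$, forcing $\ga_n\notin H$ for infinitely many $n$ while $\ga_n P$ accumulates on $P$. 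Once this lemma is established, the remaining parts of (I) are routine, and (II) reduces to standard three-manifold topology combined with rigidity of Fuchsian subgroups in $\Ga$.
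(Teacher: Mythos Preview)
The paper is a survey and does not itself prove Theorem~\ref{finite}; it is quoted from \cite{MMO_inventiones} and \cite{BO_etds}. There is therefore no in-paper proof to compare against, so I assess your argument directly.

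Part (I) has a genuine gap at the punchline. You claim that producing $\gamma_n\in\Gamma\setminus H$ with $\gamma_n P\to P$ ``contradict[s] the discreteness of $\Gamma$ in $G$.'' It does not: since $\Stab_G(P)\cong\PSL_2(\br)$ is non-compact, convergence $\gamma_n P\to P$ in $G/\Stab_G(P)$ only yields $\gamma_n=u_n s_n$ with $u_n\to e$ and $s_n\in\Stab_G(P)$ possibly escaping to infinity; nothing forces $\gamma_n$ near the identity. In fact, what your recurrence-versus-funnel tension actually produces, if anything, is that the orbit $\Gamma C$ accumulates on itself in $\cal C$---precisely the \emph{negation} of the closedness you are trying to establish---so the proof-by-contradiction structure collapses. (Separately, the recurrence only shows that translates $\gamma_n P$ meet a fixed compact set in $\bH^3$; it does not pin down the limiting plane as $P$ itself rather than some other $Q$.) A valid route to $\La_H=C$ must invoke geometric finiteness of $\Gamma$ differently, for instance by transferring the conical/bounded-parabolic dichotomy for points of $\La$ to $H$ along the inclusion $P\subset\hull\La$.

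Part (II) is also incomplete. Kneser--Haken bounds the size of a \emph{disjoint} family of pairwise non-parallel incompressible surfaces in $N$; it does not bound the number of isotopy classes, and compact irreducible $3$-manifolds can contain infinitely many such classes. The assertion that ``for $\delta<2$ only finitely many Fuchsian conjugacy classes of a fixed topological type can be realized geodesically'' is not justified, and I do not see why it should hold. A correct finiteness argument needs a different mechanism---in the cited references it is dynamical, exploiting that the relevant closed $\PSL_2(\br)$-orbits all lie over the compact set $\RFM$ (modulo cusps) together with an isolation/non-accumulation property for such orbits there.
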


\subsection{Conjecture on orbit closures on Apollonian circle packing} The Apollonian group (the symmetry group of an Apollonian circle packing) almost falls into this framework but just misses, because its compact core is a genus-two handlebody and hence the boundary is compressible. Therefore it is not acylindrical.
We propose the following:
\begin{conjecture}[Orbit-closure dichotomy for Apollonian circle packings]
Let $\mathcal P$ be an Apollonian circle packing with symmetry group $\Gamma<\PSL_2(\mathbb C)$. For any circle $C\in \mathcal C_\Lambda^*$, the orbit $\Gamma C$ is either closed or dense in $\mathcal C_\Lambda$.
\end{conjecture}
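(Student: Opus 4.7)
The plan is to follow the strategy of \cite{MMO_duke} and \cite{BO_etds}: lift the question to the frame bundle $\Ga\ba G$ and reduce it to orbit-closure analysis for the action of $H=\PSL_2(\R)$, where $H$ is the stabilizer of a fixed circle in the identification $\cal C = G/H$. A circle $C\in\cal C_\La^*$ corresponds to an $H$-orbit $gH\subset G$ whose associated geodesic plane $C^\dagger$ projects into $\cM$ meeting $\core\cM$ from both sides. Assume for contradiction that $\ov{\Ga C}$ is neither closed nor dense in $\cal C_\La$; equivalently, $\ov{\Ga gH}$ is a proper $H$-invariant closed subset of $\Ga\ba G$ strictly larger than $\Ga gH$, and the goal is to derive a contradiction.

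First I would invoke local mixing of the frame flow, together with the finiteness $|m^{\BMS}|<\infty$, to produce recurrences of the horocyclic subgroup $U<H$ inside the BMS-support of $\Ga\ba G$. These recurrences should yield an extra direction $v\in G\setminus H$ with $\ov{\Ga gH}\cdot v\subset \ov{\Ga gH}$; in the acylindrical setting of Theorem \ref{c2}, the incompressibility of the compact-core boundary forces $v$ to generate transverse motion, and density follows by iteration. A supplementary ingredient would be the equidistribution of orthogonal translates of $C^\dagger$ under the geodesic flow, governed by the skinning measure $\mu^{\mathsf{sk}}_{C^\dagger}$; since $C\in\cal C_\La^*$, this measure ought to have full support on the part of $\RFM$ relevant to $C^\dagger$, ensuring that the translates fill out all separating planes in the absence of a closed orbit.

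The main obstacle will be the compressibility of the boundary: the Apollonian compact core is a genus-two handlebody, and the essential disks it admits could allow the recurrence direction $v$ to land in the normalizer of $H$ rather than furnish genuinely new transversals. This is precisely the mechanism that permits the ``exotic'' intermediate closures constructed by Zhang \cite{Yong_exotic} outside $\cal C_\La^*$ in the acylindrical setting, and ruling out their analogues inside $\cal C_\La^*$ here is the essential new difficulty. Overcoming it will likely require structure specific to Apollonian packings beyond geometric finiteness -- for instance, the arithmeticity of $\Ga$ (commensurable with a subgroup of the integral orthogonal group of a quadratic form of signature $(3,1)$) and the additional inversive symmetries provided by the super-Apollonian group of Graham--Lagarias--Mallows--Wilks--Yan. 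These should supply the transversal displacements that the compressibility argument fails to produce, and together with a careful analysis of the dense tangency combinatorics of $\cal P$ (every circle is tangent to infinitely many others), they ought to close the iteration and force density in $\cal C_\La$.
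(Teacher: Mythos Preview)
The statement you are attempting to prove is presented in the paper as an open \emph{conjecture}, not a theorem; the paper offers no proof, precisely because the Apollonian group fails the acylindricity hypothesis on which Theorems~\ref{mmo1} and~\ref{c2} rest. There is thus no ``paper's own proof'' to compare against, and your proposal should be read as a research outline rather than a verification of an existing argument.

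On the substance, your outline conflates two distinct dynamical mechanisms from different parts of the paper. Local mixing of the frame flow (Theorem~\ref{win}) and skinning-measure equidistribution are the engines behind the \emph{counting} results of section~\ref{s:cc}; they play no role in the orbit-closure classification of section~\ref{s:mmo}. The latter rests instead on \emph{thick recurrence} of one-parameter unipotent orbits to a fixed compact set $\cal R\subset\Ga\ba G$, and the construction of $\cal R$ hinges on the positive-modulus property of the limit set---equivalently, on every separating circular slice $C\cap\La$ containing a uniformly perfect Cantor subset. For the Apollonian packing this property fails outright: the components of $\Omega$ are mutually tangent, the relevant moduli collapse to zero, and no recurrence set of the required type is available by the known construction. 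You correctly flag compressibility of the genus-two handlebody core as the obstacle, but your proposed remedies---arithmeticity of $\Ga$, the super-Apollonian group, the tangency combinatorics---remain purely heuristic: you give no mechanism by which any of them supplies the missing transversal displacements or restores a usable recurrence set. As written, the first analytic step (deriving unipotent recurrence from local mixing and finiteness of $|m^{\BMS}|$) is already incorrect, and the decisive difficulty is acknowledged rather than resolved.
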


Since Apollonian circle packings are among the most classical and visually striking examples of circle packings, this is a particularly natural and compelling problem.

\subsection{Topological rigidity of geodesic planes} Recall that any geodesic plane in $\bH^3$ is uniquely determined by a circle in $\hc$ and vice versa (see Figure \ref{geop}).
Via this correspondence,  Theorem \ref{c2} yields the topological rigidity of geodesic planes in the hyperbolic manifold $\cM=\Ga\ba \bH^3$. 
An immersed geodesic plane $P$ in $\cM$ is the image of a geodesic plane in $\bH^3$ under the quotient map $\bH^3\to \Ga\ba \bH^3$. Denote by $\cM^*$ the interior of the convex core of $\cM$.
A circle $C$ separates $\La$ if and only if the corresponding geodesic plane $P$ intersects $\cM^*$.
Theorems \ref{mmo1} and \ref{c2} imply the following:

\begin{theorem}[\cite{MMO_inventiones}, \cite{MMO_duke}, \cite{BO_etds}]
\label{m2} Let $\cM$ be a geometrically finite  acylindrical  hyperbolic $3$-manifold.
Then 
\begin{enumerate}
    \item  Any geodesic plane $P$  intersecting $\cM^{*}$ is either closed or dense in $\cM^{*}$. 
\item There are at most countably many properly immersed geodesic planes intersecting $\cM^*$.
\item If $\cM$ is rigid, 
then any geodesic plane $P$ intersecting $\cM^{*}$ is either closed or dense in $\cM$.
\end{enumerate}
\end{theorem}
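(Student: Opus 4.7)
The plan is to translate Theorem \ref{c2} directly into geometric language via the canonical bijection $C \leftrightarrow C^\dagger$ between circles in $\hc$ and geodesic planes in $\bH^3$, where $C^\dagger$ is the hemisphere bounded by $C$. Under the quotient map $\pi : \bH^3 \to \cM$, each immersed plane $P$ in $\cM$ is the image $\pi(C^\dagger)$ of some such $C^\dagger$, and the space of immersed geodesic planes identifies with $\Ga\ba\cal C$. Two compatibilities make the translation work: (a) $C$ lies in $\cal C_\La^*$ iff $C^\dagger$ meets $\inte\hu(\La)$, iff the plane $P$ meets $\cM^*$; and (b) Hausdorff convergence $\ga_n C \to C'$ in $\cal C$ is equivalent to Hausdorff convergence $\ga_n C^\dagger \to C'^\dagger$ on compacta in $\bH^3$. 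Combined, (a) and (b) yield that the closure of $\Ga C$ in $\cal C_\La^*$ corresponds under $\pi$ to the closure of $P$ in $\cM^*$.

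Given this dictionary, all three parts follow from their counterparts in Theorem \ref{c2}. For part (1), if $\Ga C$ is dense in $\cal C_\La^*$, then for any $\tilde x \in \inte\hu(\La)$ I can pick some $C' \in \cal C_\La^*$ with $\tilde x \in C'^\dagger$ (available since $\inte\hu(\La)$ is open), and approximating $C'$ by $\ga_n C$ produces points on $\ga_n C^\dagger$ converging to $\tilde x$, giving density of $P$ in $\cM^*$. Conversely, closedness of the single orbit $\Ga C$ in $\cal C_\La^*$ translates, via a compactness argument on the Grassmannian of $2$-planes through an accumulation point, into properness of $\Stab_\Ga(C^\dagger)\ba(C^\dagger\cap\pi^{-1}(\cM^*))\to\cM^*$, so $P$ is closed in $\cM^*$. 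Part (2) is immediate from Theorem \ref{c2}(2), since properly immersed planes meeting $\cM^*$ correspond bijectively to closed $\Ga$-orbits in $\cal C_\La^*$. For part (3), in the rigid case Theorem \ref{c2}(3) upgrades the dense alternative to density in all of $\cal C_\La$; since every point $\tilde x\in\bH^3$ lies on some plane whose boundary circle meets $\La$ (pick any $\xi\in\La$ and any plane containing the geodesic from $\tilde x$ to $\xi$), this upgraded density translates to density of $P$ in all of $\cM$.

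The main subtlety, rather than a genuine obstacle, is the distinction between closedness in $\cal C_\La^*$ and in $\cal C_\La$, equivalently between closedness of $P$ in $\cM^*$ and in $\cM$. Zhang's exotic example from the remark above shows that for non-rigid acylindrical $\Ga$, an orbit $\Ga C$ can be closed in $\cal C_\La^*$ yet accumulate in $\cal C_\La\setminus\cal C_\La^*$; the corresponding plane $P$ is then closed in $\cM^*$ but not in $\cM$. This is precisely why (1) and (2) are phrased relative to $\cM^*$, while (3) — which invokes the sharper rigid classification — is able to upgrade to $\cM$. Apart from this bookkeeping, the proof carries no essential new content beyond Theorem \ref{c2}; the analytic and dynamical substance is entirely absorbed there.
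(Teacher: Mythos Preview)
Your proposal is correct and follows essentially the same approach as the paper, which simply states that Theorem \ref{m2} is a direct consequence of Theorems \ref{mmo1} and \ref{c2} via the circle--plane correspondence, using that $C\in\cal C_\La^*$ iff the associated plane meets $\cM^*$. One minor imprecision: in part (2) you claim a bijection between properly immersed planes meeting $\cM^*$ and closed $\Ga$-orbits in $\cal C_\La^*$, but Zhang's example shows closed-in-$\cal C_\La^*$ need not imply closed-in-$\cal C$; however, the needed injection (properly immersed $\Rightarrow$ closed in $\cal C$ $\Rightarrow$ closed in $\cal C_\La^*$) suffices for countability, so the conclusion stands.
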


\subsection{Thick recurrence of unipotent flows behind geodesic plane rigidity} We now explain the dynamical mechanism underlying geodesic plane rigidity for geometrically finite acylindrical groups. To bring in homogeneous dynamics, we pass to the frame bundle
$$\op{ F}(\cM)=\Ga\ba \PSL_2(\c) .$$
Any geodesic plane in $\cM$ is the image of a $\PSL_2(\br)$-orbit in $\Gamma\backslash \PSL_2(\c)$ under the basepoint projection $\mathcal F(\cM)\to \cM$. Hence, the classification of closures of geodesic planes follows from the classification of the closures of ${\PSL_2(\br)}$-orbits in $\Gamma\backslash \PSL_2(\c)$. A pivotal point is that $\PSL_2(\R)$ is generated by unipotent subgroups (the strict upper and lower triangular subgroups).
Recall that a matrix is unipotent if all of its eigenvalues are equal to $1$, and that unipotent subgroups are those consisting entirely of unipotent elements.

When $\Ga\ba \PSL_2(\c)$ has finite volume, any $\PSL_2(\R)$-orbit is either
closed or dense.  This is a special case of Ratner's theorem (Theorem \ref{ra}) proved using unipotent dynamics. The main difficulty in the infinite-volume case is the lack of recurrence of unipotent orbits: without recurrence, orbits rarely return to compact sets, and accumulation behavior becomes hard to capture.

The key to carrying out homogeneous dynamics in the acylindrical case is the construction of a compact subset $\mathcal R\subset \Gamma\backslash G$ such that for any $x\in \mathcal R$, the orbit $xU$ of a one-parameter unipotent subgroup $U=\{u_t:t\in \br\}<\PSL_2(\R)$ has "thick recurrence". That is, there exists $\kappa>1$ such that for any $s>0$, one can find $t\in \br $ with $s<|t|<\kappa s$ satisfying $xu_t\in \mathcal R$. Moreover, every $\PSL_2(\R)$-orbit corresponding to a geodesic plane intersecting $\cM^*$ arises from some $x\in\mathcal R$. The construction of $\cal R$ relies on the {\emph{positive modulus property}} of the Sierpi\'nski limit set:  $\inf_{\ell\ne k} \text{mod}(\hc- (\overline T_\ell \cup \overline T_k))>0;$
  where, for an annulus $A$, the $\text{mod} (A)=\log r$ if $A$ is conformally equivalent to the round annulus  $\{z\in \c: 1<|z|<r\}$.
This condition is used to show that every separating circle $C$ has a {\it thick} circular slice  $C\cap \Lambda$, in the sense that
$C\cap \La$ contains a uniformly perfect Cantor subset\footnote{That is, a Cantor set with no gaps that are disproportionately large at any scale.}, even though $C\cap \La$ itself need not be uniformly perfect.
Morally speaking, $\cal R$ consists of all frames connecting these uniformly perfect Cantor subsets of circular slices, modulo certain horoballs. 
The link between thick circular slices and thick recurrence of unipotent flows stems from the fact that the visual images of horocycles on $\hc$ are circles.

 \begin{figure}[htbp] \begin{center}
   \includegraphics [height=3.5cm] {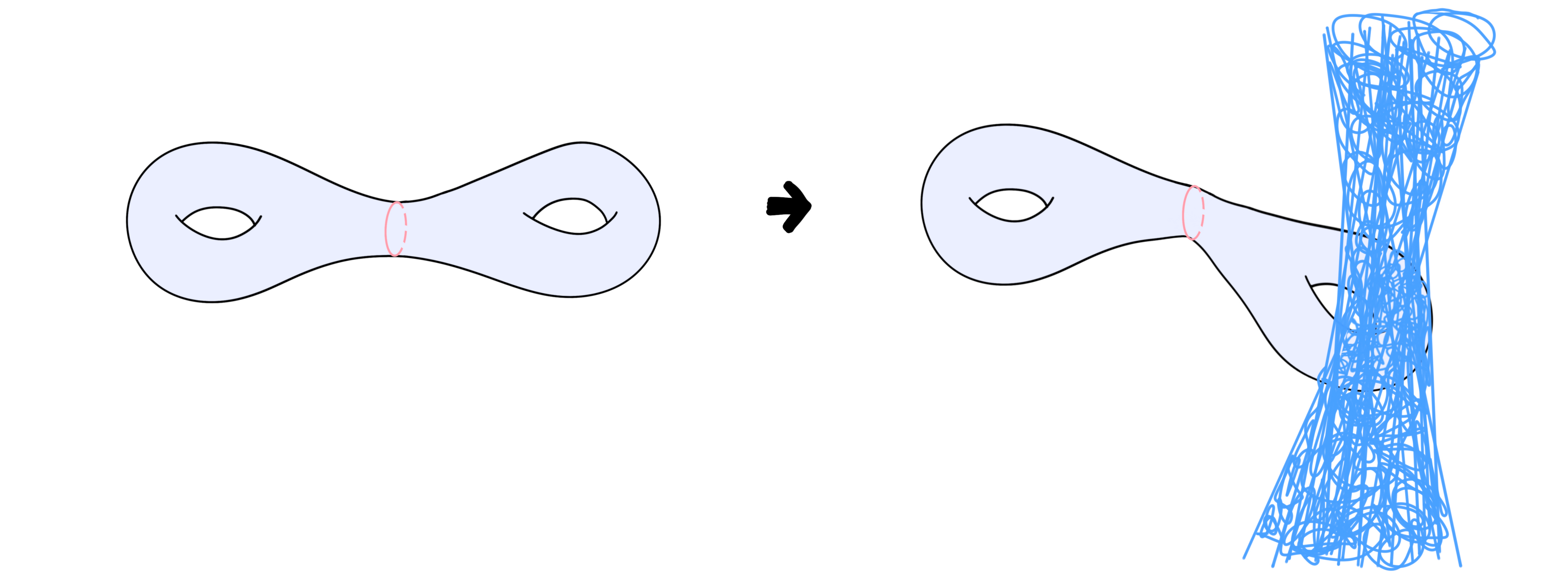}  
   \end{center}
     \caption{Bending and chaotic geodesic plane}\label{bend}
  \end{figure} 

\subsection{Examples of chaotic geodesic planes in quasi-Fuchsian manifolds}\label{s:chaos}  Indeed, the rigidity of geodesic planes does not hold for general geometrically finite hyperbolic $3$-manifolds when there are circular slices of $\La$ that fail to be thick enough.
A Kleinian group $\Gamma$ is called {\it quasi-Fuchsian} if it is a quasiconformal deformation of a (Fuchsian) lattice of $\PSL_2(\br)$; its limit set $\Lambda$ is a Jordan circle (see
the first image in Figure \ref{limitex}).
In \cite{MMO_inventiones}, it was shown that many quasi-Fuchsian $3$-manifolds obtained via
 bending constructions contain geodesic planes whose closures are diffeomorphic to the closure of a geodesic in a closed hyperbolic surface times the real line. Because geodesic closures in closed hyperbolic surfaces are known to be chaotic, the corresponding geodesic planes in these hyperbolic $3$-manifolds likewise exhibit chaotic behavior (see Figure \ref{bend}).

\section{Orbit closures in higher dimensions}
\subsection{Ratner's theorem in finite volume}
In this section, we consider higher dimensional analogues of convex cocompact rigid acylindrical hyperbolic $3$-manifolds and present rigidity results for geodesic planes and horocycles.
We begin by recalling Ratner's theorem on orbit closures of subgroups generated by unipotent elements, which was a conjecture of Raghunathan.
\begin{theorem}[\cite{ratner_top}]\label{ra}
    Let $G$ be a connected linear Lie group and $\Ga<G$ a lattice.
    Let $U<G$ be a connected closed subgroup generated by unipotent elements. Then for any $x\in \Ga\ba G$, the closure of $xU$ is homogeneous:
    $$\overline{xU} =xL $$
    for some connected Lie subgroup $L<G$ containing $U$.
\end{theorem}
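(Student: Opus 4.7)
The plan is to follow Ratner's original strategy: first classify all $U$-invariant ergodic probability measures on $\G\ba G$, and then deduce the orbit-closure statement. I would proceed in three stages.

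\textbf{Stage 1 (Measure classification).} First reduce to the case where $U=\{u_t\}_{t\in\R}$ is a one-parameter unipotent subgroup, and let $\mu$ be a $U$-invariant ergodic probability measure on $\G\ba G$. The goal is to show that $\mu$ is algebraic: there exist $x_0\in \G\ba G$ and a closed connected subgroup $L\supseteq U$ such that $x_0L$ is closed, carries a finite $L$-invariant measure, and $\mu$ is exactly that measure. The engine of the proof is polynomial divergence of unipotent orbits: for two nearby $\mu$-generic points $y=xg$, the relative drift $u_t^{-1}gu_t$ grows polynomially in $t$, and its leading coefficient lies in a prescribed subspace determined by $g$. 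Comparing Birkhoff averages along $xu_t$ and $yu_t$ of a uniformly continuous test function and exploiting this polynomial drift produces a new direction under which $\mu$ must be invariant. Iterating this enlargement of the stabilizer of $\mu$ yields $L$, and a closing argument identifies $\mu$ with the $L$-invariant measure on a single closed $L$-orbit.

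\textbf{Stage 2 (Individual equidistribution).} From the measure classification I would deduce Ratner's equidistribution theorem: for every $x\in \G\ba G$ and every $f\in C_c(\G\ba G)$, the time averages $\tfrac{1}{T}\int_0^T f(xu_t)\,dt$ converge to $\int f\,d\mu_x$, where $\mu_x$ is an algebraic measure whose support equals $\overline{xU}$. The delicate point is upgrading from ``for $\mu$-almost every $x$'' to ``for every $x$''; this relies on the Dani-Margulis non-divergence estimates, which rule out escape of unipotent orbits to infinity in $\G\ba G$, together with a continuity/compactness argument on the space of algebraic measures in the style of Mozes-Shah.

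\textbf{Stage 3 (General $U$).} For a connected closed subgroup $U$ generated by unipotents, choose a finite collection of one-parameter unipotent subgroups $U_1,\dots,U_k$ generating $U$. Apply Stage 2 to $U_1$ to obtain $\overline{xU_1}=xL_1$; then inductively analyze the closure of $xL_1U_2$, using a Mautner-type argument to absorb each new one-parameter factor into the stabilizer. After finitely many steps, one obtains a closed connected subgroup $L\supseteq U$ with $\overline{xU}=xL$.

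The main obstacle is the polynomial-divergence step in Stage 1: converting the polynomial drift between two nearby generic orbits into a bona fide new invariance of $\mu$, uniformly on a set of large measure and without losing control of error terms, is exactly what requires the full strength of Ratner's H-property and her careful transversal analysis. Once this measure-classification engine is in place, Stages 2 and 3 are comparatively routine, modulo the crucial non-divergence input in Stage 2.
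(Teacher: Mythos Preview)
The paper does not prove this theorem at all; it is quoted as background with a citation to Ratner's original paper \cite{ratner_top}, and the surrounding text only remarks that the special case $G=\PSL_2(\c)$, $U=\PSL_2(\br)$ was also obtained by Shah following Margulis and Dani--Margulis. There is therefore no in-paper proof to compare your proposal against.

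As a summary of Ratner's actual strategy your three-stage outline is broadly accurate: measure classification for one-parameter unipotent flows via polynomial divergence and the H-property, upgrade to pointwise equidistribution using Dani--Margulis non-divergence, then bootstrap to general $U$ generated by unipotents. You correctly identify Stage~1 as the heart of the matter. That said, what you have written is a roadmap rather than a proof: the phrases ``iterating this enlargement of the stabilizer'' and ``a closing argument identifies $\mu$'' hide exactly the arguments that occupy the bulk of Ratner's papers, and your Stage~3 induction is more delicate than you suggest (one must control how the closed orbit $xL_1$ interacts with the next $U_i$, which is where the Mozes--Shah type arguments and countability of algebraic orbits enter). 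None of this is wrong, but none of it is yet a proof either.
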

For $G=\PSL_2(\c)$, the subgroup $\PSL_2(\br)$ is a maximal connected Lie subgroup of $G$, and hence Ratner's theorem yields a closed or dense dichotomy for $\PSL_2(\br)$-orbits in $\Ga\ba \PSL_2(\c)$ when $\Ga$ is a lattice. This special case was also proved independently by Shah \cite{Shah_1991}, following earlier ideas of Margulis \cite{Margulis_Banach}, and Dani-Margulis \cite{Dani_Margulis1989} in their work on the Oppenheim conjecture on values of quadratic forms.

Classifications become more subtle as the acting subgroup $U$ becomes smaller, since more orbit-closure possibilities can arise. The most delicate and essential case of Ratner’s theorem is when $U$ is a one-parameter unipotent subgroup.
For geometrically finite rigid acylindrical hyperbolic $3$-manifolds, a classification of orbit closures for a one-parameter unipotent subgroup $U<\PSL_2(\mathbb C)$
was obtained in \cite{MMO_gafa} for convex cocompact groups and extended in \cite{kimlee} to geometrically finite groups. In dimension two, the analogous classification had already been established much earlier by Hedlund \cite{Hedlund_duke} and by Dal'bo \cite{dalbo}.

\subsection{Ratner-type rigidity in hyperbolic manifolds with Fuchsian ends} In joint work with Lee \cite{LeeOh_orbit}, we considered convex cocompact hyperbolic $d$-manifolds with Fuchsian ends for any $d\ge 4$ and proved an analogue of Ratner's theorem on their frame bundles.
Let $\bH^d$ denote the $d$-dimensional hyperbolic space with boundary $\partial \bH^d=\bS^{d-1}$, and let
$$G:=\SO(d,1)^\circ=\op{Isom}^+(\bH^d).$$
Any complete hyperbolic $d$-manifold can be written as $\cM=\Gamma\ba \bH^d$ for a torsion-free discrete subgroup $\Gamma <G$.
The limit set $\La$ of $\Gamma$ and the convex core of $\cM$ are defined exactly as in the three-dimensional case.  

 \begin{definition}
     A convex cocompact hyperbolic $d$-manifold $\cM$ is said to have {\it Fuchsian ends}
 if the boundary of its convex core is totally geodesic.
  \end{definition}
See Figure \ref{limit} for an illustration of the limit set in this setting.
\begin{figure}[htbp]   \label{limit} \begin{center}
 \includegraphics [height=4cm]{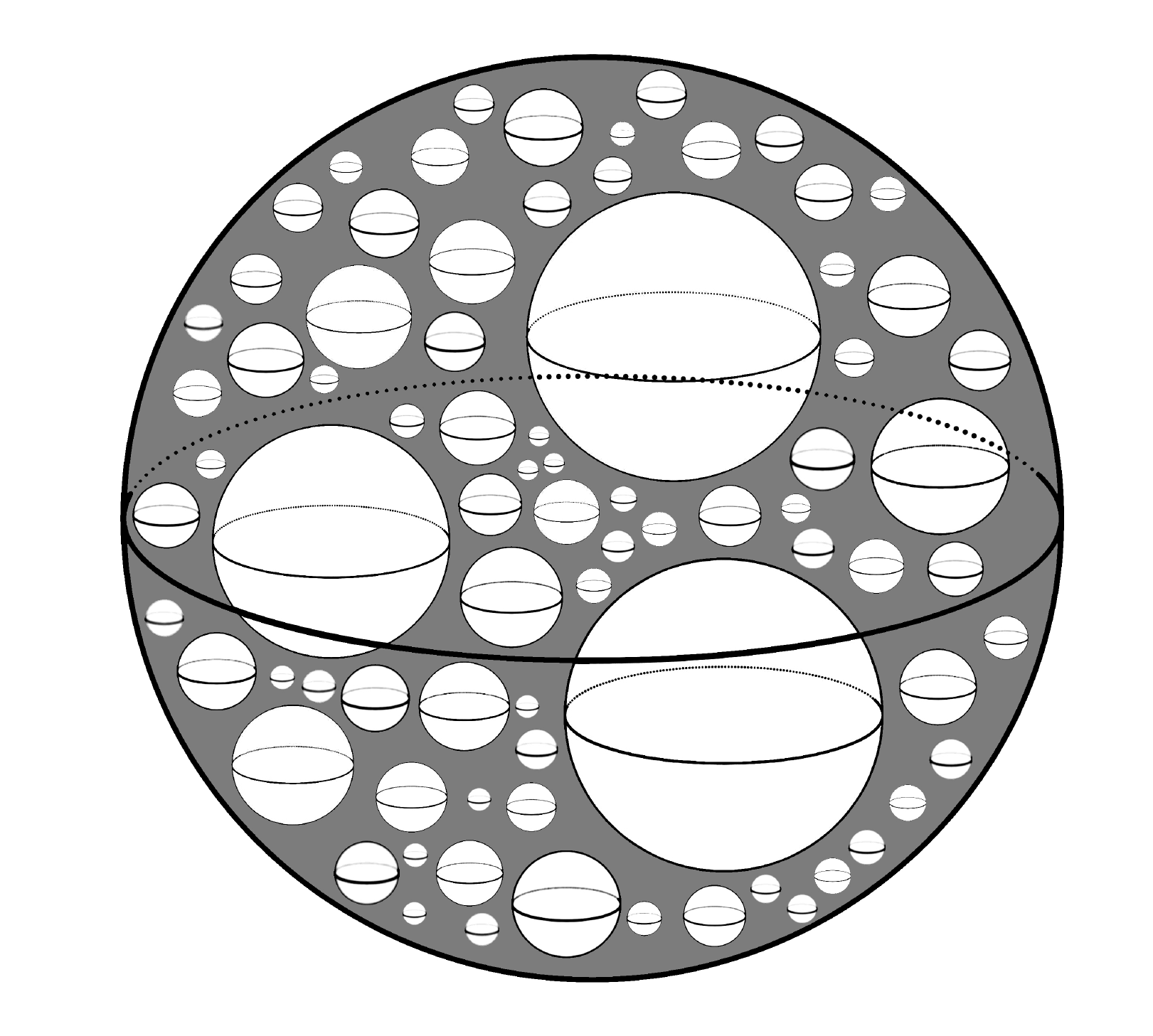}
\end{center} \caption{Limit set of a convex cocompact hyperbolic $4$-manifold with Fuchsian ends}
 \end{figure} 
 
The term {\it Fuchsian ends} reflects the fact that each component of the complement $\cM-\core (\cM)$ is diffeomorphic to the product $S\times (0,\infty)$ for some compact hyperbolic $(d-1)$-manifold $S$.  For $d=2$, every convex cocompact hyperbolic surface has Fuchsian ends. For $d=3$, these are precisely convex cocompact rigid acylindrical hyperbolic $3$-manifolds.
For $d\ge 4$, such manifolds are infinitesimally rigid, in contrast to the cases $d=2,3$: the inclusion $\Gamma\to G$ admits no non-trivial local deformations \cite{Kerckhoff_Storm}.

Let $A$ be the one parameter diagonalizable subgroup of $G$ generating the frame flow. The renormalized frame bundle of $\cM$ is the set of all  frames whose $A$-orbit connects a pair
of limit points; equivalently,
$$\RFM={\{x\in \Gamma\ba G: \text{$xA$ is bounded}\} }.$$
This set captures all the non-trivial dynamics of $A$-action on $\Ga\ba G$. The following joint work with Lee provides an analogue of Ratner's theorem:
 \begin{theorem}[\cite{LeeOh_orbit}]\label{higherratner}Let $\cM$ be a convex cocompact hyperbolic $d$-manifold with Fuchsian ends with $d\ge 2$.
 Let $U<G$ be any connected closed subgroup generated by unipotent elements, normalized by $A$.
Then for any $x\in \RFM$, the closure of $xU$ is relatively homogeneous: 
\be\label{oo} \overline{xU}\cap \RFM =xL\cap \RFM \ee 
for  a connected closed subgroup $U<L<G$ such that $xL$ is closed.
\end{theorem}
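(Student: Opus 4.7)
The plan is to adapt the strategy from the rigid acylindrical three-dimensional case (Theorems \ref{c2} and \ref{m2}) to higher dimensions, exploiting the Fuchsian-ends structure of $\cM$. The natural ambient space for the dynamics is $\RFM$, since points outside it have $A$-orbits escaping to the ends of $\cM$ and consequently their $U$-orbits wander off to infinity without capturing useful recurrence. Because $U$ is normalized by $A$, the $A$-action descends to an action on the set of $U$-orbits, which is the fundamental source of rigidity we will leverage.

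The first step is to construct a compact subset $\mathcal R \subset \RFM$ on which $U$-orbits exhibit thick recurrence, in direct analogy with the set $\mathcal R$ of Section \ref{s:mmo}: there exists $\kappa > 1$ so that for every $x \in \mathcal R$, every one-parameter unipotent subgroup $\{u_t\} \le U$, and every $s > 0$, one can find $t$ with $s \le |t| \le \kappa s$ and $xu_t \in \mathcal R$. Because $\cM$ has Fuchsian ends, the domain of discontinuity $\bS^{d-1} - \La$ is a disjoint union of open round balls $B_i$ with pairwise disjoint closures, and the boundary of $\core \cM$ consists of compact totally geodesic $(d-1)$-submanifolds. The key geometric input is a higher-dimensional positive modulus property, $\inf_{i \ne j}\operatorname{mod}(\bS^{d-1} - (\overline B_i \cup \overline B_j)) > 0$, which yields that every generalized sphere meeting $\La$ transversally contains a uniformly perfect Cantor subset; $\mathcal R$ is then assembled from frames anchored at such thick slices. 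Every $U$-orbit of a point $x \in \RFM$ enters $\mathcal R$, so it suffices to analyze $\overline{xU} \cap \RFM$ for $x \in \mathcal R$.

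The second step is a Ratner-type orbit-closure analysis using the recurrence from Step 1. Given two $U$-recurrent points in $\mathcal R$ lying in a small transversal neighborhood, polynomial divergence of $u_t$-translates, combined with the fact that $A$ normalizes $U$, yields an additional connected one-parameter subgroup leaving $\overline{xU}\cap \RFM$ invariant. Iterating this on the dimension of the stabilizing subgroup—preserving the $A$-normalized and unipotent-generated character at each stage—produces a connected closed subgroup $L$ with $U \le L \le G$ satisfying $\overline{xU}\cap \RFM = xL \cap \RFM$. Closedness of $xL$ itself is obtained by applying Ratner's measure classification to $L\cap \Ga$ and using the totally geodesic boundary of $\core \cM$ to exclude accumulation into the ends. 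The principal obstacle is the construction of $\mathcal R$ in dimensions $d \ge 4$: the positive modulus property and the consequent uniformly perfect circular slices must be shown to be compatible with every $A$-normalized unipotent direction, not merely the $\SO(2,1)$-directions that sufficed in the three-dimensional setting. A secondary obstacle is the closedness of $xL$, which ultimately depends on the infinitesimal rigidity of Fuchsian-end manifolds in dimensions $d \ge 4$ to rule out deformations of the candidate closed orbit.
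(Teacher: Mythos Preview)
Your outline captures the thick-recurrence mechanism correctly, but it misidentifies the principal obstacle and omits the genuinely new ingredient that the paper singles out. In dimension three with $U=\PSL_2(\br)$, there is no proper connected closed subgroup strictly between $U$ and $G$, so once polynomial divergence produces an additional direction the orbit closure is forced to be all of $\RFM$. In dimension $d\ge 4$ this fails: between a one-parameter unipotent $U$ and $G=\SO(d,1)^\circ$ there is a rich family of intermediate closed subgroups, and the union of the corresponding intermediate closed orbits forms a \emph{singular set}. Your Step~2, ``iterate polynomial divergence to enlarge the invariance group,'' does not by itself prevent $\overline{xU}\cap\RFM$ from accumulating entirely on that singular set; thick recurrence to $\mathcal R$ can keep the orbit returning arbitrarily close to some lower-dimensional closed $L$-orbit, so the extra direction you extract may always be tangent to a proper $L$. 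The paper emphasizes that the key technical tool here is an infinite-volume analogue of the \emph{Dani--Margulis avoidance principle}: one must show that during thick-recurrence times the unipotent orbit stays a definite distance from every compact piece of the singular set, and this is combined with an inductive scheme on the dimension of the maximal unipotent subgroup together with intermediate equidistribution statements. None of this appears in your proposal.

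Two smaller points. First, constructing $\mathcal R$ is \emph{not} the hard step you make it out to be: with Fuchsian ends the components of $\Omega$ are already round balls with disjoint closures, so the positive-modulus and uniformly-perfect-slice arguments transfer from the rigid acylindrical $3$-manifold case without serious new difficulty. Second, your route to closedness of $xL$ is off: Ratner's measure classification is a finite-volume statement and does not apply to $\Gamma\backslash G$ here, and the infinitesimal rigidity of Fuchsian-end manifolds for $d\ge 4$ plays no role in establishing that $xL$ is closed.
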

From this, one can deduce the classification of
the entire orbit closure $\overline{xU}$, not just its intersection with $\RFM$:
indeed, 
$$\overline{xU}=xL\cap \overline{(\RFM ) U},$$
and the structure of $\overline{(\RFM ) U}$ is well-understood. 

To highlight a new ingredient in the proof of Theorem \ref{higherratner} in higher dimensions, consider a one-parameter unipotent subgroup $U$, and suppose that $xU$ is not contained in any closed orbit $xL$ of a proper intermediate subgroup $U<L<G$. In this situation, one must show that ${xU}\cap \RFM$ is dense in $\RFM$. The obstruction arises because the thick recurrence of $xU$ to $\RFM$ may keep the orbit too close to the singular set, which is the union of all intermediate closed orbits. The key technical tool is an infinite-volume analogue of the {\emph{Dani-Margulis avoidance principle}} for the singular set (\cite{DM_soviet}), combined with a delicate inductive scheme that incorporates both the dimension of the maximal unipotent subgroup of the acting group and certain equidistribution statements.
Roughly speaking, the avoidance principle in this setting asserts that unipotent flows return to $\RFM$ in such a way that, during thick recurrence times, they remain at a definite distance from every compact subset of
the singular set (see \cite{LeeOh_orbit}, \cite{OhSurvey}).

\subsection{Rigidity of horocycles} As a geometric consequence of the orbit-closure classification for one dimensional unipotent subgroups, we obtain the rigidity of horocycles. A horocycle in $\cM=\Ga\ba \bH^d$ is an isometrically immersed copy of $\br$ with zero torsion and constant geodesic curvature equal to $1$. In the upper half-space model
 $$\bH^d=\{(x_1, \cdots, x_{d-1}, y): y>0\},$$
 a horocycle is either a horizontal line or a circle tangent to the boundary $y=0$, and its image in $\cal M=\Ga\ba \bH^d$ gives rise to a horocycle  in $\cM$ via the quotient map $\bH^d\to \cM$.

\begin{corollary} [\cite{LeeOh_orbit}] \label{h2} Let $\cM$ be a convex cocompact hyperbolic $d$-manifold with Fuchsian ends with $d\ge 2$.
For any horocycle $\chi \subset \cM$,
 one of the following holds:
\begin{enumerate}
\item $\chi=\overline \chi$ is properly immersed; or
\item
$\overline{\chi}$ is
  a properly immersed totally geodesic $k$-plane for some $k\ge 2$, up to tilting.
\end{enumerate}
\end{corollary}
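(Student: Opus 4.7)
A horocycle $\chi\subset\cM$ lifts to an orbit $xU\subset \FM=\Gamma\backslash G$ of a one-parameter unipotent subgroup $U=\{u_t\}<G$ whose basepoint projection parametrizes $\chi$ by arclength. Since $U$ is normalized by the frame-flow subgroup $A$, Theorem~\ref{higherratner} is applicable whenever $x\in\RFM$. My plan is to reduce Corollary~\ref{h2} to a direct application of that theorem, followed by the Lie-theoretic classification of $A$-normalized intermediate subgroups $U<L<G$ inside $G=\SO(d,1)^\circ$.

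First I would dispose of the trivial alternative: if $xU$ is closed in $\FM$ and the induced map $U/\Stab_U(x)\to\FM$ is proper, then $\chi=\overline\chi$ is properly immersed, placing us in case~(1). Suppose instead that $\chi$ accumulates. Because $\cM$ is convex cocompact it has no cusps, and the non-wandering set of the frame flow coincides with $\RFM$; any accumulation of $xU$ therefore forces a sequence $xu_{t_n}$ to converge into $\RFM$. Replacing $x$ by a suitable translate, I may assume $x\in\RFM$ from the outset. Theorem~\ref{higherratner} then supplies a connected closed subgroup $U<L<G$, with $xL$ closed in $\FM$, such that
$$\overline{xU}\cap\RFM=xL\cap\RFM,$$
and the full closure is recovered from $\overline{xU}=xL\cap\overline{(\RFM)U}$.

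The remaining task is to identify $L$ geometrically. The connected closed subgroups of $\SO(d,1)^\circ$ that are normalized by $A$ and strictly contain a one-parameter unipotent subgroup are classified: up to conjugation by the centralizer $M$ of $A$ in a maximal compact subgroup, each such $L$ is an extension of the stabilizer $\SO(k,1)^\circ$ of a totally geodesic $k$-plane (for some $2\le k\le d$) by a compact subgroup of $M$. Consequently $xL$ projects to a totally geodesic $k$-plane in $\cM$ containing $\chi$; the compact factor together with any $M$-conjugation account precisely for the phrase \emph{up to tilting}. Proper immersion of the plane follows from the closedness of $xL$ in $\FM$ combined with convex cocompactness, which ensures that the corresponding $\Stab_\Gamma(L)$-invariant geodesic $k$-plane in $\bH^d$ descends properly to $\cM$.

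The hardest step will be the geometric identification carried out in the last paragraph: translating the abstract homogeneous-dynamics output of Theorem~\ref{higherratner} into the concrete statement about totally geodesic $k$-planes up to tilting, and verifying proper immersion in each case. This rests on the Lie-theoretic classification of admissible $L$ inside $\SO(d,1)^\circ$, together with a Fuchsian-ends argument ensuring that every closed $\SO(k,1)^\circ M_k$-orbit descends to a properly immersed totally geodesic plane. By contrast, the reduction to $x\in\RFM$ in the non-proper case is comparatively routine given convex cocompactness.
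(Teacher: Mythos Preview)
Your overall strategy---reduce to Theorem~\ref{higherratner} and then interpret the resulting subgroup $L$ geometrically---matches what the paper indicates, but two steps contain genuine gaps.

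First, the reduction to $x\in\RFM$ is not justified by the argument you give. The non-wandering set of the \emph{frame flow} (the $A$-action) being $\RFM$ tells you nothing directly about where $U$-orbits accumulate; you are conflating $A$-dynamics with $U$-dynamics. A horocycle based at $\xi\in\Lambda$ may well have $xU\cap\RFM=\emptyset$: this happens, for instance, when the boundary circle of the geodesic $2$-plane containing the lifted horocycle meets $\Lambda$ only at $\xi$ (e.g.\ a circle internally tangent to some boundary sphere $\partial B_i$ at $\xi$). Even if $\overline{xU}$ meets $\RFM$ at some $y$, you only get $\overline{yU}\subset\overline{xU}$; promoting this to equality requires a separate minimality argument. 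The actual reduction requires a case analysis on the basepoint $\xi$ relative to $\Lambda$ and the components of $\Omega$, together with the explicit product structure of the Fuchsian ends.

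Second, and more seriously, your classification of the admissible $L$ and your explanation of ``tilting'' are incorrect. The connected closed $A$-normalized subgroups strictly containing $U$ are not merely compact extensions of $\SO(k,1)^\circ$: the list also includes semidirect products $\SO(k,1)^\circ\ltimes V$ with $V$ a nontrivial abelian unipotent factor, and solvable groups $A\ltimes V$ with $V\subset N$. More to the point, ``tilting'' in the paper's sense means pushing $\bH^k$ a fixed distance along a normal direction inside $\bH^{k+1}$, producing an equidistant hypersurface that is \emph{not} totally geodesic in $\bH^d$. This has nothing to do with compact factors or $M$-conjugation; it reflects the fact that the $\SO(k,1)^\circ$-orbit in $\bH^d$ through a point \emph{off} the fixed copy of $\bH^k$ is exactly such an equidistant surface. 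Your sentence ``the compact factor together with any $M$-conjugation account precisely for the phrase \emph{up to tilting}'' therefore misidentifies the mechanism, and with it the geometry of $\overline\chi$ in case~(2).
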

An immersed totally geodesic $k$-plane of $\cM$ is the image of a totally geodesic immersion $\bH^k\to \cM$.  The phrase “up to tilting” means the following: a tilted copy of $\mathbb H^k \subset \mathbb H^n$ is obtained by pushing $\mathbb H^k$ a fixed distance along its normal direction inside a $(k+1)$-dimensional hyperbolic subspace.  Tilting preserves the essential geometric features relevant to our setting: horocycles are carried to horocycles, along with their closures. It is therefore natural that Corollary \ref{h2} allows such
tilting operations.

For a more expository perspective, we refer the reader to \cite{Oh_traveler}, where the closure of a horocycle is portrayed as the journey of a traveler through the hyperbolic world, and the rigidity theorem is compared to Kronecker’s classical theorem on the closure of a line in a torus. See Figure \ref{traveler} for an illustration of this viewpoint.

\begin{figure}[htbp] \label{traveler} \begin{center}
 \includegraphics [height=5cm]{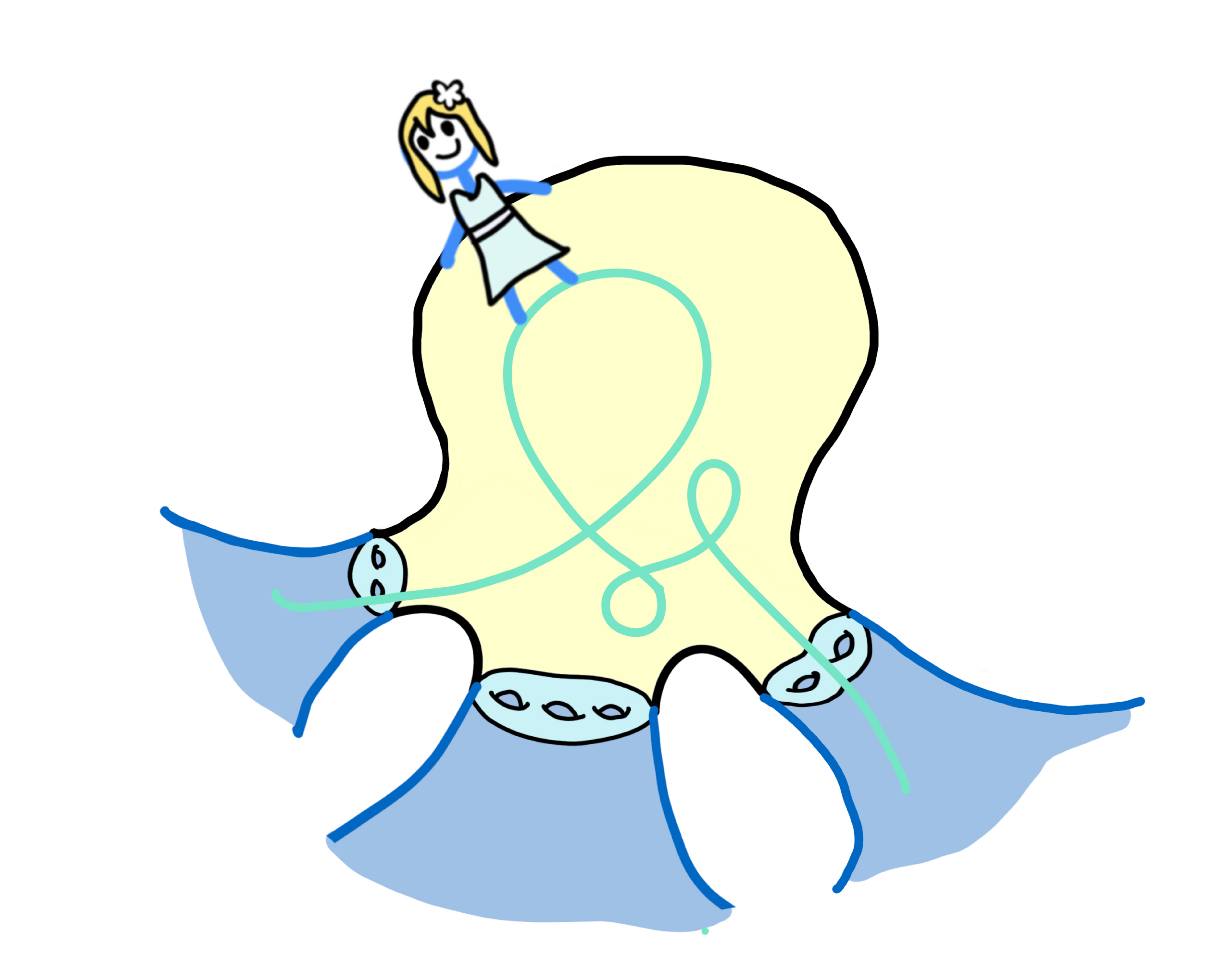}
\end{center} \caption{Sightseeing of a horocycle traveler in a hyperbolic manifold with Fuchsian ends}
 \end{figure}

\subsection{Rigidity of geodesic planes} 
The next result shows that closure rigidity extends beyond geodesic two-planes to all totally geodesic submanifolds of dimension at least two. 

\begin{corollary}[\cite{LeeOh_orbit}] \label{h1}
Let $\cM$ be a convex cocompact hyperbolic $d$-manifold with Fuchsian ends with $d\ge 3$.
Then the closure of any totally geodesic  submanifold of dimension at least $2$ intersecting $\op{core} \cM$ is a properly immersed totally geodesic submanifold.
\end{corollary}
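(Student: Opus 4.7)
The plan is to deduce Corollary~\ref{h1} from the Ratner-type rigidity in Theorem~\ref{higherratner}, applied to the $H$-orbit in the frame bundle that projects to the given totally geodesic submanifold. Let $G=\SO(d,1)^\circ$ and let $H<G$ be the identity component of the stabilizer of a fixed oriented $k$-plane $\bH^k\subset \bH^d$, so $H\simeq \SO(k,1)^\circ$. Since $k\ge 2$, the group $H$ is semisimple and generated by one-parameter unipotent subgroups, and one may arrange the Cartan $A$ of $G$ so that $A\subset H$, which guarantees that $H$ is trivially normalized by $A$. Every totally geodesic $k$-submanifold $P\subset\cM=\Ga\ba G/K$ arises as $P=\pi(xH)$ for some $x\in\Ga\ba G$, where $\pi:\Ga\ba G\to\cM$ is the basepoint projection.

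First I would verify that a frame $x\in xH\cap\RFM$ exists: since $P$ meets $\op{core}\cM$, a lift satisfies $\tilde P\cap \hull(\La)\ne\emptyset$, and convexity of $\hull(\La)$ together with the product structure of the Fuchsian ends $E_i\cong S_i\times(0,\infty)$ forces $\La\cap\partial_\infty\tilde P$ to contain at least two points; a frame along the geodesic of $\tilde P$ joining them then lies in $\RFM$. Applying Theorem~\ref{higherratner} with $U=H$ yields a connected closed subgroup $H\subseteq L\subseteq G$ with $xL$ closed in $\Ga\ba G$, $\overline{xH}\cap\RFM=xL\cap\RFM$, and $\overline{xH}=xL\cap \overline{(\RFM)H}$.

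The next step is to identify $L$ and pull back the conclusion to $\cM$. The connected closed subgroups of $G$ containing $H$ and normalized by $A$ are, modulo a compact rotation ``tilt'' factor acting on the normal directions of $H$, conjugate to $\SO(m,1)^\circ$ for some $k\le m\le d$. Because $L\supset A$, any such tilt must commute with $A$ and hence lie in the compact part of the centralizer of $A$; combined with the closedness of $xL$ in $\Ga\ba G$ and the discreteness of $\Ga$, this forces the tilt to be trivial. Thus $L$ is the identity component of the stabilizer of a totally geodesic $m$-plane, and $Q:=\pi(xL)$ is a properly immersed totally geodesic $m$-submanifold of $\cM$. Finally, the properness of $\pi$ and the fact that the Fuchsian ends confine totally geodesic submanifolds to controlled product regions identify $\pi(\overline{xH})=\overline{P}$ with $Q$.

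The principal obstacle is this tilt-ruling step, which is precisely what distinguishes Corollary~\ref{h1} from its horocycle analog Corollary~\ref{h2}, where tilting is explicitly permitted. The presence of the Cartan subgroup $A$ inside $H$ is the key structural feature that prevents rotational deformations from appearing in $L$; making this rigorous, while simultaneously verifying that the closure of $P$ in $\cM$ exactly matches $\pi(xL)$ as one passes through the product-type ends, is where the bulk of the geometric work will lie.
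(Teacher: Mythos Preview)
Your strategy---lift to the frame bundle and apply Theorem~\ref{higherratner} with $U=H\simeq\SO(k,1)^\circ$---is exactly the intended one; the paper states Corollary~\ref{h1} without proof as a direct consequence of that theorem, so there is nothing further to compare against.

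That said, your handling of the tilt is both overcomplicated and in part incorrect. The connected closed subgroups $L$ with $H\subset L\subset G$ do have the form $\SO(m,1)^\circ\cdot C$ with $C$ compact, but your claim that ``closedness of $xL$ together with discreteness of $\Gamma$ forces $C$ to be trivial'' is not valid, and in any case unnecessary: $C$ lies in the centralizer $\SO(d-m)\subset K$ of $\SO(m,1)^\circ$, hence $C\cdot o=o$ and $L\cdot o=\bH^m$, so $\pi(xL)$ is automatically a totally geodesic $m$-plane regardless of $C$. The genuine reason tilting appears in Corollary~\ref{h2} but not here is simpler than you suggest: for a one-parameter unipotent $U$, an intermediate $L\simeq\SO(m,1)^\circ$ may stabilize a copy of $\bH^m$ that does not pass through $o$, and then $L\cdot o$ is an equidistant hypersurface (a tilted plane); but once $H$ already stabilizes a plane $\bH^k\ni o$, any $L\supset H$ stabilizes some $\bH^m\supset\bH^k\ni o$, so $L\cdot o=\bH^m$ is totally geodesic with no further argument. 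The issue you flag at the end---matching $\overline P$ with $\pi(xL)$ through the Fuchsian ends---is real and is dealt with in \cite{LeeOh_orbit}, not in this survey.
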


\begin{remark} It seems unlikely that there exists an analogous class
of locally symmetric manifolds in higher rank where geodesic planes exhibit rigidity. Nevertheless, explicit higher-rank counterexamples to geodesic plane rigidity have been
 constructed only recently, in joint work with Dey \cite{Dey_Oh}, using floating geodesic planes and bulging deformations of Goldman. \end{remark}

\subsection{Orbit closure of circles in higher dimensions} Finally, to connect back with the orbit-closure problem for circles from the previous section, we include the following description of $\Gamma$-orbits of circles. As before, let $\mathcal C$ denote the space of circles in the boundary $\mathbb S^{d-1}$.

\begin{corollary}[\cite{LeeOh_orbit}] \label{sphere} 
Let $C\in \cal C$ be a circle intersecting $\La$ in more than two points.
Then there exists a $k$-dimensional sphere $S$ for some $k\ge 1$
such that
$$\cl{\Ga C}=\{D\in \cal C_\Lambda : D \subset  \Gamma S\}.$$
\end{corollary}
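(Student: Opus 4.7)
The plan is to translate the problem into a $U$-orbit problem on $\Ga\backslash G$ with $G=\SO(d,1)^\circ$, apply Theorem \ref{higherratner} to the subgroup $U=\SO(2,1)^\circ$, and read the answer back at the boundary $\bS^{d-1}$. Identify the space of circles as $\cal C\cong G/H$, where $H$ is the setwise stabilizer in $G$ of a reference circle $C_0=\partial_\infty P_0$; then $H^\circ=\SO(2,1)^\circ\times \SO(d-2)$ and $H/\SO(2,1)^\circ$ is compact. Writing $C=gC_0$, choose a lift $x=\Ga g\in \Ga\backslash G$ such that the $A$-orbit $xA$ corresponds to a geodesic in $C^\dagger$ with both endpoints in $\La$; this is possible because $|C\cap\La|\ge 3$. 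Then $xA$ is bounded, so $x\in \RFM$. Put $U=\SO(2,1)^\circ\subset H$ with $A\subset U$, so that $U$ is generated by unipotents and normalized by $A$. Theorem \ref{higherratner} produces a connected closed subgroup $U\subseteq L\subseteq G$, with $xL$ closed in $\Ga\backslash G$, such that
\[\overline{xU}\cap \RFM \;=\; xL\cap \RFM.\]

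The principal geometric step is to identify $L$. Since $L$ is a connected closed subgroup of $\SO(d,1)^\circ$ containing the copy of $\SO(2,1)^\circ$ acting on $P_0$, and $xL$ is closed in the frame bundle of the Fuchsian-end manifold $\cM$, the classification of such Lie subgroups forces $L$, up to a compact factor centralizing $P_0$, to coincide with $\SO(k{+}1,1)^\circ$ acting on some totally geodesic $(k{+}1)$-plane $P_L\supseteq P_0$, for some $k\ge 1$. Set $S=\partial_\infty P_L\subset \bS^{d-1}$, a $k$-sphere containing $C$. Because $\SO(k{+}1,1)^\circ$ acts transitively on ordered triples in $\bS^k$, the $L$-orbit of $C$ in $\cal C$ is exactly $\{D\in \cal C: D\subseteq S\}$.

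To translate back to $\cal C$, use compactness of $H/U$ to pull $H$ across the closure, and combine with the description $\overline{xU}=xL\cap\overline{(\RFM)U}$ noted after Theorem \ref{higherratner} to handle the limit points lying outside $\RFM$. This shows that $\overline{\Ga C}$ equals $\Ga\cdot\{\text{circles in }S\}$ intersected with the circles meeting $\La$. Since each $\ga C$ meets $\La$ in at least three points and $\La$ is compact, every element of $\overline{\Ga C}$ also meets $\La$, hence
\[\overline{\Ga C}\;=\;\{D\in \cal C:\, D\cap \La\ne\emptyset,\; D\subseteq \Ga S\},\]
as claimed.

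The main obstacle I expect is the geometric classification of $L$ in the second paragraph: one must rule out connected subgroups of $\SO(d,1)^\circ$ strictly between $\SO(2,1)^\circ$ and the isometry group of a hyperbolic subspace, such as solvable or skew extensions by a unipotent radical, by combining the closedness of $xL$ with the rigidity arising from the Fuchsian-end geometry of $\cM$. A secondary subtlety is the passage from the $\RFM$-level statement to an orbit closure in $\cal C$: compactness of $H/U$ handles right multiplication cleanly, and the condition $D\cap\La\ne\emptyset$ (rather than a stronger intersection requirement) absorbs the limit points coming from outside $\RFM$.
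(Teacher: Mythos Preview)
The paper does not supply a proof of this corollary; as a survey, it records the statement from \cite{LeeOh_orbit} as the boundary reformulation of Theorem~\ref{higherratner} and moves on. Your outline is exactly the intended derivation: identify $\cal C\simeq G/H$ with $H^\circ=\SO(2,1)^\circ\times\SO(d-2)$, lift $C$ to a point $x\in\RFM$, apply Theorem~\ref{higherratner} with $U=\SO(2,1)^\circ$, and read off the sphere $S=\partial_\infty P_L$ from the resulting intermediate subgroup $L$.

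The obstacle you flag is the genuine one. Connected closed subgroups of $\SO(d,1)^\circ$ containing a copy of $\SO(2,1)^\circ$ are not all of the form $\SO(k{+}1,1)^\circ\times(\text{compact})$: there are groups with nontrivial unipotent radical, for instance the stabilizer of a horosphere based at a point of $C_0$. Ruling these out uses more than the closedness of $xL$; one needs that $g^{-1}\Ga g\cap L$ is Zariski dense in $L$ (equivalently, that $xL$ carries a finite $L$-invariant measure of BMS type), which forces $L$ to be reductive. This step is carried out in \cite{LeeOh_orbit} and is standard in Ratner-type arguments, but it is an argument, not a triviality, and your sketch would need to incorporate it. Your secondary subtlety is handled as you indicate, via compactness of $H/U$ and the description $\overline{xU}=xL\cap\overline{(\RFM)U}$; note that the hypothesis $|C\cap\La|\ge 3$ is doing more than placing a single $x$ in $\RFM$ (two points suffice for that): it guarantees that the $H$-orbit $xH$ meets $\RFM$ in a set large enough that $\overline{xH}$ in $\Ga\ba G$ genuinely corresponds to $\overline{\Ga C}$ in $\cal C$, and that the right-hand side of the claimed equality is not larger than the left.
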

When $d=3$, the only possible spheres are circles or $\hc=\S^2$, recovering the dichotomy described earlier. In higher dimensions, circle orbits can accumulate on spheres of intermediate dimension, which reflects the richer geometric symmetries available beyond dimension three.

\section{Representation rigidity of Kleinian groups}\label{s:rig}
\subsection{Circular slices under quasiconformal conjugacy}
Figure \ref{f:def} below illustrates how the
limit set of a geometrically finite rigid acylindrical Kleinian group $\Gamma$ transforms
under a quasiconformal conjugacy $f:\hc\to \hc$.

  \begin{figure}[htbp] \begin{center}
   \includegraphics [height=5cm]  {cc_rigid} 
   \quad \includegraphics [height=5cm]  {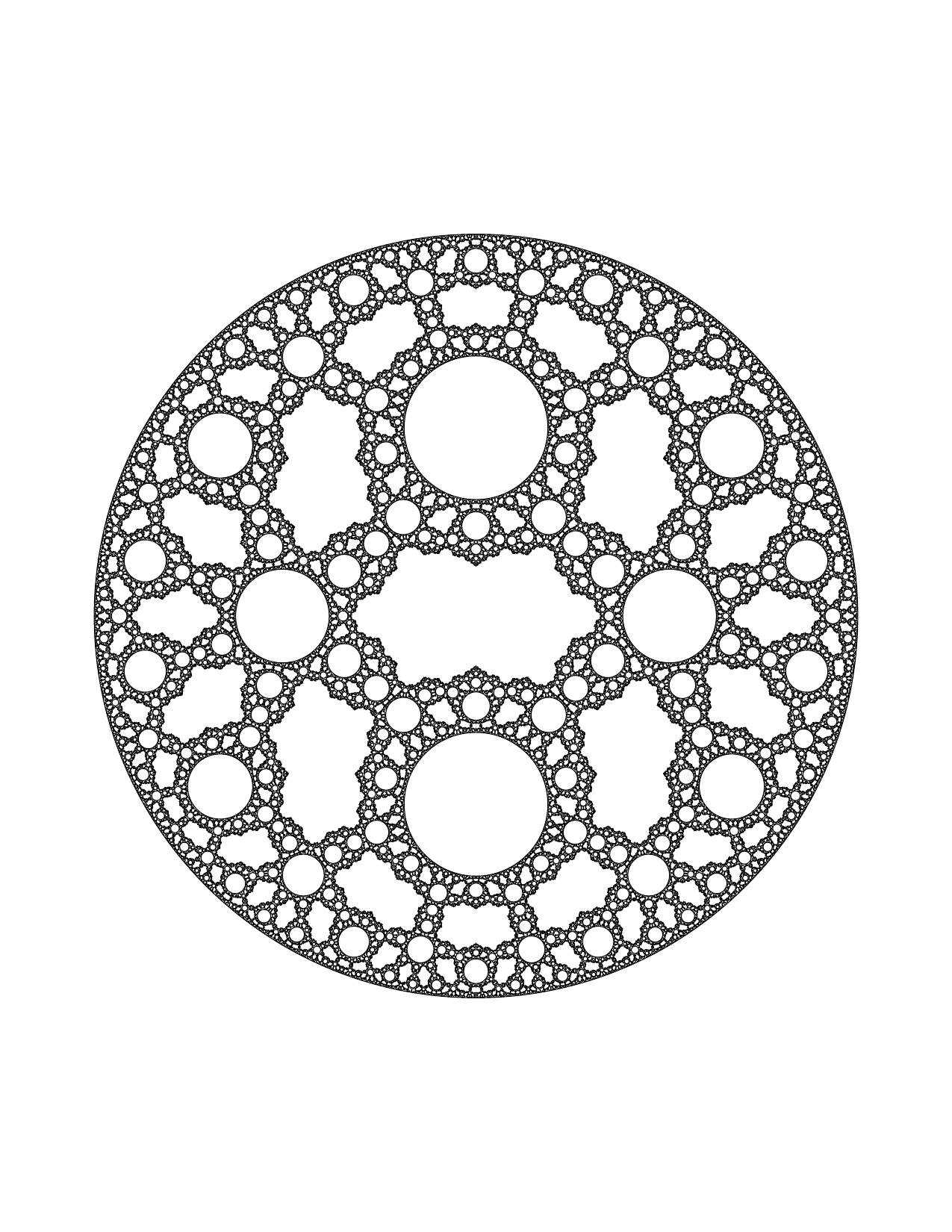}
   \end{center}
     \caption{Limit sets under quasiconformal deformation}\label{f:def}
  \end{figure} 
Clearly $f$ is not conformal: $f$ does not map all circles in $\hc$ to circles. It is also
not hard to see that $f$ cannot map any open collection of circles to circles.
On the other hand, it is much less clear how many circular slices of the limit set $\La$ can be mapped into circles under $f$. 
Denote by $\La_f$ the collection of all such circular slices:
\be\label{lf}
\Lambda_f:= \bigcup \left\{ C \cap \La : \begin{matrix}
C \subset \hc \mbox{ is a circle such that }
f(C \cap \La) \mbox{ is contained in a circle}
\end{matrix}
\right\}.
\ee 

We call a point $\xi\in \La_f$ a conformal point of $f$; that is, $\xi\in \La$ lies on some circle $C$ such that $f(C\cap \La)$ is contained in a circle.
Our results show that these conformal points $\La_f$ are negligible, both topologically and measurably: $\La_f$ has empty interior in $\Lambda$, and, even more, it has zero $\delta$-dimensional Hausdorff measure, where $\delta=\dim\Lambda$. This phenomenon is not isolated: it reflects a deeper principle of representation rigidity for Kleinian groups. 

\subsection{Mostow-Prasad and Sullivan rigidity}
To formulate this principle precisely, we recall the rigidity theorems of Mostow-Prasad and Sullivan.
Let $\Gamma<\PSL_2(\c)$ be a Kleinian group and consider the discrete and faithful locus of its representation variety:
$$\mathfrak R_{\op{disc}} (\Gamma):=\{\rho:\Ga \to \PSL_2(\c): \text{$\rho$ is discrete and faithful}\}.$$
For any $g\in \Mob(\hc)$, the conjugation $\ga\mapsto g \ga g^{-1}$ defines an element of
$\mathfrak R_{\op{disc}} (\Gamma)$. Such representations are precisely those induced by automorphisms of $\PSL_2(\c)$; we call them {\emph{algebraic representations}}. The Mostow-Prasad rigidity theorem\footnote{Mostow first proved this for cocompact lattices, and Prasad extended it to non-cocompact lattices.}  states that lattices admit no other discrete faithful representations:
\begin{theorem}[\cite{Mostowbook}, \cite{Prasad1973strong}] If $\Gamma<\PSL_2(\c)$ is a lattice, then every discrete faithful representation of $\Gamma$ is algebraic.
\end{theorem}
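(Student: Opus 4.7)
The plan is to construct a $\Gamma$-equivariant homeomorphism $f:\hc\to\hc$ that conjugates $\Gamma$ to $\rho(\Gamma)$, and then to show via ergodicity that $f$ must lie in $\Mob(\hc)=\PSL_2(\c)$. First I would verify that $\Gamma':=\rho(\Gamma)$ is itself a lattice. In the cocompact setting this follows from the fact that $\Gamma$ and $\Gamma'$ have the same cohomological dimension and Euler characteristic, which forces $\Gamma'\ba\bH^3$ to be closed; in the non-cocompact case treated by Prasad, one must first establish that $\rho$ carries parabolic elements to parabolic elements of matching rank, so that cusps are in bijection and the covolume of $\Gamma'$ is finite.

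Next I would produce the boundary map. The \v{S}varc--Milnor lemma, applied to a compact fundamental domain (or its analogue obtained after truncating cusp neighborhoods in the Prasad case and re-gluing via a cusp-matching map), yields a $(\rho,\Gamma)$-equivariant quasi-isometry $\widetilde f:\bH^3\to\bH^3$. Stability of quasi-geodesics in the Gromov-hyperbolic space $\bH^3$ forces $\widetilde f$ to extend continuously to a $\Gamma$-equivariant homeomorphism $f:\hc\to\hc$ that is quasiconformal in the sense of \eqref{qc}; thus $f\circ\gamma=\rho(\gamma)\circ f$ on $\hc$ for every $\gamma\in\Gamma$.

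The core task is then to show that this quasiconformal $f$ is in fact $1$-quasiconformal, hence M\"obius. By the measurable Riemann mapping theorem, $f$ is a.e.\ differentiable with Beltrami coefficient $\mu_f=\partial_{\bar z}f/\partial_z f$ of supremum norm strictly less than $1$. Since both $\gamma$ and $\rho(\gamma)$ act conformally on $\hc$, the conjugation relation forces the modulus $|\mu_f|$ to be $\Gamma$-invariant as a measurable function on $\hc$. Because $\Gamma$ is a lattice, it acts ergodically on $\hc$ with respect to Lebesgue measure, so $|\mu_f|\equiv k$ for some constant $k\in[0,1)$. To conclude $k=0$, I would invoke the double ergodicity of the diagonal $\Gamma$-action on $\hc\times\hc$ (equivalently, ergodicity of the geodesic flow on $\op{T}^1(\Gamma\ba\bH^3)$): the argument of $\mu_f$ encodes a measurable $\Gamma$-equivariant field of infinitesimal distortion axes, and a density-point argument in the style of Mostow shows that no such field can be consistent with $\Gamma$-equivariance unless it vanishes, i.e.\ $k=0$. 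Once $\mu_f\equiv 0$, $f$ is conformal, $f\in\PSL_2(\c)$, and $\rho$ is realized by conjugation by $f$, hence algebraic.

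The hardest step is the last one: extracting $\mu_f\equiv 0$ from its $\Gamma$-equivariance and constant modulus is where the analytic theory of quasiconformal mappings interacts most delicately with the ergodic theory of the lattice action on $\hc$. A secondary obstacle, specific to Prasad's extension, is the a priori matching of parabolic subgroups of $\Gamma$ with those of $\Gamma'$, which must be settled before any equivariant boundary map can even be constructed.
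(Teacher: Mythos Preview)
Your outline is sound and matches the two-step scheme the paper attributes to Mostow: first produce a $\rho$-equivariant quasiconformal boundary homeomorphism, then show it is M\"obius. The paper does not give a detailed proof of this cited theorem beyond that outline, so your sketch is more fleshed out than what appears there. Two small corrections: the Euler characteristic does no work in your first step (it vanishes for every $3$-manifold); the cohomological-dimension argument alone already forces compactness of $\Gamma'\ba\bH^3$ when $\Ga$ is cocompact. And in the last step, $\mu_f$ itself is not $\Ga$-invariant but transforms by the cocycle $\overline{\gamma'}/\gamma'$; what one actually uses is ergodicity of $\Ga$ on the space of line elements of $\hc$, which is how Mostow eliminated the constant-modulus distortion field.

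It is worth noting that the paper also sketches a genuinely different route, via self-joinings. One observes that $\rho$ is algebraic if and only if $\Ga_\rho=(\id\times\rho)(\Ga)$ fails to be Zariski dense in $\PSL_2(\c)\times\PSL_2(\c)$. If $\Ga_\rho$ were Zariski dense, higher-rank Patterson--Sullivan theory for $\Ga_\rho$ forces the two pushforwards $(f^{-1}\times\id)_*\Leb$ and $(\id\times f)_*\Leb$ on $\hc\times\hc$ to be mutually singular; this contradicts the fact that $f_*\Leb$ is absolutely continuous with respect to Lebesgue (which follows from $f$ being quasiconformal). That argument bypasses the line-field step entirely, trading the delicate analysis of $\mu_f$ for ergodic properties of horospherical actions on the rank-two quotient $\Ga_\rho\ba(\PSL_2(\c)\times\PSL_2(\c))$.
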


We are therefore led to the infinite volume case. For finitely generated groups, there are two types of the limit set  by the Ahlfors measure conjecture, now a theorem proved through the works of Thurston, Canary, Agol, and Calegari-Gabai on the tameness conjecture:
\begin{theorem} [Ahlfors measure conjecture] Let $\Ga<\PSL_2(\c) $ be finitely generated.
Then $$\La=\hc\quad\text{or}\quad \op{Leb}(\La)=0 ,$$
where $\op{Leb}$ denotes the Lebesgue measure of $\c$.
\end{theorem}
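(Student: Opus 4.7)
My plan is to split the argument by whether $\Ga$ is geometrically finite and, in the geometrically infinite case, to reduce to the tameness theorem. First, the geometrically finite case is immediate: if $\Ga$ is geometrically finite and $\La\ne \hc$, then $\Ga$ is not a lattice, so by the Sullivan theorem recalled above one has $\delta=\dim\La<2$. Since the two-dimensional Hausdorff measure on $\hc$ is comparable to Lebesgue measure, this forces $\op{Leb}(\La)=0$.

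Now assume $\Ga$ is finitely generated but not geometrically finite. I would invoke the Agol--Calegari-Gabai tameness theorem: $\cM=\Ga\ba\bH^3$ is homeomorphic to the interior of a compact $3$-manifold with boundary, and hence has finitely many ends. Next I would apply Canary's theorem that topological tameness implies geometric tameness, so each end $E$ of $\cM$ is either geometrically finite or simply degenerate, and in the latter case carries an exiting sequence of pleated surfaces $f_n:S_n\to \cM$ whose images eventually fill $E$. For each end let $\La_E\subset \La$ be the set of limit points whose geodesic ray from a base point eventually enters $E$. Geometrically finite ends contribute sets of Hausdorff dimension strictly less than $2$, hence of zero Lebesgue measure. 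For a simply degenerate end, the Thurston--Bonahon--Canary argument combines the exiting pleated surfaces with Sullivan-type recurrence and a Hopf-style ergodicity argument for the geodesic flow on the unit tangent bundle of $\cM$ to show that either $\op{Leb}(\La_E)=0$ or $\La_E$ has full Lebesgue measure in $\hc$. Summing over the finitely many ends, and using that $\La$ is closed (so any full-measure closed subset of $\hc$ must equal $\hc$), yields the dichotomy $\op{Leb}(\La)=0$ or $\La=\hc$.

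The main obstacle is clearly the tameness theorem itself. Both of its known proofs, Calegari--Gabai's shrinkwrapping and Agol's alternative approach, are deep results in $3$-manifold topology lying well outside the conformal-dynamical methods used throughout the rest of the paper; reproducing either would be a substantial undertaking in its own right. By contrast, once tameness is available, Canary's geometric tameness together with the pleated-surface and ergodicity arguments produce the measure-theoretic dichotomy in a conceptually clean way, so I would expect the bulk of the technical effort in any independent write-up to be concentrated on importing the tameness input rather than on the dichotomy step itself.
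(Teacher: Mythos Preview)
The paper does not give a proof of this theorem; it merely states it as a known result, attributing it to ``the works of Thurston, Canary, Agol, and Calegari--Gabai on the tameness conjecture.'' Your outline is essentially a sketch of that literature route, and the main ingredients you identify are correct: the geometrically finite case (which in fact goes back to Ahlfors himself, though your appeal to Sullivan's dimension formula also works), the reduction of the general finitely generated case to the tameness theorem of Agol and Calegari--Gabai, and Canary's theorem bridging topological tameness to the measure dichotomy. Your assessment that tameness is the deep input, with the rest following comparatively cleanly, is exactly right.

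One caveat on the details: the end-by-end decomposition of $\Lambda$ you propose is not how Canary's argument actually runs and is imprecise as written. The sets $\Lambda_E$ are not cleanly defined (a geodesic ray may be recurrent to the core rather than eventually confined to a single end), and the claim that a geometrically finite end contributes a set of Hausdorff dimension strictly less than $2$ conflates local end data with the global limit set. Canary's argument is global: if $\Gamma$ is topologically tame but not geometrically finite, the presence of a simply degenerate end forces the geodesic flow on $\mathrm{T}^1(\mathcal M)$ to be conservative and hence ergodic with respect to Liouville measure, which directly yields $\Lambda=\widehat{\mathbb C}$. No summation over ends is needed.
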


For finitely generated groups $\Ga$ with $\La=\hc$, Sullivan established the quasiconformal rigidity of $\Ga$ \cite{Sullivan1981ergodic}.
Define the Teichm\"uller space of $\Ga$ as the space of quasiconformal deformations:
$$\mathfrak T (\Gamma)=\{\rho\in \rdi:\text{there exists a $\rho$-equivariant quasiconformal homeomorphism of $\hc$}\}.$$
Quasiconformal deformations played an essential role in Mostow's proof of rigidity, which proceeds in two steps. For any lattice $\Ga<\PSL_2(\c)$, 
 \begin{enumerate}
     \item every $\rho\in \rdi$ yields an equivariant quasiconformal homeomorphism of $\hc$, so $\rdi=\mathfrak T(\Ga)$; 
\item  every such homeomorphism is in fact M\"obius, so
$\mathfrak T(\Ga)=\{\text{algebraic representations}\}$.
 \end{enumerate} 

 Sullivan's quasiconformal rigidity theorem states that the second step persists even in the infinite-volume case, provided that $\La=\hc$:
\begin{theorem}[\cite{Sullivan1981ergodic}]\label{su}If $\Ga$ is finitely generated with $\La=\hc$,
then any quasiconformal deformation of $\Ga$ is algebraic.
\end{theorem}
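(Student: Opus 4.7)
The plan is to translate the rigidity claim into the non-existence of non-trivial $\Ga$-invariant Beltrami differentials on $\hc$, and then to deduce this via ergodicity of the boundary action.

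First I would set up the correspondence. A quasiconformal deformation $\rho \in \mathfrak T(\Ga)$ comes with a $\rho$-equivariant quasiconformal homeomorphism $f: \hc \to \hc$, and its complex dilatation
\[
\mu := \bar\partial f / \partial f \in L^\infty(\hc), \qquad \|\mu\|_\infty < 1,
\]
satisfies the equivariance
\[
\mu(\ga z)\,\cdot\, \frac{\overline{\ga'(z)}}{\ga'(z)} \;=\; \mu(z) \qquad \text{for all } \ga \in \Ga, \text{ a.e.\ } z,
\]
so that $\mu$ is a $\Ga$-invariant Beltrami differential on $\hc$. By the uniqueness clause in the measurable Riemann mapping theorem, $f$ is M\"obius (so $\rho$ is algebraic) if and only if $\mu = 0$ almost everywhere. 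Hence the theorem reduces to showing that any $\Ga$-invariant Beltrami differential on $\hc$ vanishes a.e.

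Next I would invoke ergodicity of the boundary action. Since $\Ga$ is finitely generated with $\La = \hc$, the Ahlfors measure conjecture (now a theorem via tameness, Canary, Agol, and Calegari-Gabai) together with the Hopf--Tsuji--Sullivan dichotomy and divergence of the Poincar\'e series at $\delta = 2$ implies that $\Ga$ acts ergodically on $(\hc, \op{Leb})$. Because the cocycle $|\overline{\ga'}/\ga'|$ is identically $1$, the modulus $|\mu|$ is a genuine $\Ga$-invariant measurable function, and ergodicity forces $|\mu| \equiv c$ a.e.\ for some constant $0 \le c < 1$.

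The crux is to rule out $c > 0$; this is Sullivan's \emph{no invariant line fields} theorem. If $c > 0$, then $\nu := \mu/c$ is a $\Ga$-invariant unit Beltrami differential, equivalent to a $\Ga$-equivariant measurable section $\hc \to \mathbb{RP}^1$ of the projectivized tangent bundle (pointing in the direction of maximal infinitesimal stretching of $f$). To derive a contradiction I would lift the $\Ga$-action to $\hc \times \mathbb{RP}^1$ equipped with $\op{Leb} \otimes d\theta$, whose ergodicity reflects ergodicity of the frame flow on $\Ga \ba \PSL_2(\c)$, and then analyze the cocycle $\ga \mapsto \arg \ga' \pmod{\pi}$ to show that no $\Ga$-equivariant measurable graph inside this ergodic system can exist. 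The main obstacle is precisely this last step: ergodicity on $\hc$ alone only controls the scalar invariant $|\mu|$ and says nothing about the directional information in $\mu/|\mu|$. Upgrading requires genuinely finer ergodic theory on an auxiliary bundle (equivalently, on $\hc \times \hc$), which rests in turn on the mixing of the geodesic and frame flows on the infinite-volume hyperbolic $3$-manifold $\Ga \ba \bH^3$ --- the very dynamical phenomena that animate the rest of this article.
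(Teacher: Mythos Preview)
The paper does not prove Theorem~\ref{su}; it is quoted as Sullivan's 1981 result, and the only surrounding commentary is that it is the special case $\La=\hc$ of the more general statement~\eqref{sa} (an equivariant quasiconformal map conformal on $\Omega$ is M\"obius). There is therefore no in-paper argument to compare against.

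Your outline is the classical Sullivan approach and is essentially correct: reduce via the measurable Riemann mapping theorem to the vanishing of the $\Ga$-invariant Beltrami coefficient, use ergodicity on the sphere to make $|\mu|$ constant, and then kill a nonzero constant with the \emph{no invariant line fields} theorem. Two points are worth tightening. First, invoking the Ahlfors measure conjecture is a red herring, since $\La=\hc$ makes it vacuous; the genuine input is ergodicity of the $\Ga$-action on $(\hc\times\hc,\op{Leb})$, equivalently of the geodesic flow for Liouville measure. For finitely generated $\Ga$ with $\La=\hc$ this is indeed known in full generality only through tameness and Bonahon--Thurston--Canary, so your instinct to cite post-1981 machinery is right, just aimed at the wrong statement. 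Second, the phrase ``ergodicity of the frame flow on $\Ga\ba\PSL_2(\c)$'' needs care in infinite volume, where the Haar measure is infinite and the flow is not ergodic in the na\"ive sense; Sullivan's line-field step runs instead on the conservative part of the boundary action (double ergodicity on $\hc\times\hc$ together with the circle extension), which is what actually rules out a measurable invariant direction field $\mu/|\mu|$. With those adjustments the plan is sound.
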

In fact, Sullivan proved that for any finitely generated $\Ga$ and
an equivariant quasiconformal homeomorphism $f$ of $\hc$, 
\be\label{sa} \text{$f$ is conformal on $\Omega$ $\Rightarrow$ $f$ is M\"obius} .\ee

Although this was not known at the time of Sullivan’s work, it now follows from the Ahlfors measure conjecture and the measurable Riemann mapping theorem that if $\Omega\neq\emptyset$, then \eqref{sa} is automatic. Thus its real content lies in the case $\Lambda=\widehat{\mathbb C}$, where it is precisely Sullivan’s quasiconformal rigidity.

\subsection{Quasiconformal rigidity via conformality on the limit set} We now turn to the case when $\La\ne \hc$. In this situation, by the Ahlfors finiteness theorem, $\Gamma\ba \Omega$ is a finite-type Riemann surface, and the Teichm\"uller space $\mathfrak T(\Ga)$ modulo conjugation is as large as the Teichm\"uller space
of $\Gamma\ba \Omega$ (\cite{Marden2016hyperbolic}, \cite{Matsuzaki1998hyperbolic}).
Hence one cannot expect an analogue of Theorem \ref{su}. It is therefore natural to ask for a criterion
ensuring that $\rho\in \mathfrak T (\Ga)$ is algebraic. Taking a cue from Sullivan's theorem \eqref{sa}, we are led to ask whether 
$$\text{$f$ is "conformal on $\La$" $\Rightarrow$ $f$ is M\"obius} .$$
When $\op{Leb} (\Lambda) =0$, the analytic notion of conformality does not make sense on $\Lambda$. 
The following joint result with Kim introduces a natural geometric substitute.
\begin{theorem}[\cite{KO_rigidity}] \label{koto} Let $\Gamma$ be a Kleinian group such that $\Omega$ has at least two components. Let $f$ be a $\Ga$-equivariant quasiconformal homeomorphism of $\hc$.
Suppose that neither $\La$ nor $f(\Lambda)$ is contained in a circle. 
If $f$ is conformal on $\Lambda$, in the sense that
$$\text{for any circle $C\subset \c$, $f(C\cap \La)$ is contained in a circle},$$ then
 $f$ is a M\"obius transformation.
\end{theorem}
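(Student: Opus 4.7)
The argument splits into two stages: first, establish the set-theoretic rigidity that $f|_\Lambda$ agrees with the restriction of some M\"obius transformation $g$; second, promote this to the global identity $f = g$ on $\hc$.

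For the first stage, the hypothesis is equivalent to saying that $f|_\Lambda$ preserves concyclicity of quadruples in $\Lambda$, i.e.\ it preserves the reality of cross-ratios of $\Lambda$-quadruples. I would normalize by postcomposing $f$ on the target with a M\"obius transformation so that $f$ fixes three chosen points $p_1, p_2, p_3 \in \Lambda$. For any other $p \in \Lambda$, the three circles $C_{ij}$ through the pairs $(p_i, p_j, p)$ must be mapped into the circles through $(p_i, p_j, f(p))$. The hypothesis that neither $\Lambda$ nor $f(\Lambda)$ is contained in a circle, together with the perfect structure of $\Lambda$, supplies additional concyclic quadruples in $\Lambda$ whose incidence relations progressively trap $f(p)$ and force $f(p) = p$. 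The orientation-preserving property of quasiconformal maps rules out the anti-M\"obius branch. Consequently $f|_\Lambda = g|_\Lambda$ for some M\"obius $g$, and the $\Gamma$-equivariance of $f$ forces $g \gamma g^{-1} = \rho(\gamma)$ for every $\gamma \in \Gamma$.

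For the second stage, set $h := g^{-1} \circ f$, which is a quasiconformal, $\Gamma$-equivariant (under the identity representation) self-homeomorphism of $\hc$ that restricts to the identity on $\Lambda$. By Sullivan's rigidity theorem~\eqref{sa}, it suffices to show that $h$ is conformal on $\Omega$. Since $h$ is continuous and fixes $\Lambda = \partial\Omega$ pointwise, it preserves $\Omega$ and each of its components $B$, with $h|_{\partial B} = \op{id}$. The hypothesis that $\Omega$ has at least two components enters crucially here: it supplies a rich family of separating circles in $\mathcal{C}_\Lambda^*$ whose $\Lambda$-slices are fixed pointwise by $h$, and combined with the $\op{Stab}_\Gamma(B)$-equivariance of $h|_B$ and its trivial boundary values, these constraints force the Beltrami coefficient $\mu_h|_B$ to vanish on each component. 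Sullivan's theorem then yields $h = \op{id}$, hence $f = g$ is a M\"obius transformation.

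I expect the first stage to be the main obstacle. Extracting M\"obius rigidity from the purely set-theoretic concyclicity-preserving property on the fractal set $\Lambda$ is subtle, since circular slices $C \cap \Lambda$ may be Cantor-like with complicated structure, and the trapping argument for $f(p)$ must produce enough non-degenerate concyclic configurations near $p$ in $\Lambda$ to rule out non-M\"obius branches, while genuinely relying on the hypothesis that $\Lambda$ is not contained in a circle. Once Stage 1 is in hand, Stage 2 proceeds relatively naturally from Sullivan's classical rigidity together with the multiple-component assumption on $\Omega$.
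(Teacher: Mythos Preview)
Your approach is entirely different from the paper's. The paper (see the discussion around \eqref{sj}) passes to the self-joining $\Gamma_\rho=(\id\times\rho)(\Gamma)<\PSL_2(\c)\times\PSL_2(\c)$ and uses the equivalence that $\rho$ is algebraic if and only if $\Gamma_\rho$ is not Zariski dense in $\PSL_2(\c)\times\PSL_2(\c)$. Assuming Zariski density, there is a direction $\u=(u_1,u_2)$ with $u_1,u_2>0$ for which the diagonal flow $A_{\u}$ on $R_\rho\subset\Gamma_\rho\ba G$ is topologically transitive; the conformality hypothesis on $f$ is then shown to obstruct this transitivity. No direct cross-ratio or incidence argument on $\Lambda$ enters.

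Your Stage~1 is where all the content lies, and the sketch is not a proof. After normalizing so that $f$ fixes $p_1,p_2,p_3\in\Lambda$, knowing that $f(C_{ij}\cap\Lambda)$ lies on the circle through $p_i,p_j,f(p)$ imposes no constraint on $f(p)$: for \emph{every} candidate value of $f(p)$ such a circle exists, so nothing is trapped at this step. The ``additional concyclic quadruples whose incidence relations progressively trap $f(p)$'' is precisely the missing argument, and it is far from clear that any elementary incidence scheme succeeds on a fractal $\Lambda$; indeed, the cross-ratio rigidity you are aiming for is presented in the paper as a \emph{consequence} of the self-joining method, not as an independent input. Your Stage~2 is separately flawed: the circular-slice hypothesis constrains only $h|_\Lambda$, which you already know is the identity, so the separating circles yield no information about $h|_\Omega$; and there do exist non-M\"obius $\Gamma$-commuting quasiconformal self-maps of $\hc$ equal to the identity on $\Lambda$ (lift a quasiconformal self-map of a component of $\Gamma\ba\Omega$ isotopic to the identity and extend by the identity elsewhere), which the two-component hypothesis on $\Omega$ does not exclude.
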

\begin{figure} [htbp] \begin{center}
\includegraphics [height=4cm]{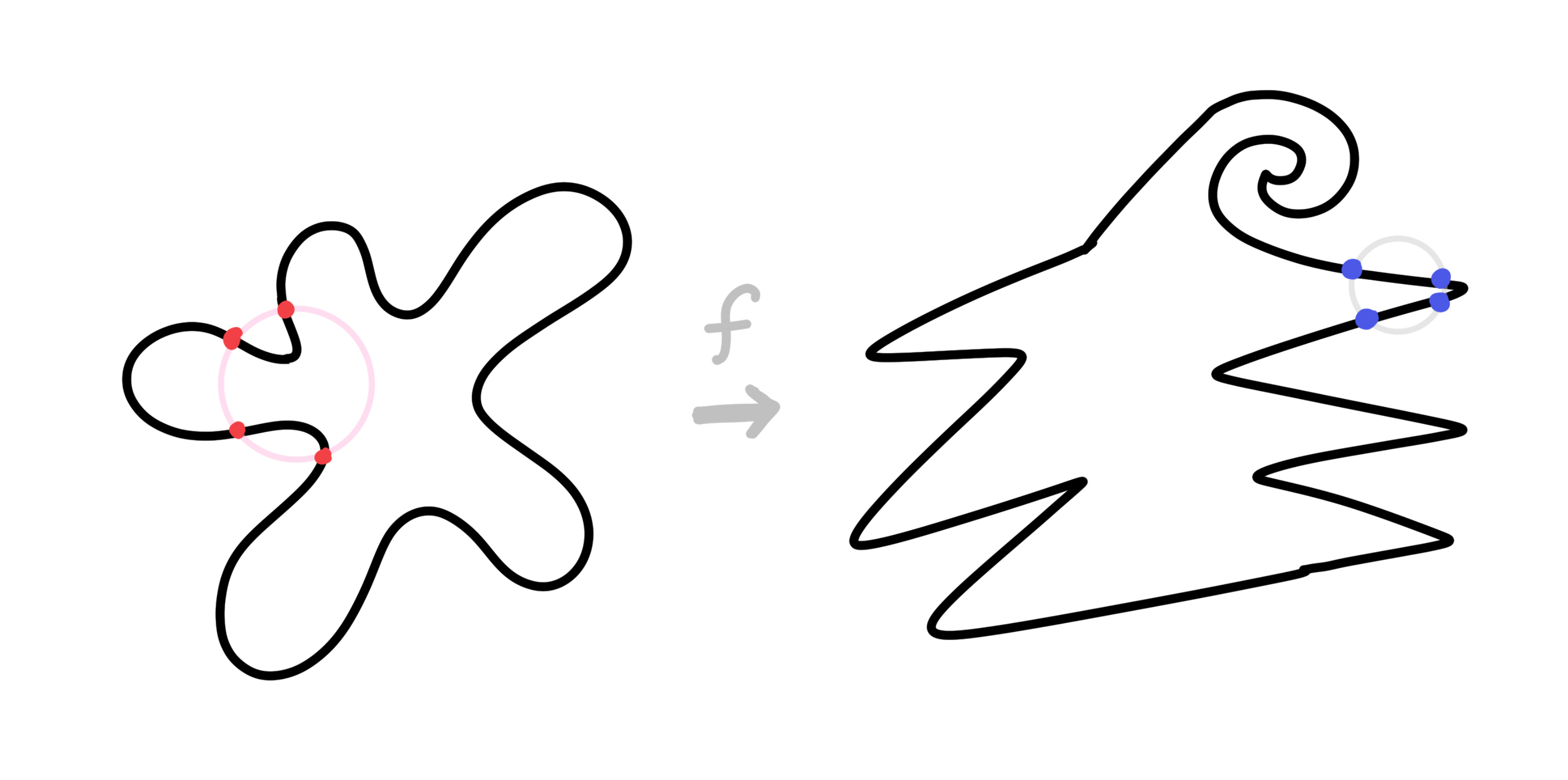}  \end{center}\caption{Conformal points for $f$}\label{conf}
\end{figure}  
The theorem requires only that $f$ maps circular slices of $\La$ into circles, not that it sends entire circles to circles (see Figure \ref{conf}).
Sullivan’s theorem \eqref{sa} covers the case $\Lambda=\hc$, while Theorem \ref{koto} provides the natural analogue when $\Lambda\ne \hc$. Taken together, they show that  any quasiconformal conjugacy that is conformal on the “visible part’’ of the dynamics must be M\"obius. The theorem applies broadly, for example, to all geometrically finite groups with connected limit set, which include all acylindrical and quasi-Fuchsian groups, but not Schottky groups.

\subsection{Rigidity via boundary maps on the limit set} We now present a stronger version that applies to any discrete faithful representation $\rho\in \rdi$ admitting a boundary map. By a boundary map, we mean a $\rho$-equivariant continuous embedding
$f: \La\to \hc$; it is unique when it exists.
If both $\Ga$ and $\rho(\Ga)$ are geometrically finite and $\rho$ is
type preserving\footnote{That is, it maps loxodromic (resp., parabolic) elements to loxodromic (resp., parabolic) elements.}, then $\rho$ admits a boundary map, as shown by Tukia. Unlike the boundary map of a quasiconformal deformation, which extends to all of $\hc$, this map is defined only on $\La$ in general.
See Figure \ref{boundarymap} for a schematic illustration of such a boundary map on the limit set. 
\begin{figure} [htbp]\label{boundarymap}
\begin{center}
  \includegraphics [height=4cm]{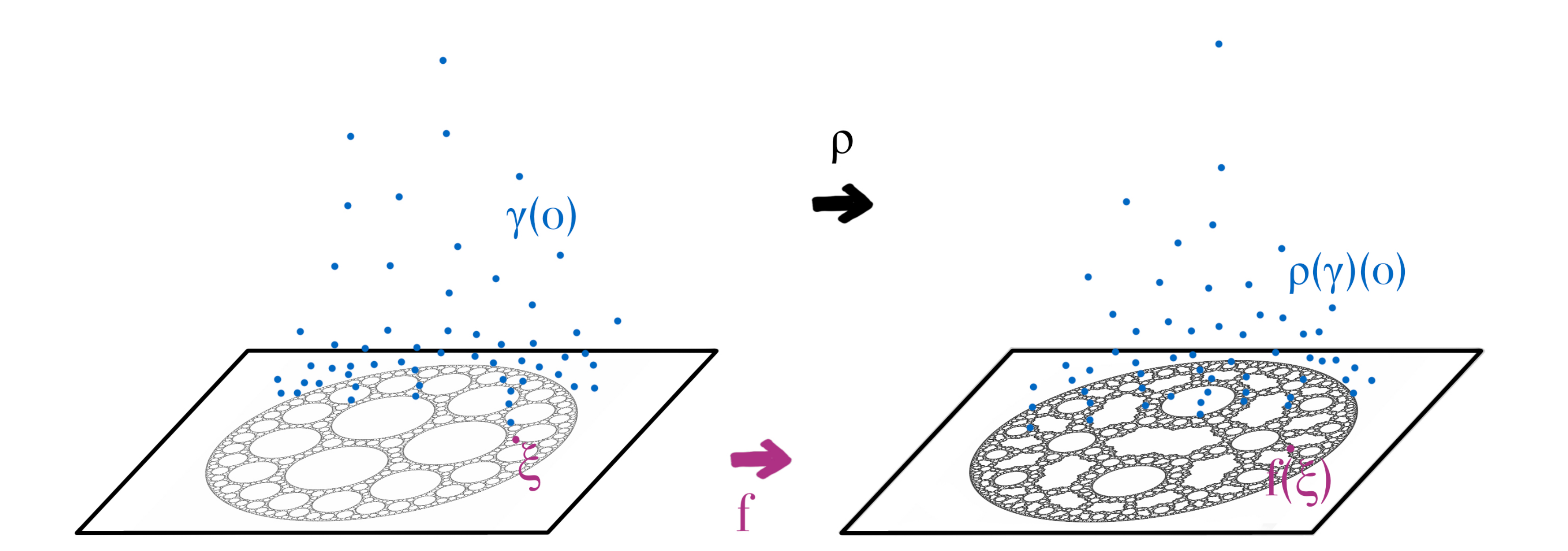}
 \end{center}\caption{The boundary map}
\end{figure}

In joint work with Kim, we obtained the following topological rigidity criterion, formulated in terms of the union $\La_f$ of all circular slices of $\La$ that are mapped to circles by $f$ (see \eqref{lf}).
\begin{theorem}[Topological criterion \cite{KO_rigidity}]\label{ko2a}Let $\Ga$ be a Kleinian group such that $\Omega$ has at least two components. Let $\rho\in \rdi$ and $f:\La\to \hc$ be its boundary map. Suppose that neither $\La$ nor $f(\Lambda)$ is contained in a circle. 
 Then
$$\text{either 
$\quad \La_f=\La\quad $ or $\quad \La_f\text{ has empty interior in $\La$}$}.$$
In the first case, $f$ extends to a M\"obius transformation and $\rho$ is algebraic.
\end{theorem}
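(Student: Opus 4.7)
The plan is a two-step reduction: first establish the dichotomy by exploiting minimality of the $\Ga$-action on $\La$, and then reduce the case $\La_f = \La$ to an application of Theorem \ref{koto}.

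For the dichotomy, I would first verify that $\La_f$ is $\Ga$-invariant. Indeed, if $C$ is a circle with $f(C \cap \La)$ contained in a circle $C'$, then for any $\ga \in \Ga$ one has $f((\ga C) \cap \La) = \rho(\ga)(f(C \cap \La)) \subset \rho(\ga)\, C'$, which is again a circle because $\rho(\ga) \in \PSL_2(\c)$ is M\"obius. A compactness argument on the space of circles together with continuity of $f$ then shows that $\La_f$ is closed in $\La$. Since $\Ga$ is non-elementary, its action on $\La$ is minimal, so any relatively open subset of $\La$ has dense $\Ga$-orbit. Thus if $\La_f$ has nonempty interior in $\La$, its closure must be all of $\La$, forcing $\La_f = \La$.

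Assume now that $\La_f = \La$. The plan is to extend $f$ to a $\Ga$-equivariant quasiconformal homeomorphism $F$ of $\hc$ and then invoke Theorem \ref{koto}. The hypothesis that $\Om$ has at least two components provides the geometric room to carry out a Douady-Earle-type equivariant extension on each component of $\Om$, producing a quasiconformal $F$ restricting to $f$ on $\La$. To apply Theorem \ref{koto} we then need the stronger uniform statement that $F(C \cap \La)$ lies in a circle for \emph{every} circle $C$ meeting $\La$ in at least three points.

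The main obstacle is precisely this upgrade from the pointwise condition $\La_f = \La$ to the uniform one. My approach would combine $\Ga$-invariance of the family of conformal circles with the density of loxodromic fixed points in $\La$: given any circle $C$ through three points of $\La$, one approximates those three points by triples of loxodromic fixed points, uses $\La_f = \La$ together with the $\Ga$-action to produce nearby conformal circles through the approximants, and then passes to the limit using continuity of $F$ to conclude that $C$ itself is conformal. Once this upgrade is in place, Theorem \ref{koto} applies and yields that $F$ is a M\"obius transformation, so $\rho$ is algebraic.
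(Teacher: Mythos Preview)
Your proposal has two gaps. First, closedness of $\La_f$ does not follow from compactness of the space of circles: if $C_n\to C$, the slice $C\cap\La$ can strictly contain any limit of the $C_n\cap\La$ (the assignment $C\mapsto C\cap\La$ is only upper semicontinuous), so control of each $f(C_n\cap\La)$ gives none over $f(C\cap\La)$. Indeed, were $\La_f$ closed and $\Ga$-invariant, minimality would yield the stronger dichotomy $\La_f\in\{\emptyset,\La\}$, which is not what the theorem asserts. Second, and more seriously, reducing the case $\La_f=\La$ to Theorem~\ref{koto} requires extending $f:\La\to\hc$ to a $\rho$-equivariant quasiconformal homeomorphism of $\hc$. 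The paper explicitly stresses that for a general $\rho\in\rdi$ the boundary map is defined only on $\La$ and need not admit such an extension; this is precisely what separates Theorem~\ref{ko2a} from Theorem~\ref{koto}, so your reduction begs the question. A Douady--Earle extension on a component $B$ of $\Omega$ presupposes an equivariant homeomorphism between Jordan-curve boundaries, which the abstract boundary map does not provide. Your ``upgrade'' from $\La_f=\La$ (each limit point lies on \emph{some} conformal circle) to the assertion that \emph{every} circle through three limit points is conformal is likewise unsupported.

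The paper's argument bypasses extension entirely. One forms the self-joining $\Ga_\rho=(\mathrm{id}\times\rho)(\Ga)<\PSL_2(\c)\times\PSL_2(\c)$ and uses that $\rho$ is algebraic if and only if $\Ga_\rho$ fails to be Zariski dense. When $\Ga_\rho$ is Zariski dense, some one-parameter diagonal flow $A_{\u}$ with $\u=(u_1,u_2)$, $u_i>0$, acts topologically transitively on $R_\rho\subset\Ga_\rho\backslash G$. The condition that $f$ carries a given circular slice into a circle translates into a closed $A_{\u}$-invariant constraint in $R_\rho$, and transitivity forces this constraint to hold everywhere or on a set with empty interior, delivering the dichotomy and the rigidity simultaneously without ever leaving $\La$.
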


As an immediate consequence, if $\Gamma$ as above admits a non-algebraic representation $\rho$ and  neither $\Gamma$ nor $\rho(\Gamma)$ is Fuchsian, then the limit set $\Lambda$ contains no open subset that is covered by circles intersecting $\La$ in at most three points.

Since the cross-ratio of four points of $\hc$ is real precisely when the four points lie on a circle,
this yields a rigidity criterion expressed purely in algebraic terms.
\begin{corollary} [Cross-ratio rigidity \cite{KO_rigidity}]
If $f$ sends every quadruple of points in $\Lambda$ with real cross-ratio to another quadruple with real cross-ratio, then $f$ extends to a M\"obius transformation. 
\end{corollary}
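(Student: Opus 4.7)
The plan is to deduce the statement from the topological rigidity criterion, Theorem~\ref{ko2a}, by verifying that its first alternative, $\La_f=\La$, is forced by the cross-ratio hypothesis. The classical fact here is that four distinct points of $\hc$ lie on a common generalized circle if and only if their cross-ratio is real; hence the hypothesis precisely asserts that $f$ preserves concyclicity of every $4$-tuple in $\La$. This pointwise hypothesis upgrades to a circle-level one: for any circle $C\subset\hc$ with $|C\cap\La|\ge 4$, fix three distinct $p_1,p_2,p_3\in C\cap\La$ and let $C'$ be the unique circle through $f(p_1),f(p_2),f(p_3)$; then for any further $q\in C\cap\La$, the $4$-tuple $\{p_1,p_2,p_3,q\}$ is concyclic on $C$, its image is concyclic by hypothesis, and since the image circle is pinned by the first three images, $f(q)\in C'$. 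Thus $f(C\cap\La)\subset C'$, which places $C\cap\La\subset\La_f$.

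The next step is to conclude $\La_f=\La$ by exhibiting, for every $\xi\in\La$, a circle through $\xi$ meeting $\La$ in at least four points. Since $\Ga$ is non-elementary and (by the standing setup of Theorem~\ref{ko2a}) $\La$ is not contained in any single circle, the limit set is perfect, uncountable, and contains an abundance of loxodromic fixed pairs. Using the density of orbits and the richness of such fixed pairs, one can produce for each $\xi\in\La$ three further points $\eta_1,\eta_2,\eta_3\in\La$ concyclic with $\xi$; combined with the previous paragraph, this places $\xi\in\La_f$.

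The conclusion is then immediate from Theorem~\ref{ko2a}: since $\La_f=\La$, the first alternative of the dichotomy holds, so $f$ extends to a M\"obius transformation. The main obstacle is the middle step—verifying uniformly over $\xi\in\La$ the existence of a circle through $\xi$ meeting $\La$ in four or more points. When $\op{dim}\La>1$, this is immediate from Marstrand-type slicing applied to the $2$-parameter family of circles through $\xi$; in general the argument must be made more delicately through the $\Ga$-equivariant structure, for instance by exploiting the density of concyclic orbit configurations.
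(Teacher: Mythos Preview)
Your approach is the paper's: deduce the corollary from Theorem~\ref{ko2a} via the classical fact that four points of $\hc$ are concyclic if and only if their cross-ratio is real. The paper treats this as a one-line reduction with no further argument.

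Where you diverge is in manufacturing an obstacle that is not actually there. You restrict attention to circles $C$ with $|C\cap\La|\ge 4$ and then worry about exhibiting such a circle through every $\xi\in\La$; you leave this step as an admitted gap, gesturing at Marstrand-type slicing and orbit-density arguments. But the restriction to $|C\cap\La|\ge 4$ is unnecessary. When $|C\cap\La|\le 3$, the image $f(C\cap\La)$ consists of at most three points and hence trivially lies on some circle. Combined with your own argument for the case $|C\cap\La|\ge 4$, you have in fact shown that $f(C\cap\La)$ lies on a circle for \emph{every} circle $C\subset\hc$. By the definition \eqref{lf} of $\La_f$, this gives $\La_f=\La$ immediately---any circle through a given $\xi\in\La$ serves as a witness---and the first alternative of Theorem~\ref{ko2a} finishes the proof. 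The slicing and density considerations you sketch are not needed.
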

Inspired by the Ahlfors measure conjecture, we also obtain a measure-theoretic criterion:
\begin{theorem}[Measure theoretic criterion \cite{KO_JTo}]\label{ko2b}Suppose in addition that $\Ga$ is geometrically finite and $\rho$ is a type-preserving geometrically finite representation.
 Then
$$\text{ either 
$\quad \La_f=\La\quad $ or $\quad \nu (\La_f)=0$}$$
where $\nu$ is the unique $\Ga$-geometric measure on $\La$ (see Theorem \ref{psm}).
In the first case, $f$ extends to a M\"obius transformation and $\rho$ is algebraic.
\end{theorem}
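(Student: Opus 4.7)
The plan is to combine the topological criterion of Theorem \ref{ko2a} with ergodicity of the Patterson--Sullivan measure $\nu$, then upgrade ``$\nu$-full measure'' to ``non-empty relative interior'' via a self-improvement at conical limit points. First, $\Lambda_f$ is a $\Gamma$-invariant Borel subset of $\Lambda$: the $\rho$-equivariance of $f$ gives $f(\gamma C\cap\Lambda)=\rho(\gamma)(f(C\cap\Lambda))\subset \rho(\gamma) C'$, another circle, so if $C$ witnesses $\xi\in\Lambda_f$ then $\gamma C$ witnesses $\gamma\xi\in\Lambda_f$. By Sullivan's ergodicity theorem, the $\Gamma$-action on $(\Lambda,\nu)$ is ergodic for any geometrically finite $\Gamma$, whence $\nu(\Lambda_f)\in\{0,1\}$. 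The theorem thus reduces to excluding the case $\nu(\Lambda_f)=1$ with $\Lambda_f\ne\Lambda$; equivalently, by Theorem \ref{ko2a}, the case $\nu(\Lambda_f)=1$ with $\Lambda_f$ having empty interior in $\Lambda$.

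The main step is a self-improvement at conical density points. By Sullivan's Global Measure Formula, $\nu$ is doubling on $\Lambda$ away from the countable (hence $\nu$-null) set of bounded parabolic fixed points, and $\nu$-a.e.\ point of $\Lambda$ is a conical limit point. Lebesgue differentiation then furnishes a point $\xi\in\Lambda_f$ that is simultaneously conical and a density-$1$ point of $\Lambda_f$. At such $\xi$, one has a sequence $\gamma_n\in\Gamma$ whose inverses act as uniformly bi-Lipschitz contractions sending shrinking neighborhoods of $\xi$ to fixed-scale neighborhoods of a basepoint in $\Lambda$ with bounded conformal distortion. Transporting the density-$1$ condition along these conical returns, together with the $\Gamma$-invariance of $\Lambda_f$, yields a fixed ball $B_0\subset\hc$ such that $\Lambda_f$ has $\nu$-full measure in $\Lambda\cap B_0$.

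To conclude, I exploit closedness of the witness condition. The set $\cal C_f=\{C\in\cal C:f(C\cap\Lambda)\text{ lies on a circle}\}$ is closed in the Hausdorff topology on $\cal C$, by continuity of $f$ and closedness of ``lies on a circle'' for tuples in $\hc$. As $\nu$ has full support on $\Lambda$ and $\Lambda_f=\bigcup_{C\in\cal C_f}(C\cap\Lambda)$ is $\nu$-full in $\Lambda\cap B_0$, every $\eta\in\Lambda\cap B_0$ is a limit of $\xi_n\in\Lambda_f\cap B_0$ with witnesses $C_n\in\cal C_f$. Restricting to witnesses of uniformly bounded radius meeting $\Lambda$ in at least four points (so the ``contained in a circle'' constraint remains non-trivial), compactness extracts $C_n\to C_\infty\in\cal C_f$, giving $\eta\in C_\infty\cap\Lambda\subset\Lambda_f$. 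Hence $\Lambda\cap B_0\subset\Lambda_f$ has non-empty relative interior, and Theorem \ref{ko2a} forces $\Lambda_f=\Lambda$; then $f$ extends to a Möbius transformation and $\rho$ is algebraic.

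The hardest part will be the self-improvement step, in particular the compactness argument for witness circles. One must establish a priori bounds ensuring that the witnesses $C_n$ stay in a compact family of circles with $|C_n\cap\Lambda|\ge 4$, so that Hausdorff limits are non-degenerate and the limiting constraint on $f(C_\infty\cap\Lambda)$ remains non-trivial; otherwise witnesses may degenerate to points or their intersections with $\Lambda$ may collapse to $\le 3$ points in the limit. This demands quantitative control of the circles through conical limit points of $\Lambda$. A secondary difficulty is the behavior of $\nu$ near cusps, where it fails to be Ahlfors regular; this is resolved by restricting analysis to the $\nu$-full set of conical limit points and working in cusp-avoiding regions controlled by the Global Measure Formula, where uniform doubling holds.
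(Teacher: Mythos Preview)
Your approach is genuinely different from the paper's, and it has a real gap. The paper proves Theorem~\ref{ko2b} via higher-rank dynamics on the self-joining quotient $\Gamma_\rho\backslash(\PSL_2(\c)\times\PSL_2(\c))$: assuming $\Gamma_\rho$ is Zariski dense, one invokes the ergodicity of a directional flow $A_{\u}$ with respect to the higher-rank Bowen--Margulis--Sullivan measure $m^{\BMS}_{\u}$ on $R_\rho$ (see \S5.5 and Theorem~\ref{bllo}). Non-trivial circles in $\cal C_f$ give rise to closed lower-dimensional orbits in the rank-two space, and the ergodic theory there forces the corresponding set to be $\nu$-null. You instead try to stay entirely in rank one, treating Theorem~\ref{ko2a} as a black box and attempting to upgrade $\nu(\Lambda_f)=1$ to non-empty relative interior.

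That upgrade is where the argument breaks. First, your ``self-improvement'' step is vacuous: once $\nu(\Lambda_f)=1$, the set $\Lambda_f$ already has full $\nu$-measure in \emph{every} ball, so conical zooming followed by $\Gamma$-invariance to produce a ball $B_0$ with this property yields nothing new. All the work must therefore come from the closedness of $\cal C_f$, but that claim is false. The assignment $C\mapsto C\cap\Lambda$ is not lower-semicontinuous in the Hausdorff topology: a point $x\in C_\infty\cap\Lambda$ need not be approximated by points of $C_n\cap\Lambda$, since the nearby points on $C_n$ may lie in $\Omega$. Hence from $f(C_n\cap\Lambda)\subset D_n$ one cannot conclude that $f(C_\infty\cap\Lambda)$ lies on a circle. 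You correctly flag the need for a priori bounds guaranteeing non-degenerate witnesses with $|C_n\cap\Lambda|\ge 4$, but propose no mechanism to obtain them; without such input, a full-$\nu$-measure set with empty interior is entirely consistent with everything you have established. The higher-rank ergodicity the paper invokes is precisely the structural ingredient that closes this gap.
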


In short, preservation of circular structure on the limit set acts as a rigidity principle: once it holds on a large enough set, the representation must be algebraic.

\subsection{Diagonal dynamics on the self-joining quotients}
These rigidity theorems are proved using the ergodic theory of diagonal flows on quotients of $\PSL_2(\c)\times \PSL_2(\c)$ by discrete subgroups called {\em self-joining groups}.  
For $\rho\in \rdi$, define the self-joining
\be \Ga_\rho=(\text{id}\times \rho)(\Ga)=\{(\ga, \rho(\ga)): \ga\in \Ga\} <G.\ee 
\begin{figure} [htbp]\label{selfjoining}
\begin{center}
  \includegraphics [height=5.5cm]{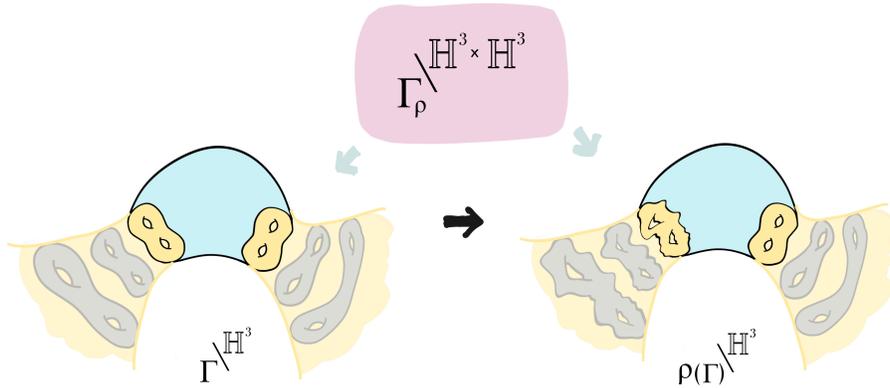}
 \end{center}\caption{Self-joining of two hyperbolic $3$-manifolds}
\end{figure}  
Figure \ref{selfjoining} depicts the diagonal embedding defining the self-joining quotient.
A basic but key observation is that
\be\label{sj} \text{$\rho$ is algebraic if and only if $\Ga_\rho$ is not Zariski dense in $\PSL_2(\c)\times \PSL_2(\c)$}.\ee 
Our rigidity theorems are obtained by showing that the conformality of $f$ on $\La$ obstructs the Zariski density of $\Ga_\rho$. 
While Mostow's rigidity theorem relies on the ergodicity of diagonal flows on $\Ga\ba \PSL_2(\c)$ with respect to the Haar measure, our approach uses the ergodic theory of diagonal flows on the higher-rank quotient space $\Gamma_\rho\backslash\left(\PSL_2(\mathbb{C})\times \PSL_2({\mathbb C})\right) $. The Furstenberg boundary of $\bH^3\times \bH^3$ is now $\hc\times \hc$ and the limit set of $\Ga_\rho$ in $\hc\times \hc$
is given by
$$\La_\rho = \{(\xi, f(\xi)): \xi\in \La\} .$$
Define $$R_\rho=\{ [(g_1, g_2)]\in \Ga_\rho\ba G: (g_1^{\pm}, g_2^{\pm}) \in \La_\rho\}$$
where $g_i^{\pm}\in \hc$ are visual images of $g_i$. 
One may view $R_\rho$ as the renormalized frame bundle of the higher-rank manifold $\Ga_\rho \ba (\bH^3\times \bH^3)$.
For each non-zero vector $\u=(u_1, u_2)\in \br^2$, the one-parameter diagonal subgroup 
$$A_{\u} =\left\{a_{t {\u}}=\left(\begin{pmatrix} e^{u_1t/2} & 0 \\ 0 &e^{-u_1t/2}\end{pmatrix},\begin{pmatrix} e^{u_2t/2} & 0 \\ 0 &e^{-u_2t/2}\end{pmatrix}\right):t\in \br \right\} $$ 
acts by right translations on $R_\rho$.
Unlike in rank one, where there is a single diagonal flow, the dynamics of $A_{\u}$ in rank two
depend crucially on the direction $\u$. The proof of Theorem \ref{ko2a} relies on the fact that
if $\Ga_\rho$ is Zariski dense, then for some $\u=(u_1, u_2)$ with $u_1, u_2>0$,
the $A_{\u}$-action on $R_\rho$ is topologically transitive,  i.e., it admits a dense orbit (\cite{Dang_top}, \cite{CS_local}).
The proof of Theorem \ref{ko2b} for $\Ga$ further uses the ergodicity of this action with respect to the higher rank Bowen-Margulis-Sullivan measure $m^{\BMS}_{\u}$ on $R_\rho$ \cite{BLLO}, which will be discussed further in section \ref{s:lmhigher}.

\subsection{A higher-rank perspective on Mostow rigidity}
The self-joining perspective also suggests a new approach to Mostow’s rigidity theorem: if $\Gamma<\PSL_2(\c)$ is a lattice, then for every $\rho\in \rdi$, the group $\Gamma_\rho$ cannot be Zariski dense in $\PSL_2(\c)\times \PSL_2(\c)$. Let $f:\hc\to \hc$ be the $\rho$-boundary map, which is a quasiconformal homeomorphism of $\hc$ with nonzero Jacobian at Lebesgue-almost every point.
Hence the Lebesgue measure on $\hc$ is absolutely continuous with respect to the push-forward $f_*\op{Leb}$. That this forces $\rho$
to be algebraic was already observed  by Sullivan \cite{Sullivan1982discrete} using the ergodicity of geodesic flow on $\Ga \ba\bH^3$. In joint work with Kim \cite{KO2023_conformalrigidity}, we provided a different proof using higher-rank Patterson-Sullivan theory for $\Ga_\rho$. More precisely, if $\Ga_\rho$ were Zariski dense, then ergodic properties of horospherical subgroup actions on $\Ga_\rho\ba G$ as in (\cite{LO_invariant}, \cite{KO2023_conformalrigidity}) would imply that the two pushforward measures 
$$(f^{-1}\times \op{id})_*\op{Leb}\quad\text{ and }\quad  (\op{id}\times f)_*\op{Leb}$$
on $\hc\times \hc$ -which arise as Patterson-Sullivan measures of $\Ga_\rho$, as in \eqref{higher}- must be mutually
singular. This, in turn, shows that $f_*\op{Leb}$ is singular to the Lebesgue measure,
contradicting absolute continuity. Therefore $\Ga_\rho$ cannot be Zariski dense, and hence $\rho$ is algebraic.

\section{From circle-counting to torus-counting} 
\subsection{Torus packings arising from quasiconformal deformations} 
In Theorem \ref{os2}, we presented a circle counting result for a locally finite circle packing $\cal P$ consisting of finitely many $\Ga$-orbits of circles. In this section, we present an analogous result for torus packing.
 By a torus in $\c^2$,  
we mean a pair of circles $T=(C_1, C_2)$. Let $\cal T$ denote the space of tori in $\c^2$. The volume of $T\in \cal T$ is defined as
$$\vol (T)=  \,\op{rad}( C_1) \times  \op{rad}( C_2).$$
Figure~\ref{torus} illustrates a quasiconformal deformation giving rise to a torus.
The main results from  Sections \ref{s:mmo} and \ref{s:rig} provide natural collections of torus packings to which
our counting theorem applies. 
\begin{theorem} \label{t1} Let $\Ga<\PSL_2(\c)$ be a convex cocompact rigid acylindrical group. Let 
$f$ be a $\rho$-equivariant quasiconformal homeomorphism of $\hc$ for some $\rho\in {\mathfrak T}(\Ga)$ that is not algebraic.
Define $$\cal P=\{(C, f(C))\in \cal T: \# (C\cap \La ) \ge 2 \}. $$
Then $\cal P$ is a finite union of closed $\Ga_\rho$-orbits of tori. In particular, $\cal P$ is locally finite\footnote{That is, for any bounded $B\subset\c^2$ and $\e>0$, there are only finitely many tori in $\cal P$ with volume at least $\e$ that intersect $B$.}.
\end{theorem}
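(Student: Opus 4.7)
The plan is to reduce the theorem to an orbit-closure statement for the $\Ga$-action on the space of circles and then invoke the rigidity criterion of Theorem~\ref{koto}. By $\rho$-equivariance, $(\gamma,\rho(\gamma))\cdot(C,f(C)) = (\gamma C, f(\gamma C))$, so $\cal P$ is $\Ga_\rho$-invariant and the projection $(C,f(C))\mapsto C$ induces a bijection between $\Ga_\rho$-orbits in $\cal P$ and $\Ga$-orbits of circles $C_0$ with $\#(C_0\cap\La)\ge 2$ and $f(C_0)\in\cal C$. If $\gamma_n\cdot(C_0,f(C_0))\to (C',T')$ in $\cal T$, then $\gamma_n C_0\to C'$ in $\cal C$, and continuity of $f$ together with the fact that a Hausdorff limit of round circles in $\hc$ is either a round circle or a point forces $T'=f(C')\in\cal C$. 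Hence closedness of the $\Ga_\rho$-orbit in $\cal T$ reduces to closedness of $\Ga C_0$ in $\cal C$.

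Each relevant $C_0$ is either separating or a boundary component of $\Omega$: a non-separating circle in $\cal C_\La$ lies in some $\overline B$ for a component $B$ of $\Omega$, and the only circle inside $\overline B$ meeting $\La$ in at least two points is $\partial B$ itself. In the boundary case $C_0=\partial B$, convex cocompactness of $\Ga$ yields only finitely many $\Ga$-orbits of components of $\Omega$, and since disjoint round disks in $\hc$ have radii shrinking to zero, $\Ga\partial B$ is discrete and closed in $\cal C$; the boundary contribution to $\cal P$ therefore consists of only finitely many closed orbits.

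The heart of the argument is the separating case $C_0\in\cal C_\La^*$ with $f(C_0)\in\cal C$. By Theorem~\ref{mmo1}, $\Ga C_0$ is either closed or dense in $\cal C_\La$. Suppose the density alternative. Then for every $D\in\cal C_\La$, choose $\gamma_n\in\Ga$ with $\gamma_n C_0\to D$; then $\rho(\gamma_n)f(C_0)=f(\gamma_n C_0)\to f(D)$ in Hausdorff distance. As a non-degenerate Jordan curve that is a Hausdorff limit of round circles, $f(D)$ must itself be a round circle. Consequently, for every circle $C\subset\hc$, $f(C\cap\La)\subset f(C)$ is contained in a circle (trivially when $C\cap\La=\emptyset$, and by the preceding when $C\in\cal C_\La$), so $f$ is conformal on $\La$ in the sense of Theorem~\ref{koto}. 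Since $\Omega$ has infinitely many components and neither $\La$ nor $f(\La)$ is contained in a circle, Theorem~\ref{koto} forces $f$ to be a M\"obius transformation, contradicting the non-algebraicity of $\rho$. Hence $\Ga C_0$ is closed in $\cal C_\La$, and since Hausdorff limits of separating circles continue to meet $\La$, $\Ga C_0$ is closed in $\cal C$.

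The same density argument yields finiteness of orbits: if infinitely many $\Ga$-orbits of separating circles $C_i$ with $f(C_i)\in\cal C$ occurred, then by Theorem~\ref{c2}(3) applied to the rigid acylindrical group $\Ga$, the union $\bigcup_i \Ga C_i$ would be dense in $\cal C_\La$, and the reasoning above would again render $f$ M\"obius. Combining with the finitely many boundary contributions, $\cal P$ is a finite union of closed $\Ga_\rho$-orbits. Local finiteness follows at once: each $\Ga C_0$ (and likewise $\rho(\Ga)f(C_0)$) is locally finite in $\cal C$ by proper discontinuity of $\Ga$ on $\bH^3$, and a torus of volume $\ge\e$ meeting a bounded subset of $\c^2$ must have one of its factors of radius $\ge\sqrt{\e}$ meeting the corresponding bounded projection. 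The main obstacle is bridging from ``$f$ sends a dense family of separating circles to round circles'' to ``$f$ is conformal on all of $\La$''; this step hinges on the fact that the density in Theorem~\ref{mmo1} takes place in the full $\cal C_\La$, so that even the boundary circles $\partial B$ are approximable by $\gamma_n C_0$, and on Theorem~\ref{koto} to convert conformality on $\La$ into the M\"obius property.
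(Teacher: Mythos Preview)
Your proof is correct and follows essentially the same route as the paper's own argument: split into the non-separating (boundary) and separating cases, apply the orbit-closure dichotomy for $\cal C_\La^*$, and use Theorem~\ref{koto} to rule out the dense alternative. The differences are cosmetic---the paper invokes Theorem~\ref{finite} for the circles contained in $\La$ where you argue directly that they are the $\partial B$'s, and the paper applies Theorem~\ref{c2}(1) once to the invariant set $\{C\in\cal C_\La^*:f(C)\in\cal C\}$ rather than separating closedness (via Theorem~\ref{mmo1}) from finiteness (via Theorem~\ref{c2}(3))---but the substance is identical.
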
 
\begin{proof} Since $\La$ is a round Sierpi\'nski carpet, the condition
$\# C\cap \La \ge 2$ means either $C\subset \La$ or $C$ separates $\La$. Circles of the first type
form finitely many closed $\Ga$-orbits of circles by Theorem \ref{finite}. If $\Ga C$ is closed,
then circles in $\Gamma C$ accumulate only at radius $0$, and since
 $f$ is quasiconformal, the same holds for $f(C)$. 
Therefore it suffices consider
    $\cal P^*=\{(C, f(C))\in \cal T:  C\in \cal C_\La^* \}. $
    By Theorem \ref{c2}, the set $\{C\in \cal C_\La^*: f(C)\in \cal C\}$ is either dense
    or a finite union of closed $\Ga$-orbits. In the first case,
    $f$ maps a dense collection of circles in $\cal C_\La$ to circles, and hence
    by continuity, $f(\cal C_\La)\subset \cal C$. By Theorem \ref{koto}, $f$ would then be
    M\"obius, contradicting the hypothesis. Therefore the set $\{C\in \cal C_\La^*: f(C)\in \cal C\}$ is a finite union of 
     closed $\Ga$-orbits. By the $\rho$-equivariance of $f$, this implies the claim.
\end{proof}
\begin{figure}[h]
\begin{center}
      \includegraphics[totalheight=3.5cm]{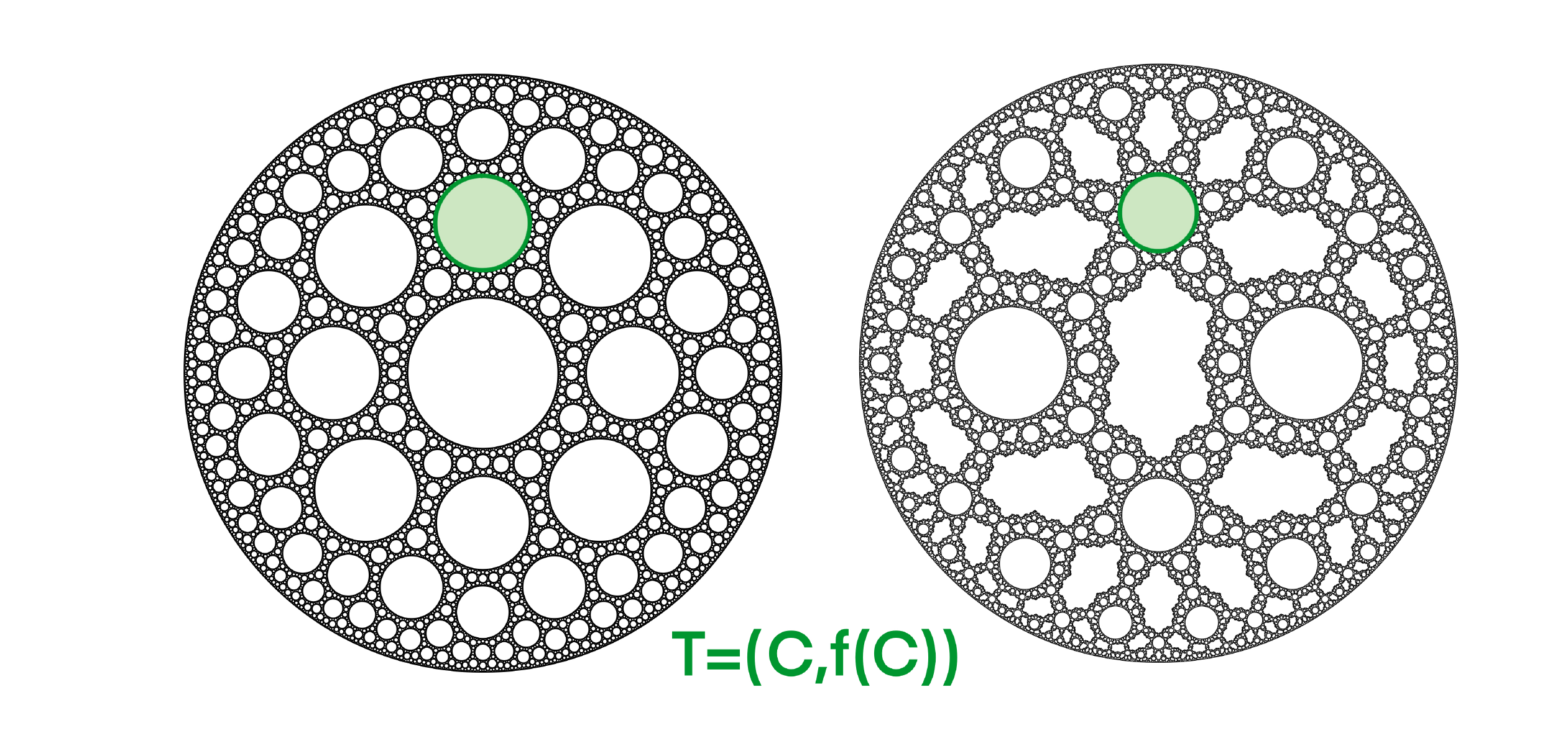}\end{center}\caption{Torus}\label{Torus}
\end{figure}
 In joint work with Edwards and Lee, we have proved
the following torus-counting theorem:

\begin{theorem}[\cite{ELO_torus}]
Let $\P$ be the torus packing from Theorem \ref{t1}.    
Then there exist constants $c_{f}>0$ and $\delta_f>0$ such that as  $t\to \infty$,
we have 
$$ \#\{T\in \cal P: \Vol (T)> \tfrac{1}{t} \} \sim c_{f} \cdot t^{\delta_f} .$$
Moreover, there exists a locally finite Borel measure $\omega_f$ on the set
$\{(\xi, f(\xi))\in \c^2: \xi\in \La\}$ such that
for any bounded region $R\subset \c^2$ whose boundary is a piecewise algebraic subvariety, 
$$ \#\{T\in \cal P: \Vol (T)> \tfrac{1}{t}, \; T\cap R\ne \emptyset  \} \sim c_{f} \cdot \omega_f (R)  \cdot t^{\delta_f} .$$
\end{theorem}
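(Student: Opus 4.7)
The plan is to recast the torus count as a higher-rank equidistribution problem on the self-joining quotient $\Gamma_\rho\ba (G\times G)$ with $G=\PSL_2(\c)$, following the dynamical blueprint behind Theorem \ref{os2} but now with the rank-one geodesic flow replaced by a one-parameter diagonal subgroup acting on $R_\rho$. Since $\P$ is a finite union of closed $\Gamma_\rho$-orbits of tori by Theorem \ref{t1}, it suffices to handle one orbit $\Gamma_\rho\cdot T_0$ with $T_0=(C_1,C_2)$ and sum the resulting asymptotics. Let $H=\PSL_2(\br)\times\PSL_2(\br)$ be the stabilizer in $G\times G$ of the pair of hemispheres above $T_0$. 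Tori in this orbit correspond to $\Gamma_\rho\ba \Gamma_\rho g_0 H$, and under the diagonal subgroup $A_\u$ with $\u=(1,1)$ both radii scale by a common exponential factor, so the condition $\vol(T)\ge 1/t$ cuts out a sector in the $A_\u$-direction whose extent grows linearly in $\log t$.

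The count then rewrites as an unfolded sum along $H$, which reduces to the asymptotic distribution in $\Gamma_\rho\ba (G\times G)$ of the translates $[g_0H]a_{s\u}$ as $s\to\infty$. The target limiting measure is the higher-rank Bowen--Margulis--Sullivan measure $m^{\BMS}_\u$ on $R_\rho$, built from the $\Gamma_\rho$-Patterson--Sullivan measures on $\La_\rho\subset \hc\times\hc$ discussed in Section \ref{s:rig}. The natural source measure is a higher-rank skinning measure on the closed $H$-orbit, obtained by placing conformal weights---with exponents prescribed by the growth indicator of $\Gamma_\rho$---at the visual endpoints of geodesics orthogonal to the two hemispheres. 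The counting exponent $\delta_f$ is then the value of the growth indicator function of $\Gamma_\rho$ in the direction $\u$, and the leading constant should take the expected form
\[
c_f\;=\;\frac{\mathsf{sk}_{\Gamma_\rho}(\P)}{|m^{\BMS}_\u|},
\]
while $\omega_f$ arises by disintegrating the skinning measure along the boundary visual map $v\mapsto (v^+,f(v^+))\in \c^2$; the piecewise algebraic hypothesis on $\partial R$ is then what guarantees $\omega_f(\partial R)=0$.

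The main obstacle is securing the necessary analytic inputs for $\Gamma_\rho$ in this Zariski-dense, infinite-covolume, higher-rank setting: (a) finiteness of $m^{\BMS}_\u$ on $R_\rho$ in the distinguished direction $\u=(1,1)$; (b) mixing of the flow $a_{s\u}$ with respect to $m^{\BMS}_\u$, preferably with some quantitative rate; and (c) a higher-rank non-escape-of-mass estimate, in the spirit of Dani--Margulis, preventing the translates $[g_0H]a_{s\u}$ from leaking into the non-compact directions transverse to $\u$. Ingredients (a)--(b) should follow from the higher-rank Patterson--Sullivan theory for self-joinings that underlies Theorem \ref{ko2b}, while (c) demands a careful analysis of the geometry near the ends of $\Gamma_\rho\ba (\bH^3\times\bH^3)$. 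Granted these inputs, the asymptotic and its weighted refinement follow by the standard thickening/wavefront argument, exactly as the distribution of orthogonal translates of geodesic planes was used to prove Theorem \ref{os2} in the rank-one case.
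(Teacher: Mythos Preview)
Your overall architecture is right---reduce to a single closed $\Gamma_\rho$-orbit, pass to translates of the closed $H$-orbit in $\Gamma_\rho\backslash(G\times G)$, and read off the count from equidistribution---but the analytic inputs you name are not the ones that actually hold, and this is a genuine gap, not a matter of detail.

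Your item (a) is false: in higher rank the Bowen--Margulis--Sullivan measure $m^{\BMS}_{\u}$ is an \emph{infinite} measure (see the paper's \S\ref{s:lmhigher}). Consequently your proposed formula $c_f=\mathsf{sk}_{\Gamma_\rho}(\mathcal P)/|m^{\BMS}_{\u}|$ is meaningless, and item (b) as you state it---strong mixing of $a_{s\u}$ for $m^{\BMS}_{\u}$---cannot hold. What replaces it is \emph{local mixing} with the polynomial renormalization $t^{(n-1)/2}$ (Theorems \ref{sam} and \ref{elo_mix}), and for counting one needs the stronger uniform version along transverse directions in $\ker\psi_{\u}$ with the Gaussian weight $e^{-I(\mathsf w)}$ (Theorem \ref{elo_full}). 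That uniform statement is exactly what absorbs the ``leakage into non-compact directions transverse to $\u$'' you worry about in (c); there is no Dani--Margulis avoidance step here, and the ergodicity input behind Theorem \ref{ko2b} is too weak to yield a counting asymptotic. One further point: the condition $\vol(T)\ge 1/t$ does not cut out a ray in a single $A_{\u}$-direction but a half-space $\{u_1+u_2\le \log t\}$ in the full Cartan $\mathfrak a$, so you genuinely have to integrate over all directions, which is why the uniform local mixing of Theorem \ref{elo_full} is indispensable and why the exponent is $\delta_{\rho,\Vol}$ rather than a single directional value of $\psi_\rho$.
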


\subsection{Higher dimensional torus packings via self-joinings} 
For $n\ge 1$, an $n$-dimensional torus packing in $\c^n$ is a countable collection of $n$-dimensional tori $T=(C_1, \cdots, C_n)$ with $C_i\in \cal C$. 
Let $\Ga<\PSL_2(\c)$ be a convex cocompact, Zariski dense Kleinian subgroup, and let
$\rho=(\rho_1, \cdots, \rho_n)$ be an $n$-tuple of faithful, convex cocompact, Zariski dense representations of $\Ga$ into $\PSL_2(\c)$ with $\rho_1=\text{id}$, and assume that no two $\rho_i$ are conjugate in $\Mob (\hc)$. The {\it self-joining} of $\Gamma$ via $\rho$ is the following discrete subgroup of $G=\prod_{i=1}^n \PSL_2(\c)$:
$$\Gamma_\rho =(\prod_i\rho_i) (\Ga)
=\{\big(\rho_1(\ga), \cdots , \rho_n(\ga)\big): \ga\in \Gamma\}.$$
Let  $f=(f_1, \cdots, f_n)$ where  $f_i:\La\to \La_{\rho_i(\Ga)}$ is the $\rho_i$-boundary map.
In the following torus-counting result, obtained jointly with Edwards and Lee, we consider torus packings arising from these boundary maps. We
do not assume that $f_i(C_1)$ is a circle, but only that $f_i(C_1\cap \La)$ is contained in a circle $C_i$. 
\begin{theorem}[\cite{ELO_torus}]\label{elo}Let $\P$ be a locally finite torus packing consisting of finitely many $\Ga_\rho$-orbits.
Suppose that $\P$ is $f$-admissible, meaning that for any $(C_1, \cdots, C_n)\in \cal P$, 
$$ f_i(C_1\cap \La)= C_i\cap f_i(\La)\quad \text{for all $1\le i\le n$.}$$
  Then there exist a constant $0<c_{\cal P}<\infty $ and a locally finite Borel measure $\omega_\rho$ on $(\prod_if_i) (\La)\cap \c^n$ such that
  for any region $R\subset \c^n$ enclosed by a piecewise real algebraic subvariety,
  $$\#\left\{T\in \cal P: \vol (T) \ge \tfrac{1}t,\; T\cap R\ne \emptyset  \right\} \sim c_{\cal P} \cdot \omega_\rho (R) \cdot  t^{\delta_{\rho, \Vol}}\quad\text{as $t\to \infty$}$$
  where $\Vol (T)= \prod_{i=1}^n  \op{rad} (C_i)$ and $$ 0<\delta_{\rho, \Vol}=\limsup_{t\to\infty}\frac{1}{t}\log \#\{\ga\in \Ga:  \sum_{i=1}^n
  d(\rho_i(\ga) o, o) \le t\}< \frac{2}{\sqrt n}, \quad o\in \bH^3. $$
\end{theorem}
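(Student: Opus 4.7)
The plan is to recast the torus counting problem as an equidistribution statement for translates of closed orbits on the self-joining quotient $\Ga_\rho \ba G$, where $G = \prod_{i=1}^n \PSL_2(\c)$, and then extract the asymptotic via higher-rank mixing against a Bowen-Margulis-Sullivan measure in a distinguished direction. Writing $\cal P = \bigcup_{j=1}^\ell \Ga_\rho T_j$, each $T_j = (C_{j,1}, \ldots, C_{j,n})$ lifts to the product of geodesic hemispheres $(C_{j,1}^\dagger, \ldots, C_{j,n}^\dagger) \subset (\bH^3)^n$, whose stabilizer in $G$ is a conjugate of $H := \prod_i \PSL_2(\br)$. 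The $f$-admissibility condition guarantees that the corresponding orbit $x_j H \subset \Ga_\rho \ba G$ is closed and that it carries a finite higher-rank skinning measure $\mu_j^{\op{sk}}$, built by integrating a $\Ga_\rho$-conformal Patterson-Sullivan density against the conditional Lebesgue measure along the normal directions to $H$.

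\textbf{Identification of the flow direction.} Let $A \subset G$ be the product of the Cartan subgroups with Lie algebra $\fa \cong \br^n$, and let $\psi(\u) = u_1 + \cdots + u_n \in \fa^*$. A torus with radii $(r_1, \ldots, r_n)$ is obtained from a base orbit by the translate $\exp(-\log r_1, \ldots, -\log r_n) \in A$, so the condition $\op{vol}(T) \ge t^{-1}$ becomes $\psi(\log a) \le \log t$. By pairwise non-conjugacy together with Zariski density of each $\rho_i(\Ga)$, the self-joining $\Ga_\rho$ is Zariski dense in $G$, and the growth indicator $\psi_{\Ga_\rho}: \fa^+ \to \br_{\ge 0}$ is strictly concave and analytic on the interior of its support. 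Let $\u_0 \in \fa^+$ be the unique unit-$\psi$ direction achieving $\delta_{\rho, \Vol} = \max_{\psi(\u) = 1} \psi_{\Ga_\rho}(\u)$, and let $\tau_0 \in \fa^*$ be the tangent linear form of $\psi_{\Ga_\rho}$ at $\u_0$. Convex cocompactness of each $\rho_i(\Ga)$ makes $\Ga_\rho$ a regular (Anosov) discrete subgroup, so the $(\Ga_\rho, \tau_0)$-Patterson-Sullivan density exists on the limit set $(\prod_i f_i)(\La) \subset (\hc)^n$, the associated BMS measure $\m_{\u_0}$ on $\Ga_\rho \ba G$ is finite, and the one-parameter flow $\{a_{s\u_0}\}$ is mixing with respect to $\m_{\u_0}/|\m_{\u_0}|$.

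\textbf{Counting, the bound, and main obstacle.} Combining mixing with a wavefront thickening along the horospherical directions transverse to $\u_0$ (a higher-rank adaptation of the Eskin-McMullen argument used in the proof of Theorem \ref{os2}), one obtains equidistribution of translates: $e^{-\delta_{\rho, \Vol} s}$ times the pushforward of $\mu_j^{\op{sk}}$ under $a_{s\u_0}$ converges weakly to $(|\mu_j^{\op{sk}}|/|\m_{\u_0}|) \cdot \m_{\u_0}$. Integrating this over the wedge $\{\u \in \fa^+ : \psi(\u) \le \log t\}$ and invoking a Tauberian argument produce the asymptotic, with $c_{\cal P} = \sum_j |\mu_j^{\op{sk}}|/|\m_{\u_0}|$ and $\omega_\rho$ arising as the projection of the tangential Patterson-Sullivan density to $\prod_i f_i(\La) \cap \c^n$ with conformal weight. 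The bound $\delta_{\rho, \Vol} < 2/\sqrt n$ follows from $\delta(\rho_i(\Ga)) < 2$ for each $i$ (convex cocompact Zariski dense) combined with strict concavity of $\psi_{\Ga_\rho}$ and a Cauchy-Schwarz estimate on the hyperplane $\{\psi(\u) = 1\}$. The hard part is the higher-rank mixing step: one must pin down the precise direction $\u_0$ (which depends subtly on $\rho$), verify finiteness and mixing of $\m_{\u_0}$ on the infinite-volume quotient $\Ga_\rho \ba G$, and upgrade to equidistribution with sufficient uniformity to handle piecewise real-algebraic boundaries of the region $R$.
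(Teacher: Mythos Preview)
Your overall strategy---recast torus counting as equidistribution of closed $H$-orbits on $\Ga_\rho\ba G$, identify the optimal direction $\u_0$ via the growth indicator, and invoke a mixing-to-counting argument---matches the paper's approach. But there is a genuine error at the heart of your mixing input. You assert that ``the associated BMS measure $\m_{\u_0}$ on $\Ga_\rho \ba G$ is finite, and the one-parameter flow $\{a_{s\u_0}\}$ is mixing with respect to $\m_{\u_0}/|\m_{\u_0}|$.'' This is false for $n\ge 2$: the paper stresses (section on local mixing of BMS measures in higher rank) that ``a crucial difference between rank one and higher rank is that in higher rank $m^{\BMS}_{\u}$ is an \emph{infinite} measure. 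Hence strong mixing does not make sense.'' The support of $m^{\BMS}_{\u}$ fibers as an $\br^{n-1}$-bundle over the compact flow space $\Omega_{\u}$, and the $\ker\psi_{\u}$-factor contributes an infinite Lebesgue measure. Consequently your equidistribution statement $e^{-\delta_{\rho,\Vol}s}(\mu_j^{\op{sk}})_*a_{s\u_0}\to (|\mu_j^{\op{sk}}|/|\m_{\u_0}|)\m_{\u_0}$ is meaningless as written.

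The correct input is \emph{local mixing} (Theorems \ref{elo_mix} and \ref{sam}): renormalized correlations carry an extra polynomial factor $t^{(n-1)/2}$ reflecting diffusion in the $(n-1)$-dimensional transverse directions $\ker\psi_{\u_0}$, and the limits are expressed through the Burger--Roblin measures $m^{\BR}_{\u_0}$, $m^{\BR_*}_{\u_0}$ rather than the BMS measure. Moreover, integrating over the wedge $\{\psi(\u)\le \log t\}$ requires uniform control as the direction varies, which is precisely the content of Theorem \ref{elo_full}: one parametrizes $\fa^+$ by $(t,\w)\mapsto t\u_0+\sqrt t\,\w$ with $\w\in\ker\psi_{\u_0}$, and the Gaussian factor $e^{-I(\w)}$ together with the uniform bound allow the wedge integral to converge. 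Without this, your ``Tauberian argument'' has no controlled input. So the missing idea is not a technicality but the central higher-rank phenomenon: finiteness and strong mixing of BMS fail, and one must work with the local mixing asymptotics of Theorems \ref{elo_mix}--\ref{elo_full} instead.
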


\subsection{The exponents in torus counting} The appearance of the particular exponent $\delta_{\rho,\Vol}$ in the above theorem is genuinely a higher rank phenomenon.
In the circle counting theorem, there was only one relevant exponent, namely the the critical exponent of a Kleinian group $\Ga$
with respect to the hyperbolic metric. In higher rank, however, the exponent depends on how the tori are
ordered in the counting problem. In Theorem \ref{elo}, ordering the tori by their volume leads to the exponent $\delta_{\rho,\Vol}$, which differs from the Riemannian critical exponent of $\Ga_\rho$.
For instance, if we replace  $\vol (T)$ by
$\prod \text{rad}(C_i)^{\kappa_i}$ with $\kappa_i>0$,
then $\delta_{\rho, \Vol}$ is replaced by 
$\limsup_{t\to\infty}\frac{1}{t}\log \#\{\ga\in \Ga:  \sum_{i=1}^n
  \kappa_i d(\rho_i(\ga) o, o) \le t\}$ \cite{ELO_torus}.

\subsection{Anosov subgroups and local mixing} The convex cocompact hypotheses on  $\rho$ in Theorem \ref{elo} ensure that the self-joining $\Ga_\rho$ belongs to the important class of  Anosov subgroups of $G$. Roughly speaking, Anosov subgroups are higher-rank analogues of convex cocompact Kleinian groups, characterized by strong geometric and dynamical stability properties (see \ref{s:anosov} for a precise definition).
The local mixing  of diagonal action on the Anosov quotients
$\Ga_\rho\ba G$ is the main tool in the proof. Local mixing is a fundamental phenomenon in homogeneous dynamics in infinite volume and will be discussed in detail in the next two sections.
The analogue of Theorem \ref{elo} for self-joinings of general geometrically finite groups remains open, with the main obstacle being the absence of a local mixing theorem in this broader setting.

\section{Matrix coefficients and Mixing in rank one infinite volume}
 In the next two sections, we discuss mixing for diagonal subgroups in
quotients of $G=\prod_{i=1}^n \SO(d,1)^\circ$ with $d\ge 2$ and $n\ge 1$.
The case of $n=1$, treated in this section,
corresponds to the frame flow on a hyperbolic $d$-manifold. Our discussion proceeds in three steps:
\begin{itemize}
    \item {\em local mixing} which captures renormalized correlation in infinite volume;
    \item {\em exponential mixing} which provides exponential rates of decay;
    and 
    \item {\em uniform exponential mixing} over congruence covers, with number-theoretic consequences including applications to the affine sieve.
\end{itemize}

\subsection{What is local mixing?} Let $G$ be a connected semisimple real algebraic group and $\Ga<G$ be a discrete subgroup.  Let $dx$ denote a $G$-invariant measure on $\Ga\ba G$ and write $\op{Vol}(S)$ for the volume of a subset $S\subset \Ga\ba G$ with respect to $dx$. Let $\{a_t:t\in \br\}$ be a one-parameter diagonalizable subgroup of $G$.
In simple terms, mixing concerns the distribution of $\cal O a_t$ as $t\to +\infty$ for a bounded open subset $\cal O\subset \Ga\ba G$.
When $\op{Vol}(\Ga\ba G)=\infty$, the Howe-Moore theorem on decay of matrix coefficients \cite{Howe_Moore} implies that for any bounded subset $B\subset \Ga\ba G$,
$$\lim_{t\to +\infty} \op{Vol} (\cal Oa_{t}\cap B) =0 ,$$
which by itself yields no meaningful distributional information.
The natural question is whether one can renormalize this quantity to obtain a nontrivial measure describing the asymptotic distribution of $\cal O a_t$. More precisely, one seeks a locally finite Borel measure $\mu$ on $\Ga\ba G$ such that for any bounded subsets $B_1, B_2\subset \Ga\ba G$ with boundaries of volume zero,
\be\label{oeq2} \lim_{t\to +\infty} \frac{\op{Vol}(\cal O a_{t} \cap B_1)}{\op{Vol}(\cal Oa_{t} \cap B_2)}= \frac{{\mu}( B_1)}{{\mu}( B_2)}\ee whenever $\mu(B_2)>0$.  This question can be formulated in terms of matrix coefficients of the quasi-regular representation of $G$ on 
 $L^2(\Ga \ba G, dx)$. 
For  $f_1, f_2\in C_c(\Ga\ba G)$, the corresponding matrix coefficient is the correlation function:
$$g\mapsto   \langle g. f_1,  f_2\rangle  =\int_{\Ga\ba G} f_1(x g ) f_2(x) \; dx .$$

\begin{definition} The action of $\{a_t\}$ on $ (\Ga\ba G, dx) $ is called {\it local mixing} if there exists a proper function $\Phi:\br_{\ge 0} \to \br_{\ge 0}$ and locally finite Borel measures $\mu_1,\mu_2$ on $\Ga \ba G $ such that for any $f_1, f_2\in C_c(\Ga\ba G)$,
\be\label{ma2} \lim_{t\to +\infty} \Phi(t) \langle a_t. f_1, f_2\rangle  =\int f_1 \,d\mu_1 \cdot \int f_2\, d\mu_2.
\ee  \end{definition}
When $\Phi$ is a constant function and $d\mu_1(x)=d\mu_2(x)=dx$, local mixing becomes the usual notion of strong mixing. The term {\it local} indicates that we  consider only compactly supported test functions so that their integrals with respect to any locally finite measure are well-defined. 
Note that \eqref{ma2} implies that \eqref{oeq2} holds with $\mu=\mu_1$. 

Roughly speaking, 
the renormalization function $\Phi(t)$, if exists, should satisfy that
for an open subset $\cal O\subset G$,
$$\Phi(t) \approx \frac{\op{Vol}( {\cal O} a_t {\cal O}) }{\# (\Ga \cap  {\cal O} a_t {\cal O})} \quad\text{ as $t\to +\infty$}; $$
this can be deduced by taking $f$ to be the characteristic function of $\Ga\ba \Ga \cal O$.
Therefore, the growth rate of $\Ga$ in the direction of the one-parameter subgroup $\{a_t\}$ determines $\Phi(t)$. The notion of local mixing was formally introduced in \cite{Oh_Pan_ZD}, where it was proved for the frame flow on abelian covers of convex cocompact hyperbolic manifolds.

\subsection{Local mixing of frame flow in geometrically finite hyperbolic manifolds}\label{s:so} Let $G=\SO(d,1)^\circ=\Isom^+(\bH^d)$ with $d\ge 2$, and $\Ga<G$ be a discrete subgroup. Write $\cM=\Ga\ba \bH^d$.

Let $A=\{a_t:t\in \br\}$ be a one-parameter diagonal subgroup of $G$, which is unique up to conjugation, and set $A^+=\{a_t:t\ge 0\}$. Fix a maximal compact subgroup $K<G$ so that the Cartan decomposition $G=KA^+K$ holds, and let $M$ denote the centralizer of $A$ in $K$.
Through the identifications $ \T^1(\cM)\simeq \Ga\ba G/M$ and $\op{F}(\cM)\simeq \Ga\ba G $, 
the right translation action of $A$ on $\Gamma\backslash G/M$ corresponds to the geodesic flow, while the action on $\Gamma\backslash G$ corresponds to the frame flow. The following local mixing theorem was proved by Roblin for the geodesic flow and by Winter for the frame flow.
\begin{theorem}[\cite{Roblin2003ergodicite}, \cite{Winter_mixing}]\label{win} Let $\Ga<G$ be a geometrically finite and Zariski dense subgroup.
For any $f_1, f_2\in C_c(\Ga\ba G)$,
    we have $$\lim_{t\to +\infty} e^{(d-1-\delta)t } \langle a_t. f_1,  f_2\rangle   =\frac{1}{|m^{\BMS}|} m^{\op{BR}}(f_1) \cdot m^{\op{BR_*}} (f_2),$$
    where $m^{\op{BR}}$ and  $m^{\BR_*}$ denote the stable and unstable Burger-Roblin measures, respectively.
\end{theorem}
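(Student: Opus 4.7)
The plan is to reduce the asserted asymptotic for Haar matrix coefficients to a mixing statement for the finite measure $m^{\BMS}$, and then recover the exponential normalization $e^{(d-1-\delta)t}$ from a thickening computation that compares the local structure of the Haar measure with that of $m^{\BMS}$ along the weak-unstable foliation. The two ingredients that make this viable are Sullivan's finiteness $|m^{\BMS}|<\infty$ (Section \ref{s:cc}) and the Hopf-parametrization \eqref{hopf}, which diagonalizes both measures against the Patterson--Sullivan measure $\nu_o$ and the spherical measure $m_o$ on $\hc=\partial\bH^d$.

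\textbf{Step 1: BMS mixing for the frame flow.} The deepest input is the claim that for any $\varphi_1,\varphi_2\in C_c(\Gamma\ba G)$,
\begin{equation}\label{bmsmix}
\lim_{t\to+\infty}\int \varphi_1(xa_t)\varphi_2(x)\,dm^{\BMS}(x)=\frac{1}{|m^{\BMS}|}\,m^{\BMS}(\varphi_1)\,m^{\BMS}(\varphi_2).
\end{equation}
For the geodesic flow on $\Gamma\ba G/M$ this is Babillot's theorem, obtained from Hopf-type arguments exploiting the product structure of $\tilde m^{\BMS}$ on $\hc\times\hc-\text{diag}$ and the conservativity/ergodicity of the $A$-action with respect to $m^{\BMS}$ (a consequence of the Patterson--Sullivan construction of Theorem \ref{psm} together with geometric finiteness). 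To lift \eqref{bmsmix} from $G/M$ to $G$, one must show that the $M$-components equidistribute under the flow; this is precisely where the Zariski density hypothesis enters. By a disintegration of $m^{\BMS}$ along $M$-fibers, the lift amounts to showing that the closure of the subgroup of $M$ generated by the holonomies of closed $A$-orbits in $\supp m^{\BMS}$ is all of $M$. Zariski density of $\Gamma$ in $G$, combined with a Brin--Flaminio--Spatzier style argument on holonomy groups of axes of loxodromic elements, produces a full $M$ and hence gives \eqref{bmsmix}.

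\textbf{Step 2: thickening from BMS to Haar.} For $f_1,f_2\in C_c(\Gamma\ba G)$ one rewrites
\[
\langle a_t.f_1,f_2\rangle=\int_{\Gamma\ba G}f_1(xa_t)f_2(x)\,dx
\]
by disintegrating $dx$ in a bounded flow box near a BMS-generic point. Writing $G=N^- M A N^+$ locally, the Haar measure factors (up to a Jacobian) as the product of Lebesgue densities on $N^+$, $A$, $M$, $N^-$, while $m^{\BMS}$ factors as a Patterson--Sullivan density on the transverse leaves $N^\pm$ times $dt$ times Haar on $M$. The exponential discrepancy along the unstable leaf $N^+\simeq \mathbb R^{d-1}$ between Lebesgue $dm_o$ and the conformal density $d\nu_o$ contracts under $a_{-t}$ with factor $e^{-(d-1-\delta)t}$, and analogously for $N^-$. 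The standard maneuver is to approximate $f_1,f_2$ by functions that are constant on small pieces of $N^+\cdot M$ (respectively $N^-\cdot M$), apply \eqref{bmsmix} to BMS-thickenings $\tilde f_1,\tilde f_2$, and then identify what the thickened integrals converge to. After undoing the thickening and tracking the Radon--Nikodym derivative between Haar and $m^{\BMS}$ on each leaf, the $N^+$-integral of $f_1$ against $dm_o\otimes d\nu_o\otimes dt$ produces $m^{\BR}(f_1)$ and the $N^-$-integral of $f_2$ against $d\nu_o\otimes dm_o\otimes dt$ produces $m^{\BR_*}(f_2)$, while the two exponential defects combine into $e^{(d-1-\delta)t}$, yielding the stated limit.

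\textbf{Main obstacle.} The analytic content of the thickening step is routine once one has \eqref{bmsmix}; the genuine difficulty is Step 1, and specifically the $M$-equidistribution. In the presence of cusps (the geometrically finite but not convex cocompact case), one also needs quantitative control of excursions into cusps in order to justify the approximation arguments for non-compactly supported test cross-sections; this requires Sullivan's shadow lemma and Stratmann--Velani global measure estimates to guarantee that the BMS support has no loss of mass into the cusps under $a_t$. Granting these two ingredients, the proof reduces to bookkeeping of Jacobians in the Hopf coordinates.
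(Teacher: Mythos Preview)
Your proposal is correct and follows exactly the route the paper indicates: the paper does not prove Theorem \ref{win} but records (in the paragraph following the definition of $m^{\BR}$) that it is equivalent to strong mixing of the finite BMS measure (Babillot for the geodesic flow, Winter for the $M$-extension to the frame flow) via Roblin's transversal intersection argument, which is precisely your Step~1/Step~2 decomposition. Your identification of the Zariski-density hypothesis as the ingredient needed for the $M$-equidistribution, and of the thickening as producing the factor $e^{(d-1-\delta)t}$ together with $m^{\BR}$ and $m^{\BR_*}$, matches the paper's account.
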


\subsection{Burger-Roblin measures} \label{s:br}
Let $o\in \bH^d$ be a point with stabilizer $K$. Let $\nu_o$ denote the Patterson-Sullivan measure on $\La$ and $m_o$ the $K$-invariant measure on $\partial \bH^d=\bS^{d-1}$. As in dimension $3$, we use
the Hopf parameterization $\T^1(\bH^d)= \{v=(v^+, v^-, t=\beta_{v^+}(o, v)): v^{\pm}\in \S^{d-1}\}$. In these coordinates, the Burger-Roblin measure
$m^{\BR}$ is induced from the $\Ga$-invariant measure on $\T^1(\bH^d)$ given by
\begin{align*}
d \tilde m^{\BR}(v)& =
e^{\delta \beta_{v^+}(o, v)}\; e^{(d-1) \beta_{v^-}(o,v) }\;d\nu_o(v^+) dm_o(v^-) dt. \end{align*}
The key distinction from the Bowen-Margulis-Sullivan (BMS) measure is the use of $m_o$ at the backward endpoint $v^-$ instead of $\nu_o$.
This choice makes $m^{\BR}$  invariant under the stable horospherical subgroup
$N=\{g\in G: a_{-t} g a_{t}\to e\text{ as $t\to +\infty$}\} $, that is,
$dm^{\BR}(xn)= dm^{\BR}(x)$ for all $n\in N$.
In fact, $m^{\BR}$ is
the unique ergodic $N$-invariant measure on $\Ga\ba G$ not supported on a single $N$-orbit, up to scaling (\cite{burger_horo}, \cite{Roblin2003ergodicite}, \cite{Winter_mixing}). It is always infinite except when $\Ga$ is a lattice. This measure was introduced by Burger in \cite{burger_horo} and was further studied by Roblin \cite{Roblin2003ergodicite}.
The unstable Burger-Roblin measure $m^{\BR_*}$ is defined analogously, with the roles of $g^{\pm}$ interchanged, and is invariant under the unstable horospherical subgroup $\check N=\{g\in G: a_{t} g a_{-t}\to e\text{ as $t\to +\infty$}\}$. It turns out that Theorem \ref{win} is equivalent to the strong mixing of the finite BMS measure (\cite{Babillot_mixing}, \cite{Winter_mixing}) via Roblin's transversal intersection argument (see \cite{Roblin2003ergodicite}, \cite{OS_Jams}). On the other hand, local mixing with respect to the Haar measure is often more useful in equidistribution and counting theorems, since Haar measures are easier to handle under changes of variables.

\subsection{Exponential mixing in infinite volume}
By "exponential mixing", we mean exponential {\emph{local mixing}}, i.e.  exponential error terms in the renormalized correlation asymptotics of Theorem \ref{win}. The discussion splits into two cases, depending on the size of the critical exponent $\delta$, which reflects the $L^2$-spectrum of
the hyperbolic manifold $\cM=\Ga\ba \bH^d$. In fact,  geometrically finite Kleinian groups can have critical exponent arbitrarily close to $0$ or to $d-1$.
Identifying $L^2(\cM)$  with the subspace of $K$-invariant functions in $L^2(\Ga\ba G)$, we see that the asymptotics of its matrix coefficients are tied to the $L^2$-spectrum of the Laplacian.
Denote by $\Delta$ the negative of the Laplace operator on $\cM$. 
Combining results of Patterson, Sullivan and Lax-Phillips, one obtains the following theorem:
\begin{theorem}[\cite{Patterson1976limit}, \cite{Sullivan1979density}, \cite{Sullivan_Riemannian}, \cite{Lax_Ph}]  \label{lax} Let $\cM=\Ga\ba \bH^d$ be a geometrically finite hyperbolic manifold.
\begin{itemize}
    \item If $\delta >\frac{d-1}{2}$, then the bottom eigenvalue of $\Delta$ on $L^2(\cM)$ is simple and equal to $\lambda_0=\delta(d-1-\delta)$, and
 there are only finitely many eigenvalues of $\Delta$ in $[\lambda_0, \frac{(d-1)^2}{4})$. 
 \item If $\delta\le \frac{d-1}{2}$, then the $L^2$-spectrum of $\Delta$ is purely continuous.
 \end{itemize}
\end{theorem}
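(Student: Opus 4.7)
The plan is to combine Sullivan's construction of a positive $\Delta$-eigenfunction from the Patterson--Sullivan measure with the Lax--Phillips compactness argument for eigenvalues below the continuous threshold. First I would build the candidate ground state: for $x\in\bH^d$ set
\[
\phi_0(x)=\int_{\partial \bH^d} e^{-\delta\,\beta_\xi(o,x)}\,d\nu_o(\xi).
\]
The conformality relation \eqref{conformal} shows that $\phi_0$ descends to a positive smooth function on $\cM$, and the pointwise identity $\Delta_x e^{-s\beta_\xi(o,x)}=s(d-1-s)\,e^{-s\beta_\xi(o,x)}$ gives $\Delta \phi_0=\delta(d-1-\delta)\,\phi_0$. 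By the uniqueness part of Theorem \ref{psm}, this is the only natural candidate for a positive eigenfunction with that eigenvalue.

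Next I would test when $\phi_0\in L^2(\cM)$. Using the geometrically finite decomposition of $\cM$ into a compact piece together with finitely many standard cusps and funnels, together with Sullivan's shadow lemma and the local structure of $\nu_o$ near cusp fixed points, the growth of $\phi_0$ in each end can be computed explicitly. The resulting integral $\int_{\cM}\phi_0^2$ is finite precisely when $\delta>\tfrac{d-1}{2}$. In that regime $\phi_0\in L^2(\cM)$ realizes the eigenvalue $\lambda_0=\delta(d-1-\delta)<\tfrac{(d-1)^2}{4}$; simplicity follows from the uniqueness of $\nu_o$ combined with the standard ground-state fact that any two positive $L^2$-eigenfunctions at $\inf\sigma(\Delta)$ agree up to a scalar (a sign-changing eigenfunction at the bottom of the spectrum would contradict the strong maximum principle). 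When $\delta\le\tfrac{d-1}{2}$ the same end computation shows $\phi_0\notin L^2(\cM)$; a ground-state transform with weight $\phi_0$, combined with the identification of the essential spectrum of $\cM$ with $[\tfrac{(d-1)^2}{4},\infty)$ (by comparison with $\bH^d$ on the ends), then leaves no room for an $L^2$-eigenvalue below $\tfrac{(d-1)^2}{4}$, giving purely continuous spectrum.

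The main obstacle is the Lax--Phillips finiteness of the discrete spectrum in $[\lambda_0,\tfrac{(d-1)^2}{4})$. The key input is that any $L^2$-eigenfunction with eigenvalue strictly below $\tfrac{(d-1)^2}{4}$ decays exponentially in each cusp of $\cM$: separating variables along the cusp cross section expresses the eigenfunction as a Fourier series, whose nonconstant modes are automatically $L^2$ at the cusp end, and whose constant mode must select the exponentially decaying root because the eigenvalue lies below the continuous threshold of the standard cusp. Truncating outside sufficiently deep horoball neighborhoods yields a relatively compact subregion $\cM_T\subset\cM$; the exponential decay estimates let one view discrete eigenfunctions as essentially supported on $\cM_T$, and Rellich compactness plus elliptic regularity on $\cM_T$ then force the discrete spectrum in $[\lambda_0,\tfrac{(d-1)^2}{4})$ to be finite. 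In the convex cocompact subcase this step is immediate from compactness of $\core\cM$; the real content of the theorem in general is precisely this replacement of compactness by Fourier-analytic decay along cusp cross sections.
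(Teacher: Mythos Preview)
The paper does not give a proof of this theorem; it is quoted as a background result with attributions to Patterson, Sullivan, and Lax--Phillips. So there is no ``paper's own proof'' to compare against, and your sketch should be judged on its own. The overall architecture---build $\phi_0$ from the Patterson--Sullivan density, test square-integrability on the ends, then invoke Lax--Phillips compactness---is the standard one and is sound. Two points deserve tightening.

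First, in the case $\delta\le \tfrac{d-1}{2}$ your argument is incomplete. Knowing that $\phi_0\notin L^2(\cM)$ and that the essential spectrum is $[\tfrac{(d-1)^2}{4},\infty)$ does \emph{not} by itself rule out an isolated $L^2$-eigenvalue below $\tfrac{(d-1)^2}{4}$; a priori some other eigenfunction could exist. The missing step is Sullivan's correspondence: if the bottom of the spectrum were an eigenvalue $\lambda<\tfrac{(d-1)^2}{4}$, the associated positive $L^2$-eigenfunction would produce a $\Gamma$-conformal density of dimension $s>\tfrac{d-1}{2}$ with $s(d-1-s)=\lambda$, contradicting the uniqueness of $\nu_o$ at dimension $\delta\le\tfrac{d-1}{2}$. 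Your ``ground-state transform'' remark gestures at this but does not supply it.

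Second, in the Lax--Phillips step you assert that removing deep horoball neighborhoods leaves a \emph{relatively compact} region $\cM_T$. This is false when $\cM$ has infinite volume: after excising the cusps you still have the funnel (flare) ends, and $\cM_T$ is noncompact. One must also prove exponential decay of sub-threshold eigenfunctions in the funnels (again by separation of variables in the standard model $\Gamma_S\backslash\bH^d$ of a flare), and only then does the Rellich/compactness argument go through on a genuinely compact truncation. Your final sentence, that the convex cocompact case is ``immediate from compactness of $\core\cM$'', has the same defect: eigenfunctions live on all of $\cM$, not on the core, so the funnel decay is exactly what makes that case work.
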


\subsubsection*{Case $\delta>\frac{d-1}{2}$:} In this regime, $L^2$-spectral theory and unitary representation theory can be used to establish exponential mixing. Let $\lambda_1= s_1(d-1-s_1) $ denote
the second smallest eigenvalue of $\Delta$
 in $[\lambda_0,  \frac{(d-1)^2}{4})$, with $s_1=\frac{d-1}{2}$ if there is no such eigenvalue, and 
define the spectral gap $$\eta_\Ga:=\min(\delta-s_1, 1).$$

The following result with Edwards shows that for the geodesic flow, the exponential rate of local mixing is determined precisely by the spectral gap $\eta_\Gamma$.
\begin{theorem} [\cite{Edwards_Oh_tem}]\label{eo}  
Let $\cM$ be geometrically finite with $\delta  >\frac{d-1}{2}$. There exists $m\in \N$ (depending only on $G$) such that,
 for any $\e>0$ and any $f_1, f_2\in C_c(\Ga\ba G)^M=C_c(\T^1(\cM))$ with  ${\mathcal S}^m(f_1), {\mathcal S}^m(f_2)<\infty$,
    \be \label{eta} \lim_{t\to +\infty} e^{(d-1-\delta)t } \langle a_t. f_1, f_2 \rangle  =\frac{1}{|m^{\BMS}|} m^{\op{BR}}(f_1) \cdot m^{\op{BR_*}}(f_2) + O_\e \left( e^{(-\eta_\Ga +\e )t }\mathcal S^m(f_1)\mathcal S^m(f_2) \right) \ee
    where  
 ${\mathcal S}^m(f_i)$ denotes the $L^2$-Sobolev norm of degree $m$.
\end{theorem}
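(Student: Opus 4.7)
The plan is to combine the Patterson-Sullivan and Lax-Phillips $L^2$-spectral structure of Theorem \ref{lax} with representation-theoretic asymptotics of matrix coefficients for $G=\SO(d,1)^\circ$, thereby upgrading Roblin-Winter's Theorem \ref{win} to an effective statement with rate controlled by the spectral gap $\eta_\Ga$.

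First I would decompose $L^2(\Ga\backslash G)^M$ into $K$-types via spherical harmonics on $K/M \cong \S^{d-1}$, so that within each $K$-type the matrix coefficient $\langle a_t f_1, f_2 \rangle$ reduces to a direct integral over the irreducible unitary components of $L^2(\Ga\backslash G)$. By Theorem \ref{lax}, when $\delta>(d-1)/2$ only finitely many spherical complementary-series summands $\pi_{s_0},\ldots,\pi_{s_k}$ occur, with $\delta=s_0>s_1>\cdots>s_k>(d-1)/2$, and the remainder of the spectrum is tempered.

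Next I would use classical asymptotics for matrix coefficients of $\SO(d,1)^\circ$ (via Harish-Chandra's expansion, or explicit Jacobi-function formulas in rank one): on a complementary series $\pi_s$, matrix coefficients of $K$-finite vectors behave as $c_s(f_1,f_2)\,e^{-(d-1-s)t}\bigl(1+O(e^{-t})\bigr)$, while on a tempered representation they decay at worst as $e^{-(d-1)t/2}$ times a polynomial in $t$. The leading $e^{-(d-1-\delta)t}$ term arises from the unique $\pi_\delta$ summand, and its coefficient equals $|m^{\BMS}|^{-1}m^{\BR}(f_1)\,m^{\BR_*}(f_2)$, either by invoking Theorem \ref{win} directly or by recognizing the base eigenfunction $\phi_0$ as the Patterson-Sullivan eigenfunction underlying both Burger-Roblin measures. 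For the remainder, summing contributions from $\pi_{s_1}$ (giving $e^{-(\delta-s_1)t}$ after normalization), from $\pi_{s_j}$ with $j\ge 2$ (strictly smaller), from the Harish-Chandra error $O(e^{-t})$ on $\pi_\delta$ itself, and from the tempered part (giving, after absorbing polynomial $t$-factors into an $e^{\e t}$ loss, an exponent capped by a universal constant) produces the claimed error $O_\e(e^{(-\eta_\Ga+\e)t})$, with $\eta_\Ga=\min(\delta-s_1,1)$; the cap at $1$ is precisely the threshold beyond which the Harish-Chandra subleading error and the polynomial-in-$t$ tempered factors prevent further improvement. Uniform control across $K$-types by a single Sobolev order $m$ would come from combining Sobolev embedding on $\S^{d-1}$ with polynomial-in-$K$-type Langlands-type bounds for matrix coefficients.

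\textbf{The hard part.} I expect the main obstacle to be making the matrix-coefficient estimates genuinely uniform in the $K$-type so that a single Sobolev order $m$ controls the full direct integral, and handling the near-endpoint behavior of the continuous spectrum at $(d-1)^2/4$ with enough precision to match $\eta_\Ga$ without loss beyond the stated $\e$. A related subtlety is showing that the contributions from $K$-types of arbitrarily high frequency still decay at the same exponential rate, which requires controlling how the implicit constants in Harish-Chandra's expansion grow with the $K$-type parameter; this is ultimately what forces the (fixed but possibly large) Sobolev order $m$ in the statement.
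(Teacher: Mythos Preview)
The paper is a survey and does not contain a proof of Theorem~\ref{eo}; the result is simply stated with a citation to \cite{Edwards_Oh_tem}, followed by a remark comparing it with earlier work of Mohammadi--Oh. There is therefore no ``paper's own proof'' to compare against.

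That said, your outline is the expected one and is essentially what is carried out in \cite{Edwards_Oh_tem}: decompose $L^2(\Ga\ba G)$ into its unitary dual, use Theorem~\ref{lax} to isolate the finitely many complementary-series constituents $\pi_{s_0},\dots,\pi_{s_k}$ from the tempered part, and read off the asymptotics of $\langle a_t f_1,f_2\rangle$ from the known decay of matrix coefficients on each piece. Your identification of the leading coefficient with $|m^{\BMS}|^{-1}m^{\BR}(f_1)m^{\BR_*}(f_2)$ via the base eigenfunction $\phi_0$ is correct, and your explanation of the cap at $1$ in $\eta_\Ga=\min(\delta-s_1,1)$ (coming from the subleading term in the Harish-Chandra expansion on $\pi_\delta$ itself) is accurate. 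The issues you flag as ``the hard part''---uniformity of the matrix-coefficient bounds over $K$-types and over the continuous spectrum, with constants polynomial in the $K$-type parameter so that a fixed Sobolev order $m$ suffices---are indeed where the work lies; these are handled in \cite{Edwards_Oh_tem} using rank-one spherical-function estimates. Your sketch is sound.
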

 In earlier joint work with Mohammadi \cite{MohammadiOh_JEMS}, exponential mixing for the frame flow was established under the assumption $\delta > d-2$ with
an error term controlled by a parameter depending on $\eta_\Gamma$.
Theorem \ref{eo} sharpens this picture by showing that, for the geodesic flow, the rate is exactly $\eta_\Gamma$ itself.

\subsubsection*{Case $\delta\le \frac{d-1}{2}$:}
In this range, the
$L^2$-spectrum of $\Delta$ is purely continuous, and spectral theory alone does not yield exponential mixing. 
However, an argument of Pollicott \cite{Pollicott_mixing} shows that 
the exponential mixing of the BMS measure $m^{\BMS}$ can be deduced from spectral bounds on the  transfer operators associated to a Markov section for the geodesic flow, via the Laplace transform of the correlation function.
Dolgopyat introduced a powerful method for obtaining such bounds for the geodesic flow on a compact negatively curved manifold, using a quantitative non-integrability property of the stable and unstable foliations of the geodesic flow; this approach is now commonly known as Dolgopyat's methods \cite{Dolgopyat}.

For convex cocompact $\Ga$, the geodesic flow on $\T^1(\cM)$ admits a finite Markov section. Building on Dolgopyat's ideas, Sarkar and Winter proved exponential mixing for the frame flow with respect to $m^{\BMS}$  with no restriction on $\delta$ \cite{SW} (see also \cite{Stoyanov} for an earlier work on the geodesic flow). 

For geometrically finite groups with cusps, the situation is more delicate since no finite Markov section exists. In a major advance, Li and Pan \cite{LP} constructed a countable Markov section for geodesic flow in this setting and established an exponential mixing by extending Dolgopyat's method to suspension flows over countable shifts.
In joint work with Sarkar, they extended this result to the frame flow \cite{LPS}. Independently, Khalil \cite{Khalil} obtained a similar mixing property for the geodesic flow using a different approach based on anisotropic Banach spaces.
Combined with an effective version of
Roblin's transversal intersection argument (\cite{Oh_Winter_JAMS}), these results yield the exponential local mixing for the Haar measure:

\begin{theorem} [\cite{SW}, \cite{LPS}] \label{lps} Let $\Ga<G$ be a geometrically finite Zariski dense subgroup.
Then there exists $\eta>0$ such that
for any 
$f_1, f_2$ on $C_c (\Ga\ba G)$ with $\|f_1\|_{C^1}, \| f_2\|_{C^1} <\infty$,
    \be \lim_{t\to +\infty} e^{(d-1-\delta)t } \left\langle a_t. f_1, f_2\right\rangle  =\frac{1}{|m^{\BMS}|} {m^{\op{BR_*}}(f_1) \cdot m^{\op{BR}} (f_2) } + O \left( e^{-\eta  t }\|f_1\|_{C^1} \| f_2\|_{C^1} \right) \ee
    where $\|f_i\|_{C^1}$ denotes the $C^1$-norm of $f_i$.
\end{theorem}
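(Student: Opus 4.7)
The plan is to reduce the statement to exponential mixing of the BMS measure $m^{\BMS}$ on $\Ga\ba G/M$, and then convert BMS mixing to Haar mixing via an effective version of Roblin's transversal intersection argument.

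For the BMS exponential mixing, I would follow the thermodynamic-formalism approach. First I would code the restriction of the geodesic flow to the non-wandering set $\RFM\subset\T^1(\cM)$ as a suspension over a one-sided Markov shift with return-time roof function, and study the twisted transfer operators $\L_s$ acting on a suitable Banach space of H\"older observables on the base. In the convex cocompact case the Markov section is finite (Sarkar--Winter); in the presence of cusps, following Li--Pan, I would use a countable Markov section whose roof function has an exponentially decaying tail, obtained by packaging deep cusp excursions into long symbols. Exploiting Zariski density of $\Ga$ to establish a uniform non-integrability (UNI) condition for the stable/unstable foliations, I would then run the Dolgopyat--Stoyanov oscillatory-cancellation bootstrap to deduce a bound $\|\L_{\delta+ib}^n\|\le C(1+|b|)^\beta e^{-\theta n}$ for $|b|\ge b_0$, uniformly in $b$. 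A Paley--Wiener contour shift of the Laplace transform of the correlation function then converts this spectral bound into exponential decay of correlations for $m^{\BMS}$ under the geodesic flow. Lifting the result from $\Ga\ba G/M$ to the frame bundle $\Ga\ba G$ proceeds by decomposing $L^2(\Ga\ba G)$ into $M$-isotypes and running the same bootstrap with non-trivial $M$-characters absorbed into the symbol of the transfer operator, preserving the rate $\theta$.

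To pass from BMS mixing to Haar mixing, I would apply an effective Roblin-type argument. Writing Haar locally in $\check N\times A\times N$ coordinates around a point of $\supp m^{\BMS}$ and thickening $f_1$ along $\check N$ and $f_2$ along $N$ at a scale $\rho>0$, the pairing $\langle a_t.f_1,f_2\rangle$ against Haar becomes, up to an error $O(\rho\,\|f_1\|_{C^1}\|f_2\|_{C^1})$, a pairing of the thickened functions against $m^{\BMS}$ in a flow box. Because $a_t$ contracts $N$ and expands $\check N$ exponentially, the horospherical integrations convert Patterson--Sullivan weights in the BMS density into spherical weights in the appropriate directions, yielding the factors $m^{\op{BR_*}}(f_1)$ and $m^{\op{BR}}(f_2)$ on the right-hand side. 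Choosing $\rho = e^{-\theta t/2}$ with $\theta$ the BMS rate from the previous step balances the smoothing error against the BMS error and delivers the desired exponential rate $\eta > 0$ in the Haar correlation.

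The main obstacle is the BMS exponential mixing itself in the cuspidal case. A countable Markov section forces one to work with transfer operators on infinite-dimensional spaces and to control symbolic contributions from arbitrarily deep cusp excursions, which prevents a direct appeal to the compact-section framework of Dolgopyat and Stoyanov. The Zariski density hypothesis enters in two essential ways: it guarantees non-arithmeticity of the length spectrum, so that $1$ is not a spectral value of $\L_{\delta+ib}$ for any $b\ne 0$, and it supplies the quantitative UNI needed to run the oscillatory cancellation uniformly in $|b|$. This is the technical heart of \cite{LP,LPS}; once the BMS rate is established, the transversal-intersection upgrade to Haar mixing follows a relatively standard effective scheme along the lines of \cite{Oh_Winter_JAMS}.
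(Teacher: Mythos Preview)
Your proposal is correct and follows essentially the same route the paper sketches: establish exponential mixing for $m^{\BMS}$ via Markov coding and Dolgopyat-type spectral bounds (finite sections in the convex cocompact case \cite{SW}, countable sections with exponential tails in the cuspidal case \cite{LP,LPS}), then upgrade to Haar mixing through an effective version of Roblin's transversal intersection argument as in \cite{Oh_Winter_JAMS}. The paper's surrounding discussion highlights exactly these ingredients, and your identification of the cuspidal BMS mixing as the technical crux matches the paper's emphasis.
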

Unlike Theorem \ref{eo}, the error exponent $\eta$ here is not explicit.

\subsection{Uniform exponential mixing and application to the affine sieve}
In many arithmetic settings, one needs exponential mixing estimates that hold
 uniformly over families of congruence covers. Establishing such uniformity not only strengthens the dynamical picture but also yields important applications in number theory, most notably through the affine sieve.

Selberg's $\frac{3}{16}$-theorem about congruence covers for $\SL_2(\z)\ba \bH^2$ can be formulated in terms of uniform strong mixing as follows: For $q\in \mathbb N$, let
$\Ga_q=\{\ga\in \SL_2(\z): \ga =e \;\; (\text{mod q})\}$ be the congruence subgroup of level $q$. Then for any $\e>0$,
we have that for any 
$f_1, f_2\in C_c (\Ga_q\ba \SL_2(\br))$,
    \be \lim_{t\to +\infty} \langle a_t. f_1, f_2 \rangle_{L^2(\Ga_q\ba \SL_2(\br))}  =
    \frac{1}{|m^{\Haar}|} m^{\Haar} (f_1 ) \cdot m^{\Haar} (f_2)  + O \left( e^{(-1/4+\e)  t} \mathcal{S}^2(f_1) \mathcal{S}^2(f_2)  \right) \ee
   where $m^{\Haar}$ denotes the $\SL_2(\br)$-invariant measure on $\Ga_q\ba \SL_2(\br)$ induced from $dx$. The crucial point is that the exponent $\frac 14$ is uniform over all $q$ and that $\frac{3}{16}= \frac{1}4(1-\frac 14)$. The eigenvalue conjecture predicts that $\frac14$ should be replaced by $\frac 12$.
    
We describe an analogue of Selberg's $\frac{3}{16}$-theorem for thin subgroups of $G=\SO(d,1)^\circ$. Let $\Ga<G$ be a {\it thin} group, i.e., $\Gamma$ is a Zariski dense subgroup of an arithmetic subgroup $G(\mathbb Z)$ of infinite index. 
For each $q\in \mathbb N$, write $\Gamma_q<\Ga$ for its level $q$ congruence subgroup.
The expander machinery of Bourgain-Gamburd-Sarnak (\cite{BGS_inv}, \cite{BGS_acta}) and its generalization by Golsefidy-Varju \cite{GolsefidyVarju} show that  if $\delta >\frac{d-1}{2}$,
there exists a finite set of primes $S$ such that the family $\mathcal F:=\{\Gamma_q: \text{$q$ is square-free with no factors in $S$}\}$ has a uniform spectral gap: \be\label{ef} \eta_{\cal F}:=\inf_{\Gamma_q\in \mathcal F} \{ \delta -s_1(q), 1\} >0,\ee 
where $s_1(q) (d-1-s_1(q))$ is the second smallest eigenvalue of $\Delta$ on $L^2(\Gamma_q\ba \bH^{d})$. Together with this, Theorem \ref{eo} implies the following:
\begin{theorem} [\cite{EO_duke}]  If $\delta >\frac{d-1}{2}$, the family of geodesic flows over $\Gamma_q\in \mathcal F$ is uniformly 
exponentially mixing: 
 for any $\e>0$ and any $f_1, f_2\in C_c(\Ga_q\ba G)^M$,
  $$ \lim_{t\to +\infty} e^{(d-1-\delta)t } \langle a_t. f_1, f_2 \rangle_{L^2(\Ga_q\ba G)}  =\frac{1}{|m^{\BMS}|} m^{\op{BR}}(f_1) \cdot m^{\op{BR_{*}}}(f_2) + O_\e \left( e^{(-\eta_{\cal F} +\e )t }\mathcal S^m(f_1)\mathcal S^m(f_2) \right) $$
    where  $m^{\BMS}$, $m^{\BR}$ and $m^{\BR_*}$  on $\Ga_q\ba G$ are the measures induced from the corresponding fixed ones on $\Ga\ba G$.
\end{theorem}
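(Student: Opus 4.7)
The plan is to re-run the proof of Theorem~\ref{eo} uniformly over the congruence family $\mathcal F$, inserting the uniform spectral gap~\eqref{ef} in place of the fixed gap $\eta_\Ga$.

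First I would identify the leading spectral component. Since $\delta>(d-1)/2$, Theorem~\ref{lax} applied to $\Ga$ gives a simple $L^2$-ground state $\phi_0\in L^2(\Ga\ba\bH^d)$ with eigenvalue $\lambda_0=\delta(d-1-\delta)$. This state lifts canonically to each cover $\Ga_q\ba\bH^d$, and the uniform gap~\eqref{ef} guarantees that on every cover it remains a simple ground state. The spherical complementary series representation $\mathcal C_\delta$ that $\phi_0$ generates therefore sits inside $L^2(\Ga_q\ba G)$ with the same structure for every $q\in\mathcal F$.

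Next I would decompose any $f_i\in C_c(\Ga_q\ba G)^M$ as $f_i=P_\delta f_i+f_i^\perp$, where $P_\delta$ is the projection onto the $\mathcal C_\delta$-isotypic component. The leading-to-leading correlation $\langle a_t\cdot P_\delta f_1,P_\delta f_2\rangle$ behaves asymptotically like $e^{-(d-1-\delta)t}$ with proportionality constant $|m^{\BMS}|^{-1}m^{\BR}(f_1)m^{\BR_*}(f_2)$, via the standard Patterson-Sullivan identification of the spherical function with the Burger-Roblin densities. Because this identification is built from objects pulled back along the finite cover $\Ga_q\ba G\to\Ga\ba G$, the same formula holds uniformly in $q$. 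The cross terms $\langle a_tP_\delta f_1,f_2^\perp\rangle$ and $\langle a_tf_1^\perp,P_\delta f_2\rangle$ vanish since $P_\delta$ commutes with the $G$-action.

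To bound the residual $\langle a_tf_1^\perp,f_2^\perp\rangle$, I would invoke~\eqref{ef}: the spectral support of $L^2(\Ga_q\ba G)\ominus P_\delta L^2(\Ga_q\ba G)$ lies in complementary series of parameter $s\le\delta-\eta_{\mathcal F}$ together with tempered components, uniformly over $\mathcal F$. Cowling-Haagerup-Howe-type quantitative estimates applied to $M$-fixed vectors within a fixed $K$-type, combined with Sobolev interpolation of some fixed order $m=m(G)$, then give
\[
|\langle a_tf_1^\perp,f_2^\perp\rangle|\ll_\e e^{-(d-1-\delta+\eta_{\mathcal F}-\e)t}\,\mathcal S^m(f_1)\,\mathcal S^m(f_2).
\]
Multiplying by $e^{(d-1-\delta)t}$ yields the claimed error $O_\e(e^{(-\eta_{\mathcal F}+\e)t}\mathcal S^m(f_1)\mathcal S^m(f_2))$, with implied constants independent of $q$.

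The main obstacle is ensuring genuine uniformity in $q$ at two points. First, the implied constants in the matrix-coefficient decay estimates must depend only on $G$ and Sobolev norms, never on $q$ itself; this is precisely where the expander input~\eqref{ef} is indispensable, as it rules out new $\Delta$-eigenvalues accumulating toward $\lambda_0$ along the tower. Second, the identification of the main term with the Burger-Roblin measures must transfer cleanly from $\Ga\ba G$ to each $\Ga_q\ba G$; this rests on the fact that the Patterson-Sullivan measure, the spherical function, and the Burger-Roblin densities on each cover are canonical pullbacks along the finite covering map, so that the normalizing constants pass through unchanged.
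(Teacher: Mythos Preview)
Your proposal is correct and matches the paper's approach exactly: the paper simply states that the theorem follows from Theorem~\ref{eo} combined with the uniform spectral gap~\eqref{ef}, and what you have written is precisely the unpacking of that one-line deduction. Your careful attention to the two uniformity issues---that the implied constants and Sobolev order $m$ depend only on $G$, and that the main-term identification pulls back along the finite covers---is the substance behind the paper's brief ``Together with this, Theorem~\ref{eo} implies.''
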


As before, when $\delta\le \frac{d-1}{2}$, the $L^2$-spectral approach no longer applies. For the surface case ($d=2$), Winter and the author
established uniform exponential mixing by combining the Bourgain-Gamburd-Sarnak expander machinery and Dolgopyat's method \cite{Oh_Winter_JAMS}. Sarkar extended these arguments
to higher dimensions $d\ge 3$, using the results in \cite{SW} and \cite{LPS}.  As in the general exponential mixing theorem Theorem \ref{lps},  the  error exponent $\eta$ in this setting is not explicit.
\begin{theorem} [\cite{Oh_Winter_JAMS}, \cite{Sarkar_uniform_cc}, \cite{Sarkar_uniform_gf}] The family of frame flows over $\Gamma_q\in \mathcal F$ is uniformly 
exponentially locally mixing: there exists $\eta>0$ such that
for any $f_1, f_2\in C_c(\Ga_q\ba G)$, 
  $$ \lim_{t\to +\infty} e^{(d-1-\delta)t } \langle a_t. f_1, f_2 \rangle_{L^2(\Ga_q\ba G)}  =\frac{1}{|m^{\BMS}|} m^{\op{BR}}(f_1) \cdot m^{\op{BR_{*}}}(f_2) + O_\e \left( e^{-\eta t }\mathcal \|f_1\|_{C^1}\mathcal \|f_2\|_{C^1} \right) .$$
\end{theorem}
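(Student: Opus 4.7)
The plan is to combine the symbolic-dynamics/transfer-operator proof of the (single-level) exponential mixing Theorem \ref{lps} with the Bourgain-Gamburd-Sarnak expander machinery, producing quantitative spectral bounds that are uniform over the congruence family $\mathcal F$. First I would code the frame flow as a suspension over a Markov section for the boundary action of $\Gamma$ on $\Lambda$: in the convex cocompact case one has a finite Markov partition as in Sarkar-Winter \cite{SW}, while in the geometrically finite case with cusps one uses the countable Markov section constructed by Li-Pan \cite{LP} and extended to the frame flow in \cite{LPS}. After a Pollicott-type Laplace transform, the correlation $\langle a_t.f_1,f_2\rangle$ is expressed in terms of iterates of a complex-parameter transfer operator $\mathcal L_s$ acting on a suitable anisotropic Banach space of H\"older or $C^1$ functions on the base, and exponential mixing reduces to a spectral gap of $\mathcal L_s$ on the line $\operatorname{Re}(s)=\delta$ with polynomial growth at infinity in $\operatorname{Im}(s)$.

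Second I would lift this picture to the cover $\Gamma_q\backslash G$. The natural object is the twisted transfer operator $\mathcal L_{s,\rho_q}$ acting on vector-valued H\"older functions taking values in $\ell^2(\Gamma/\Gamma_q)$, where $\rho_q$ is the quasi-regular representation. Orthogonality splits $\mathcal L_{s,\rho_q}$ into the trivial block, which is $\mathcal L_s$ itself and contributes the main term with normalization $e^{(d-1-\delta)t}$ and BR factors, and the non-trivial block on the orthogonal complement of constants, which contributes the error. The heart of the argument is to prove a Dolgopyat-type $L^2$-contraction $\|\mathcal L_{s,\rho_q}^N\|\le C(1+|\operatorname{Im}(s)|)^{A}\,\theta^N$ with $\theta<1$ and with constants $C,\theta,A$ independent of $q\in\mathcal F$. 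For the trivial block the Sarkar-Winter and Li-Pan-Sarkar cancellation estimates, based on the quantitative non-integrability of the stable/unstable foliations of the frame flow, supply the required bound. For the non-trivial block one feeds in the uniform expansion of the Cayley graphs of $\Gamma/\Gamma_q$ coming from Bourgain-Gamburd-Sarnak \cite{BGS_inv,BGS_acta} and Salehi Golsefidy-Varj\'u \cite{GolsefidyVarju}, which turns the expander property into an additional $L^2$-decay of twisted Birkhoff sums that can be combined with Dolgopyat's oscillatory cancellation as in Oh-Winter \cite{Oh_Winter_JAMS} and Sarkar \cite{Sarkar_uniform_cc,Sarkar_uniform_gf}.

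Once uniform spectral gaps for $\mathcal L_{s,\rho_q}$ are in hand, the contour shift in the inverse Laplace transform produces exponential mixing of $m^{\BMS}$ on $\Gamma_q\backslash G$ with a uniform rate $\eta>0$ and a loss of a fixed number of $C^1$-derivatives. To pass from BMS mixing to Haar mixing, i.e.\ to obtain the statement with $m^{\BR}, m^{\BR_*}$ on the right and $\|f_i\|_{C^1}$ on the left, I would use an effective version of Roblin's transversal intersection argument, smearing $f_1,f_2$ along local stable/unstable horospherical plaques and applying the BMS exponential mixing to the resulting BMS-integrable test functions, exactly as in the effective mixing/equidistribution scheme of \cite{Oh_Winter_JAMS}. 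The smearing is done at scale $e^{-ct}$ for a small $c>0$, and because all the relevant constants (Markov section data, Dolgopyat exponents, expander gap, BMS/BR/Haar comparison) are uniform in $q$, one obtains the stated estimate with a single $\eta>0$ valid for every $\Gamma_q\in\mathcal F$.

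The main obstacle I expect is the uniform Dolgopyat bound on the non-trivial block of $\mathcal L_{s,\rho_q}$ in the presence of cusps. The countable Markov section for the geometrically finite case has unbounded roof function and the transfer operator acts on a space of functions with weights adapted to cusp excursions; proving a $q$-uniform contraction requires combining the expander $\ell^2$-flattening of Bourgain-Gamburd-Sarnak with Li-Pan's tail estimates on cusp returns, and carefully controlling the constants in Dolgopyat's cancellation lemma so that they do not deteriorate with the cusp depth or with the rank of the representation $\rho_q$. All other steps are either established in the cited works or are standard effective bookkeeping.
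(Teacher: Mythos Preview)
Your proposal is correct and follows essentially the same approach the paper describes: the paper does not give a full proof of this theorem but explains that it is obtained by combining the Bourgain--Gamburd--Sarnak expander machinery with Dolgopyat's method (as in \cite{Oh_Winter_JAMS} for $d=2$), with Sarkar's extensions \cite{Sarkar_uniform_cc,Sarkar_uniform_gf} to $d\ge 3$ using the Markov-section technology of \cite{SW} and \cite{LPS}; your outline of twisted transfer operators, uniform Dolgopyat bounds via the expander gap, and the effective Roblin transversal argument to pass from BMS to Haar mixing is exactly this scheme.
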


These uniform exponential mixing results form a key dynamical input for counting orbital points with almost prime coordinates with respect to the archimedean norm (cf. \cite{KO}, \cite{MohammadiOh_JEMS}), one of the central themes of the affine sieve developed by Bourgain–Gamburd–Sarnak.

\begin{corollary} Let $Q(x_1, \cdots, x_{d+1})$ be an integral quadratic form of signature $(d,1)$ with $d\ge 2$. Let $\G$ be a geometrically finite and Zariski dense subgroup of $\SO_Q({\mathbb Z})$.  Let  $w_0\in \mathbb Z^{d+1}$; when $d=2$, assume that $Q(w_0)\le 0$. For any  norm $\|\cdot \|$ on $\br^{d+1}$ and $1\le r\le d+1$, we  have the following for all $T\ge 1$:
\begin{enumerate}
 \item $\#\{{\bf x}\in w_0\G:  \|{\bf x}\|<T, \;\; \text{$x_j$ is prime for all $j=1, \cdots, r$}\}\ll
\frac{T^\delta}{(\log T) ^r} ;$
\item For some $R>1$, $\#\{{\bf x}\in w_0\G :  \|{\bf x}\|<T, \; x_1\cdots x_r \text{ has at most $R$-prime factors}\}\gg
\frac{T^\delta}{(\log T) ^r} .$
\end{enumerate}
\end{corollary}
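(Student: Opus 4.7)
The plan is to reduce the corollary to the combinatorial affine sieve of Bourgain-Gamburd-Sarnak \cite{BGS_acta}, whose decisive analytic input is the uniform exponential mixing over congruence covers stated immediately above. First, by a smoothing and transversal-intersection argument applied to a thickening of the ball $B_T=\{x\in\mathbb R^{d+1}:\|x\|<T\}$, combined with local mixing (Theorem \ref{win}) and an Eskin-McMullen-type wavefront unfolding \cite{EM} (adapted to geometrically finite groups as in \cite{OS_inv}), one obtains an archimedean main term
\[
\mathcal N(T):=\#\{x\in w_0\Gamma:\|x\|<T\}\;\sim\; c\cdot T^{\delta},\qquad c>0,
\]
where the hypothesis $Q(w_0)\le 0$ when $d=2$ is imposed to guarantee that the Patterson-Sullivan-type density on the level set $\{Q=Q(w_0)\}$ is locally finite. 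Applying the same argument on each congruence quotient $\Gamma_q\backslash G$ for $q\in\mathcal F$ and invoking the uniform exponential mixing of \cite{Oh_Winter_JAMS,Sarkar_uniform_cc,Sarkar_uniform_gf}, one upgrades this to the effective congruence asymptotic
\[
\#\{x\in w_0\gamma\Gamma_q:\|x\|<T\}\;=\;c_{\gamma,q}\,T^{\delta}\;+\;O\bigl(q^{A}T^{\delta-\eta}\bigr),
\]
with $A,\eta>0$ independent of $q\in\mathcal F$ and $\gamma\in\Gamma$, and the main-term coefficient $c_{\gamma,q}$ depending only on the coset $\gamma\bmod q$.

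Next, strong approximation for Zariski dense subgroups of $\SO_Q$ (Matthews-Vaserstein-Weisfeiler, with the quantitative form in \cite{GolsefidyVarju}) implies that for square-free $q\in\mathcal F$ the local densities $c_{\gamma,q}$ factor as an Euler product over the primes dividing $q$. Introducing the sifting polynomial $f(x)=x_1x_2\cdots x_r$ and a brief Zariski-density argument, one checks that modulo each good prime $p$ the hypersurface $\{f\equiv 0\}$ meets the orbit $w_0\Gamma\bmod p$ in a proportion $r/p+O(p^{-2})$, giving sieve dimension exactly $r$. Consequently the sequence $a_n:=\#\{x\in w_0\Gamma:\|x\|<T,\,f(x)=n\}$ has total mass $\asymp T^{\delta}$, level of distribution $T^{\alpha}$ for some $\alpha>0$ (take $q$ up to $T^{\alpha/A}$ in the congruence estimate), and sieve dimension $r$. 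The fundamental lemma of combinatorial sieve theory then delivers the upper bound in (1) with $z=T$, while the weighted (Diamond-Halberstam) sieve executed as in \cite{BGS_acta} produces the almost-prime lower bound (2) for all $R=R(\alpha,r)$ sufficiently large.

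The main obstacle is the congruence asymptotic with polynomial-in-$q$ error: this is precisely what the uniform exponential mixing theorem supplies. For $\delta>(d-1)/2$ it rests on unitary representation theory together with the uniform spectral gap $\eta_{\mathcal F}>0$ coming from the expander machinery of Bourgain-Gamburd-Sarnak and Golsefidy-Varj\'u; when $\delta\le(d-1)/2$ the $L^2$-spectrum of $\Delta$ on $\Gamma_q\backslash\mathbb H^d$ is purely continuous, and one must instead run Dolgopyat's method on a countable Markov section uniformly in $q$, exactly as achieved in \cite{Oh_Winter_JAMS,Sarkar_uniform_gf,LPS}. This is also the reason the corollary asserts only the \emph{existence} of a finite $R$ in (2) without an explicit value: in this regime the exponent $\eta$, and hence the admissible sieve level $\alpha$, is not effective.
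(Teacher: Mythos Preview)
The paper does not include a proof of this corollary; it is stated as a direct application of the preceding uniform exponential mixing theorems, with pointers to \cite{KO} and \cite{MohammadiOh_JEMS} for the counting-plus-sieve mechanism. Your outline is exactly the approach taken in those references and in the Bourgain--Gamburd--Sarnak framework: an effective congruence count with error polynomial in the level (supplied by uniform mixing), strong approximation to produce multiplicative local densities, verification of sieve dimension $r$ for $f(x)=x_1\cdots x_r$, and then the upper- and lower-bound combinatorial sieves. So your proposal matches what the paper intends, only with more detail than the paper itself provides.

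Two small technical remarks. First, in the upper bound (1) one does not literally sieve with $z=T$: one takes $z=T^{\theta}$ for a small $\theta>0$ determined by the level of distribution, and the upper-bound sieve then gives $\ll T^{\delta}\prod_{p<z}(1-r/p)\asymp T^{\delta}/(\log T)^{r}$; this is presumably what you meant. Second, your explanation of the hypothesis $Q(w_0)\le 0$ when $d=2$ is slightly off: the issue is not local finiteness of a Patterson--Sullivan density on the level set, but rather finiteness of the skinning measure on $(\Gamma\cap H)\backslash H$ for $H=\mathrm{Stab}_G(w_0)$. When $d=2$ and $Q(w_0)>0$ one has $H\simeq\SO(1,1)$, and since $\delta\le 1$ in this case the relevant skinning (or ``$\PS$-type'') measure on the associated geodesic can be infinite in the presence of parabolics, which is what obstructs the main-term asymptotic (cf.\ \cite{OS_inv}, \cite{MohammadiOh_JEMS}). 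Neither point affects the correctness of your overall scheme.
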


This result generalizes a theorem with Kontorovich from \cite{KO} that for any bounded integral\footnote{this means that curvatures of all circles in $\cal P$ are integers.} Apollonian circle packing $\cal P$, the number of circles of prime curvature (=reciprocal of radius) at most $T$
satisfies
$$\#\{C\in \cal P: \text{curv}(C) \text{ prime at most $T$}\} \ll \frac{T^{\delta_{\mathsf A}}}{\log T} \quad\text{ for all $T\ge 1$} $$
(see Figure \ref{ap}). Here,  quadruples of curvatures of four mutually tangent circles correspond to points in an $\Ga_{\cal P}$-orbit on the zero locus of the Descartes quadratic form.

In summary, uniform exponential mixing provides the dynamical engine of the affine sieve, forging a bridge  between dynamics on hyperbolic manifolds and distribution of almost primes  in orbits.

\section{Matrix coefficients and Mixing in higher-rank infinite volume}\label{s:lmhigher} 
We now turn to the higher-rank setting, where the main difference from rank one is that the required renormalization depends on the direction of the diagonal flow.

\subsection{Local mixing for self-joining quotients} Let $$G=G_1\times \cdots \times G_n, \quad  G_i=\SO(d,1)^\circ.$$
The rank of $G$ is $n$; hence $n\ge 2$ corresponds to higher rank.
Let $\Ga<\SO(d,1)^\circ$ be a Zariski dense discrete group, and let $\rho_1, \rho_2, \cdots, \rho_n$ be discrete faithful representations of $\Ga$ into $\SO(d,1)^\circ$, where $\rho_1$
is the inclusion map.
Consider the self-joining of $\Gamma$ via $\rho=\prod_{i=1}^d \rho_i$:
$$\Gamma_\rho 
=(\prod_{i=1}^n \rho_i  )(\Ga)=\{\big(\rho_1(\ga), \cdots , \rho_n(\ga)\big): \ga\in \Gamma\}<G.$$
We study the dynamics of diagonal flows on the quotient $\Ga_\rho\ba G$.
The maximal connected diagonalizable subgroup $A<G$ is of the form
$$A=\{ a_{\u}= (a_{u_1}, \cdots, a_{u_n} ) :\mathsf{u}=(u_1, \cdots, u_n) \in \br^n\} $$
where $\{a_t:t\in \br \}$ is a one-parameter diagonalizable subgroup of $\SO(d,1)^\circ$. Let $\fa\simeq \br^n$ be the Lie algebra of $A$ with positive Weyl chamber $\fa^+=\{\u=(u_1, \cdots, u_n): u_i\ge 0 \text{ for all $i$}\}$. For each non-zero $\u\in \fa^+$,
define the one-parameter subgroup $$A_{\u}=\{a_{t\u}: t\in \br\}.$$

\begin{figure} [htbp]\label{limitcone}\begin{center}
  \includegraphics [height=3cm]{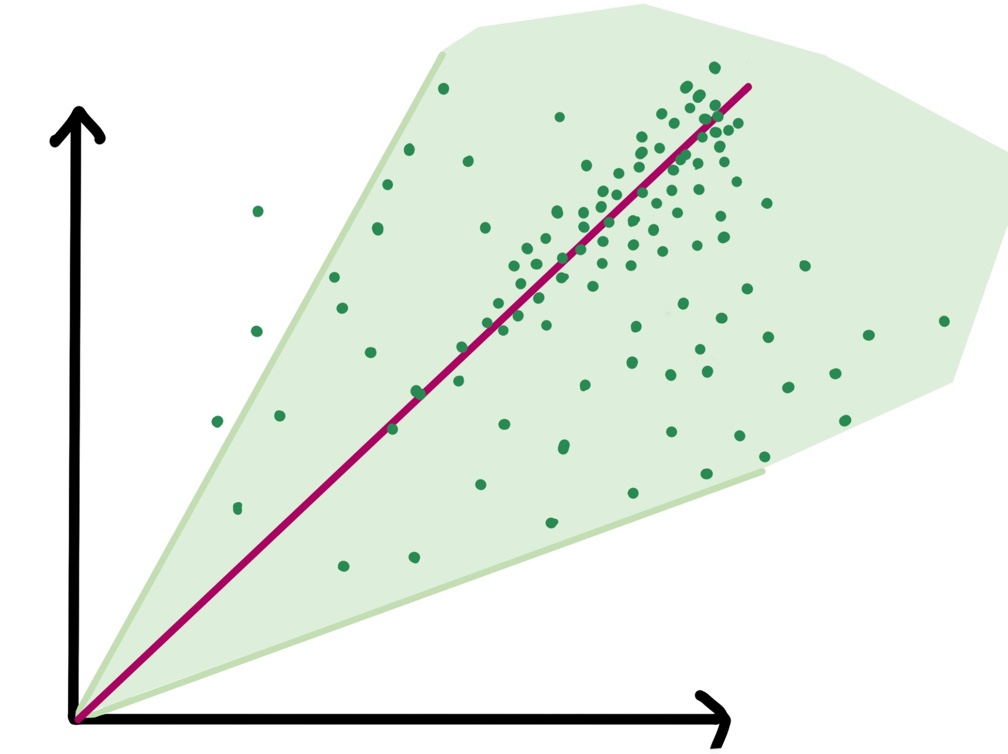}
 \end{center}\caption{Limit cone and a ray in its interior for $n=2$}
\end{figure}  The dynamical behavior of $A_{\u}$-action on $\Ga_\rho\ba G$ depends strongly on the direction $\u$. The set of directions where non-trivial dynamics occur forms the limit cone of $\Ga_\rho$, introduced by Benoist:
$$ \cal L_\rho:= \{ \lim_{k\to \infty} t_k \u_k\in \fa^+: t_k\to 0, \u_k\in \mu(\Ga_\rho)\} $$
where $\mu(\Ga_\rho)=\{ \mu(\ga_\rho)=\left( d(\rho_1(\ga)o, o), \cdots, d(\rho_n(\ga) o, o)  \right)\in \fa^+: \ga\in \Ga\}$ is the Cartan projection of $\Ga_\rho$.  Figure~\ref{limitcone} illustrates the limit cone and an interior ray.
 Benoist proved that
if $\Ga_\rho$ is Zariski dense, its limit cone $\cal L_\rho$ is a convex cone with non-empty interior \cite{Benoist1997proprietes}. 

Another essential tool is the growth indicator $\psi_\rho:\fa^+\to \br$ of $\Ga_\rho$, introduced by Quint \cite{Quint_indicator}.
For $\u\in \fa^+$,
$$\psi_{\rho}(\u):=\|\u\| \inf_{\u \in\mathsf C} \tau_{\mathsf C}$$
where $\mathsf C$ ranges over all open cones containing $\u$ and $\tau_{\mathsf C}$ is the abscissa of convergence of the series $\sum_{\ga\in\Ga,\,\mu(\ga_\rho)\in\mathsf C}e^{-t\|{\mu(\ga_\rho)}\|}.$  Quint proved that
$\psi_\rho$ is upper semi-continuous and concave, and $\L_\rho=\{\psi_\rho \ge 0\}$.  In higher rank, it records how the growth rate of $\Ga_\rho$-orbit in $\prod_{i=1}^n \bH^d$ varies with the chosen direction.

For the rest of this section, we suppose that 
$$\text{each $\rho_i$ is convex cocompact and that no two $\rho_i$ are conjugate in $\op{Isom}(\bH^d)$.}$$
In joint work with Edwards and Lee, we established
the following local mixing theorem for directional flows.  
\begin{theorem}[\cite{ELO_anosov}] \label{elo_mix} 
 For any $\u\in \op{int}\L_\rho$, there exists $\kappa_{\u}>0$ such that for all $f_1, f_2\in C_c(\Gamma\ba G)$,
$$\lim_{t\to +\infty} t^{(n -1)/2} e^{t(\sigma-\psi_\rho)(\u)} \langle a_{t\u}.f_1, f_2\rangle 
= \kappa_{\u} \cdot m^{\BR}_{\u} (f_1) \,m^{\BR_*}_{\u} (f_2) $$
where $\sigma(\u)=u_1+\cdots +u_n$ is the sum of all positive roots of $(\frak g, \frak a)$ and
$m_{\u}^{\BR}$ and  $m^{\BR_*}_{\u} $ are the stable and unstable Burger-Roblin measures on $\Ga\ba G$ associated to $\u$.
\end{theorem}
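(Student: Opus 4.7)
The plan is to port the rank-one strategy to higher rank: first establish a polynomial-rate mixing for a higher-rank Bowen--Margulis--Sullivan measure $m^{\BMS}_\u$ adapted to the direction $\u$, and then transfer this to the Haar correlation via a Roblin-type transverse intersection argument. Since each $\rho_i$ is convex cocompact, $\Ga_\rho$ is an Anosov subgroup of $G$. Fix $\u\in\inte\L_\rho$ and choose a linear form $\psi_\u\in\fa^*$ tangent to the growth indicator $\psi_\rho$ at $\u$, so that $\psi_\u\ge\psi_\rho$ on $\fa^+$ and $\psi_\u(\u)=\psi_\rho(\u)$. By the Quint--Sambarino higher-rank Patterson--Sullivan theory one constructs a unique $(\Ga_\rho,\psi_\u)$-conformal density $\{\nu_x\}$ supported on $\La_\rho\subset \mathcal F$ and a dual density $\{\check\nu_x\}$ on the opposite Furstenberg boundary. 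Using the Hopf-type parametrization $G/M\simeq \mathcal F^{(2)}\times\fa$, assemble $m^{\BMS}_\u$ by pairing $\nu$ with $\check\nu$ and Lebesgue on $\fa$; replacing one side by the $K$-invariant measure on $\mathcal F$ yields $m^{\BR}_\u$ and $m^{\BR_*}_\u$. The Anosov hypothesis ensures $|m^{\BMS}_\u|<\infty$.

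The analytic heart of the proof is to establish
$$\lim_{t\to+\infty} t^{(n-1)/2}\,\langle a_{t\u}.f_1,f_2\rangle_{L^2(m^{\BMS}_\u)}\;=\; c_\u\, m^{\BMS}_\u(f_1)\, m^{\BMS}_\u(f_2).$$
The polynomial factor $t^{(n-1)/2}$ reflects the fact that the elements $\ga\in\Ga$ contributing to the correlation have Cartan projections $\mu(\ga_\rho)$ that concentrate around the ray $\br_{>0}\u$ like an $(n-1)$-dimensional Gaussian of width $\sqrt{t}$. Following the strategy of Thirion and Sambarino, one codes the $A_\u$-flow on the non-wandering set of $\op{supp}m^{\BMS}_\u$ by a Markov section, twists the Ruelle transfer operators by the $\fa$-valued Livšic cocycle associated to $\mu$, and combines a spectral gap (via Dolgopyat--Stoyanov-type estimates in the Anosov setting) with the Nagaev--Guivarc'h perturbation method to extract a local limit theorem whose leading Gaussian term produces the stated asymptotic. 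This is where the main difficulty lies: one needs both a uniform spectral gap for the twisted transfer operators on the symbolic model and joint non-arithmeticity of the Livšic cocycle in all $n-1$ directions transverse to $\u$; propagating a sharp polynomial rate rather than a merely exponential one is what makes the higher-rank LLT qualitatively harder than its rank-one predecessor.

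To pass from BMS mixing to the Haar correlation, apply the Roblin-type transverse intersection method. Since $\u\in\inte\fa^+$, the stable and unstable subgroups $N,\check N$ for $A_\u$ are the full opposite horospherical subgroups of $G$, and $a_{t\u}$ contracts $N$ while expanding $\check N$ at rate $e^{\sigma(t\u)}$. On a small flow box with local product structure $\check N\cdot AM\cdot N$ centered at a point of $\op{supp}m^{\BMS}_\u$, the Haar measure and $m^{\BMS}_\u$ differ along $N$ and $\check N$ by the conformal factors $e^{\psi_\u(\beta)}$ that distinguish the PS densities $\nu,\check\nu$ from the $K$-invariant boundary measure. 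Smearing $f_1,f_2$ along the transverse directions and invoking the BMS mixing above, the net contribution of the Jacobian and conformal factors is $e^{-t\sigma(\u)}\cdot e^{t\psi_\u(\u)}=e^{-t(\sigma-\psi_\rho)(\u)}$, while substituting the $K$-invariant measure for one conformal density in each pairing replaces $m^{\BMS}_\u$ by $m^{\BR}_\u$ and $m^{\BR_*}_\u$ on the two sides respectively. A standard approximation then extends the asymptotic from BMS-generic test functions to arbitrary $f_1,f_2\in C_c(\Ga_\rho\ba G)$, yielding exactly the claimed limit with constant $\kappa_\u=c_\u/|m^{\BMS}_\u|$.
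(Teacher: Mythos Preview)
Your overall architecture is correct and matches the paper's route: first prove polynomial-rate local mixing for the higher-rank BMS measure $m^{\BMS}_\u$ (this is the Sambarino/Chow--Sarkar result, Theorem~\ref{sam} in the paper), and then transfer to the Haar correlation by a higher-rank Roblin transverse intersection argument, which accounts for the factor $e^{t(\sigma-\psi_\rho)(\u)}$ and replaces BMS by BR measures.

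There is, however, a genuine conceptual error. You assert that the Anosov hypothesis forces $|m^{\BMS}_\u|<\infty$, and at the end you set $\kappa_\u=c_\u/|m^{\BMS}_\u|$. This is false for $n\ge 2$: as the paper emphasizes, a crucial difference from rank one is that $m^{\BMS}_\u$ is an \emph{infinite} measure in higher rank. Indeed, the support of $m^{\BMS}_\u$ is (homeomorphic to) an $\br^{n-1}$-bundle over the compact space $\Omega_\u=\Ga_\rho\ba(\La_\rho^{(2)}\times\br)$, and the measure decomposes as a finite BMS-type measure on the base times Lebesgue on $\ker\psi_\u\simeq\br^{n-1}$. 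Your formula $\kappa_\u=c_\u/|m^{\BMS}_\u|$ would therefore give $0$. The constant $\kappa_\u$ in Theorem~\ref{elo_mix} is the \emph{same} constant that already appears in the BMS local mixing (Theorem~\ref{sam}); no division by a total mass occurs.

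This also affects your description of the analytic core. Since $m^{\BMS}_\u$ is infinite, even the BMS statement is genuinely a \emph{local} mixing result, and the $t^{(n-1)/2}$ factor is not a correction to a strong-mixing limit but arises from a local limit theorem in the $(n-1)$ transverse directions of $\ker\psi_\u$. Your Gaussian heuristic for the Cartan projections is the right intuition, but the mechanism is Nagaev--Guivarc'h perturbation of the $\fa$-valued transfer operator near $0$ together with Fourier inversion on $\ker\psi_\u$; invoking Dolgopyat--Stoyanov estimates here is misplaced, since those control large imaginary parameters and yield exponential rates, which are neither needed nor claimed for Theorem~\ref{sam}.
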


For counting results such as Theorem \ref{elo}, one needs 
not only asymptotics along a fixed direction but also uniform control of  across all directions in $ \fa^+$.
A linear form $\psi\in\fa^*$ is said to be tangent to the growth indicator $\psi_\rho$ at $\u\in \fa^+$ if $\psi\ge \psi_\rho$ and $\psi(\u)=\psi_\rho(\u)$. 
In the setting of Theorem \ref{elo_mix}, the growth indicator $\psi_\rho$ is strictly concave and analytic in $\inte \L_\rho$ (\cite{Quint_indicator}, \cite{samb_hyper}, \cite{PS_eigenvalues}). As a consequence, there exists a unique tangent form $\psi_{\u}$ at $\u$.  The positive Weyl chamber
$\fa^+$ admits the parametrization
$$\br_{\ge 0}\times \ker \psi_{\u}\to \fa, \quad (t, {\mathsf w})\mapsto t\u+\sqrt t {\mathsf w} .$$

\begin{theorem} [\cite{ELO_anosov}, \cite{ELO_unique}]\label{elo_full}
Let $\u\in \op{int}\L_\rho$. For any $f_1, f_2\in C_c(\Ga\ba G)$ and ${\mathsf w}\in \ker \psi_{\u}$,
$$ \lim_{t\to +\infty} t^{(n-1)/2} e^{(\sigma -\psi_\rho) (t\u +\sqrt t\w)}  \langle a_{t\u+\sqrt t\w }.f_1, f_2\rangle = \kappa_{\u}\,e^{-I(\w)} m_{\u}^{\BR} (f_1) \, m_{\u}^{\BR_*} (f_2)
$$ where $I(\w)=\tfrac{\|\w\|_*^2-\langle \w, \u \rangle_*^2}{\|\u\|_*^2} $  for some inner product $\langle\cdot,\cdot\rangle_*$ on $\frak a$.
Moreover, the left-hand side is uniformly bounded by a fixed constant multiple of $e^{- I(\w)}$ for all sufficiently large $t$.
\end{theorem}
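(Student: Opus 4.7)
The plan is to derive Theorem \ref{elo_full} by coupling the proof of Theorem \ref{elo_mix} with a local central limit theorem (LCLT) for the Cartan projection of $\Gamma_\rho$ that tracks fluctuations at the scale $\sqrt t$ transverse to $\u$.

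First, I would convert the matrix coefficient into a weighted $\Gamma_\rho$-orbit sum. A higher-rank analogue of Roblin's transversal intersection argument, combined with the mixing of the finite BMS measure $m^{\BMS}_\u$ established for Anosov self-joinings in \cite{ELO_anosov}, gives an asymptotic of the form
\begin{equation*}
  \langle a_{t\u+\sqrt t \w}. f_1, f_2\rangle
  \;\sim\; \frac{1}{|m^{\BMS}_\u|} \sum_{\gamma\in\Gamma_\rho}
     \Phi^-_{\u,f_1}(\gamma)\,\Phi^+_{\u,f_2}(\gamma)\,
     \mathbf{1}\!\big[\mu(\gamma) - t\u - \sqrt t \w \in O(1)\big],
\end{equation*}
where $\Phi^\pm_{\u,f_i}$ are Patterson-Sullivan smearings of $f_i$ against the $(\psi_\u,\Gamma_\rho)$-conformal densities; their aggregate contributions reproduce the measures $m^{\BR_*}_\u(f_1)$ and $m^{\BR}_\u(f_2)$.

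Second, I would apply an LCLT for $\{\mu(\gamma):\gamma\in\Gamma_\rho\}$ weighted by $e^{-\psi_\u(\mu(\gamma))}$. By the thermodynamic formalism for Anosov representations (Sambarino, Potrie-Sambarino), $\psi_\rho$ is real-analytic and strictly concave on $\inte\L_\rho$, so $-d^2\psi_\rho(\u)$ is positive semi-definite with degenerate direction $\u$. The spectral theory of the associated family of transfer operators, in the spirit of Dolgopyat, Stoyanov, and Thirion-Sambarino, yields a Gaussian LCLT with covariance $-d^2\psi_\rho(\u)$: along $\ker\psi_\u$, the weighted count in a window of size $O(1)$ around $t\u + \sqrt t \w$ is asymptotic to $C\cdot t^{-(n-1)/2} e^{-I(\w)}$. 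A short Taylor computation identifies the inner product $\langle\cdot,\cdot\rangle_*$ with the metric dual to $-d^2\psi_\rho(\u)$, producing precisely the stated quadratic form. Combined with the orbit-sum identity and the expansion
\begin{equation*}
  (\sigma-\psi_\rho)(t\u+\sqrt t \w)
  \;=\; t(\sigma-\psi_\rho)(\u) + \sqrt t\, \sigma(\w) - \tfrac12\, d^2\psi_\rho(\u)[\w,\w] + o(1),
\end{equation*}
where the linear term $\psi_\u(\w)$ vanishes since $\w\in\ker\psi_\u$, this produces the main limit with $\kappa_\u$ inherited unchanged from Theorem \ref{elo_mix}; the Gaussian factor $e^{-I(\w)}$ arises entirely from the LCLT.

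For the uniform upper bound by a constant multiple of $e^{-I(\w)}$, I would upgrade the LCLT to a moderate-deviation statement with Gaussian tail control, uniform in $\w\in\ker\psi_\u$. This is where I expect the main obstacle to lie: it requires uniform spectral bounds for the family of twisted transfer operators parametrized by the perturbation direction, together with careful treatment of $\w$ approaching the boundary of $\L_\rho-\u$, where the Gaussian regime transitions to genuine large-deviation decay and the tangent form can degenerate. Once such tail bounds are in hand, a comparison argument partitioning $\ker\psi_\u$ into unit boxes and applying the pointwise limit box-by-box converts the asymptotic into the required uniform envelope.
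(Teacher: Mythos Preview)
Your proposal has a concrete error at the outset: you invoke ``the mixing of the finite BMS measure $m^{\BMS}_\u$'' and normalize by $\tfrac{1}{|m^{\BMS}_\u|}$, but as the paper stresses, in higher rank $m^{\BMS}_\u$ is an \emph{infinite} measure, so strong mixing is unavailable and the total-mass normalization is meaningless. This is not cosmetic. The factor $t^{(n-1)/2}$ already enters at the level of the BMS correlation itself (Theorem~\ref{sam}), precisely because the support of $m^{\BMS}_\u$ is an $\br^{n-1}$-bundle over the compact flow space $\Omega_\u$. Moreover, Roblin's transversal argument does not output an orbit sum indexed by $\gamma\in\Gamma_\rho$; it relates the Haar correlation to the BMS correlation. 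So your displayed asymptotic in step one does not follow from the ingredients you name.

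The actual route runs in the opposite order. One first establishes local mixing for $m^{\BMS}_\u$ with the shift $\sqrt t\,\w$, working on the symbolic model supplied by the Markov section of \cite{BCLS_gafa}: the spectral analysis of the $\fa$-valued transfer operators together with Fourier inversion is exactly where the LCLT lives, and it delivers both the $t^{(n-1)/2}$ and the Gaussian $e^{-I(\w)}$, with $\langle\cdot,\cdot\rangle_*$ coming from the Hessian of $\psi_\rho$ at $\u$ as you correctly anticipated. Only then does the higher-rank transversal intersection argument convert the BMS statement into the Haar statement, replacing BMS integrals by the Burger--Roblin integrals $m^{\BR}_\u(f_1)\,m^{\BR_*}_\u(f_2)$. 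Your list of ingredients---transfer operators, strict concavity of $\psi_\rho$, the Taylor expansion, moderate deviations for the uniform bound---is largely right; what needs fixing is the order of assembly and the recognition that $|m^{\BMS}_\u|=\infty$.
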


\subsection{Burger-Roblin measures in higher rank} 
We explain the Burger-Roblin measures $m_{\u}^{\BR}$
and $m_{\u}^{\BR_*}$, as introduced in \cite{ELO_anosov}.
The Furstenberg boundary of $G$ is $\cal F=\prod {\bS}^{d-1}$, and
the limit set
of $\Ga_\rho$ is
$\La_\rho=\{ \xi_\rho=(f_1(\xi), \cdots, f_n(\xi)) : \xi \in \La\} $
where $f_i$ denotes the $\rho_i$-boundary map. Fix $o\in \prod \bH^d$ and let $K=\op{Stab}_G(o)$.
For any linear form $\psi\in \fa^*$ tangent to $\psi_\rho$ at some $\u\in \inte \fa^+$, Quint constructed a higher rank analogue of Patterson-Sullivan measure, a measure supported on $\La_\rho$ satisfying 
    \be\label{higher} \frac{d\gamma_* \nu}{d\nu}(\xi_\rho)= e^{\psi(\beta_{\xi_\rho}(o,\ga_\rho o)) }  \quad\text{ for all $\ga_\rho\in \Ga_\rho$},\ee  where $\beta$ is the $\fa$-valued Busemann map \cite{Quint2002Mesures}; it is called a $(\Ga_\rho, \psi)$-Patterson-Sullivan measure.
Under the hypothesis of Theorem \ref{elo_mix},
there exists a unique $(\Ga_\rho, \psi_{\u})$-Patterson-Sullivan measure, denoted $\nu_{\u}$, and for different directions, these measures are mutually singular \cite{LO_invariant}. 

 Let $M$ be the centralizer of $A$ in $K$, $m_o$ the $K$-invariant probability measure on $\cal F$, and $ds$ the Lebesgue measure on $\fa$.  
The Burger-Roblin measure $m^{\BR}_{\u}$ is induced from the following  $\Ga$-invariant measure on $G/M$:
$$
d \tilde m^{\BR}_{\u}(g) =
e^{\psi_{\u} (\beta_{g^+}(o, g(o)))}\; e^{\sigma (\beta_{g^-}(o,go)) }\;d\nu_{\u}(g^+) dm_o(g^-) ds ;$$
where $g^{\pm}=(g_1^{\pm}, \cdots, g_n^{\pm}) $ and $s=\beta_{g^+}(o, go)\in \fa$.
This measure $m^{\BR}_{\u}$ is an $N$-invariant ergodic measure where $N$ is the stable horospherical subgroup for $A_{\u}$ (\cite{LO_invariant}, \cite{LO_ergodic_decom}). Unlike the rank one convex cocompact case, where the BR-measure is the {\emph{unique}} $N$-ergodic measure on $\Ga\ba \SO (d,1)^\circ$ not supported on a closed $NM$-orbit, in higher rank, one obtains a continuous family of $N$-ergodic measures on $\Ga_\rho\ba G$ parametrized by directions in $\op{int} \L_\rho$. They yield all $N$-ergodic measures supported on the set of {\it directionally recurrent} points \cite{LLLO}.
It remains an open question whether these measures exhaust all possible $N$-ergodic measures in $\Ga_\rho\ba G$, except for those supported on closed $NM$-orbits.
Similarly, the unstable Burger–Roblin measure $m^{\BR_*}_{\u}$ is defined by switching the roles of $g^+$ and $g^-$ in the above formula.

\subsection{Local mixing of Bowen-Margulis-Sullivan measures in higher rank}
The Bowen-Margulis-Sullivan measure $m^{\BMS}_{\u}$ is the $AM$-invariant measure on $\Ga_\rho \ba G$ induced from
\begin{align*}
d \tilde m^{\BMS}_{\u}(g)& =
e^{\psi_{\u} (\beta_{g^+}(o, go))}\; e^{\psi_{\u}( \beta_{g^-}(o,go)) }\;d\nu_{\u} (g^+) d\nu_{\u}(g^-) ds.
\end{align*}
A crucial difference between rank one and higher rank is that 
in higher rank $m^{\BMS}_{\u}$ is an {\it infinite} measure. Hence strong mixing does not make sense. Nonetheless,  $m^{\BMS}_{\u}$ is $A$-ergodic (\cite{LO_ergodic_decom}) and 
the following local mixing result was proved by Sambarino for the $M$-invariant functions and by Chow-Sarkar in general:
\begin{theorem}[\cite{sambarino_orbit}, \cite{CS_local}] \label{sam}
 For any $\u\in \op{int}\L_\rho$, there exists $\kappa_{\u}>0$ such that for all $f_1, f_2\in C_c(\Gamma_\rho\ba G)$,
$$\lim_{t\to +\infty} t^{(n -1)/2}  \int_{\Gamma_\rho\ba G} f_1(x a_{t\u})  f_2(x) \,dm_{\u}^{\BMS} (x) 
= \kappa_{\u} \cdot m_{\u}^{\BMS} (f_1) \, m_{\u}^{\BMS} (f_2). $$
\end{theorem}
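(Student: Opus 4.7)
\medskip

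\noindent\textbf{Proof proposal.} The plan is to pass to symbolic dynamics for the $A$-action on the Anosov quotient $\Ga_\rho\ba G$, reduce the correlation integral to a renewal-type sum weighted by an equilibrium state, and extract the $t^{(n-1)/2}$ normalization from a local central limit theorem for the $\fa$-valued Cartan cocycle. Since each $\rho_i$ is convex cocompact and the $\rho_i$ are pairwise non-conjugate, $\Ga_\rho$ is a Zariski dense projective Anosov subgroup of $G$; following Bridgeman--Canary--Labourie--Sambarino and Sambarino, the non-wandering part of the $A$-action on $\Ga_\rho\ba G/M$ admits a Markov coding by a topologically mixing subshift of finite type $(\Sigma,\s)$ with a Hölder $\fa$-valued roof cocycle $\tau:\Sigma\to\fa$ whose periods recover the Cartan projections of $\Ga_\rho$, and the BMS measure $m^{\BMS}_{\u}$ lifts to the $\sigma$-invariant equilibrium state of the Hölder potential $-\psi_{\u}\!\circ\!\tau$, which has zero pressure precisely because $\psi_{\u}$ is tangent to $\psi_\rho$ at $\u$.

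Next I would expand $\int f_1(xa_{t\u})f_2(x)\,dm^{\BMS}_{\u}(x)$ using a local cross-section for the $A_{\u}$-flow in the Markov model. After a standard unfolding, the correlation at time $t$ becomes
\begin{equation*}
\sum_{N\ge 0}\int_\Sigma f_1\otimes f_2\bigl(x,\s^N x\bigr)\,\rho_{N,\u}(x,t)\,d\mu_{\u}(x),
\end{equation*}
where $\mu_{\u}$ is the equilibrium state and $\rho_{N,\u}(x,t)$ is the density of the event that the $N$-th Birkhoff sum $\tau^{(N)}(x)=\sum_{k<N}\tau(\s^k x)$ equals $t\u$ up to a bounded transverse window. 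Thus the whole problem is reduced to estimating the density of $\tau^{(N)}$ at the point $t\u\in\fa\simeq\br^n$ for $N\asymp t/\|\bar\tau_{\u}\|$, where $\bar\tau_{\u}$ is the mean of $\tau$ with respect to $\mu_{\u}$ (which points in the direction $\u$ by the tangency of $\psi_{\u}$).

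The key analytic step is an $\fa$-valued local central limit theorem for $\tau^{(N)}$ under $\mu_{\u}$. Consider the family of twisted transfer operators $\cal L_{s}$, $s\in\fa^*_{\c}$, acting on Hölder functions on $\Sigma$; perturbation theory around the leading eigenvalue, together with a uniform spectral gap away from $s=0$, gives an Edgeworth-type expansion
\begin{equation*}
\rho_{N,\u}(x,t)=\frac{1}{(2\pi N)^{n/2}\sqrt{\det\Sigma_{\u}}}\,\exp\!\Bigl(-\tfrac{1}{2N}\langle\Sigma_{\u}^{-1}(t\u-N\bar\tau_{\u}),\,t\u-N\bar\tau_{\u}\rangle\Bigr)+o(N^{-n/2}),
\end{equation*}
where $\Sigma_{\u}$ is the covariance of $\tau$ under $\mu_{\u}$. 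Summing over $N$ with $N\bar\tau_{\u}\approx t\u$ integrates out one direction (the flow direction $\u$) and leaves the transverse Gaussian of rank $n-1$, producing the overall $t^{-(n-1)/2}$ rate. The resulting leading term factorizes as a product of integrals of $f_1$ and $f_2$ against the Patterson--Sullivan data $\nu_{\u}\otimes\nu_{\u}\otimes ds$, which by the definition of $m^{\BMS}_{\u}$ is exactly $m^{\BMS}_{\u}(f_1)\,m^{\BMS}_{\u}(f_2)$, with $\kappa_{\u}$ the explicit Gaussian normalizing constant $(2\pi\|\bar\tau_{\u}\|)^{-(n-1)/2}(\det'\Sigma_{\u})^{-1/2}$ times a pressure derivative.

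\medskip

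\noindent\textbf{Main obstacle.} Everything hinges on the spectral hypothesis underlying the local CLT, namely the non-arithmeticity of the cocycle $\tau$ in every codirection: for each non-zero $\xi\in\fa^*$, the $\br$-valued cocycle $\xi\circ\tau$ is not cohomologous to a function taking values in a discrete subgroup of $\br$. This is where Zariski density of $\Ga_\rho$ is indispensable, via Benoist's theorem that the Jordan projections of $\Ga_\rho$ generate a dense additive subgroup of $\fa$; this rules out all lattice arithmeticity and yields the strict spectral gap of $\cal L_{i\xi}$ for $\xi\neq 0$ needed to localize the Fourier inversion that produces the LCLT. The final step, needed to go from the $M$-invariant setting of Sambarino to general $f_1,f_2\in C_c(\Ga_\rho\ba G)$ as in Chow--Sarkar, is to decompose $f_i$ into isotypic components under the compact group $M$ and combine the $M$-mixing on each isotype with the LCLT above; the uniformity in $M$-frequencies requires an equidistribution statement for holonomies along closed $A$-orbits, which follows from Zariski density of $\Ga_\rho$ in $G$ (so that $MA$-periods equidistribute in $M$).
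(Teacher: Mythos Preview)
Your proposal is correct and follows essentially the same route the paper sketches: Markov coding of the $A$-flow via the metric Anosov structure of Bridgeman--Canary--Labourie--Sambarino, identification of $m^{\BMS}_{\u}$ with the equilibrium state of $-\psi_{\u}\circ\tau$, and extraction of the $t^{(n-1)/2}$ rate from Fourier inversion/spectral analysis of the $\fa$-valued transfer operator, with non-arithmeticity supplied by Benoist and the passage from $M$-invariant to general test functions attributed to Chow--Sarkar. The paper phrases the geometric source of the polynomial factor as the $\ker\psi_{\u}\simeq\br^{n-1}$ vector-bundle structure of $\op{supp} m^{\BMS}_{\u}$ over the compact base $\Omega_{\u}$, which is exactly the structural fact your local CLT computation exploits.
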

Theorem \ref{elo_mix} is in fact deduced from this result via a higher rank version of transversal intersection argument.

 If $\Ga$ is a normal subgroup of a convex cocompact Zariski dense subgroup $\Ga_0<\SO(d,1)^\circ$ with $\Ga/\Ga_0\simeq {\mathbb Z}^{n-1}$, then in joint work with Pan \cite{Oh_Pan_ZD}, we proved that for all $f_1, f_2\in C_c(\Gamma\ba \SO(d,1)^\circ)$,
$$\lim_{t\to +\infty} t^{(n -1)/2}  \int_{\Gamma\ba \SO(d,1)^\circ} f_1(x a_{t})  f_2(x) \,dm^{\BMS} (x) 
= \kappa \cdot m^{\BMS} (f_1) \, m^{\BMS} (f_2) $$
for some constant $\kappa>0$. The factor $t^{(n-1)/2}$ arises because the support of $m^{\BMS}$ is the ${\mathbb Z}^{n-1}$-cover
of a compact flow space, namely the renormalized frame bundle of $\Ga_0\ba \bH^d$.

Likewise, the same factor in Theorem \ref{sam} reflects that the support of $m^{\BMS}_{\u}$ is an $\br^{n-1}$-bundle over a compact flow space.
A key structural property of $m_{\u}^{\BMS}$ for Anosov subgroups is that
it decomposes as the product of a rank one BMS-type measure on a compact space and the Lebesgue measure on the kernel of the linear form $\psi_{\u}$. Letting $\La_\rho^{(2)}=(\La_\rho\times \La_\rho - \text{diag} )$,
we have that the $\Ga_\rho$-action on $\La_\rho^{(2)} \times \br$ given by
$\ga. (\xi,\eta, t)=(\ga \xi, \ga \eta, t+\psi_{\u}(\beta_\xi(e, \ga))$ is properly discontinuous and cocompact.  Hence $\Omega_{\u}:=  \Ga\ba ( \La_\rho^{(2)}\times \br)$ is a compact metrizable space. The projection
$$ \Ga_\rho \ba (\La_\rho^{(2)} \times \fa) \to \Omega_{\u} \quad\quad  [(\xi, \eta, \mathsf{v})]\mapsto [(\xi, \eta, \psi_{\u} (\mathsf{v}))]$$ defines a $\ker \psi_{\u}$-vector bundle over 
 $\Omega_{\u}$ \cite{sambarino_orbit}. It follows from the work of Bridgeman-Canary-Labourie-Sambarino \cite{BCLS_gafa} that the translation flow on $\Omega_{\u}$ is a metric Anosov flow, which is H\"older conjugate to a reparameterization of the Gromov geodesic flow associated to $\Gamma$. This structure provides a Markov section on $\Omega_{\u}$ and hence a finite symbolic coding of the flow.
 Theorem \ref{sam} then follows from the control of the $\fa$-valued transfer operator on this symbolic model by applying the Fourier inversion theorem to the correlation function.

\subsection{Drunken bird vs. drunken person} 
The vector-bundle structure yields a homeomorphism
$$\Ga_\rho \ba ( \La_\rho^{(2)}\times \fa ) \simeq \Omega_{\u} \times \br^{n-1} $$
which suggests an analogy with random walks on Euclidean spaces.
 In dimensions one and two, a random walk almost surely returns home (a "drunken person"), while in higher dimensions, a "drunken bird" tends to fly off to infinity.
This leads to the following rank dichotomy, obtained
 jointly with Burger, Landesberg and Lee.
 \begin{theorem}[\cite{BLLO}]\label{bllo}
 Let $\mathsf u\in \inte \L_\rho$.
The $A_{\u}$-action on $(\Ga_\rho\ba G, m^{\BMS}_{\u})$ is ergodic if and only if $n \le 3$.
 \end{theorem}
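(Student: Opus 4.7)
The plan is to exploit the vector-bundle picture recalled just above. The $A_{\u}$-flow on $(\Ga_\rho\ba G, m^{\BMS}_{\u})$ can be realized as a $\br^{n-1}$-skew extension of a finite-measure metric Anosov flow on $\Omega_{\u}$, so the ergodicity question collapses to whether a $\ker\psi_{\u}$-valued cocycle over that base flow is recurrent or transient. By the classical random-walk dichotomy (recurrent in dimensions $1$ and $2$, transient in dimensions $\ge 3$), recurrence will hold exactly when $n-1\le 2$, i.e., when $n\le 3$.

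More concretely, decompose $\fa=\br\u_0\oplus\ker\psi_{\u}$ with $\u_0=\u/\psi_{\u}(\u)$. Under the identification $\Ga_\rho\ba(\La_\rho^{(2)}\times\fa)\simeq\Omega_{\u}\times\br^{n-1}$ used in Theorem~\ref{sam}, the measure $m^{\BMS}_{\u}$ becomes a product $m_0\otimes\op{Leb}$, where $m_0$ is the finite Gibbs-type measure on the base attached to the H\"older reparametrized Gromov flow, and $A_{\u}$ acts as a skew product
$$(x,w)\longmapsto\bigl(\phi_t(x),\,w+\sigma_{\u}(t,x)\bigr),$$
where $\phi_t$ is the base flow on $\Omega_{\u}$ and $\sigma_{\u}$ is the $\ker\psi_{\u}$-component of the $\fa$-valued Busemann/Jordan cocycle of $\Ga_\rho$. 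Since the base flow is mixing with respect to $m_0$ by BCLS, K.~Schmidt's theorem on cocycle extensions of ergodic probability-preserving flows reduces the ergodicity of $A_{\u}$ to the joint assertion that $\sigma_{\u}$ is \emph{recurrent} and that its essential values span all of $\ker\psi_{\u}$.

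To analyze $\sigma_{\u}$, I would transport it through the finite Markov section on $\Omega_{\u}$ provided by BCLS into a H\"older cocycle over a subshift of finite type, and apply the central limit theorem for hyperbolic flows (Ratner, Denker--Philipp, Melbourne--T\"or\"ok) to obtain a CLT for $\sigma_{\u}$ with covariance form $\Sigma$ on $\ker\psi_{\u}$. Non-degeneracy of $\Sigma$ is the crux: if $\Sigma$ had a nontrivial kernel direction $\phi\in(\ker\psi_{\u})^{\ast}$, Livshits' theorem would force $\phi\circ\sigma_{\u}$ to be a coboundary over the base flow, producing a nontrivial linear relation among the $\fa$-valued length spectra of the $\rho_i$. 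Combined with the strict concavity and analyticity of $\psi_\rho$ on $\inte\L_\rho$ and Benoist's Zariski density theorem, such a relation can only come from two of the $\rho_i$ being conjugate in $\op{Isom}(\bH^d)$, contradicting the standing hypothesis and giving positive-definiteness of $\Sigma$.

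Granting a nondegenerate CLT, the Chung--Fuchs dichotomy for $\br^k$-valued cocycles gives recurrence precisely when $k\le 2$; non-degeneracy of $\Sigma$ simultaneously guarantees the essential-values condition in Schmidt's criterion. With $k=\dim\ker\psi_{\u}=n-1$, this yields ergodicity of $A_{\u}$ for $n\in\{2,3\}$ (the case $n=1$ being the classical rank-one mixing already implicit in Theorem~\ref{win}), while for $n\ge 4$ transience of $\sigma_{\u}$ produces a continuum of $A_{\u}$-invariant sets of positive but non-full $m_0\otimes\op{Leb}$-measure, so $A_{\u}$ fails to be ergodic. The principal technical obstacle is the non-degeneracy of $\Sigma$: the passage from pairwise non-conjugacy of the $\rho_i$ to positive-definiteness of the covariance form rests on a delicate Livshits-type argument intertwined with the analytic structure of the growth indicator on $\inte\L_\rho$.
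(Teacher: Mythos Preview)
Your skew-product/Schmidt strategy is a genuinely different route from what the paper (and the cited source \cite{BLLO}) actually does. The paper deduces Theorem~\ref{bllo} from a higher-rank Hopf--Tsuji--Sullivan dichotomy: ergodicity of $A_{\u}$ on $(\Ga_\rho\ba G, m^{\BMS}_{\u})$ is shown to be equivalent to divergence of the integrals $\int_0^\infty m^{\BMS}_{\u}(\mathcal O\cap \mathcal O a_{t\u})\,dt$ for bounded open $\mathcal O$, once one knows the measure is $\u$-\emph{balanced}. The balanced condition and the asymptotic size of the integrand are then read off directly from the local mixing Theorem~\ref{sam}, which gives decay like $t^{-(n-1)/2}$; the integral diverges precisely when $(n-1)/2\le 1$, i.e.\ $n\le 3$. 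So the ``drunken bird'' heuristic in the paper is realized not through Schmidt's cocycle formalism but through an HTS-type Poincar\'e-series criterion combined with the precise polynomial rate in local mixing.

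Your approach is plausible in outline, but there is a real gap at the recurrence step. The Chung--Fuchs theorem is a statement about i.i.d.\ random walks, and a nondegenerate CLT for a H\"older cocycle over an Anosov flow is \emph{not} by itself enough to conclude recurrence in fiber dimension $\le 2$: one needs a \emph{local} limit theorem (or at least an estimate of the type $m_0\{x:\|\sigma_{\u}(t,x)\|\le R\}\asymp t^{-(n-1)/2}$) so that the relevant return-time series diverges. Such LLTs are available for H\"older cocycles over subshifts of finite type via Dolgopyat-type spectral estimates, so your program can be completed, but you must invoke an LLT rather than a CLT, and the passage from nondegenerate covariance to full essential range in Schmidt's criterion also requires a separate argument (nondegenerate CLT alone does not give it). By contrast, the paper's HTS route avoids both of these issues: the needed asymptotic is exactly the content of Theorem~\ref{sam}, and no separate essential-range analysis is required. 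Your approach has the merit of making the random-walk analogy literal and of isolating the role of Zariski density (via the covariance nondegeneracy), while the paper's approach is shorter and uses only ingredients already developed in the article.
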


For example, if $\Ga<G=\SO(d,1)^\circ\times \SO(d,1)^\circ $ is a Zariski dense
self-joining of a convex cocompact subgroup of $\SO(d,1)^\circ$, then the flow
$A_{\u}$ on $(\Ga\ba G, m^{\BMS}_{\u})$ 
is ergodic for any $\u\in \inte \L_\rho$. This ergodicity was one of the main ingredients in the proof of Theorem \ref{ko2b}.

Theorem \ref{bllo} follows from a general Hopf-Tsuji-Sullivan type criterion for the ergodicity of the action of a one parameter diagonal subgroup $A_{\u}$ on $\Ga\ba G$, where $G$ is a connected semisimple real algebraic group and $\Ga<G$ a Zariski dense discrete subgroup \cite{BLLO}. The ergodicity criterion relies  on the measure
being $\u$-balanced, that is, $\limsup_{t\to +\infty}\frac{\int_0^t m_{\u}^{\BMS}(\cal O_1\cap \cal O_1 a_{t\u}) dt }{\int_0^t m_{\u}^{\BMS}(\cal O_2\cap \cal O_2 a_{t\u})dt} <\infty$ for any bounded open subsets $\cal O_1, \cal O_2$. This condition is verified for $\Ga_\rho$ by Theorem \ref{sam}.

\subsection{Borel Anosov subgroups}\label{s:anosov}  Let $G=KA^+K$ be a Cartan decomposition. Recall that a finitely generated subgroup $\Ga<G$ is called {\emph{Borel Anosov}} if there exists $c>0$ such that for all $\ga\in \Ga$,	$$ \alpha (\mu(\ga)) \ge  c|\ga| -c^{-1}$$ for all simple roots $\alpha$ of $(\frak g, \fa)$ where $|\gamma |$ denotes the word length of $\ga$ with respect to a fixed finite set of generators of $\Ga$ and $\mu(\ga)\in \fa^+$ is the Cartan projection of $\ga$, i.e., $\ga\in K \exp(\mu(\ga)) K$.  This condition is stronger than the quasi-isometric embedding property for the orbit map $\ga\mapsto \ga o\in G/K$
    where $K=\op{Stab}(o)$: it requires a quasi-isometric type control for {\it every} simple root $\alpha$. The class of Borel Anosov subgroups forms a distinguished subclass of the general family of Anosov subgroups, since in general the Anosov property may be imposed relative
to any subset of the simple roots.  Only in the Borel Anosov case does the associated dynamical system coincide with the homogeneous space $\Ga\ba G$; for other types of Anosov subgroups, the relevant dynamical space is no longer $G$-homogeneous.  The notion of Anosov subgroups was introduced by Labourie for surface groups~\cite{Labourie2006anosov}, extended by  Guichard-Wienhard for Gromov hyperbolic groups~\cite{Guichard2012anosov}, and formulated in the general language used here by Kapovich-Leeb-Porti \cite{KLP_Anosov}.
  
 Analogues of Theorems \ref{elo_full} and \ref{sam} hold for all Zariski dense Borel Anosov subgroups of a connected semisimple real algebraic group $G$. In our setting, a self-joining $\Ga_\rho$ is Borel Anosov if and only if all $\rho_i$ are convex cocompact representations. This explains the hypothesis in Theorem \ref{elo} and in the other theorems of this section.
A natural question is whether these results  extend to self-joinings of geometrically finite representations. More generally, one may ask whether an analogous local mixing phenomenon holds for Zariski-dense {\it relatively Borel Anosov} subgroups—viewed as higher-rank analogues of geometrically finite groups—just as Anosov subgroups generalize convex cocompact ones in higher rank.

\subsection{Rank-one vs. higher-rank contrast for $L^2$-spectrum}
In rank one, as noted in Theorem \ref{lax}, the existence of an $L^2$-eigenfunction at the bottom of the Laplace spectrum depends on the critical exponent $\delta$. By contrast, for a general semisimple real algebraic group $G$ and any Zariski dense discrete subgroup $\Ga<G$, the bottom of the $L^2$-spectrum on $\Ga\ba X$ (where $X$ is the associated symmetric space) is never an atom, equivalently, there is no positive $L^2$-eigenfunction of $\Delta$. The only exception occurs in the product case: up to commensurability, $\Ga=\Ga_1\times \Ga_2$ where 
$\Ga_1$ is a discrete subgroup of a rank one factor $G_1$ and $\Ga_2$ is a lattice in $G_2$ with $G=G_1 G_2$. In particular, if $G$ is simple and of higher rank, then the bottom of $L^2(\Ga\ba X)$ is never an atom. This was shown in joint work with Edwards, Fraczyk, and Lee \cite{EFLO}. These results naturally lead to the question of how the continuous part of the spectrum behaves and, in particular, when the representation $L^2(\Gamma \backslash G)$ is tempered, meaning that all $K$-finite matrix coefficients are $L^{2+\e}$-integrable for any $\e>0$.

Temperedness of $L^2(\Ga\ba G)$ is closely related to the orbital growth of $\Ga$. For Borel Anosov subgroups, it was shown in joint work with Edwards \cite{Edwards_Oh_tem} that  $L^2(\Ga\ba G)$ is tempered (in which case we call $\Ga$ tempered) if and only if the growth indicator of $\Ga$ is bounded above by the half-sum of all positive roots.  More recently, Lutsko, Weich, and Wolf \cite{LWW} extended this growth-indicator criterion to all discrete subgroups and proved that $L^2(\Ga\ba G)$ is tempered for any Borel Anosov subgroup (possibly except when the reduced root system of $G$ is $A_2$), thereby confirming the conjecture in \cite{KMO_tent}. Finally, in joint work with Fraczyk \cite{FO}, we constructed Zariski-dense, non-tempered subgroups in higher rank. In contrast to the rank-one setting, where non-tempered subgroups always give rise to an atom at the base of the $L^2$-spectrum, these examples show that in higher rank, non-temperedness can occur even without such an atom—a phenomenon that appears to be genuinely higher-rank in nature.


\begin{thebibliography}{10}

\bibitem{Babillot_mixing}
{\sc M.~Babillot}, {\em On the mixing property for hyperbolic systems}, Israel J. Math., 129 (2002), pp.~61--76, \url{https://doi.org/10.1007/BF02773153}.

\bibitem{Benoist1997proprietes}
{\sc Y.~Benoist}, {\em Propri\'{e}t\'{e}s asymptotiques des groupes lin\'{e}aires}, Geom. Funct. Anal., 7 (1997), pp.~1--47, \url{https://doi.org/10.1007/PL00001613}.

\bibitem{BO_etds}
{\sc Y.~Benoist and H.~Oh}, {\em Geodesic planes in geometrically finite acylindrical 3-manifolds}, Ergodic Theory Dynam. Systems, 42 (2022), pp.~514--553, \url{https://doi.org/10.1017/etds.2021.19}.

\bibitem{BGS_inv}
{\sc J.~Bourgain, A.~Gamburd, and P.~Sarnak}, {\em Affine linear sieve, expanders, and sum-product}, Invent. Math., 179 (2010), pp.~559--644, \url{https://doi.org/10.1007/s00222-009-0225-3}.

\bibitem{BGS_acta}
{\sc J.~Bourgain, A.~Gamburd, and P.~Sarnak}, {\em Generalization of {S}elberg's {$\frac{3}{16}$} theorem and affine sieve}, Acta Math., 207 (2011), pp.~255--290, \url{https://doi.org/10.1007/s11511-012-0070-x}.

\bibitem{BCLS_gafa}
{\sc M.~Bridgeman, R.~Canary, F.~Labourie, and A.~Sambarino}, {\em The pressure metric for {A}nosov representations}, Geom. Funct. Anal., 25 (2015), pp.~1089--1179, \url{https://doi.org/10.1007/s00039-015-0333-8}.

\bibitem{burger_horo}
{\sc M.~Burger}, {\em Horocycle flow on geometrically finite surfaces}, Duke Math. J., 61 (1990), pp.~779--803,  \url{https://doi.org/10.1215/S0012-7094-90-06129-0}.

\bibitem{BLLO}
{\sc M.~Burger, O.~Landesberg, M.~Lee, and H.~Oh}, {\em The {H}opf--{T}suji--{S}ullivan dichotomy in higher rank and applications to {A}nosov subgroups}, J. Mod. Dyn., 19 (2023), pp.~301--330, \url{https://doi.org/10.3934/jmd.2023008}.

\bibitem{CS_local}
{\sc M.~Chow and P.~Sarkar}, {\em Local mixing of one-parameter diagonal flows on {A}nosov homogeneous spaces}, Int. Math. Res. Not. IMRN,  (2023), pp.~15834--15895, \url{https://doi.org/10.1093/imrn/rnac342}.

\bibitem{dalbo}
{\sc F.~Dal'bo}, {\em Topologie du feuilletage fortement stable}, Ann. Inst. Fourier (Grenoble), 50 (2000), pp.~981--993, \url{https://doi.org/10.5802/aif.1781}.

\bibitem{Dang_top}
{\sc N.-T. Dang}, {\em Topological mixing of positive diagonal flows}, Israel J. Math., 260 (2024), pp.~1--71, \url{https://doi.org/10.1007/s11856-023-2561-1}.

\bibitem{Dani_Margulis1989}
{\sc S.~G. Dani and G.~A. Margulis}, {\em Values of quadratic forms at primitive integral points}, Invent. Math., 98 (1989), pp.~405--424, \url{https://doi.org/10.1007/BF01388860}.

\bibitem{DM_soviet}
{\sc S.~G. Dani and G.~A. Margulis}, {\em Limit distributions of orbits of unipotent flows and values of quadratic forms}, vol.~16, Part 1 of Adv. Soviet Math., Amer. Math. Soc., Providence, RI, 1993, pp.~91--137.

\bibitem{Dey_Oh}
{\sc S.~Dey and H.~Oh}, {\em Fractal closures of geodesic planes in Hitchin manifolds}, Preprint arXiv:2509.17915.

\bibitem{Dolgopyat}
{\sc D.~Dolgopyat}, {\em On decay of correlations in {A}nosov flows}, Ann. of Math. (2), 147 (1998), pp.~357--390, \url{https://doi.org/10.2307/121012}.

\bibitem{DRS}
{\sc W.~Duke, Z.~Rudnick, and P.~Sarnak}, {\em Density of integer points on affine homogeneous varieties}, Duke Math. J., 71 (1993), pp.~143--179, \url{https://doi.org/10.1215/S0012-7094-93-07107-4}.

\bibitem{EFLO}
{\sc S.~Edwards, M.~Fraczyk, M.~Lee, and H.~Oh}, {\em Infinite volume and atoms at the bottom of the spectrum}, C. R. Math. Acad. Sci. Paris, 362 (2024), pp.~1873--1880, \url{https://doi.org/10.5802/crmath.586}.

\bibitem{ELO_unique}
{\sc S.~Edwards, M.~Lee, and H.~Oh}, {\em Uniqueness of conformal measures and local mixing for {A}nosov groups}, Michigan Math. J., 72 (2022), pp.~243--259, \url{https://doi.org/10.1307/mmj/20217222}.

\bibitem{ELO_anosov}
{\sc S.~Edwards, M.~Lee, and H.~Oh}, {\em Anosov groups: local mixing, counting and equidistribution}, Geom. Topol., 27 (2023), pp.~513--573, \url{https://doi.org/10.2140/gt.2023.27.513}.

\bibitem{ELO_torus}
{\sc S.~Edwards, M.~Lee, and H.~Oh}, {\em Torus counting and self-joinings of {K}leinian groups}, J. Reine Angew. Math., 807 (2024), pp.~151--185, \url{https://doi.org/10.1515/crelle-2023-0089}.

\bibitem{EO_duke}
{\sc S.~Edwards and H.~Oh}, {\em Spectral gap and exponential mixing on geometrically finite hyperbolic manifolds}, Duke Math. J., 170 (2021), pp.~3417--3458,\url{https://doi.org/10.1215/00127094-2021-0051}.

\bibitem{Edwards_Oh_tem}
{\sc S.~Edwards and H.~Oh}, {\em Temperedness of {$L^2(\Gamma \setminus G)$} and positive eigenfunctions in higher rank}, Commun. Am. Math. Soc., 3 (2023), pp.~744--778, \url{https://doi.org/10.1090/cams/25}.

\bibitem{EM}
{\sc A.~Eskin and C.~McMullen}, {\em Mixing, counting, and equidistribution in {L}ie groups}, Duke Math. J., 71 (1993), pp.~181--209, \url{https://doi.org/10.1215/S0012-7094-93-07108-6}.

\bibitem{FO}
{\sc M.~Fraczyk and H.~Oh}, {\em Zariski-dense non-tempered subgroups in higher rank of nearly optimal growth}, Journal fur die reine und angewandte Mathematik (Crelles Journal),  (2025), \url{https://doi.org/10.1515/crelle-2025-0043}.

\bibitem{GolsefidyVarju}
{\sc A.~S. Golsefidy and P.~P. Varj\'u}, {\em Expansion in perfect groups}, Geom. Funct. Anal., 22 (2012), pp.~1832--1891, \url{https://doi.org/10.1007/s00039-012-0190-7}.

\bibitem{Guichard2012anosov}
{\sc O.~Guichard and A.~Wienhard}, {\em Anosov representations: domains of discontinuity and applications}, Invent. Math., 190 (2012), pp.~357--438, \url{https://doi.org/10.1007/s00222-012-0382-7}.

\bibitem{Hedlund_duke}
{\sc G.~A. Hedlund}, {\em Fuchsian groups and transitive horocycles}, Duke Math. J., 2 (1936), pp.~530--542,  \url{https://doi.org/10.1215/S0012-7094-36-00246-6}.

\bibitem{Howe_Moore}
{\sc R.~E. Howe and C.~C. Moore}, {\em Asymptotic properties of unitary representations}, J. Functional Analysis, 32 (1979), pp.~72--96, \url{https://doi.org/10.1016/0022-1236(79)90078-8}.

\bibitem{KLP_Anosov}
{\sc M.~Kapovich, B.~Leeb, and J.~Porti}, {\em Anosov subgroups: dynamical and geometric characterizations}, Eur. J. Math., 3 (2017), pp.~808--898, \url{https://doi.org/10.1007/s40879-017-0192-y}.

\bibitem{Kerckhoff_Storm}
{\sc S.~P. Kerckhoff and P.~A. Storm}, {\em Local rigidity of hyperbolic manifolds with geodesic boundary}, J. Topol., 5 (2012), pp.~757--784, \url{https://doi.org/10.1112/jtopol/jts018}.

\bibitem{Khalil}
{\sc O.~Khalil}, {\em Exponential mixing via additive combinatorics}, Preprint, arXiv:2305.00527.

\bibitem{kimlee}
{\sc D.~Kim and M.~Lee}, {\em Horocycles in hyperbolic 3-manifolds with round Sierpinski limit sets}, arXiv:2501.14067, To appear in Groups, Geometry, and Dynamics,  (2025).

\bibitem{KMO_tent}
{\sc D.~M. Kim, Y.~Minsky, and H.~Oh}, {\em Tent property of the growth indicator functions and applications}, Geom. Dedicata, 218 (2024), pp.~Paper No.14, 18, \url{https://doi.org/10.1007/s10711-023-00846-3}.

\bibitem{KO_rigidity}
{\sc D.~M. Kim and H.~Oh}, {\em Rigidity of {K}leinian groups via self-joinings}, Invent. Math., 234 (2023), pp.~937--948, \url{https://doi.org/10.1007/s00222-023-01213-5}.

\bibitem{KO2023_conformalrigidity}
{\sc D.~M. Kim and H.~Oh}, {\em Conformal measure rigidity for representations via self-joinings}, Adv. Math., 458 (2024), pp.~Paper No. 109992, 40, \url{https://doi.org/10.1016/j.aim.2024.109992}.

\bibitem{KO_JTo}
{\sc D.~M. Kim and H.~Oh}, {\em Rigidity of {K}leinian groups via self-joinings: measure theoretic criterion}, J. Topol., 18 (2025), p.~Paper No. e70035, \url{https://doi.org/10.1112/topo.70035}.

\bibitem{KO}
{\sc A.~Kontorovich and H.~Oh}, {\em Apollonian circle packings and closed horospheres on hyperbolic 3-manifolds}, J. Amer. Math. Soc., 24 (2011), pp.~603--648, \url{https://doi.org/10.1090/S0894-0347-2011-00691-7}.
\newblock With an appendix by Oh and Nimish Shah.

\bibitem{Labourie2006anosov}
{\sc F.~Labourie}, {\em Anosov flows, surface groups and curves in projective space}, Invent. Math., 165 (2006), pp.~51--114, \url{https://doi.org/10.1007/s00222-005-0487-3}.

\bibitem{LLLO}
{\sc O.~Landesberg, M.~Lee, E.~Lindenstrauss, and H.~Oh}, {\em Horospherical invariant measures and a rank dichotomy for {A}nosov groups}, J. Mod. Dyn., 19 (2023), pp.~331--362, \url{https://doi.org/10.3934/jmd.2023009}.

\bibitem{Lax_Ph}
{\sc P.~D. Lax and R.~S. Phillips}, {\em The asymptotic distribution of lattice points in {E}uclidean and non-{E}uclidean spaces}, in Toeplitz centennial ({T}el {A}viv, 1981), vol.~4 of Oper. Theory Adv. Appl., Birkh\"auser Verlag, Basel-Boston, Mass., 1982, pp.~365--375.

\bibitem{LO_invariant}
{\sc M.~Lee and H.~Oh}, {\em Invariant measures for horospherical actions and {A}nosov groups}, Int. Math. Res. Not. IMRN,  (2023), pp.~16226--16295, \url{https://doi.org/10.1093/imrn/rnac262}.

\bibitem{LO_ergodic_decom}
{\sc M.~Lee and H.~Oh}, {\em Ergodic decompositions of geometric measures on {A}nosov homogeneous spaces}, Israel J. Math., 260 (2024), pp.~195--234, \url{https://doi.org/10.1007/s11856-023-2560-2}.

\bibitem{LeeOh_orbit}
{\sc M.~Lee and H.~Oh}, {\em Orbit closures of unipotent flows for hyperbolic manifolds with {F}uchsian ends}, Geom. Topol., 28 (2024), pp.~3373--3473, \url{https://doi.org/10.2140/gt.2024.28.3373}.

\bibitem{LP}
{\sc J.~Li and W.~Pan}, {\em Exponential mixing of geodesic flows for geometrically finite hyperbolic manifolds with cusps}, Invent. Math., 231 (2023), pp.~931--1021, \url{https://doi.org/10.1007/s00222-022-01156-3}.

\bibitem{LPS}
{\sc J.~Li, P.~Sarkar, and W.~Pan}, {\em Exponential mixing of frame flows for geometrically finite hyperbolic manifolds}, (arXiv:2302.03798), To appear in Journal of EMS.

\bibitem{LWW}
{\sc C.~Lutsko, T.~Weich, and L.Wolf}, {\em Polyhedral bounds on the joint spectrum and temperedness of locally symmetric spaces}, Preprint arXiv:2402.02530,  (2024).

\bibitem{Marden2016hyperbolic}
{\sc A.~Marden}, {\em Hyperbolic manifolds}, Cambridge University Press, Cambridge, 2016, \url{https://doi.org/10.1017/CBO9781316337776}.
\newblock An introduction in 2 and 3 dimensions.

\bibitem{Margulis_Banach}
{\sc G.~A. Margulis}, {\em Indefinite quadratic forms and unipotent flows on homogeneous spaces}, in Dynamical systems and ergodic theory ({W}arsaw, 1986), vol.~23 of Banach Center Publ., PWN, Warsaw, 1989, pp.~399--409.

\bibitem{Matsuzaki1998hyperbolic}
{\sc K.~Matsuzaki and M.~Taniguchi}, {\em Hyperbolic manifolds and {K}leinian groups}, Oxford Mathematical Monographs, Oxford University Press, 1998.

\bibitem{Mc_iter}
{\sc C.~McMullen}, {\em Iteration on {T}eichm\"uller space}, Invent. Math., 99 (1990), pp.~425--454, \url{https://doi.org/10.1007/BF01234427}.

\bibitem{MMO_gafa}
{\sc C.~T. McMullen, A.~Mohammadi, and H.~Oh}, {\em Horocycles in hyperbolic 3-manifolds}, Geom. Funct. Anal., 26 (2016), pp.~961--973,  \url{https://doi.org/10.1007/s00039-016-0373-8}.

\bibitem{MMO_inventiones}
{\sc C.~T. McMullen, A.~Mohammadi, and H.~Oh}, {\em Geodesic planes in hyperbolic 3-manifolds}, Invent. Math., 209 (2017), pp.~425--461, \url{https://doi.org/10.1007/s00222-016-0711-3}.

\bibitem{MMO_duke}
{\sc C.~T. McMullen, A.~Mohammadi, and H.~Oh}, {\em Geodesic planes in the convex core of an acylindrical 3-manifold}, Duke Math. J., 171 (2022), pp.~1029--1060, \url{https://doi.org/10.1215/00127094-2021-0030}.

\bibitem{MohammadiOh_JEMS}
{\sc A.~Mohammadi and H.~Oh}, {\em Matrix coefficients, counting and primes for orbits of geometrically finite groups}, J. Eur. Math. Soc. (JEMS), 17 (2015), pp.~837--897, \url{https://doi.org/10.4171/JEMS/520}.

\bibitem{Mostowbook}
{\sc G.~Mostow}, {\em Strong rigidity of locally symmetric spaces}, Annals of Mathematics Studies, No. 78, Princeton University Press, Princeton, N.J.; University of Tokyo Press, Tokyo, 1973.

\bibitem{Oh_traveler}
{\sc H.~Oh}, {\em Euclidean traveller in hyperbolic worlds}, Notices Amer. Math. Soc., 69 (2022), pp.~1888--1897.

\bibitem{OhSurvey}
{\sc H.~Oh}, {\em Dynamics for discrete subgroups of {${\rm SL}_2(\Bbb C)$}}, in Dynamics, geometry, number theory---the impact of {M}argulis on modern mathematics, Univ. Chicago Press, Chicago, IL, [2022] \copyright 2022, pp.~506--566.

\bibitem{Oh_Pan_ZD}
{\sc H.~Oh and W.~Pan}, {\em Local mixing and invariant measures for horospherical subgroups on abelian covers}, Int. Math. Res. Not. IMRN,  (2019), pp.~6036--6088, \url{https://doi.org/10.1093/imrn/rnx292}.

\bibitem{OS_inv}
{\sc H.~Oh and N.~Shah}, {\em The asymptotic distribution of circles in the orbits of {K}leinian groups}, Invent. Math., 187 (2012), pp.~1--35, \url{https://doi.org/10.1007/s00222-011-0326-7}.

\bibitem{OS_Jams}
{\sc H.~Oh and N.~A. Shah}, {\em Equidistribution and counting for orbits of geometrically finite hyperbolic groups}, J. Amer. Math. Soc., 26 (2013), pp.~511--562, \url{https://doi.org/10.1090/S0894-0347-2012-00749-8}.

\bibitem{Oh_Winter_JAMS}
{\sc H.~Oh and D.~Winter}, {\em Uniform exponential mixing and resonance free regions for convex cocompact congruence subgroups of {${\rm SL}_2(\Bbb{Z})$}}, J. Amer. Math. Soc., 29 (2016), pp.~1069--1115, \url{https://doi.org/10.1090/jams/849}.

\bibitem{Patterson1976limit}
{\sc S.~J. Patterson}, {\em The limit set of a {F}uchsian group}, Acta Math., 136 (1976), pp.~241--273,  \url{https://doi.org/10.1007/BF02392046}.

\bibitem{Pollicott_mixing}
{\sc M.~Pollicott}, {\em On the rate of mixing of {A}xiom {A} flows}, Invent. Math., 81 (1985), pp.~413--426, \url{https://doi.org/10.1007/BF01388579}.

\bibitem{PS_eigenvalues}
{\sc R.~Potrie and A.~Sambarino}, {\em Eigenvalues and entropy of a {H}itchin representation}, Invent. Math., 209 (2017), pp.~885--925, \url{https://doi.org/10.1007/s00222-017-0721-9}.

\bibitem{Prasad1973strong}
{\sc G.~Prasad}, {\em Strong rigidity of {${\bf Q}$}-rank {$1$} lattices}, Invent. Math., 21 (1973), pp.~255--286.

\bibitem{Quint2002Mesures}
{\sc J.-F. Quint}, {\em Mesures de {P}atterson-{S}ullivan en rang sup\'{e}rieur}, Geom. Funct. Anal., 12 (2002), pp.~776--809,  \url{https://doi.org/10.1007/s00039-002-8266-4}.

\bibitem{Quint_indicator}
{\sc J.-F. Quint}, {\em L'indicateur de croissance des groupes de {S}chottky}, Ergodic Theory Dynam. Systems, 23 (2003), pp.~249--272, \url{https://doi.org/10.1017/S0143385702001268}.

\bibitem{ratner_top}
{\sc M.~Ratner}, {\em Raghunathan's topological conjecture and distributions of unipotent flows}, Duke Math. J., 63 (1991), pp.~235--280, \url{https://doi.org/10.1215/S0012-7094-91-06311-8}.

\bibitem{Roblin2003ergodicite}
{\sc T.~Roblin}, {\em Ergodicit\'{e} et \'{e}quidistribution en courbure n\'{e}gative}, M\'{e}m. Soc. Math. Fr. (N.S.),  (2003), pp.~vi+96, \url{https://doi.org/10.24033/msmf.408}.

\bibitem{samb_hyper}
{\sc A.~Sambarino}, {\em Hyperconvex representations and exponential growth}, Ergodic Theory Dynam. Systems, 34 (2014), pp.~986--1010, \url{https://doi.org/10.1017/etds.2012.170}.

\bibitem{sambarino_orbit}
{\sc A.~Sambarino}, {\em The orbital counting problem for hyperconvex representations}, Ann. Inst. Fourier (Grenoble), 65 (2015), pp.~1755--1797, \url{https://doi.org/10.5802/aif.2973}.

\bibitem{Sarkar_uniform_gf}
{\sc P.~Sarkar}, {\em Generalization of {S}elberg's {$3/16$} theorem for geometrically finite thin subgroups of {${\rm SO}(n,1)$}}, In preparation.

\bibitem{Sarkar_uniform_cc}
{\sc P.~Sarkar}, {\em Generalization of {S}elberg's {$3/16$} theorem for convex cocompact thin subgroups of {${\rm SO}(n,1)$}}, Adv. Math., 409 (2022), pp.~Paper No. 108610, 58, \url{https://doi.org/10.1016/j.aim.2022.108610}.

\bibitem{SW}
{\sc P.~Sarkar and D.~Winter}, {\em Exponential mixing of frame flows for convex cocompact hyperbolic manifolds}, Compos. Math., 157 (2021), pp.~2585--2634, \url{https://doi.org/10.1112/s0010437x21007600}.

\bibitem{Shah_1991}
{\sc N.~A. Shah}, {\em Closures of totally geodesic immersions in manifolds of constant negative curvature}, in Group theory from a geometrical viewpoint ({T}rieste, 1990), World Sci. Publ., River Edge, NJ, 1991, pp.~718--732.

\bibitem{Stoyanov}
{\sc L.~Stoyanov}, {\em Spectra of {R}uelle transfer operators for axiom {A} flows}, Nonlinearity, 24 (2011), pp.~1089--1120, \url{https://doi.org/10.1088/0951-7715/24/4/005}.

\bibitem{Sullivan1979density}
{\sc D.~Sullivan}, {\em The density at infinity of a discrete group of hyperbolic motions}, Inst. Hautes \'{E}tudes Sci. Publ. Math.,  (1979), pp.~171--202.

\bibitem{Sullivan1981ergodic}
{\sc D.~Sullivan}, {\em On the ergodic theory at infinity of an arbitrary discrete group of hyperbolic motions}, vol.~97 of Ann. of Math. Stud., Princeton Univ. Press, 1981.

\bibitem{Sullivan1982discrete}
{\sc D.~Sullivan}, {\em Discrete conformal groups and measurable dynamics}, Bull. Amer. Math. Soc. (N.S.), 6 (1982), pp.~57--73, \url{https://doi.org/10.1090/S0273-0979-1982-14966-7}.

\bibitem{Sullivan1984entropy}
{\sc D.~Sullivan}, {\em Entropy, {H}ausdorff measures old and new, and limit sets of geometrically finite {K}leinian groups}, Acta Math., 153 (1984), pp.~259--277, \url{https://doi.org/10.1007/BF02392379}.

\bibitem{Sullivan_Riemannian}
{\sc D.~Sullivan}, {\em Related aspects of positivity in {R}iemannian geometry}, J. Differential Geom., 25 (1987), pp.~327--351, \url{http://projecteuclid.org/euclid.jdg/1214440979}.

\bibitem{Thurston1986hyperbolic}
{\sc W.~P. Thurston}, {\em Hyperbolic structures on {$3$}-manifolds. {I}. {D}eformation of acylindrical manifolds}, Ann. of Math. (2), 124 (1986), pp.~203--246, \url{https://doi.org/10.2307/1971277}.

\bibitem{Winter_mixing}
{\sc D.~Winter}, {\em Mixing of frame flow for rank one locally symmetric spaces and measure classification}, Israel J. Math., 210 (2015), pp.~467--507,  \url{https://doi.org/10.1007/s11856-015-1258-5}.

\bibitem{Zhang2022construction}
{\sc Y.~Zhang}, {\em Construction of acylindrical hyperbolic 3-manifolds with quasifuchsian boundary}, Exp. Math., 31 (2022), pp.~883--896, \url{https://doi.org/10.1080/10586458.2020.1718566}.

\bibitem{Yong_exotic}
{\sc Y.~Zhang}, {\em Existence of an exotic plane in an acylindrical 3-manifold}, Math. Res. Lett., 30 (2023), pp.~611--631, \url{https://doi.org/10.4310/mrl.2023.v30.n2.a11}.

\end{thebibliography}
\end{document}